\numberwithin{equation}{section}
\DeclareMathOperator{\supp}{supp}
\DeclareMathOperator{\dist}{dist}
\DeclareMathOperator{\dom}{\mathcal{D}}
\DeclareMathOperator{\OO}{\mathcal{O}}
\DeclareMathOperator{\oo}{o}
\DeclareMathOperator{\argmin}{argmin}
\DeclareMathOperator{\del}{\partial}
\DeclareMathOperator{\Real}{Re}		
\DeclareMathOperator{\Imaginary}{Im}
\DeclareMathOperator{\fin}{fin}
\DeclareMathOperator{\PV}{PV}
\DeclareMathOperator{\semci}{sc}
\DeclareMathOperator{\deq}{:=}
\newtheorem{thm}{Theorem}[section]
\newtheorem{prop}[thm]{Proposition}
\newtheorem{lem}[thm]{Lemma}
\newtheorem{cor}[thm]{Corollary}
\theoremstyle{remark}
\newtheorem{rem}[thm]{Remark}
\newtheorem{remark}[thm]{Remark}
\theoremstyle{definition}
\newtheorem{definition}[thm]{Definition}
\newtheorem{assumption}[thm]{Assumption}
\newtheorem{example}[thm]{Example}
\newcommand\cA{{\mathcal A}}
\newcommand{\cD}{{\mathcal D}}
\newcommand{\cE}{{\mathcal E}}
\newcommand{\cL}{{\mathcal L}}
\newcommand{ \sfb }{{\mathsf b}}
\newcommand{\sfT}{{\mathsf T}}
\newcommand{\fa}{{\mathfrak a}}
\newcommand{\fc}{{\mathfrak c}}
\newcommand{\fC}{{\mathfrak C}}
\newcommand{\fe}{{\mathfrak e}}
\newcommand{\ft}{{\mathfrak t}}
\newcommand{\fM}{{\mathfrak M}}
\newcommand{\fB}{{\mathfrak B}}
\newcommand{\bmu}{{\bm{u}}}
\newcommand{\bmx}{{\bm{x}}}
\newcommand{\bmla}{{\bm \la}}
\newcommand{\rd}{{\rm d}}
\newcommand{\ri}{\mathrm{i}}
\newcommand{\bC}{{\mathbb C}}
\newcommand{\bE}{\mathbb{E}}
\newcommand{\bH}{\mathbb{H}}
\newcommand{\bP}{\mathbb{P}}
\newcommand{\bR}{{\mathbb R}}
\newcommand{\bZ}{\mathbb{Z}}
\newcommand{\bW}{\mathbb{W}}
\newcommand{\la}{\lambda}
\newcommand{\wt}{\widetilde}
\newcommand{\dL}{{\rm L}}
\renewcommand{\Im}{{\rm Im}}
\renewcommand{\Re}{{\rm Re}}
\title{Edge Rigidity of Dyson Brownian Motion with General Initial Data}
\author{Amol Aggarwal$^{1, 2}$}
\author{Jiaoyang Huang$^3$}
\address[1]{Department of Mathematics, Columbia University}
\address[2]{Clay Mathematics Institute}
\address[3]{Department of Statistics and Data Science, University of Pennsylvania}
\begin{document}

\maketitle

\begin{abstract}

In this paper, we study the edge behavior of Dyson Brownian motion with general $\beta$. 
Specifically, we consider the scenario where the averaged initial density near the edge, on the scale $\eta_*$, is lower bounded by a square root profile. Under this assumption, we establish that the fluctuations of extreme particles are bounded by $(\log n)^{\OO(1)}n^{-2/3}$ after time $C\sqrt{\eta_*}$. Our result improves previous edge rigidity results from \cite{landon2017edge, adhikari2020dyson} which require both lower and upper bounds of the averaged initial density. 
Additionally, combining with \cite{landon2017edge}, our rigidity estimates are used to prove that the distribution of extreme particles converges to the Tracy-Widom $\beta$ distribution in short time.
\end{abstract}

\setcounter{tocdepth}{1}
\tableofcontents

\section{Introduction}\label{s:intro}

\subsection{Preface}

In 1962 \cite{MR0148397}, Dyson interpreted the Gaussian orthogonal and unitary ensembles as dynamical limit of the matrix valued Brownian motion, which is given by
\begin{align}\label{e:MatrixDBM}
   \rd H(t)=\rd B(t),
\end{align}
where $B(t)$ is the Brownian motion on real symmetric or complex Hermitian matrices. It turns out the eigenvalues of the above matrix valued Brownian motion satisfy a system of stochastic differential equations. These equations have been later generalized to stochastic differential equations, called the $\beta$-Dyson Brownian motion,
	\begin{flalign}
		\label{e:DBM}
		\rd\lambda_i (t) = \Big( \displaystyle\frac{2}{\beta n} \Big)^{1/2} \rd B_i (t)+ \frac{1}{n}\displaystyle\sum_{\substack{1 \le j \le n \\ j \ne i}}	\displaystyle\frac{\rd t}{\lambda_i (t) - \lambda_j (t)},\quad 1\leq i\leq n,
	\end{flalign}
where  $\bmla^{(n)}(t)=\bmla(t)=(\lambda^{(n)}_1(t), \la^{(n)}_2(t),\cdots, \la_n^{(n)}(t))=(\lambda_1(t), \la_2(t), \cdots, \la_n(t))\in \overline{\mathbb{W}}_n$ lies in the closure of the Weyl chamber $\mathbb{W}_n = \{ \bm{y} \in \mathbb{R}^n : y_1 > y_2 > \cdots >  y_n \}$ and  $\{B_i(t)\}_{1\leq i\leq n}$ are independent Brownian motions.
The real symmetric and complex Hermitian matrix valued Brownian motion corresponds to \eqref{e:DBM} with $\beta=1$ and $\beta=2$ respectively.

Dyson Brownian motion is a prototype model for a one-dimension interacting particle system with singular interactions, which capture the statistical properties of energy levels in complex systems. The understanding  of Dyson Brownian motion plays an important role in the proof of Wigner's original universality conjecture by Erd{\H o}s, Schlein and Yau \cite{erdHos2011universality}. For a review about the general framework regarding the proof of the universality conjecture, one can read the book \cite{erdHos2017dynamical}.

In this paper, we investigate the edge behavior of Dyson Brownian motion (DBM) described by equation \eqref{e:DBM}, considering various initial data. If the initial data is a delta mass at zero,  the law of Dyson Brownian motion at time one corresponds to the Gaussian-$\beta$-ensemble distribution. Notably, the largest particle fluctuates exhibits fluctuations on the scale of $n^{-2/3}$, commonly referred to as optimal edge rigidity. Moreover, its fluctuations converge to the Tracy-Widom $\beta$ distribution \cite{MR2813333,MR3253704,krishnapur2016universality}. A natural question arises: \emph{when starting from general initial data, how long does it take to observe optimal edge rigidity, characterized by $n^{-2/3}$ fluctuations?}

It has been proven in \cite{landon2017edge} ($\beta=1,2$) and \cite{adhikari2020dyson} (general $\beta$), if the initial data has a spectral edge with  square root
behavior down to a scale $\eta_*$, then after time $C\sqrt{\eta_*}$, the extreme particle concentrates on the scale $n^{-2/3+\oo(1)}$. 
However, when the initial data contains an outlier, the extreme eigenvalue initially experiences fluctuations on the scale of $n^{-1/2}$, and after time $\OO(1)$, it merges with the bulk of the spectral distribution, resulting in $n^{-2/3+\oo(1)}$ fluctuations. This phenomenon is commonly referred to as the Baik--Ben Arous--P\'{e}ch\'{e} (BBP) phase transition \cite{baik2005phase, bloemendal2013limits, bloemendal2016limits}.

In this paper, we propose a general criterion that encompasses both aforementioned cases and is believed to be necessary  (up to constants). Specifically, we demonstrate that if  on the scale $\eta_*$, the averaged density near the edge is lower bounded by a square root profile (without imposing the analogous upper bound), then the fluctuations of extreme particles are bounded by $(\log n)^{\OO(1)}n^{-2/3}$ after time $C\sqrt{\eta_*}$.  Combining with \cite{landon2017edge}, our optimal edge rigidity estimates are used to give a proof of the local ergodicity of Dyson Brownian motion for more general initial data, i.e., the distribution of extreme particles converges to Tracy-Widom $\beta$ distribution in short time. 

The limiting empirical particle density of Dyson Brownian motion can be expressed as the free convolution of the initial density with the rescaled semicircle distribution, which corresponds to the solution of a complex Burgers equation \cite{anderson2010introduction}. To establish the edge rigidity, we employ a refined comparison between the empirical particle density of Dyson Brownian motion and a deterministic measure-valued process represented by the solution of the complex Burgers equation. This solution can be obtained using the method of characteristics. The Stieltjes transform of the empirical particle density satisfies the same complex Burgers equation, with the addition of a stochastic term and some small error terms. By plugging the characteristic flow of the complex Burgers equation, we can analyze the differences between the two Stieltjes transforms using estimates of Gr\"{o}nwall type. The method of characteristics has previously been employed in different contexts related to Dyson Brownian motion. In \cite{huang2019rigidity}, it was utilized to investigate the bulk behavior of Dyson Brownian motion. In \cite{adhikari2020dyson}, it was applied to study the edge behavior of Dyson Brownian motion when starting from a profile exhibiting square root behavior. Furthermore, the method of characteristics was utilized in \cite{bourgade2021extreme}
to study the extreme gaps between eigenvalues of Wigner matrices, and in  \cite{adhikari2022local},  to analyze particle rigidity in unitary Dyson Brownian motion.

Our case poses additional challenges as we only assume a lower bound for the averaged initial density near the edge, without assuming square root behavior. As a crucial initial step, we establish that if the averaged initial density near the edge satisfies this lower bound, it continues to hold within a finite time frame. To prove this, we develop a coupling technique for Dyson Brownian motions (see \Cref{l:coupling}) with different initial data. This coupling ensures that if one Dyson Brownian motion exhibits larger gaps between particles at the beginning, this property persists for all subsequent times. Notably, this coupling technique may have independent significance beyond our specific study. 

In previous work \cite{adhikari2020dyson}, a key insight relied on the observation that the characteristic flows near the edge progress at a slower rate compared to the movement of the spectral edge, which is easily obtained when the density has square root behavior. In our case, we undertake a more meticulous analysis of the equations governing the relative movement of our characteristics with respect to the spectral edge. As a result, we establish two quantitative versions (see \Cref{c:kappa} and \Cref{c:domainic}) of these estimates when the averaged density near the edge is lower bounded by a square root profile. These two estimates, combined with a stopping time argument, enable us to prove that the Stieltjes transform of the empirical particle density remains small outside the right edge of the free-convolution density. Consequently, there are no outliers, and optimal edge rigidity is guaranteed.

\subsection*{Notation.} Let $\mathbb{H} = \{ z \in \mathbb{C} : \Imaginary z > 0\}$ denote the upper half-plane and $\overline{\mathbb{H}}$ denote its closure. For any real numbers $a, b \in \mathbb{R}$ with $a < b$, we set $\llbracket a, b \rrbracket = [a, b] \cap \mathbb{Z}$. We will write $X=\OO(Y)$ to mean that $|X|\leq CY$ for some universal constant $C>0$. We also write $a\wedge b=\min \{a,b \}$ and  $a\vee b=\max \{a,b \}$.
Let $\mathscr{P}_{\fin} = \mathscr{P}_{\fin} (\mathbb{R})$ denote the set of nonnegative measures $\mu$ on $\mathbb{R}$ with finite total mass, $\mu (\mathbb{R}) < \infty$. Further let $\mathscr{P} = \mathscr{P} (\mathbb{R}) \subset \mathscr{P}_{\fin}$ denote the set of probability measures on $\mathbb{R}$; the support of any measure $\nu \in \mathscr{P}$ is denoted by $\supp \nu$. We say that a probability measure $\mu \in \mathscr{P}$ has density $\varrho$ (with respect to Lebesgue measure) if $\varrho : \mathbb{R} \rightarrow \mathbb{R}$ is a measurable function satisfying $\mu (dx) = \varrho(x) dx$. For any real number $x \in \mathbb{R}$, we let $\delta_x \in \mathscr{P}$ denote the delta function at $x$. Given two measures $\mu, \nu \in \mathscr{P}_{\fin}$ of finite total mass, the L\'{e}vy distance between them is 
	\begin{flalign}
		\label{e:defLM}
		d_{\dL} (\mu, \nu) = \inf \Bigg\{ a > 0 : \displaystyle\int_{-\infty}^{y-a} \mu (dx) - a \le \displaystyle\int_{-\infty}^y \nu (dx) \le \displaystyle\int_{-\infty}^{y+a} \mu (dx) + a, \quad \text{for all $y \in \mathbb{R}$} \Bigg\}.
	\end{flalign}
For two sets $A,B\in \bR$ we denote $\dist(A,B)=\inf_{x\in A, y\in B}|x-y|$. When $A=\{x\}$ is a single point, we simply write $\dist(\{x\},B)=\dist(x,B)$.
We denote $\overline{\bR}=\bR\cup \{\pm \infty\}$ and for any $\lambda \in \mathbb{R}$ make the convention 
\begin{align}\label{e:infinf}
+\infty> \lambda>-\infty,\quad \infty-\lambda, \lambda-(-\infty)=+\infty,\quad 
(-\infty)-\lambda, \lambda-\infty=-\infty.
\end{align}

\subsection*{Organization.} We  present the main results in the rest of \Cref{s:intro}. 
In \Cref{s:preliminary}, we collect some preliminary results on free convolution with semicircle distributions, previous results on Dyson Brownian motion and couplings of Dyson Brownian motions with different initial data. 
In \Cref{s:rigidity}, we prove our main results on the edge rigidity and universality of Dyson Brownian motion from general initial data. Finally in \Cref{s:example}, we discuss some examples and applications of our optimal rigidity result.

\subsection{Main Results}
We fix $\beta\geq 1$ and consider Dyson Brownian motion \eqref{e:DBM} with general initial data. 
We denote the empirical particle density of the $n$-particle Dyson Brownian motion $\bmla(t)$ by
\begin{align}\label{e:empiricald}
\mu_t^{(n)}=\frac{1}{n}\sum_{i=1}^n \delta_{\lambda_i(t)},\quad t\geq 0.
\end{align} 
\begin{remark}\label{r:changep}
In some cases (we will state when explicitly), we allow some particles to be at $\pm \infty$. More precisely, we allow for some $1\leq \ell \le m\leq n$ to have $\lambda_1(t)=\lambda_2(t)=\cdots\lambda_{\ell-1}(t)=+\infty$ and $\lambda_{m+1}(t)=\lambda_{m+2}(t)=\cdots\lambda_{n}(t)=-\infty$. In this case,  recalling our convention \eqref{e:infinf}, \eqref{e:DBM} is a Dyson Brownian motion with $m-\ell+1$ particles, indexed by $\llbracket \ell, m\rrbracket$. In this case, the empirical density is a measure valued process, with total mass $\mu_t^{(n)}(\bR)=(m-\ell+1)/n$. 
\end{remark}

We study $n$-particle Dyson Brownian motion, with initial data 
\begin{align}\label{e:initial}
\mu_0^{(n)}=\frac{1}{n}\sum_{i=1}^n \delta_{\lambda_i(0)}.
\end{align} 
\begin{assumption}\label{a:densitylowbound}
Fix constants $\mathsf T\geq 100$ and $\mathsf b\in (0,1)$. Take a small parameter $\eta_*=\eta_*(n) \in (0, 1/4)$ (possibly depending on $n$). Assume  the initial data $\mu_0^{(n)}$ (as in \eqref{e:initial}) is supported on $[-\infty, 0]$, satisfying $\mu_0^{(n)}([-x, 0])\geq \sfb x^{3/2}$ for all $ \eta_*\leq x\leq \sfT^2$. 
\end{assumption}

We denote by $\mu_t=\mu_0^{(n)}\boxplus \mu_{\semci}^{(t)}$ the free convolution of $\mu_0^{(n)}$ and the rescaled semicircle distribution $\mu_{\semci}^{(t)}$ (see \eqref{rho1}); see \Cref{TransformConvolution} for a more detailed definition. 
Adopt \Cref{a:densitylowbound}. We will prove in \Cref{l:densityt}, that for $(30/\sfb)\sqrt{\eta_*}\leq t\leq \sfT$, $\mu_t(\rd x)=\varrho_t(x)\rd x$ has square root behavior close to the right edge $E_t:=\sup (\supp \mu_t)$ of the support of $\mu_t$, that is,
\begin{align}\label{ctt}
\varrho_t(x)\rd x=(1+\OO(|E_t-x|)) \frac{\sqrt{\cA(t)(E_t-x)}}{\pi},\quad x\rightarrow E_t,
\end{align}
where $\cA(t)=\cA^{\mu_0}(t)$ is a scalar depending on $\mu_0$ and $t$. We refer to \Cref{l:densityt} for the explicit formula.

\begin{thm}
\label{t:main}
Adopt \Cref{a:densitylowbound}, and let $\fB= 2^{60}\sfb^{-6}>0$. Then there exists a large $C=C(\sfb, \sfT)>1$ such that the following holds with probability $1-Ce^{-(\log n)^2}$. For all $t \in [\fB\sqrt {\eta_*}, \sfT]$, we have
\begin{align}\label{e:edgerig}
\lambda_1(t)\leq E_t+ \frac{(\log n)^{15}}{n^{2/3}}.
\end{align}
\end{thm}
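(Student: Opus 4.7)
The plan is to compare the empirical Stieltjes transform $m_t^{(n)}(z) := \int (x-z)^{-1}\mu_t^{(n)}(dx)$ with the Stieltjes transform $m_t(z)$ of the free convolution $\mu_t = \mu_0^{(n)} \boxplus \mu_{\semci}^{(t)}$ along the characteristics of the complex Burgers equation, and then to use the resulting proximity near $E_t$ to rule out outliers. Deterministically, $m_t$ satisfies $\partial_t m_t(z) = m_t(z)\partial_z m_t(z)$, so along the characteristic $z_s = z_s(z)$ defined by $\partial_s z_s = -m_s(z_s)$, $z_0 = z$, the value $m_s(z_s) \equiv m_0(z)$ is conserved. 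An It\^o expansion shows that $m_s^{(n)}(z_s)$ obeys the same flow equation up to a martingale of order $(n \Im z_s)^{-1}$ plus a quadratic correction of size $\OO((n \Im z_s)^{-2})$, so the difference $D_s := m_s^{(n)}(z_s) - m_s(z_s)$ admits a Gr\"onwall-type estimate as long as the characteristic remains in a controllable region.

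I would carry out the argument in three steps. First, combine the coupling \Cref{l:coupling} with \Cref{l:densityt} to propagate the lower bound on $\mu_0^{(n)}$ from \Cref{a:densitylowbound} to later times, so that $\mu_t$ acquires a genuine square-root edge obeying \eqref{ctt} for all $t \in [\fB \sqrt{\eta_*}, \sfT]$; this fixes the geometry of $\mu_t$, the scalar $\cA(t)$, and the edge trajectory $E_t$. Second, invoke \Cref{c:kappa} and \Cref{c:domainic} to track characteristics launched at height $\Im z \sim (\log n)^{15} n^{-2/3}$ above points in a neighborhood of $E_0$: these quantitative estimates force $\Im z_s \gtrsim n^{-2/3}$ and prevent $z_s$ from colliding with the moving edge, which is precisely the part of the analysis that is delicate when only a lower bound on the density is available. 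Third, integrate the SDE for $D_s$ along each such characteristic, bounding the martingale by BDG at the cost of $(\log n)^{\OO(1)}(n \Im z_s)^{-1}$ and absorbing the drift $D_s \, \partial_z m_s(z_s)$ using the characteristic estimates from the second step.

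To close the bootstrap, I would introduce a stopping time $\tau$, defined as the first $s$ at which either the characteristic exits its good region or $|D_s|$ exceeds the target $(\log n)^{\OO(1)}(n \Im z_s)^{-1}$; the martingale and characteristic bounds provide a strict improvement of the bootstrap hypothesis up to $\tau$, so $\tau > \sfT$ with probability $1 - Ce^{-(\log n)^2}$. A union bound over a polynomially fine net of starting points $z$ in a rectangle of the form $\{|\Re z - E_0| \le 1,\ \Im z \in [(\log n)^{15} n^{-2/3}, 1]\}$, combined with the Lipschitz continuity of $m_s^{(n)}$, upgrades this to a uniform Stieltjes-transform comparison at height $\eta = (\log n)^{15} n^{-2/3}$. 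Comparing $\Im m_t^{(n)}(E_t + \kappa + i\eta)$ against $\Im m_t$, which vanishes to the right of $E_t$, then shows that no particle can sit to the right of $E_t + (\log n)^{15}/n^{2/3}$, which is \eqref{e:edgerig}.

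The main obstacle is exactly the absence of an upper bound on the initial density: a concentration of initial mass can make $\partial_z m_s(z_s)$ large, which either inflates the Gr\"onwall factor or pulls the characteristic into the spectrum, breaking the comparison before time $\sfT$. This is overcome by the two-sided quantitative estimates on the relative motion of $z_s$ and $E_s$ supplied by \Cref{c:kappa} and \Cref{c:domainic}, which depend crucially on the propagated lower bound from step one. A secondary subtlety is the treatment of the possible $\pm\infty$ particles allowed by \Cref{r:changep}: these must be shown not to create spurious outliers during the initial short window $[0, \fB\sqrt{\eta_*}]$ before the comparison is activated, which again follows from the coupling \Cref{l:coupling} together with monotonicity.
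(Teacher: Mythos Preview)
Your proposal is essentially the paper's approach: characteristics of the Burgers flow, a stopping-time bootstrap on $|\widetilde m_s(z_s)-m_s(z_s)|$, BDG on the martingale term, a Gr\"onwall estimate controlled via \Cref{c:kappa} and \Cref{c:domainic}, a lattice net, and finally ruling out outliers by reading off $\Im \widetilde m_t$ at the corner of the spectral domain. Two small corrections: the comparison is not ``activated'' only at time $\fB\sqrt{\eta_*}$ but runs from $t=0$ with a decreasing threshold $f(t)$ (equal to $\fC^2\eta_*$ on $[0,\fC\sqrt{\eta_*}]$ and then shrinking to $(\log n)^{15}n^{-2/3}$; this is exactly what \Cref{c:domainic} and the domain $\cD_t$ are built to track, and it is how the initial window is absorbed), and the $\pm\infty$ particles of \Cref{r:changep} play no role in the main argument---they appear only in the auxiliary couplings behind \Cref{freeconvolutioncompare} and \Cref{p:densitybound}, not in the Dyson Brownian motion under study.
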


\begin{thm}\label{t:universality}
Adopt \Cref{a:densitylowbound}; fix real numbers $0 < \fa < 1 \le  \Theta$ and a smooth, compactly supported test function $F : \mathbb{R} \rightarrow \mathbb{R}$; and let $\fB= 2^{60}\sfb^{-6}>0$ and $\ft=\max\{2\fB\sqrt{\eta_*}, n^{-1/3+\fa}\}$. There exists a constant $\delta = \delta (\mathsf{b}, \mathsf{T}, \mathfrak{a}, \Theta) > 0$ such that the following holds for any sufficiently large integer $n$. If for each $\ft/2\leq  t\leq \sfT$ we have $\cA(t)\in [\Theta^{-1}, \Theta]$ (recall \eqref{ctt}) and  $\Theta^{-1}\sqrt{x}\leq \varrho_t(E_t-x)\leq \Theta \sqrt{x}$ for $0\leq x\leq \Theta^{-1}$, 
 then 
\begin{align}\begin{split} \label{eqn:htedgeb1}
\bE[ F (n^{2/3}\cA(t)^{1/3} ( E_t-\lambda_1))] = \bE_{\emph{Airy}_\beta}[ F (\Lambda_1) ]+\OO\left(n^{-\delta}\right),\quad \text{for any}\quad \ft\leq t\leq \sfT-\Theta,
\end{split}\end{align}

\noindent where the second expectation  is with respect to the Tracy--Widom $\beta$ distribution $TW_{\beta}$ of \cite[Theorem 1.3]{MR2813333}.
\end{thm}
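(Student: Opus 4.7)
The plan is to reduce \Cref{t:universality} to the short-time edge universality statements of \cite{landon2017edge} (for $\beta \in \{1,2\}$) and \cite{adhikari2020dyson} (for general $\beta$), using \Cref{t:main} as the crucial rigidity input. Those universality results were proved under two hypotheses: two-sided square-root density behavior at the edge of the free convolution $\mu_t$, and optimal edge rigidity for the DBM particles. We have both: the density assumptions of \Cref{t:universality} give the square-root profile of $\mu_s$ for $s \in [\ft/2, \sfT]$, and \Cref{t:main} supplies edge rigidity at scale $(\log n)^{15} n^{-2/3}$ throughout $[\fB\sqrt{\eta_*}, \sfT]$.

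First I would invoke \Cref{t:main} at time $\ft/2 \geq \fB\sqrt{\eta_*}$ to obtain $\lambda_1(\ft/2) \leq E_{\ft/2} + (\log n)^{15} n^{-2/3}$ with probability $1 - Ce^{-(\log n)^2}$. Combined with the assumed two-sided square-root profile of $\mu_s$ on $[\ft/2, \sfT]$, this means the configuration $\bmla(\ft/2)$ is valid initial data for the short-time DBM analysis of \cite{adhikari2020dyson}: its free-convolution edge is square-root regular, and the topmost particles are rigid at the optimal scale. Standard bulk and gap rigidity for DBM under a regular edge (also from \cite{adhikari2020dyson}) then propagate on the entire interval $[\ft/2, \sfT]$. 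Running DBM from $\ft/2$ to $t$, the elapsed time is at least $\ft/2 \geq \max\{\fB\sqrt{\eta_*}/2, n^{-1/3+\fa}/2\}$, which comfortably exceeds the $n^{-1/3+\oo(1)}$ threshold needed for Tracy--Widom convergence in the local-relaxation arguments of \cite{landon2017edge, adhikari2020dyson}. This produces
\[
\bE[F(n^{2/3}\cA(t)^{1/3}(E_t - \lambda_1))] = \bE_{\text{Airy}_\beta}[F(\Lambda_1)] + \OO(n^{-\delta}),
\]
for some $\delta(\sfb, \sfT, \fa, \Theta) > 0$; the scaling $\cA(t)^{1/3}$ is exactly the square-root coefficient in \eqref{ctt}, which matches the Tracy--Widom normalization.

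The main obstacle I anticipate is checking that the time-$\ft/2$ configuration really satisfies the input hypotheses of the universality theorems in \cite{landon2017edge, adhikari2020dyson}, which were originally phrased for DBM started from an initial density with a clean square-root edge down to some microscopic scale. Under our weaker \Cref{a:densitylowbound}, one must use \Cref{l:densityt} (square-root asymptotic of $\varrho_t$) together with \Cref{t:main} (optimal upper-edge rigidity at time $\ft/2$) to translate our configuration at time $\ft/2$ into one fitting those hypotheses. A secondary check is that the smooth, compactly supported test-function formulation in \eqref{eqn:htedgeb1} is consistent with how edge universality is stated in the cited works; this is a standard approximation argument. Once the bookkeeping is done, no new analytical input beyond \Cref{t:main} and \Cref{l:densityt} should be required.
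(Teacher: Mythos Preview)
Your overall strategy matches the paper's: feed the rigidity from \Cref{t:main} and the square-root profile from \Cref{l:densityt} into the short-time coupling of \cite{landon2017edge,adhikari2020dyson} (recorded in the paper as \Cref{p:diffsmall}). The one substantive issue is the timing of the restart.

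The coupling in \Cref{p:diffsmall} is a \emph{short-time} statement: given initial data whose density is square-root regular on scale $t_0=n^{-1/3+\omega_0}$, the conclusion holds after running for time $t_1=n^{-1/3+\omega_1}$ with $\omega_1<\omega_0/100$. You start at $\ft/2$ and run to an arbitrary $t\in[\ft,\sfT-\Theta]$, so the elapsed time $t-\ft/2$ can be of order $1$, well outside the admissible range for $t_1$; exceeding $n^{-1/3+o(1)}$ from below is necessary but not sufficient. The paper instead restarts at $t_0:=t-n^{-1/3+\omega}$ with $0<\omega<\fa/100$ (still $\ge\ft/2$), rescales space and time by $\cA:=\cA(t_0)$ to normalize the leading coefficient in \eqref{e:trho0} to $1$, and verifies the rigidity input \eqref{e:rigg} for all $i\le cn$ by combining \Cref{t:main} with the bulk estimate \Cref{t:bulkrigidity}. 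Finally, since the target statement \eqref{eqn:htedgeb1} involves $\cA(t)$ rather than $\cA(t_0)$, a short computation (\Cref{l:cAdiff}) is needed to show $|\cA(t_0)-\cA(t)|\le n^{-\omega}$; this step would fail for your choice $t_0=\ft/2$, where $\cA(\ft/2)$ and $\cA(t)$ can differ by an order-one amount.
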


The proofs of \Cref{t:main} and \Cref{t:universality} will be given in \Cref{s:rigidity}.

\subsection*{Acknowledgements} 

Amol Aggarwal was partially supported by a Packard Fellowship for Science and Engineering, a Clay Research Fellowship, by the NSF grant DMS-1926686, and by the IAS School of Mathematics. The research of Jiaoyang Huang is supported by NSF grant DMS-2054835 and DMS-2331096. 
Amol Aggarwal and Jiaoyang Huang also wish to acknowledge the NSF grant DMS-1928930, which supported their participation in the Fall 2021 semester program at MSRI in Berkeley, California titled, ``Universality and Integrability in Random Matrix Theory and Interacting Particle Systems."

\section{Preliminaries}\label{s:preliminary}
\subsection{Free Convolution With Semicircle Distributions}

	\label{TransformConvolution}
	
	In this section we recall various results concerning Stieltjes transforms and free convolutions with the semicircle distribution. To that end, fix a finite measure $\mu \in \mathscr{P}_{\fin}$. We define the \emph{Stieltjes transform} of $\mu$ to be the function $m = m^{\mu} : \mathbb{H} \rightarrow \mathbb{H}$ for any complex number $z \in \mathbb{H}$ setting
	\begin{flalign}
		\label{mz0} 
		m(z) = \displaystyle\int_{-\infty}^{\infty} \displaystyle\frac{\mu (\rd x)}{x-z}.
	\end{flalign}

We have the following estimates on the Stieltjes transform and its derivatives.
\begin{lem}\label{l:STproperty}
Let $m(z)=m_\mu(z)$ be the Stieltjes transform of a finite measure $\mu\in \mathscr P_{\rm fin}$ with $A=\mu(\bR)$. For any integer $p\geq 1$, we denote its $p$-th derivative by $m^{(p)}(z)$. Then, 
\begin{align}\label{e:stbb}
|m(z)|\leq \frac{A}{\dist(z,\supp(\mu))},\quad
\quad |m'(z)|\leq \frac{\Im[m(z)]}{\Im[z]}, 
\quad |m^{(p)}(z)|\leq \frac{p!A}{\dist(z,\supp(\mu))^{p+1}}
\end{align}
\end{lem}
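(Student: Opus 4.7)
The plan is to derive all three bounds directly from the integral representation \eqref{mz0} and its termwise differentiations, using only the triangle inequality for integrals together with the elementary identity $\Im[(x-z)^{-1}] = \Im[z]/|x-z|^2$ valid for $x\in\bR$ and $z\in\bH$. All three estimates are standard consequences of the analytic structure of Stieltjes transforms, so I do not expect any serious obstacle; the only mild point to verify is that differentiation under the integral sign is justified, which follows from uniform convergence away from $\supp(\mu)$ since $\mu$ has finite total mass $A$.

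First I would handle the bound on $|m(z)|$. Applying the triangle inequality to \eqref{mz0} gives $|m(z)|\leq \int |x-z|^{-1}\,\mu(\rd x)$, and the trivial lower bound $|x-z|\geq \dist(z,\supp(\mu))$ for $x\in\supp(\mu)$, combined with $\mu(\bR)=A$, yields the first inequality in \eqref{e:stbb}. The bound on $|m^{(p)}(z)|$ is proved in the same way: differentiating under the integral gives
\begin{equation*}
m^{(p)}(z) = p!\int_{-\infty}^{\infty} \frac{\mu(\rd x)}{(x-z)^{p+1}},
\end{equation*}
so the triangle inequality and the same distance estimate immediately produce the third bound.

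For the middle estimate, I would specialize the differentiation formula to $p=1$ so that $m'(z)=\int (x-z)^{-2}\mu(\rd x)$, and then observe that the identity $\Im[(x-z)^{-1}]=\Im[z]/|x-z|^2$ (valid for $x$ real and $\Im[z]>0$) gives
\begin{equation*}
\Im[m(z)] = \Im[z]\int_{-\infty}^{\infty}\frac{\mu(\rd x)}{|x-z|^2}.
\end{equation*}
Since the triangle inequality yields $|m'(z)|\leq \int |x-z|^{-2}\mu(\rd x)$, dividing the previous display by $\Im[z]$ and comparing gives precisely $|m'(z)|\leq \Im[m(z)]/\Im[z]$. This completes all three parts, and the whole argument is a few lines of manipulation; no induction, complex analysis, or additional machinery is needed beyond what has already been introduced.
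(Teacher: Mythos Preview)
Your proposal is correct and essentially identical to the paper's own proof: both use the triangle inequality on the integral representation together with $|x-z|\ge\dist(z,\supp\mu)$ for the first and third bounds, and the identity $\Im[m(z)]=\Im[z]\int|x-z|^{-2}\mu(\rd x)$ combined with $|m'(z)|\le\int|x-z|^{-2}\mu(\rd x)$ for the middle one.
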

\begin{proof}
The Stieltjes transform and its derivatives are given by 
\begin{align*}
|m(z)|=\left|\int_\bR \frac{\rd \mu(x)}{x-z}\right|\leq \int_\bR \frac{\rd \mu(x)}{|z-x|}\leq \frac{A}{\dist(z,\supp(\mu))},
\end{align*}
and 
\begin{align*}
|m^{(p)}(z)|=\left|\int_\bR \frac{p!\rd \mu(x)}{(x-z)^{p+1}}\right|\leq \int_\bR \frac{p!\rd \mu(x)}{|z-x|^{p+1}}\leq \frac{p!A}{\dist(z,\supp(\mu))^{p+1}}.
\end{align*}
Finally for the middle statement of \eqref{e:stbb}, we have
\begin{align*}
|m'(z)|= \left|\int_\bR \frac{\rd \mu(x)}{(x-z)^{2}}\right|\leq \int_\bR \frac{\rd \mu(x)}{|z-x|^{2}}=\frac{1}{\Im[z]}\int_\bR \frac{\Im[z]\rd \mu(x)}{|z-x|^{2}}=\frac{\Im[m(z)]}{\Im[z]}.
\end{align*} 
\end{proof}
	
	\noindent If $\mu$ has a density with respect to Lebesgue measure, that is, $\mu(\rd x) = \varrho (x) \rd x$ for some $\varrho \in L^1 (\mathbb{R})$, then $\varrho$ can be recovered from its Stieltjes transform by the identity \cite[Equation (8.14)]{FPRM}, 
	\begin{flalign}
		\label{mrho} 
		\pi^{-1} \displaystyle\lim_{y \rightarrow 0} \Imaginary m(x + \mathrm{i} y) = \varrho (x); \qquad \displaystyle\pi^{-1} \lim_{y \rightarrow 0} \Real m(x + \mathrm{i} y) = H \varrho (x),
	\end{flalign}
	
	\noindent for any $x \in \mathbb{R}$. In the latter, $Hf$ denotes the Hilbert transform of any function $f \in L^1 (\mathbb{R})$, given by 
	\begin{flalign}
		\label{transform2}
		Hf (x) = \pi^{-1} \cdot \PV \displaystyle\int_{-\infty}^{\infty} \displaystyle\frac{f(w) dw}{w-x},
	\end{flalign}
	
	\noindent where $\PV$ denotes the Cauchy principal value (assuming the integral exists as a principal value).

	The \emph{semicircle distribution} is a measure $\mu_{\semci} \in \mathscr{P} (\mathbb{R})$ whose density $\varrho_{\semci} : \mathbb{R} \rightarrow \mathbb{R}_{\ge 0}$ with respect to the Lebesgue measure is given by 
	\begin{flalign}
		\label{rho1} 
		\varrho_{\semci} (x) = \displaystyle\frac{(4-x^2)^{1/2}}{2\pi} \cdot \textbf{1}_{x \in [-2, 2]}, \qquad \text{for all $x \in \mathbb{R}$}. 
	\end{flalign}
	
	\noindent For any real number $t > 0$, we  denote the rescaled semicircle density $\varrho_{\semci}^{(t)}$ and distribution $\mu_{\semci}^{(t)} \in \mathscr{P}$ by 
	\begin{flalign}
		\label{rhosct}
		\varrho_{\semci}^{(t)} (x) = t^{-1/2} \varrho_{\semci} (t^{-1/2} x); \qquad \mu_{\semci}^{(t)} = \varrho_{\semci}^{(t)} (x) d x.
	\end{flalign}

	 We next discuss the free convolution of a probability measure $\mu \in \mathscr{P}$ with the (rescaled) semicircle distribution $\mu_{\semci}^{(t)}$. For any $t > 0$, denote the function $M = M^{\mu} = M^{t; \mu}: \mathbb{H} \rightarrow \mathbb{C}$ and the set $\Lambda_t = \Lambda_{t; \mu} \subseteq \mathbb{H}$ by (see \Cref{f:lambdat})
	\begin{flalign}
		\label{mtlambdat} 
		M(z) = z - t m(z); \quad \Lambda_t = \Big\{ z \in \mathbb{H} : \Imaginary \big( z - tm (z) \big) > 0 \Big\} = \Bigg\{ z \in \mathbb{H} : \displaystyle\int_{-\infty}^{\infty} \displaystyle\frac{\mu(d x)}{|z-x|^2} < \displaystyle\frac{1}{t} \Bigg\}.
	\end{flalign}

\begin{figure}
\centering
\includegraphics[width=0.7\textwidth]{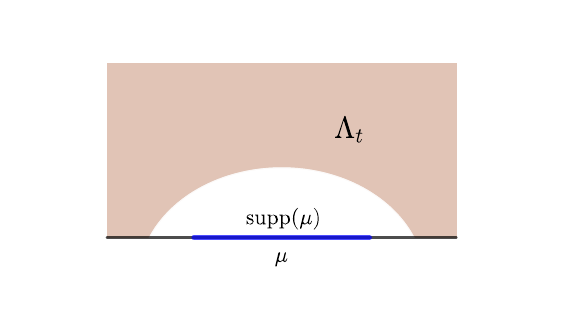}
\caption{$\Lambda_t$ as defined in \eqref{mtlambdat} is an open subset of the upper half plane $\mathbb H$. $M(z)$ is a holomorphic map from $\Lambda_t$ to $\mathbb H$. }
\label{f:lambdat}
\end{figure}

	\begin{lem}[{\cite[Lemma 4]{FCSD}}] 
		
		\label{mz} 
		
		 The function $M$ is a homeomorphism from $\overline{\Lambda}_t$ to $\overline{\mathbb{H}}$. Moreover, it is a holomorphic map from $\Lambda_t$ to $\mathbb{H}$ and a bijection from $\partial \Lambda_t$ to $\mathbb{R}$. 
	
	\end{lem}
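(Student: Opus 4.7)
The plan is to verify in sequence: holomorphy of $M$ with $M(\Lambda_t)\subset\mathbb{H}$; injectivity on $\Lambda_t$; surjectivity onto $\mathbb{H}$; and the boundary bijection $\partial\Lambda_t\to\mathbb{R}$, together giving the global homeomorphism. Throughout I will use the equivalence of the two forms of $\Lambda_t$ in \eqref{mtlambdat}, which follows from the identity
\begin{equation*}
\Imaginary M(z) = \Imaginary z \cdot \left(1 - t\int_\mathbb{R}\frac{\mu(dx)}{|x-z|^2}\right).
\end{equation*}
Holomorphy of $M$ on all of $\mathbb{H}$ is immediate from that of $m$, and the identity yields $M(\Lambda_t)\subset\mathbb{H}$.

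For injectivity I would use the resolvent identity
\begin{equation*}
m(z_1)-m(z_2) = (z_1-z_2)\int_\mathbb{R}\frac{\mu(dx)}{(x-z_1)(x-z_2)}.
\end{equation*}
If $M(z_1)=M(z_2)$ for distinct $z_1,z_2\in\Lambda_t$, this forces $1=t\int (x-z_1)^{-1}(x-z_2)^{-1}\mu(dx)$; but by Cauchy--Schwarz the right-hand side is bounded in modulus by $t\bigl(\int|x-z_1|^{-2}\mu(dx)\bigr)^{1/2}\bigl(\int|x-z_2|^{-2}\mu(dx)\bigr)^{1/2}<1$, using the second form of $\Lambda_t$. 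Contradiction.

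For surjectivity, I would show $M(\Lambda_t)$ is open, closed in the connected set $\mathbb{H}$, and nonempty. Openness follows from the open mapping theorem applied to the injective (hence non-constant) $M$. Nonemptiness: the half-plane $\{\Imaginary z>\sqrt{t}\}$ sits inside $\Lambda_t$ since $\int|x-z|^{-2}\mu(dx)\le(\Imaginary z)^{-2}<1/t$ there, and on it $M(z)=z+O(t/\Imaginary z)$ covers a neighborhood of infinity in $\mathbb{H}$. For closedness, if $w_n=M(z_n)\to w\in\mathbb{H}$, I would rule out $z_n$ accumulating on $\partial\Lambda_t\cap\mathbb{H}$ (there $\Imaginary M\to 0$, contradicting $\Imaginary w>0$) and $|z_n|\to\infty$ (there $m(z_n)\to 0$ by splitting the defining integral near $\supp\mu$ and at infinity, so $|M(z_n)|\to\infty$); a subsequence then converges inside $\Lambda_t$ to a preimage of $w$.

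For the boundary claim, $M$ is already continuous on all of $\mathbb{H}$, so it extends automatically to $\partial\Lambda_t\cap\mathbb{H}$ with $\Imaginary M\equiv 0$ there, and at real points of $\partial\Lambda_t$ (necessarily outside $\supp\mu$, by the integral form of $\Lambda_t$) $m$ extends continuously by dominated convergence. The main obstacle I anticipate is injectivity on $\partial\Lambda_t$, since the strict Cauchy--Schwarz inequality above degenerates to equality on the boundary. The cleanest workaround is to apply the interior injectivity result to the perturbed map $z\mapsto z-(t-\epsilon)m(z)$, injective on the strictly larger domain $\Lambda_{t-\epsilon}\supset\overline{\Lambda}_t$, and let $\epsilon\downarrow 0$. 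Continuity of the inverse, and hence the full homeomorphism $\overline{\Lambda}_t\to\overline{\mathbb{H}}$, then follows from the open mapping theorem on the interior together with the fact that $M$ maps $\partial\Lambda_t$ into $\mathbb{R}$ and $\Lambda_t$ bijectively onto $\mathbb{H}$.
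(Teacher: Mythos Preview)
The paper does not prove this lemma; it is simply quoted from \cite[Lemma 4]{FCSD}. Your sketch is essentially the standard argument that appears in Biane's paper, so in that sense your approach matches the literature the paper defers to.

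That said, your boundary-injectivity step has a genuine gap. Knowing that $M_\epsilon(z)=z-(t-\epsilon)m(z)$ is injective on $\Lambda_{t-\epsilon}\supset\overline{\Lambda}_t$ for every $\epsilon>0$ does \emph{not} pass to the limit: pointwise limits of injective maps need not be injective, and here nothing prevents $M_\epsilon(z_1)\neq M_\epsilon(z_2)$ for each $\epsilon$ while $M(z_1)=M(z_2)$. The correct fix is to push the Cauchy--Schwarz argument one step further. If $z_1\neq z_2$ in $\overline{\Lambda}_t$ satisfy $M(z_1)=M(z_2)$, your resolvent identity gives $1=t\int (x-z_1)^{-1}(x-z_2)^{-1}\,\mu(dx)$, and Cauchy--Schwarz yields $|{\cdot}|\le 1$ with equality forcing both $\int|x-z_j|^{-2}\mu(dx)=1/t$ \emph{and} $(x-z_1)^{-1}=c\,\overline{(x-z_2)^{-1}}$ for $\mu$-a.e.\ $x$ and some constant $c$. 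When $\mu$ is not a point mass this forces $z_1=\bar z_2$, hence (both lying in $\overline{\mathbb H}$) $z_1=z_2$; the point-mass case is a direct calculation. You also do not address surjectivity of $M|_{\partial\Lambda_t}$ onto $\mathbb R$; this follows, for instance, from the parametrization $\partial\Lambda_t=\{u+iv_t(u):u\in\mathbb R\}$ together with $M(u+iv_t(u))\to\pm\infty$ as $u\to\pm\infty$ and continuity. Finally, your remark that real points of $\partial\Lambda_t$ are ``necessarily outside $\supp\mu$'' is slightly imprecise: they lie outside the set where $\int(x-E)^{-2}\mu(dx)=\infty$, but can be endpoints of components of $\supp\mu$ (cf.\ \Cref{i:mu} of \Cref{yconvolution}).
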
 

	For any real number $t \ge 0$, define $m_t = m_t^{\mu} : \mathbb{H} \rightarrow \mathbb{H}$ as follows. First set $m_0 (z) = m(z)$; for any real number $t > 0$, define $m_t$ so that	
	\begin{flalign}
		\label{mt} 
		m_t \big( z - t m_0 (z) \big) = m_0 (z), \qquad \text{for any $z \in \Lambda_t$}.
	\end{flalign}
	
	\noindent Since by \Cref{mz} the function $M(z) = z - tm_0 (z)$ is a bijection from $\Lambda_t$ to $\mathbb{H}$, \eqref{mt} defines $m_t$ on $\mathbb{H}$. By \cite[Proposition 2]{FCSD}, $m_t$ is the Stieltjes transform of a measure $\mu_t \in \mathscr{P} (\mathbb{R})$. This measure is called the \emph{free convolution} between $\mu$ and $\mu_{\semci}^{(t)}$, and we often write $\mu_t = \mu \boxplus \mu_{\semci}^{(t)}$. By \cite[Corollary 2]{FCSD}, $\mu_t$ has a density $\varrho_t = \varrho_t^{\mu} : \mathbb{R} \rightarrow \mathbb{R}_{\ge 0}$ with respect to Lebesgue measure for $t > 0$.

In the following lemma, we collect some properties of free convolution with rescaled semicircle distributions; it mainly follows from \cite{FCSD}.

\begin{lem}
	
	\label{yconvolution}
	
	The following statements hold, for any real number $t > 0$.
	
	\begin{enumerate} 
		
		\item \label{i:boundary} Define the function $v_t : \mathbb{R} \rightarrow \mathbb{R}_{\ge 0}$ by for each $u \in \mathbb{R}$ setting 
		\begin{flalign}
			\label{vte} 
			v_t (u) = \displaystyle\inf \bigg\{ v \ge 0 :  \displaystyle\int_{-\infty}^{\infty} \displaystyle\frac{\mu(dx)}{(u-x)^2 + v^2} \le t^{-1} \bigg\}.
		\end{flalign} 
		
		\noindent Then $v_t$ is continuous on $\mathbb{R}$. Moreover, the boundary of $\Lambda_t$ is parameterized by $\partial\Lambda_t=\{E+\ri v_t(E): E\in \bR\}$, and the set $\big\{ E\in \bR: v_t(E)>0 \big\}$ consists of countably many open intervals $\bigcup_{i\geq 1}(a_i,b_i)$. 
		\begin{enumerate} 
			\item  \label{vesum} For each $E\in \bigcup_{i\geq 1}(a_i,b_i)$, we have  $\int_{-\infty}^\infty \big|x-E-\ri v_t(E) \big|^{-2} \mu (dx) = t^{-1}$.
			\item For each $E\in\bR\setminus \bigcup_{i\geq 1}(a_i,b_i)$, we have $\int_{-\infty}^\infty \big| x-E-\ri v_t(E) \big|^{-2} \mu(dx) \leq t^{-1}$.
		\end{enumerate}
		\item \label{i:mu} We have $\supp \mu \subseteq \bigcup_{i\geq 1}[a_i,b_i]$, and $\mu \big( \bigcup_{i\geq 1}\{a_i,b_i\} \big)=0$.
		
		\item	\label{i:bijection} The function $M \big(E+\ri v_t(E) \big)$ is (strictly) increasing in $E \in \bR$. Moreover, $v_t (E)$ and $M \big( E + \mathrm{i} v_t (E) \big)$ are smooth for $E$ in the interior of $\mathbb{R} \setminus \bigcup_{i\ge 1} \{ a_i, b_i \}$. 
		
		\item
		\label{i:rhoy}A real number $y \in \mathbb{R}$ satisfies $\varrho_t (y) > 0$ if and only if $y =M(w)= w - t m_0 (w)$ for some $w = w(y) \in \partial \Lambda_t \cap \mathbb{H}$. Moreover, the function $w(y)$ is smooth  in $y \in \big\{ y' \in \mathbb{R} : \varrho_t (y') > 0 \big\}$. 
		\begin{enumerate} 
			\item \label{rhoyw} We have $\varrho_t (y) = \pi^{-1} \Imaginary m_0(w)= (\pi t)^{-1} \Imaginary w$.
			\item The Hilbert transform of $\varrho_t$ is given by $H\varrho_t (y)= \pi^{-1} \Real m_0(w) = (\pi t)^{-1} \Real (w-y)$. 
		\end{enumerate}
		
		\item \label{i:edge}  Denote $\mathfrak{e}_+ = \mathfrak{e}_+ (t) = \max (\supp \mu_t)$ and $\mathfrak{e}_- = \mathfrak{e}_- (t) = \min \supp \mu_t$, and let
		\begin{flalign*} 
			w_+ = \sup \Bigg\{ w \in \mathbb{R} : \displaystyle\int_{-\infty}^{\infty} \displaystyle\frac{\mu (d x)}{(x-w)^2} > t^{-1} \Bigg\}; \qquad w_- = \inf \Bigg\{ w \in \mathbb{R} : \displaystyle\int_{-\infty}^{\infty} \displaystyle\frac{\mu (d x)}{(x-w)^2} > t^{-1} \Bigg\}.
		\end{flalign*}
		Then $\mathfrak{e}_+=w_+-tm_0(w_+)$ and $\mathfrak{e}_- = w_- - tm_0 (w_-)$.
		
		\item \label{i:rhoevolution} For any real number $t > 0$, both $\varrho_t(y)$ and its Hilbert transform $H \varrho_t (y)$ are smooth in $y \in \big\{ y' \in \mathbb{R} : \varrho_t (y') > 0 \big\}$. Moreover, we have 
	 \begin{flalign}
	 	\label{trhoty}
	 	\partial_t \varrho_t(y)= \pi \cdot \partial_y \big(\varrho_t(y) H \varrho_t (y) \big), \qquad \text{for $y \in \big\{ y' \in \mathbb{R} : \varrho_t (y') > 0 \big\}$}.
	\end{flalign}

	\end{enumerate}

\end{lem}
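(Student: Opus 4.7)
The plan is to assemble the six parts as consequences of the properties of $M$ established in \Cref{mz} together with standard facts about free convolution from \cite{FCSD} and the Stieltjes inversion formula \eqref{mrho}. Throughout, it is convenient to observe that for $z = E + \ri v$ with $E \in \bR$ and $v \ge 0$, the condition $\Im(z - tm(z)) = 0$ is equivalent (for $v > 0$) to $\int_{-\infty}^\infty |x-z|^{-2}\mu(\rd x) = t^{-1}$, while for $v = 0$ it corresponds to $\int_{-\infty}^\infty (x-E)^{-2}\mu(\rd x) \le t^{-1}$; this identity drives essentially every part.

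First I would prove part \eqref{i:boundary}. Uniqueness and finiteness of the infimum defining $v_t(u)$ follow because $v \mapsto \int |x-u-\ri v|^{-2}\mu(\rd x)$ is strictly decreasing and continuous in $v>0$, tends to $0$ as $v\to\infty$, and is lower semicontinuous in $u$; this gives continuity of $v_t$. The identification of $\partial \Lambda_t$ with the graph of $v_t$ is immediate from the defining inequality in \eqref{mtlambdat}. The set $\{v_t>0\}$ is open, hence a countable union of open intervals $(a_i,b_i)$, and on each of these we have equality in \eqref{vesum} by continuity, while on the complement $v_t = 0$ and the inequality in (b) holds. Part \eqref{i:mu} then follows: if $u_0 \in \supp \mu$ lay outside $\bigcup_i [a_i,b_i]$, one could insert $v = 0$ into the integral and use that $\int (x-u_0)^{-2}\mu(\rd x) = \infty$ near a point of positive $\mu$-mass (and a standard argument when $u_0$ is only a limit point), contradicting $v_t(u_0) = 0$; the claim $\mu(\{a_i,b_i\}) = 0$ comes from \cite[Corollary~2]{FCSD}, which also provides absolute continuity of $\mu_t$ for $t>0$.

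Next I would prove parts \eqref{i:bijection} and \eqref{i:rhoy}. By \Cref{mz}, $M$ is a homeomorphism from $\partial \Lambda_t$ to $\bR$, and since $\partial \Lambda_t$ is parameterized by $E$ via part \eqref{i:boundary}, $E \mapsto M(E+\ri v_t(E))$ is a continuous bijection $\bR \to \bR$, hence strictly monotone; smoothness on $\bR \setminus \bigcup_i \{a_i,b_i\}$ follows from the implicit function theorem applied to the analytic defining equation for $v_t$ (with $v_t > 0$ on $(a_i,b_i)$ and $v_t \equiv 0$ on the complementary open intervals). For \eqref{i:rhoy}, the Stieltjes inversion formula \eqref{mrho} together with the defining equation \eqref{mt} gives $\varrho_t(y) = \pi^{-1}\lim_{\eta\downarrow 0}\Im m_t(y+\ri\eta)$; because $M$ extends continuously to $\overline{\Lambda}_t$, writing $y = M(w)$ for the corresponding $w \in \partial \Lambda_t$ yields $m_t(y) = m_0(w)$, hence the identities $\varrho_t(y) = \pi^{-1}\Im m_0(w) = (\pi t)^{-1}\Im w$ (using $y = w - tm_0(w)$) and the analogous formula for $H\varrho_t(y)$ from the real part. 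The smoothness of $w(y)$ on $\{\varrho_t > 0\}$ again follows from the implicit function theorem, since $M'(w) \ne 0$ there (the strict monotonicity in \eqref{i:bijection} is quantitative).

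Finally I would handle parts \eqref{i:edge} and \eqref{i:rhoevolution}. The edge formula for $\mathfrak{e}_+$ follows by taking the supremum of $y = M(E+\ri v_t(E))$ over $E$: the point $w_+$ as defined is precisely the largest $w \in \bR$ where $v_t(w) = 0$ transitions, i.e.\ the right endpoint of the last interval $(a_i,b_i)$; by monotonicity in \eqref{i:bijection} this sup is attained at $w_+$, giving $\mathfrak{e}_+ = w_+ - tm_0(w_+)$. The same argument on the left gives $\mathfrak{e}_-$. For the PDE \eqref{trhoty}, differentiate the relation $m_t(M(z)) = m_0(z)$ in $t$ (with $M(z) = z-tm_0(z)$) at a fixed $z$ and apply the chain rule to get $\partial_t m_t + m_t \partial_y m_t = 0$ on $\bH$, which is the complex Burgers equation; taking imaginary parts and boundary values via \eqref{mrho} and \eqref{transform2} produces $\partial_t \varrho_t = \pi \partial_y(\varrho_t H\varrho_t)$ on the open set $\{\varrho_t > 0\}$, where the Hilbert and density boundary values are smooth by \eqref{i:rhoy}. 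The main subtlety is bookkeeping the case $v_t(E) = 0$ carefully (where $M$ may fail to be strictly monotone on small degenerate intervals), which is why the statements in \eqref{i:bijection} and \eqref{i:rhoevolution} are restricted to the interior of $\bR \setminus \bigcup_i \{a_i,b_i\}$ and $\{\varrho_t > 0\}$ respectively; these restrictions mean every step above stays within the domain where the relevant analytic functions are nonvanishing, and no further obstacle arises.
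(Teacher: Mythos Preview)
Your overall strategy matches the paper's: both assemble the lemma from \Cref{mz}, the results of \cite{FCSD}, and the Stieltjes inversion formula, and both derive \eqref{trhoty} by differentiating \eqref{mt} to get the complex Burgers equation and taking imaginary parts. The argument for parts \eqref{i:rhoy}, \eqref{i:edge}, and \eqref{i:rhoevolution} is essentially identical to the paper's.

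There are, however, two genuine gaps. First, in part \eqref{i:mu} your argument is incomplete. You write that if $u_0\in\supp\mu\setminus\bigcup_i[a_i,b_i]$ then $\int(x-u_0)^{-2}\mu(\rd x)=\infty$, with ``a standard argument when $u_0$ is only a limit point.'' This is not standard and can fail: a point $u_0$ can lie in $\supp\mu$ with $\int(x-u_0)^{-2}\mu(\rd x)<\infty$ (e.g.\ $\mu=\sum_n 2^{-n^2}\delta_{1/n}$ at $u_0=0$). The paper instead proves the stronger statement $\mu\big(\bR\setminus\bigcup_i(a_i,b_i)\big)=0$ directly, via a pigeonhole argument: if this set carried mass $c>0$, one finds for each $n$ an interval of length $(cn)^{-1}$ carrying mass at least $c/2n$, and then \emph{any} point $E_n$ in that interval gives $\int(x-E_n)^{-2}\mu(\rd x)\gtrsim n$, contradicting $v_t(E_n)=0$ for $n$ large. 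This single estimate yields both $\supp\mu\subseteq\bigcup_i[a_i,b_i]$ and $\mu\big(\bigcup_i\{a_i,b_i\}\big)=0$; your citation of \cite[Corollary~2]{FCSD} for the latter is also off, since that result concerns the absolute continuity of $\mu_t$, not properties of $\mu$.

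Second, in part \eqref{i:bijection} you correctly note that a continuous bijection $\bR\to\bR$ is strictly monotone, but this does not pin down \emph{increasing}. The paper closes this by invoking \cite[Lemma~5]{FCSD}, which shows $M(E+\ri v_t(E))$ is strictly increasing on each $(a_i,b_i)$; since $\bigcup_i(a_i,b_i)$ is nonempty by part \eqref{i:mu}, this fixes the orientation globally.
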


\begin{proof}
	\Cref{i:boundary} of the lemma follows from \cite[Lemma 2]{FCSD} and the content in \cite{FCSD} directly following it (the fact that $\big\{ E \in \mathbb{R}: v_t (E) > 0 \big\}$ is open, implying that it consists of countably many open intervals, follows from the definition \eqref{vte} of $v_t$). To establish \Cref{i:mu}, assume to the contrary that $\mu \big( \mathbb{R} \setminus \bigcup_{i \ge 1} (a_i, b_i) \big) > 0$. Then, there exists some constant $c > 0$ and subset $I \subseteq \mathbb{R} \setminus \bigcup_{i \ge 1} (a_i, b_i)$ such that $\mu (I) \ge c$ and $I \subseteq [-c^{-1}, c^{-1}]$. Denoting for any integers $n \ge 1$ and $j \in \llbracket -n, n-1 \rrbracket$ the subsets $I_{j; n} = I \cap \big[ j / cn, (j+1) / cn \big]$, it follows for each $n \ge 1$ that there exists some $j_n \in \llbracket -n, n-1 \rrbracket$ such that $\mu (I_{j_n; n}) \ge c / 2n$. Hence, for any real number $E_n \in I_{j_n; n} \subset \mathbb{R} \setminus \bigcup_{i \ge 1} (a_i, b_i)$, we have 
	\begin{flalign*} 
		t^{-1} \ge \displaystyle\int_{-\infty}^{\infty} |x-E_n|^{-2} \mu (dx) \ge \displaystyle\int_{I_{j_n; n}} |x-E_n|^{-2} \mu (dx) \ge c^2 n^2 \mu (I_{j_n; n}) \ge \displaystyle\frac{c^3 n}{2},
	\end{flalign*}
	
	\noindent where in the first inequality we used \Cref{i:boundary} of the lemma; in the second we restricted the integral to $I_{j_n; n}$; in the third we used the bound $|x-E| \le (cn)^{-1}$ for each $x, E \in I_{j_n; n}$; and in the fourth we used the fact that $\mu (I_{j_n; n}) > c / 2n$. Taking $n > 2(tc^3)^{-1}$ gives a contradiction, implying \Cref{i:mu} of the lemma.
	
	To verify \Cref{i:bijection} of the lemma, recall from \Cref{mz} that $M$ is a continuous bijection from $\partial \Lambda_t$ to $\mathbb{R}$. Together with \Cref{i:boundary} of the lemma, this implies that $M \big( E + \mathrm{i} v_t (E) \big)$ is either (strictly) increasing or (strictly) decreasing. By \cite[Lemma 5]{FCSD}, $M \big( E + \mathrm{i} v_t (E) \big)$ is (strictly) increasing on $\bigcup_{i \ge 1} (a_i, b_i)$ (which is nonempty by \Cref{i:mu} of the lemma), meaning that $M \big( E + \mathrm{i} v_t (E) \big)$ is (strictly) increasing on $\mathbb{R}$; this verifies the first statement in \Cref{i:bijection} of the lemma. The fact that $v_t$ is smooth on $\bigcup_{i \ge 1} (a_i, b_i)$ follows from \cite[Lemma 2]{FCSD}; since $v_t = 0$ in the interior of $\mathbb{R} \setminus \bigcup_{i \ge 1} (a_i, b_i)$, it follows that $v_t$ is smooth in the interior of $\mathbb{R} \setminus \bigcup_{i \ge 1} \{ a_i, b_i \}$. This also implies that $M \big( E + \mathrm{i} v_t (E) \big)$ is smooth in the interior of $\mathbb{R} \setminus \bigcup_{i \ge 1} (a_i, b_i)$ (since, by \eqref{mz0}, $m \big( E + \mathrm{i} v_t (E) \big)$ is smooth in $v_t (E)$, as long as $v_t (E) > 0$ or $E \notin \supp \mu$), verifying \Cref{i:bijection} of the lemma.
	
	All statements in \Cref{i:rhoy} of the lemma, except for the smoothness of $w$ in $y$, follow from \cite[Corollary 2, Corollary 3]{FCSD}. To show this smoothness, we use \Cref{i:boundary} of the lemma to express $w=E+\ri v_t(E)$ for some real number $E \in \mathbb{R}$. Thus, $y(E)=M(w)=M \big(E+\ri v_t(E) \big)$, which is smooth in $E$ by \Cref{i:bijection} of the lemma, which also indicates that $\partial_E M \big(E+\ri v_t(E) \big)>0$ on the domain where $v_t (E) > 0$. So, the implicit function theorem implies that $w = E+\ri v_t(E)$ is smooth in $y$ when $v_t (E) > 0$ (that is, when $w \in \mathbb{H}$, or equivalently when $\varrho_t (y) > 0$). Then,  \Cref{i:edge} of the lemma follows from \Cref{i:rhoy} and the definition \eqref{vte} of $v_t$.    
	
	The first part of \Cref{i:rhoevolution} follows from \cite[Corollary 3]{FCSD} (together with \cite[Lemma 2]{FCSD}). For the second part of  \Cref{i:rhoevolution}, by taking the derivative on both sides of \eqref{mt} with respect to $t$, we get for any $z \in \Lambda_t$ that
	\begin{flalign*} 
		0 = \partial_t m_0 (z) = \partial_t \Big( m_t \big( z - tm_0 (z) \big) \Big) & = \partial_t m_t \big( z - t m_0 (z) \big) - \partial_z m_t \big( z - t m_0 (z) \big) \cdot m_0 (z) \\ 
		& = \partial_t m_t \big( z - tm_0 (z) \big) - \partial_z m_t \big( z - tm_0 (z) \big) \cdot m_t \big( z - tm_0 (z) \big).
	\end{flalign*} 

	\noindent Setting $w = z - tm_0 (z)$ and applying \Cref{mz}, it follows for any $w = y + \mathrm{i} \eta \in \mathbb{H}$ that 
	\begin{flalign*}
		\partial_t m_t (y + \mathrm{i} \eta) = \displaystyle\frac{1}{2} \cdot \partial_z \big( m_t (y + \mathrm{i} \eta) \big)^2.
	\end{flalign*}  
	
	\noindent Sending $\eta$ to zero and applying \eqref{mrho} (with the first part of the lemma), we find
	\begin{align}\label{e:limitzero}
		\pi \cdot \partial_t \big(H \varrho_t (y)+\mathrm{i} \varrho_t(y) \big)=\frac{\pi^2}{2}\partial_y \Big( \big(H \varrho_t (y)+\mathrm{i} \varrho_t(y) \big)^2 \Big).
	\end{align}

	\noindent Then \eqref{trhoty} follows from comparing the imaginary parts on both sides of \eqref{e:limitzero}.
\end{proof}

\begin{rem} 
		
		\label{r:convpmeasure}
		
		While free convolutions are typically defined between probability measures, the relation \eqref{mt} also defines the free convolution of any measure $\mu\in \mathscr{P}_{\fin}$, satisfying $A = \mu(\mathbb{R}) < \infty$, with the rescaled semicircle distribution $\mu_{\semci}^{(t)}$. Indeed, define the probability measure $\widetilde{\mu} \in \mathscr{P}$ from $\mu$ by setting $\widetilde{\mu} (I)= A^{-1} \cdot \mu (A^{1/2} I)$, for any interval $I\subseteq \mathbb{R}$. Furthermore, for any real number $s \ge 0$, define the probability measure $\widetilde{\mu}_s = \widetilde{\mu} \boxplus \mu_{\semci}^{(s)}$, and denote its Stieltjes transform by $\widetilde{m}_s$. Then, define the free convolution $\mu_t = \mu \boxplus \mu_{\semci}^{(t)}$ and its Stieltjes transform $m_t = m_{t; \mu}$ by setting
		\begin{flalign*} 
			\mu_t (I) = A \cdot \widetilde{\mu}_t (A^{-1/2} I), \qquad \text{for any interval $I \subseteq \mathbb{R}$, so that} \qquad	m_t (z) = A^{1/2} \cdot \widetilde m_t ( A^{-1/2} z),
		\end{flalign*} 
		
		\noindent where the second equality follows from the first by \eqref{mz0}. Then, 
		\begin{align*}
			m_{t} \big( z-t m_{0}(z) \big) = A^{1/2} \cdot \widetilde{m}_t \Big( A^{-1/2} \big(z- t m_{0}(z) \big) \Big) & = A^{1/2} \cdot \widetilde{m}_t \big( A^{-1/2} z- t \widetilde{m}_{0}(A^{-1/2} z) \big) \\
			& = A^{1/2} \cdot \widetilde{m}_{0}(A^{-1/2} z)= m_{0}(z),
		\end{align*}
		
		\noindent so that \eqref{mt} continues to hold for $m_t$. In particular, \Cref{mz} and \Cref{yconvolution} also hold for $\mu$. 
		
	\end{rem}

\begin{example}\label{r:Etbound}
If we start from the delta mass $\mu_0=A\delta_0$, then the free convolution $\mu_t=\mu_0\boxplus \mu_{\semci}^{(t)}$ is supported on $[-2\sqrt {At}, 2\sqrt {At}]$, and it is given explicitly by 
\begin{align}
\mu_t(\rd x)=\frac{\sqrt{4At-x^2}}{2\pi t}\rd x, \quad t\geq 0. 
\end{align}
\end{example}

\subsection{Empirical measure of Dyson Brownian motion}
It is known that for $n$-particle Dyson Brownian motion, if the initial data \eqref{e:initial} weakly converges to a probability measure $\mu$, then the empirical particle density \eqref{e:empiricald} (as a measure valued process) converges to the free convolution of $\mu$ with the rescaled semicircle distribution. We recall the precise statement from \cite{anderson2010introduction}. 

	\begin{prop}[{\cite[Proposition 4.3.10, Corollary 4.3.11]{anderson2010introduction}}]
		
		\label{rhot} 
		
		Fix a compactly supported probability measure $\mu \in \mathscr{P}$ and real number $\sfT>1$. Denote the free convolution of $\mu$ with the rescaled semicircle distribution $\mu_{\semci}^{(t)} \in \mathscr{P}$ by $\mu_t$. 
		For each integer $n\geq 1$, let $\bmu^{(n)}=(u^{(n)}_1,u^{(n)}_2,\cdots,u^{(n)}_n)\in \overline\bW_n$ denote sequences such that the empirical measures $(1/n)\sum_{i=1}^{(n)}\delta_{u^{(n)}_i}$ converge weakly to $\mu$. 
		 Consider $n$ particle Dyson Brownian motion $\bm\la^{(n)}(t) = (\la_1^{(n)}(t), \la_2^{(n)}(t), \ldots , \la_n^{(n)}(t))$ starting from $\bmu^{(n)}$. We denote the empirical particle density as $\mu_t^{n}=(1/n)\sum_{i=1}^{(n)} \delta_{\la_i^{(n)}(t)}$. Then, for any real number $\varepsilon > 0$, we have $\lim_{n \rightarrow \infty} \mathbb{P} \big[ \max_{0\leq t\leq \sfT}d_{\dL} (\mu_t^{(n)}, \mu_t) > \varepsilon \big] = 0$, where $d_{\text L}$ is the L\'{e}vy metric from \eqref{e:defLM}.
		
	\end{prop}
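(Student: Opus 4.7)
The plan is to derive a stochastic equation for the empirical Stieltjes transform $m_t^{(n)}(z) := (1/n)\sum_{i=1}^n (\la_i^{(n)}(t)-z)^{-1}$, show its noise is negligible as $n\to\infty$, and identify the deterministic limit as the Stieltjes transform of $\mu\boxplus\mu_{\semci}^{(t)}$. Applying It\^o's formula to $m_t^{(n)}(z)$ for $z\in\bH$ using \eqref{e:DBM}, and symmetrizing the double sum $\sum_{i\neq j}(\la_i-z)^{-2}(\la_i-\la_j)^{-1}$ in $(i,j)$ via the identity
\[
\tfrac{1}{(\la_i-z)^2}-\tfrac{1}{(\la_j-z)^2} = \tfrac{(\la_j-\la_i)(\la_i+\la_j-2z)}{(\la_i-z)^2(\la_j-z)^2},
\]
one obtains
\[
\rd m_t^{(n)}(z) = m_t^{(n)}(z)\,\del_z m_t^{(n)}(z)\,\rd t + \frac{1}{n}\!\left(\frac{1}{\beta}-\frac{1}{2}\right)\!\del_z^2 m_t^{(n)}(z)\,\rd t + \rd M_t^{(n)}(z),
\]
where $M^{(n)}(z)$ is a continuous martingale with quadratic variation $\OO\bigl(n^{-2}(\Im z)^{-4}\,t\bigr)$. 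By Doob's inequality, $\sup_{0\le t\le \sfT}|M_t^{(n)}(z)|\to 0$ in probability, uniformly for $z$ in compact subsets of $\bH$.

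Next, I would establish tightness of $(\mu^{(n)}_\cdot)_{n\ge1}$ in $C([0,\sfT],\mathscr P(\bR))$ with the L\'evy topology. The a priori bound $|m_t^{(n)}(z)|\le (\Im z)^{-1}$ combined with the SDE above gives equicontinuity in $t$ of $t\mapsto m_t^{(n)}(z)$ for each fixed $z\in\bH$; a crude control on $\max_i|\la_i^{(n)}(t)|$ (obtained from \eqref{e:DBM} by comparison with non-interacting Brownian motions, using that $\mu$ is compactly supported and $\sfT<\infty$) rules out mass escaping to infinity. Since the weak topology on $\mathscr P(\bR)$ is metrized by Stieltjes transforms evaluated at any countable dense subset of $\bH$, this yields tightness of the induced path measures in $C([0,\sfT],\mathscr P(\bR))$.

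Finally, I would identify the limit. Any subsequential limit $\nu_\cdot$ has Stieltjes transform $\wt m_t(z)$ satisfying the complex Burgers equation $\del_t \wt m_t(z)=\wt m_t(z)\,\del_z \wt m_t(z)$ with $\wt m_0=m^\mu$. The method of characteristics --- along the flow $\dot z_s=-\wt m_s(z_s)$ the value $\wt m_s(z_s)=m_0(z_0)$ is preserved --- yields a unique solution in the appropriate class of Stieltjes transforms, and comparing this with the defining relation \eqref{mt} identifies $\nu_t=\mu\boxplus\mu_{\semci}^{(t)}=\mu_t$. The main obstacle is upgrading pointwise-in-$t$ convergence of $m_t^{(n)}(z)$ (which comes for free from the SDE at fixed $z$) to uniform-in-$t$ convergence in the L\'evy metric $d_\dL$; this is handled by the equicontinuity from the second step combined with the uniformity in $z$ (on compact subsets of $\bH$) from the first step, and then transferring back to $d_\dL$ via the metrization through Stieltjes transforms.
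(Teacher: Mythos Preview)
The paper does not prove this proposition: it is quoted verbatim from \cite[Proposition 4.3.10, Corollary 4.3.11]{anderson2010introduction} and used as a black box. Your sketch is essentially the argument given in that reference (It\^o for the empirical Stieltjes transform, tightness in $C([0,\sfT],\mathscr P(\bR))$, and identification of limit points via the complex Burgers equation and \eqref{mt}), so there is no methodological difference to discuss. Two minor remarks: your $\OO(1/n)$ drift term matches exactly the expression the paper later derives in \eqref{eq:dm}, so that part is certainly correct; and for the ``no mass escape'' step, comparison with \emph{non-interacting} Brownian motions does not directly control $\max_i|\la_i^{(n)}(t)|$ (the repulsion pushes the extreme particles outward), so one typically argues instead via a moment bound such as $\sup_t n^{-1}\sum_i \la_i^{(n)}(t)^2$, which is how the cited reference proceeds.
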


\begin{remark}
In \Cref{rhot}, we can allow some particles to be at $\pm \infty$, as in \Cref{r:changep}. Assume the initial data
$(1/n)\sum_{i=1}^n\delta_{u^n_i}$ converges weakly to a positive measure $\mu$ with $\mu(\bR)\leq 1$. Let $\mu_t$  be the free convolution of $\mu$ with the rescaled semicircle distribution $\mu_{\semci}^{(t)} \in \mathscr{P}$ as in \Cref{r:convpmeasure}. Then \Cref{rhot} still holds.
\end{remark}

A  stronger version of convergence has been proven in \cite{huang2019rigidity}, which proves that the optimal bulk rigidity holds for Dyson Brownian motion with general potentials. To state it, we need some further notation. 
\begin{definition}
		
		\label{gammarho} 
		
		Let $\mu \in \mathscr{P}_{\fin}$ denote a finite measure with $\mu(\bR)=A$.  We denote the associated inverted cumulative density $\gamma: [0, A]\mapsto \bR$, as
		\begin{flalign}
			\label{gammay} 
			\gamma(y) = \displaystyle\sup \Bigg\{ \gamma \in \mathbb{R} : \displaystyle\int_{\gamma}^\infty \rd\mu(x) \ge y \Bigg\}. 
		\end{flalign}
		We also set $\gamma(y)=\infty$ if $y<0$, and $\gamma(y)=-\infty$ if $y>A$. Then $\gamma(y)$ is non-increasing and left continuous, i.e., $\gamma(y^-)=\gamma(y)$. 
		 For any integers $n \ge 1$ and $j \in \mathbb{Z}$, we define the \emph{classical location} (also called \emph{$n^{-1}$-quantiles}) with respect to $\mu$, 
		 \begin{align}\label{e:classical_loc}
		 \gamma_j = \gamma_j^{\mu} = \gamma_{j; n} = \gamma_{j; n}^{\mu} \in \mathbb{R}=\gamma((j-1/2)/n),
		 \end{align}
	 for $j\in \llbracket1,An\rrbracket$. We also set $\gamma_j = \infty$ if $j < 1$ and $\gamma_j = -\infty$ if $j > An$.
		
	\end{definition}

\begin{prop}[{\cite[Corollary 3.2]{huang2019rigidity}}]
\label{t:bulkrigidity}
	For any real number $D > 1$, there exists a constant $C = C (D) > 1$ such that the following holds.
Consider $n$ particle Dyson Brownian motion $\bm\la(t) = (\la_1(t), \la_2(t), \ldots , \la_n(t))$ starting from $\mu_0=\mu_0^{(n)}$ with $\supp \mu_0\subset [-n^{D}, n^D]$.  We denote by $\mu_t=\mu_0\boxplus \mu_{\semci}^{(t)}$ the free convolution of $\mu_0$ with rescaled semicircle distribution, and its classical locations by $\{\gamma_i(t)\}_{1\leq i\leq n}$ (as in \eqref{e:classical_loc}). Then, with probability $1-Ce^{-(\log n)^2}$, we have for all $0\leq t\leq n^D$ and $1\leq i\leq n$ that
\begin{align}
	\label{lambdatprobability}
\gamma_{i+\lfloor(\log n)^5\rfloor}(t)-n^{-D}\leq \lambda_i(t)\leq \gamma_{i-\lfloor(\log n)^5\rfloor}(t)+n^{-D}. 
\end{align}
\end{prop}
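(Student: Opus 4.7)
The plan is to establish optimal bulk rigidity by comparing the empirical Stieltjes transform
\[
m_t^{(n)}(z) \deq \frac{1}{n}\sum_{i=1}^{n}\frac{1}{\lambda_i(t)-z}
\]
with the deterministic Stieltjes transform $m_t(z)$ of $\mu_t$, uniformly on a fine grid of spectral parameters $z=E+\mathrm{i}\eta$ with $\eta\asymp (\log n)^{5}/n$ and for all $t\in[0,n^D]$, and then converting that Stieltjes control into particle-level rigidity via standard monotonicity / Helffer--Sj\"{o}strand arguments. The comparison is carried out along the characteristics of the complex Burgers equation satisfied by $m_t$.

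First I would apply It\^{o}'s formula to $m_t^{(n)}(z)$ using \eqref{e:DBM}. Symmetrizing the double sum $\sum_{i\neq j}(\lambda_i-\lambda_j)^{-1}(\lambda_i-z)^{-2}$ produces the stochastic complex Burgers equation
\[
\mathrm{d}m_t^{(n)}(z) \;=\; m_t^{(n)}(z)\,\partial_z m_t^{(n)}(z)\,\mathrm{d}t \,+\, \cE_t(z)\,\mathrm{d}t \,+\, \mathrm{d}N_t(z),
\]
where $\cE_t(z)$ is a lower-order correction from the quadratic-variation term (of size $\OO(n^{-1}\eta^{-2})$ on the grid) and $N_t(z)$ is a martingale with $\mathrm{d}\langle N(z),N(\bar z)\rangle_t \lesssim (\beta n^2)^{-1}\eta^{-3}\,\Im m_t^{(n)}(z)\,\mathrm{d}t$. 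On the deterministic side, $m_t$ satisfies the same Burgers equation without noise, and by \eqref{mt}, along the characteristic $z_s(z):= z - s\, m_0(z)$ one has $m_s(z_s(z))=m_0(z)$, so $m_s$ is constant along the flow.

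Given a target $z^{\ast}=E+\mathrm{i}\eta_0$ at a terminal time $T\in[0,n^D]$, I would use \Cref{mz} to invert the characteristic map and find $z_0$ with $z_T(z_0)=z^{\ast}$. Setting $z_s\deq z_s(z_0)$ and $f_s \deq m_s^{(n)}(z_s) - m_s(z_s)$, the chain rule together with $\dot z_s = -m_s(z_s)$ and $\partial_s m_s(z_s)=0$ reduces the above SDE to the linear stochastic ODE
\[
\mathrm{d}f_s \;=\; \partial_z m_s^{(n)}(z_s)\, f_s\,\mathrm{d}s \,+\, \cE_s(z_s)\,\mathrm{d}s \,+\, \mathrm{d}N_s(z_s).
\]
By \Cref{l:STproperty}, $|\partial_z m_s^{(n)}(z_s)|\leq \Im m_s^{(n)}(z_s)/\Im z_s$, and the key integrated identity
\[
\int_0^T \frac{\Im m_s(z_s)}{\Im z_s}\,\mathrm{d}s \;=\; \log\frac{\Im z_0}{\Im z_T} \;\leq\; C\log n
\]
holds because $\Im z_s = \Im z_0 - s\,\Im m_0(z_0)$ decreases at exactly the rate $\Im m_s(z_s)$, and $\Im z_0\leq n^{\OO(1)}$. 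A stopping-time/bootstrap argument then propagates an estimate $|f_s|\leq (\log n)^{C}/(n\,\Im z_s)$: the linear drift produces only a polylogarithmic Gr\"{o}nwall factor, $\cE_s$ contributes $\OO((\log n)^C/(n\,\Im z_T))$, and the martingale is controlled by BDG plus a Gaussian concentration bound, which yields sub-Gaussian tails on scale $(\log n)^{C}/(n\,\Im z_T)$ and hence survives a union bound over polynomially many grid points $\{E_k+\mathrm{i}\eta_0\}_k$ and time slices; the resulting estimate on $|m_T^{(n)}(z^\ast)-m_T(z^\ast)|$ is converted into the stated bound on $|\lambda_i(t)-\gamma_i(t)|$ by monotonicity of $\eta\mapsto \eta\,\Im m^{(n)}(E+\mathrm{i}\eta)$.

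The main obstacle is preventing the Gr\"{o}nwall integrating factor from exploding: the coefficient $\partial_z m_s^{(n)}(z_s)$ can a priori be as large as $(\Im z_s)^{-2}$, which naively would produce a catastrophic factor $\exp(T/\eta_0^{2})$. The resolution is precisely the integrated identity above, which exploits the fact that the Burgers characteristics \emph{are} the trajectories driving $\Im z_s$ toward zero, so that $\int(\Im z_s)^{-1}\Im m_s(z_s)\,\mathrm{d}s$ is only logarithmic in $\eta_0^{-1}$. A secondary subtlety is that the a priori bound $\Im m_s^{(n)}(z_s)\leq \Im m_s(z_s)+|f_s|$ feeds back into the Gr\"{o}nwall estimate, and the martingale's bracket also depends on $\Im m_s^{(n)}(z_s)$; this forces a genuinely bootstrapped stopping-time analysis and a grid choice on a single scale $\eta_0$ so that the Gaussian concentration is strong enough to yield the probability $1-Ce^{-(\log n)^2}$ after the union bound.
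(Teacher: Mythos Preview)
The paper does not give its own proof of this proposition: it is quoted verbatim as \cite[Corollary 3.2]{huang2019rigidity}, with only a remark that the probability bound can be sharpened to $1-Ce^{-(\log n)^2}$ by inspecting that reference's proof. So there is nothing to compare your argument against here.

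That said, your sketch is the correct outline of the argument in \cite{huang2019rigidity}: the stochastic complex Burgers equation for $m_t^{(n)}$, the characteristic flow $z_s=z_0-s\,m_0(z_0)$ along which $m_s(z_s)$ is constant, the linearized equation for $f_s=m_s^{(n)}(z_s)-m_s(z_s)$, and---most importantly---the observation that $\int_0^T \Im m_s(z_s)/\Im z_s\,\mathrm{d}s=\log(\Im z_0/\Im z_T)$, which keeps the Gr\"{o}nwall factor polylogarithmic. The bootstrap/stopping-time structure and the BDG control of the martingale are also the right ingredients. Indeed, the present paper reuses exactly this machinery in Section~\ref{s:rigidity} (see \eqref{eq:mzt}--\eqref{e:beta} and the surrounding estimates) to prove its edge rigidity result, so your proposal is consistent with both the cited source and the methodology of the paper itself.
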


We mention that, In \cite{huang2019rigidity}, the probability on the right side of \eqref{lambdatprobability} was written to be $1 - C n^{-D}$ for any $D > 1$, but it can be seen from the proof (see that of \cite[Proposition 3.8]{MCTMGP}, where $\delta$ there is $\frac{5}{4}$ here) that it can be taken to be $1 - C e^{ - (\log n)^2}$ instead.

\subsection{Dyson Brownian Motion Comparison}

In this section, we allow some particles to be at $\pm\infty$ as in \Cref{r:changep}. The following lemma gives $i$-th particle location and gap comparisons between Dyson Brownian motions with different initial data. The proof utilizes a coupling of Dyson Brownian motions and a maximum principle, which has been used previously in the random matrix literature \cite{bourgade2017eigenvector, bourgade2021extreme}.

\begin{figure}
\centering
\includegraphics[width=1\textwidth]{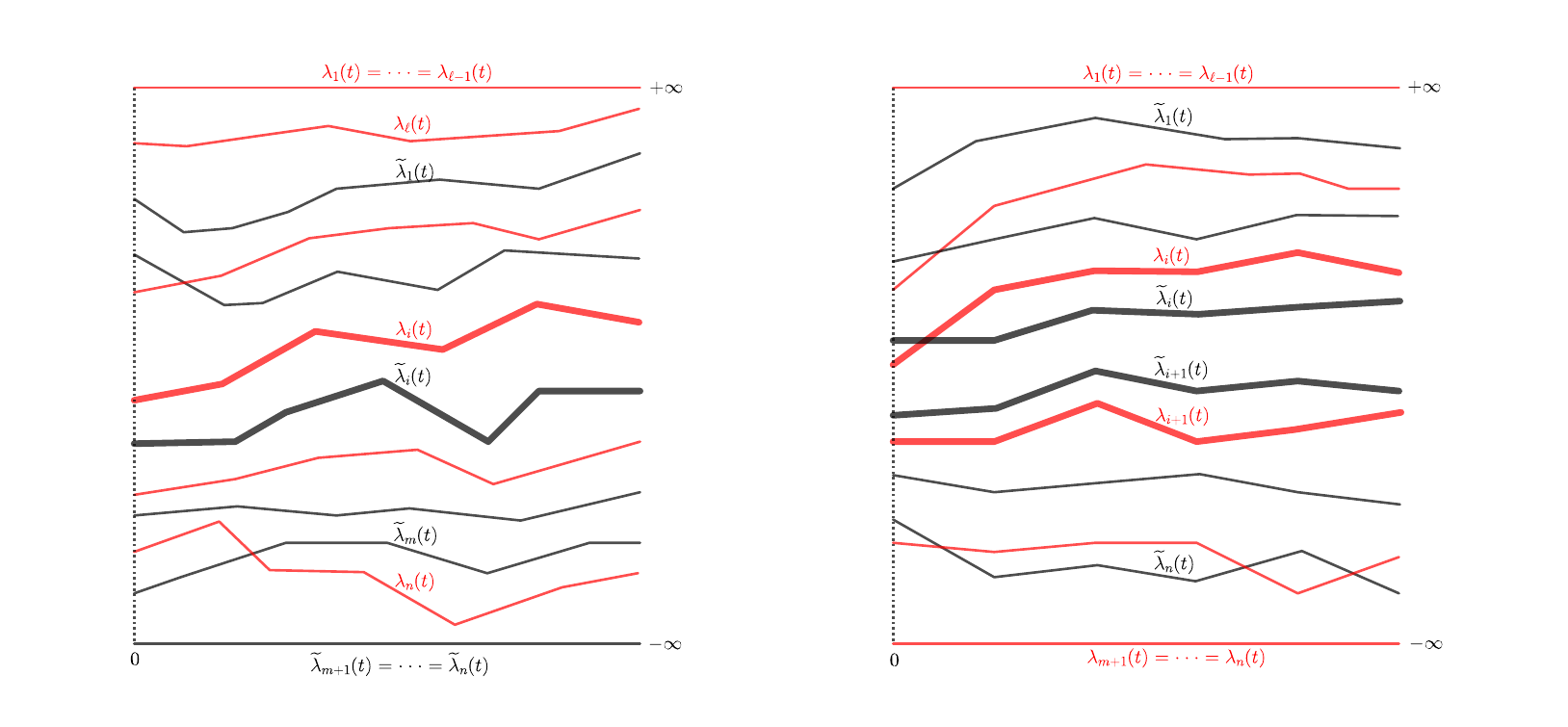}
\caption{Left Panel: if the initial data satisfies $\lambda_i(0)\geq \wt\lambda_i(0)$, then we can couple the two Dyson Brownian motions $\lambda_i(t)\geq \wt\lambda_i(t)$ (with possibly some particles at $\pm \infty$).
Right Panel: if the initial data satisfies $\lambda_i(0)-\lambda_j(0)\geq \wt\lambda_i(0)-\wt\lambda_j(0)$, then we can couple the two Dyson Brownian motions $\lambda_i(t)-\lambda_j(t)\geq \wt\lambda_i(t)-\wt\lambda_j(t)$.
 }
\label{f:DBM_coupling}
\end{figure}

\begin{lem}\label{l:coupling}
Take any integers $1\leq \ell\leq m\leq n$. 
Consider two coupled Dyson Brownian motions (see \Cref{f:DBM_coupling})
\begin{flalign}
\label{e:DBM1}		&\rd\lambda_i (t) = \displaystyle\sum_{\substack{1 \le j \le n \\ j \ne i}}	\displaystyle\frac{\rd t}{\lambda_i (t) - \lambda_j (t)} + \Big( \displaystyle\frac{2}{\beta n} \Big)^{1/2} \rd B_i (t),\\
\label{e:DBM2}		&\rd\wt\lambda_i (t) = \displaystyle\sum_{\substack{1 \le j \le n \\ j \ne i}}	\displaystyle\frac{\rd t}{\wt\lambda_i (t) - \wt\lambda_j (t)} + \Big( \displaystyle\frac{2}{\beta n} \Big)^{1/2} \rd B_i (t),
	\end{flalign}
which share the same Brownian motions $\{B_i(t)\}_{1\leq i\leq n}$. 
\begin{enumerate}
\item Assume that $\lambda_i(t)=+\infty$ for $i\in\llbracket1, \ell-1\rrbracket$ and $\lambda_i(t)\in \bR$ for $i\in\llbracket\ell, n\rrbracket$, and that $\wt\lambda_i(t)\in \bR$ for $i\in \llbracket 1, m\rrbracket$ and $\wt \lambda_i(t)=-\infty $ for $i\in\llbracket m+1,n\rrbracket$. If $\lambda_i(0)\geq \wt\lambda_i(0)$ for all $\ell\leq i\leq m$, then $\lambda_i(t)\geq \wt\lambda_i(t)$ for all $\ell\leq i\leq m$ and $t\geq 0$. 
\item Assume that $\lambda_i(t)=+\infty$ for $i\in\llbracket1, \ell-1\rrbracket$; $\lambda_i(t)=-\infty$ for $i\in\llbracket m+1, n\rrbracket$; $\lambda_i(t)\in \bR$ for $i\in\llbracket\ell, m\rrbracket$; and $\wt\lambda_i(t)\in \bR$ for $i\in \llbracket 1, n\rrbracket$. If $\lambda_i(0)-\lambda_j(0)\geq \wt\lambda_i(0)-\wt\lambda_j(0)$ for all $\ell\leq i<j\leq m$, then $\lambda_i(t)-\lambda_j(t)\geq \wt\lambda_i(t)-\wt\lambda_j(t)$ for all $\ell\leq i<j\leq m$ and $t\geq 0$. 
\end{enumerate} 	
\end{lem}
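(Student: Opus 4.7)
My plan is to exploit that subtracting the two SDEs \eqref{e:DBM1} and \eqref{e:DBM2} cancels the common Brownian motions, producing a pathwise first-order deterministic system of ODEs for the differences, and then apply a parabolic-type maximum principle. The reasoning is well-defined on the (almost sure) event that no collisions occur in either process, which is guaranteed for $\beta\geq 1$ and ensures that the singular interaction kernels stay bounded on compact time intervals.

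For part (1), I would introduce $D_i(t):=\lambda_i(t)-\wt\lambda_i(t)$ for $i\in\llbracket\ell,m\rrbracket$. Subtracting the SDEs and using that boundary particles at $\pm\infty$ give vanishing contributions (by the convention \eqref{e:infinf}), the resulting ODE takes the schematic form
\begin{align*}
\frac{dD_i}{dt}=\sum_{\substack{j\in\llbracket\ell,m\rrbracket\\ j\neq i}}\frac{D_j-D_i}{(\lambda_i-\lambda_j)(\wt\lambda_i-\wt\lambda_j)}+f_i(t),
\end{align*}
where $(\lambda_i-\lambda_j)(\wt\lambda_i-\wt\lambda_j)>0$ for every pair $i\neq j$ in $\llbracket\ell,m\rrbracket$ (both factors have the same sign by ordering), and the source $f_i(t)\geq 0$ collects the terms $-1/(\wt\lambda_i-\wt\lambda_j)>0$ for $j\in\llbracket 1,\ell-1\rrbracket$ and $1/(\lambda_i-\lambda_j)>0$ for $j\in\llbracket m+1,n\rrbracket$. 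This is a classical cooperative linear ODE system with a nonnegative source, so the maximum principle propagates $D_i\geq 0$.

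For part (2), I would first reduce to consecutive gaps $G_i(t):=[\lambda_i(t)-\lambda_{i+1}(t)]-[\wt\lambda_i(t)-\wt\lambda_{i+1}(t)]$ for $i\in\llbracket\ell,m-1\rrbracket$, via the telescoping identity $(\lambda_i-\lambda_j)-(\wt\lambda_i-\wt\lambda_j)=\sum_{k=i}^{j-1}G_k$, which makes the hypothesis and the conclusion equivalent to $G_k(0)\geq 0$ and $G_k(t)\geq 0$ for all $k$. Differentiating $G_i$, the $k=i,i+1$ contributions produce the diagonal term $-2G_i/[(\lambda_i-\lambda_{i+1})(\wt\lambda_i-\wt\lambda_{i+1})]$, and what remains splits into inner cross-terms indexed by $k\in\llbracket\ell,m\rrbracket\setminus\{i,i+1\}$ plus boundary terms from $\wt\lambda$-particles at $k\in\llbracket 1,\ell-1\rrbracket\cup\llbracket m+1,n\rrbracket$. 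A direct case-by-case sign analysis, crucially exploiting that there is \emph{no} index $k$ strictly between the consecutive labels $i$ and $i+1$ so every relevant product $(\lambda_i-\lambda_k)(\lambda_{i+1}-\lambda_k)$ and $(\wt\lambda_i-\wt\lambda_k)(\wt\lambda_{i+1}-\wt\lambda_k)$ is strictly positive, shows that whenever all $G_p(t)\geq 0$ and some $G_i(t)=0$, the right-hand side is $\geq 0$.

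To convert these non-strict derivative inequalities into a genuine maximum-principle conclusion, I would apply the standard exponential-shift trick: replace $D_i$ (respectively $G_i$) by $D_i+\epsilon e^{Kt}$ (respectively $G_i+\epsilon e^{Kt}$) for a fixed large constant $K$. The shift cancels in every difference $D_j-D_i$ and telescopes harmlessly for the $G$-system, but adds a strictly positive $K\epsilon e^{Kt}$ to the right-hand side of the ODE. If the shifted quantity were to first hit zero at some time $T$, the pathwise ODE would force its derivative at $T$ to be strictly positive, contradicting that it was strictly positive on $[0,T)$ and zero at $T$. Sending $\epsilon\to 0$ yields the desired comparison. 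The main obstacle I anticipate is the sign bookkeeping in part (2): for a non-consecutive pair $(i,j)$ the indices $k$ strictly between $i$ and $j$ produce cross-terms with denominators of negative sign, whose contribution to $dG_{ij}/dt$ is of indefinite sign and would break a direct maximum principle on $G_{ij}$; this is precisely why reducing to consecutive gaps and invoking the telescoping identity is essential.
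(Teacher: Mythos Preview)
Your proposal is correct and follows essentially the same route as the paper: subtract the coupled SDEs to cancel the Brownian noise, then run a maximum principle on the resulting pathwise ODE system, reducing part~(2) to consecutive gaps via telescoping. The sign analysis you describe for the cross terms and boundary terms matches the paper's exactly.

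The one genuine difference is how you make the maximum principle rigorous. The paper tracks the running argmin directly and, for part~(2), deals separately with the degenerate case where the derivative at the minimum vanishes: it observes that this forces $(\lambda_i-\lambda_j)(\lambda_{i+1}-\lambda_j)=(\wt\lambda_i-\wt\lambda_j)(\wt\lambda_{i+1}-\wt\lambda_j)$ for \emph{every} $j$, hence the two configurations are exact translates of each other and remain so forever by the translation invariance of DBM, contradicting the definition of the stopping time. Your $\epsilon e^{Kt}$ shift avoids this degenerate-case argument, but be careful with the phrase ``telescopes harmlessly for the $G$-system'': at the first hitting time of the shifted quantity one has $G_i=-\epsilon e^{KT}$ and $G_p\geq -\epsilon e^{KT}$, so the sign analysis at $G_p\geq 0$ does not apply verbatim. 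What you actually need is a local Lipschitz bound on the right-hand side (available because the interaction kernels are bounded on compacts by non-collision), so that the error is $O(\epsilon e^{KT})$ and is beaten by choosing $K$ large. This is standard, but it is the content hiding behind ``fixed large constant $K$'', and you should say so explicitly.
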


\begin{proof}[Proof of \Cref{l:coupling}]
By taking difference of \eqref{e:DBM1} and \eqref{e:DBM2}, we have
\begin{align}\label{e:diffeq}
\rd(\lambda_i (t)-\wt\lambda_i(t)) = -\displaystyle\sum_{\substack{1 \le j \le n \\ j \ne i}}	\displaystyle\frac{(\lambda_i (t)-\wt\lambda_i(t))-(\lambda_j (t)-\wt\lambda_j(t))\rd t}{(\lambda_i (t) - \lambda_j (t))(\wt\lambda_i (t) - \wt\lambda_j (t))},\quad \ell\leq i\leq m.
\end{align}
Let 
$
i_*(t):=\argmin_{\ell\leq i\leq m}\{\lambda_i (t)-\wt\lambda_i(t)\}
$
(if $i_*(t)$ is not unique, we can simply take the smallest one).
By our assumption for $i\leq \ell-1$ or $i\geq m+1$, $\lambda_i(t)-\wt\lambda_{i}(t)=\infty$. Thus we also have 
$
i_*(t)=\argmin_{1\leq i\leq n}\{\lambda_i (t)-\wt\lambda_i(t)\}. 
$
Since $\lambda_i(t)-\widetilde \lambda_i(t)$ are continuous, $i_*(t)$ is piecewise constant. Either $i_*(t)$ is continuous at $t$ or $i_*(t)$ has a jump at $t$. If $i_*(t)$ is continuous at $t$, then $\rd i_*(t)=0$ and 
\begin{align}\label{e:positiveder}
\frac{\rd(\lambda_{i_*(t)} (t)-\wt\lambda_{i_*(t)}(t))}{\rd t} = -\displaystyle\sum_{\substack{1 \le j \le n \\ j \ne {i_*(t)}}}	\displaystyle\frac{(\lambda_{i_*(t)} (t)-\wt\lambda_{i_*(t)}(t))-(\lambda_j (t)-\wt\lambda_j(t))}{(\lambda_{i_*(t)} (t) - \lambda_j (t))(\wt\lambda_{i_*(t)} (t) - \wt\lambda_j (t))}\geq 0,
\end{align}
where in the last inequality we used the definition of $i_*(t)$ that $(\lambda_{i_*(t)} (t)-\wt\lambda_{i_*(t)}(t))-(\lambda_j (t)-\wt\lambda_j(t))\leq 0$, and $(\lambda_{i_*(t)} (t) - \lambda_j (t))(\wt\lambda_{i_*(t)} (t) - \wt\lambda_j (t))> 0$. 
If  $i_*(t)$ has a jump at $t$, then we have $\lambda_{i_*(t^-)} (t)-\wt\lambda_{i_*(t^-)}(t)=\lambda_{i_*(t^+)} (t)-\wt\lambda_{i_*(t^+)}(t)$, and as in \eqref{e:positiveder} we have
\begin{align}\label{e:positiveder2}
\frac{\rd(\lambda_{i_*(t^-)} (t^-)-\wt\lambda_{i_*(t^-)}(t^-))}{\rd t},  \frac{\rd(\lambda_{i_*(t^+)} (t^+)-\wt\lambda_{i_*(t^+)}(t^+))}{\rd t} \geq 0.
\end{align}
We conclude from \eqref{e:positiveder} and \eqref{e:positiveder2} that $\lambda_{i_*(t)} (t)-\wt\lambda_{i_*(t)}(t)$ is non-decreasing in $t$. Moreover by our assumption $\lambda_{i_*(0)} (0)-\wt\lambda_{i_*(0)}(0)\geq 0$.  We conclude that for all $\ell\leq i\leq m$,
$\lambda_i(t)- \wt\lambda_i(t)\geq \lambda_{i_*(t)} (t)-\wt\lambda_{i_*(t)}(t)\geq 0$ for any $t\geq 0$. This finishes the first claim.

To prove the second claim, we further take the difference of \eqref{e:diffeq} with $i$ replaced by $i+1$, 
\begin{align}\begin{split}\label{e:gapdiff}
&\frac{\rd((\lambda_i (t)-\lambda_{i+1}(t))-(\wt\lambda_i (t)-\wt\lambda_{i+1}(t)))}{\rd t}
=-2\displaystyle\frac{(\lambda_i (t)-\lambda_{i+1} (t))-(\wt\lambda_i(t)-\wt\lambda_{i+1}(t))}{(\lambda_i (t) - \lambda_{i+1} (t))(\wt\lambda_i (t) - \wt\lambda_{i+1} (t))}\\
&-\displaystyle\sum_{\substack{1 \le j \le n \\ j \ne i,i+1}}
\frac{\lambda_i(t)-\lambda_{i+1}(t)}{(\lambda_i(t)-\lambda_j(t))(\lambda_{i+1}(t)-\lambda_j(t))}
-\frac{\wt\lambda_i(t)-\wt\lambda_{i+1}(t)}{(\wt\lambda_i(t)-\wt\lambda_j(t))(\wt\lambda_{i+1}(t)-\wt\lambda_j(t))}.
\end{split}\end{align}
Let 
$
i_*(t)=\argmin_{\ell\leq i\leq m-1}\{(\lambda_i (t)-\lambda_{i+1}(t))-(\wt\lambda_i (t)-\wt\lambda_{i+1}(t))\}
$ (if $i_*(t)$ is not unique, we can simply take the smallest one).
By our assumption for $i\leq \ell-1$ or $i\geq m$, $(\lambda_i (t)-\lambda_{i+1}(t))-(\wt\lambda_i (t)-\wt\lambda_{i+1}(t))=\infty$. Thus we also have 
$
i_*(t)=\argmin_{1\leq i\leq n}\{(\lambda_i (t)-\lambda_{i+1}(t))-(\wt\lambda_i (t)-\wt\lambda_{i+1}(t))\}
. 
$
Since $(\lambda_i (t)-\lambda_{i+1}(t))-(\wt\lambda_i (t)-\wt\lambda_{i+1}(t))$ are continuous, $i_*(t)$ is piecewise constant.
Either $i_*(t)$ is continuous at $t$ or $i_*(t)$ has a jump at $t$.
If  $i_*(t)$ has a jump at $t$, then we have $(\lambda_{i_*(t^-)}(t)-\lambda_{{i_*(t^-)}+1}(t))-(\wt\lambda_{i_*(t^-)} (t)-\wt\lambda_{{i_*(t^-)}+1}(t))=(\lambda_{i_*(t^+)}(t)-\lambda_{{i_*(t^+)}+1}(t))-(\wt\lambda_{i_*(t^+)} (t)-\wt\lambda_{{i_*(t^+)}+1}(t))$. In both cases $(\lambda_{i_*(t)}(t)-\lambda_{{i_*(t)}+1}(t))-(\wt\lambda_{i_*(t)} (t)-\wt\lambda_{{i_*(t)}+1}(t))$ is continuous in $t$. 

By our assumption $(\lambda_{i_*(0)} (0)-\lambda_{i_*(0)+1}(0))-(\wt\lambda_{i_*(0)} (0)-\wt\lambda_{i_*(0)+1}(0))\geq 0$.
We denote the stopping time
\begin{align}\label{e:deftau}
\tau=\inf_{t\geq 0}\{t: (\lambda_{i_*(t)} (t)-\lambda_{i_*(t)+1}(t))-(\wt\lambda_{i_*(t)} (t)-\wt\lambda_{i_*(t)+1}(t))<0\}. 
\end{align}
Then either $\tau=\infty$, and our claim follows or $\tau<\infty$. Next we prove by contradiction that $\tau<\infty$ is impossible. Otherwise 
\begin{align}\begin{split}\label{e:diff0}
&(\lambda_{i_*(\tau)} (\tau)-\lambda_{i_*(\tau)+1}(\tau))-(\wt\lambda_{i_*(\tau)} (\tau)-\wt\lambda_{i_*(\tau)+1}(\tau))=0,\\
&
\lambda_{i}(\tau)-\lambda_{j}(\tau)
\geq \widetilde\lambda_{i}(\tau)-\widetilde \lambda_{j}(\tau),\quad \ell\leq i\leq j\leq m,
\end{split}\end{align}
 and there exists arbitrarily small $\varepsilon>0$ such that 
 \begin{align}\label{e:diffsmall}
 (\lambda_{i_*(\tau+\varepsilon)} (\tau+\varepsilon)-\lambda_{i_*(\tau+\varepsilon)+1}(\tau+\varepsilon))-(\wt\lambda_{i_*(\tau+\varepsilon)} (\tau+\varepsilon)-\wt\lambda_{i_*(\tau+\varepsilon)+1}(\tau+\varepsilon))<0.
 \end{align}
 
 By plugging $i=i_*(\tau)$ (if $i_*(t)$ has a jump at $\tau$, we interpret this as $i=i_*(\tau^+)$) and $t=\tau$ in \eqref{e:gapdiff}, and using the first statement in \eqref{e:diff0}, we get
 \begin{align}\begin{split}\label{e:derla}
\frac{\rd((\lambda_{i_*(\tau)} (\tau)-\lambda_{i_*(\tau)+1}(\tau))-(\wt\lambda_{i_*(\tau)} (\tau)-\wt\lambda_{i_*(\tau)+1}(\tau)))}{\rd t}=
-(\lambda_{i_*(\tau)}(\tau)-\lambda_{{i_*(\tau)}+1}(\tau))\times
\\
\sum_{\substack{1 \le j \le n \\ j \ne {i_*(\tau)},{i_*(\tau)}+1}}\frac{1}{(\lambda_{i_*(\tau)}(\tau)-\lambda_j(\tau))(\lambda_{{i_*(\tau)}+1}(\tau)-\lambda_j(\tau))}
-\frac{1}{(\wt\lambda_{i_*(\tau)}(\tau)-\wt\lambda_j(\tau))(\wt\lambda_{{i_*(\tau)}+1}(\tau)-\wt\lambda_j(\tau))}\geq 0,
\end{split}\end{align}
where we also used that $(\lambda_{i_*(\tau)}(\tau)-\lambda_{{i_*(\tau)}+1}(\tau))>0$ and $0<(\wt\lambda_{i_*(\tau)}(\tau)-\wt\lambda_j(\tau))(\wt\lambda_{{i_*(\tau)}+1}(\tau)-\wt\lambda_j(\tau))\leq(\lambda_{i_*(\tau)}(\tau)-\lambda_j(\tau))(\lambda_{{i_*(\tau)}+1}(\tau)-\lambda_j(\tau))$ from \eqref{e:diff0}. If  \eqref{e:derla} is postive, then this contradicts \eqref{e:diffsmall}. 

Otherwise \eqref{e:derla} equals zero, which is only possible if $(\wt\lambda_{i_*(\tau)}(\tau)-\wt\lambda_j(\tau))(\wt\lambda_{{i_*(\tau)}+1}(\tau)-\wt\lambda_j(\tau))-(\lambda_{i_*(\tau)}(\tau)-\lambda_j(\tau))(\lambda_{{i_*(\tau)}+1}(\tau)-\lambda_j(\tau))=0$ for all $j\neq i_*(\tau), i_*(\tau)+1$. Together with \eqref{e:diff0}, this implies that $\lambda_{i_*(\tau)}(\tau)-\lambda_j(\tau)=\wt\lambda_{i_*(\tau)}(\tau)-\wt\lambda_j(\tau)$ for all $1\leq j\leq n$. In particular, $\{\wt \lambda_j(\tau)\}_{1\leq j\leq n}=\{ \lambda_j(\tau)+c\}_{1\leq j\leq n}$ for  $c:=\wt \lambda_{i_*(\tau)}(\tau)-\lambda_{i_*(\tau)}(\tau)\in \bR$. Then the coupling \eqref{e:DBM1} (notice that DBM only depends on the gaps $\lambda_i(t)-\lambda_j(t)$) implies that for any $t\geq \tau$, we have  $\{\wt \lambda_j(t)\}_{1\leq j\leq n}=\{ \lambda_j(t)+c\}_{1\leq j\leq n}$, which again contradicts the definition of $\tau$ in \eqref{e:deftau}.  
\end{proof}

\Cref{rhot} and \Cref{l:coupling} together give a continuous version of comparison for the free convolution with a rescaled semicircle distribution. 
\begin{lem}
	
	\label{freeconvolutioncompare}
	 	
	 	Fix a real number $A\in (0,1]$, and finite measures $\mu, \widetilde{\mu} \in \mathscr{P}_{\rm fin}$ such that $\mu (\mathbb{R}) = A $ and $\widetilde{\mu} (\mathbb{R}) = 1$. Let $\mu_t = \mu \boxplus \mu_{\semci}^{(t)}$ and $\widetilde{\mu}_t = \mu \boxplus \mu_{\semci}^{(t)}$ denote the free convolutions of $\mu$ and $\widetilde \mu$ with the rescaled semicircle distribution respectively. Also denote the associated inverted cumulative density functions (recall \eqref{gammay}) by $\gamma_t:  [0, A] \rightarrow \mathbb{R}$ and $\widetilde{\gamma}_t : [0, 1] \rightarrow \mathbb{R}$  respectively.

		\begin{enumerate}
			\item \label{i:convolutionheight}Assume for each $y\in [0, A]$ that $\gamma_0(y)\leq \wt\gamma_0(y) $. Then, $\gamma_t(y) \leq \widetilde\gamma_t(y)$ holds for each $(t, y) \in [0, \infty] \times [0, A]$.

			\item \label{i:convolutiongap}Assume for each $y,y'\in [0, A]$ that $ |\gamma_0(y)-\gamma_0(y')|\geq |\widetilde{\gamma}_0(y) - \widetilde{\gamma}_0(y')|$. Then, $|\gamma_t(y)-\gamma_t(y')|\geq  |\widetilde{\gamma}_t(y) - \widetilde{\gamma}_t(y')|   $ holds for each $t \in [0,\infty]$ and $y,y'\in [0,A]$.

		\end{enumerate}
	\end{lem}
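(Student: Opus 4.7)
The plan is to discretize $\mu$ and $\widetilde\mu$ at their classical locations, couple the resulting finite-particle Dyson Brownian motions via the same Brownian drivers, apply the comparisons in \Cref{l:coupling}, and then pass to the large-$n$ limit using \Cref{rhot}. A standard truncation argument reduces us to the case where $\mu,\widetilde\mu$ have compact support, so I may freely use the compact-support hypothesis of \Cref{rhot}.

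For \Cref{i:convolutionheight}, I fix a large integer $n$, set $m=\lfloor An\rfloor$, and take initial data
\[
u_i^{(n)}=\gamma_0\bigl((i-1/2)/n\bigr)\ \text{for}\ 1\leq i\leq m,\quad u_i^{(n)}=-\infty\ \text{for}\ m<i\leq n,\quad \widetilde u_i^{(n)}=\widetilde\gamma_0\bigl((i-1/2)/n\bigr)\ \text{for}\ 1\leq i\leq n.
\]
The hypothesis $\gamma_0\leq \widetilde\gamma_0$ on $[0,A]$ yields $u_i^{(n)}\leq \widetilde u_i^{(n)}$ for every $1\leq i\leq m$. I then run coupled Dyson Brownian motions $\lambda^{(n)}(t),\widetilde\lambda^{(n)}(t)$ started from $u^{(n)}$ and $\widetilde u^{(n)}$ with common Brownian drivers, and apply \Cref{l:coupling}(1) with $\ell=1$, identifying the lemma's ``$\lambda$''-process with $\widetilde\lambda^{(n)}$ (all particles finite) and its ``$\widetilde\lambda$''-process with $\lambda^{(n)}$ (finite on $\llbracket 1,m\rrbracket$, $-\infty$ on $\llbracket m+1,n\rrbracket$). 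This delivers $\widetilde\lambda^{(n)}_i(t)\geq \lambda^{(n)}_i(t)$ for all $1\leq i\leq m$ and $t\geq 0$. By \Cref{rhot}, together with \Cref{r:convpmeasure} and \Cref{r:changep}, the empirical measures of $\lambda^{(n)}$ and $\widetilde\lambda^{(n)}$ converge weakly in probability to $\mu_t$ and $\widetilde\mu_t$ respectively, uniformly on compact time intervals. Weak convergence transfers to pointwise convergence of the inverted CDFs at any continuity point of the limit, so the discrete inequality passes to $\gamma_t(y)\leq \widetilde\gamma_t(y)$ on a dense subset of $[0,A]$ (namely, the common set of continuity points of $\gamma_t$ and $\widetilde\gamma_t$, whose complement is at most countable). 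Left-continuity of $\gamma_t$ and $\widetilde\gamma_t$ (as recorded in \Cref{gammarho}) extends this to all $y\in[0,A]$.

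For \Cref{i:convolutiongap}, I keep exactly the same discretization. The gap hypothesis, evaluated at the quantile levels $(i-1/2)/n,(j-1/2)/n\in[0,A]$ for $1\leq i<j\leq m$, implies $u_i^{(n)}-u_j^{(n)}\geq \widetilde u_i^{(n)}-\widetilde u_j^{(n)}$. Invoking \Cref{l:coupling}(2) in place of (1), with the analogous identification of the two processes, propagates these gap inequalities to every $t\geq 0$. The same weak-convergence step then gives $\gamma_t(y)-\gamma_t(y')\geq \widetilde\gamma_t(y)-\widetilde\gamma_t(y')$ for $y<y'$ in $[0,A]$, which (since both differences are nonnegative by monotonicity of the inverted CDFs) is exactly the absolute-value form stated in the lemma.

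The main delicate step is the limit passage, since \Cref{rhot} only supplies weak convergence of empirical measures while the conclusion is a pointwise statement about inverse CDFs; this is handled by restricting to continuity points and then invoking left-continuity, as above. A secondary bookkeeping issue is the treatment of the $n-m$ particles at $-\infty$ in the $\mu$-side discretization, but under the conventions of \eqref{e:infinf} and \Cref{r:changep} the associated DBM reduces to a genuine $m$-particle system in the finite sector whose empirical measure has total mass $m/n\to A$, matching $\mu$ in the limit.
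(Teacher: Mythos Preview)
Your proposal is correct and follows essentially the same approach as the paper: discretize at classical locations, couple via \Cref{l:coupling}, and pass to the limit through \Cref{rhot} by first restricting to continuity points of $\gamma_t,\widetilde\gamma_t$ and then invoking left-continuity. The paper only spells out part~(2) and does not mention the compact-support reduction, so your write-up is if anything slightly more careful; just note that in part~(2) the ``analogous identification'' must swap roles relative to part~(1), since in \Cref{l:coupling}(2) it is the process with larger gaps that carries the $\pm\infty$ particles.
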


\begin{proof}[Proof of \Cref{freeconvolutioncompare}]
Using \Cref{rhot} and \Cref{l:coupling} as input, the proofs of the two statements are similar. So we will only prove the second statement. 
We take two sets of initial data, given by  $\{\la_i(0)=\gamma_0((i-1/2)/n)\}_{1\leq i\leq \lfloor An\rfloor}$, $\{\la_i(0)=-\infty\}_{\lfloor An\rfloor<i\leq n}$ and  $\{\wt \la_i(0)=\wt\gamma_0((i-1/2)/n)\}_{1\leq i\leq n}$. Then for 
any index $1\leq i\leq n$, by our assumption for $1\leq i\leq \lfloor An\rfloor$, 
\begin{align}\begin{split}\label{e:largegap1}
\la_{i}(0)-\la_{i+1}(0)
&=\gamma_0((i-1/2)/n)-\gamma_0((i+1/2)/n)\\
&\geq \wt\gamma_0((i-1/2)/n)-\wt\gamma_0((i+1/2)/n)=\wt\la_{i}(0)-\wt\la_{i+1}(0).
\end{split}\end{align}

Then \eqref{e:largegap1} verifies the assumptions in the second statement of \Cref{l:coupling}, so we can couple the two associated Dyson Brownion motions such that
\begin{align}
\la_{i}(t)-\la_{i+1}(t)\geq \wt\la_{i}(t)-\wt\la_{i+1}(t),\quad 1\leq i\leq \lfloor An\rfloor.
\end{align}

\noindent By \Cref{rhot} (recalling the definitions of L\'{e}vy distance from \eqref{e:defLM} and the inverted cumulative density from \eqref{gammay}), we have for any $\varepsilon>0$, $0\leq a\leq A$, and sufficiently large $n\geq 1$ that
\begin{align}
\gamma_t(a+\varepsilon)-\varepsilon\leq\la_{\lfloor an\rfloor}(t)\leq \gamma_t(a-\varepsilon)+\varepsilon,
\quad
\wt\gamma_t(a+\varepsilon)-\varepsilon\leq\wt\la_{\lfloor an\rfloor}(t)\leq \wt\gamma_t(a-\varepsilon)+\varepsilon,
\end{align}
In particular if $\gamma_t$ and $\wt\gamma_t$ are both continuous at $a$, then we have 
\begin{align}
\gamma_t(a)=\lim_{n\rightarrow \infty}\lambda_{\lfloor a n\rfloor}(t),\quad \wt\gamma_t(a)=\lim_{n\rightarrow \infty}\wt\lambda_{\lfloor a n\rfloor}(t),
\end{align}
and for any $0\leq a<b\leq A$ such that $\gamma_t$ and $\wt\gamma_t$ are both continuous at $a, b$ we have
\begin{align}\label{e:ctpoint}
\gamma_t(a)-\gamma_t(b)=\lim_{n\rightarrow\infty} \la_{\lfloor an\rfloor}(t)-\la_{\lfloor bn\rfloor}(t)\geq \lim_{n\rightarrow\infty} \wt \la_{\lfloor an\rfloor}(t)-\wt \la_{\lfloor bn\rfloor}(t)=\wt\gamma_t(a)-\wt\gamma_t(b).
\end{align}

Recall $\gamma_t$ and $\wt\gamma_t$ are non-increasing and left continuous, by from the discussion after  \eqref{gammay}. They are thus continuous at all but countably many points. So for any $0<a<b\leq A$, we can take sequences $\{a_k\}_{k\geq 1}, \{b_k\}_{k\geq 1}$ which converge to $a,b$ respectively from below, such that $\gamma_t$ and $\wt\gamma_t$ are both continuous at each $a_k, b_k$. 
Then it follows from \eqref{e:ctpoint} and the fact the $\gamma_t$ and $\wt\gamma_t$ are left continuous
\begin{align}\label{e:gapbb}
\gamma_t(a)-\gamma_t(b)=\lim_{k\rightarrow \infty}\gamma_t(a_k)-\gamma_t(b_k)\geq \lim_{k\rightarrow \infty}\wt\gamma_t(a_k)-\wt\gamma_t(b_k)=\wt\gamma_t(a)-\wt\gamma_t(b).
\end{align}
If $a=0$, then $\gamma_t(a)=\wt\gamma_t(a)=\infty$ and we have the trivial statement $\gamma_t(a)-\gamma_t(b)=\infty=\wt\gamma_t(a)-\wt\gamma_t(b)$.
This together with \eqref{e:gapbb} finishes the proof of the second statement in \eqref{freeconvolutioncompare}.
\end{proof}

\subsection{Comparisons with Free Convolutions}

In this section we discuss comparisons between Dyson Brownian motions with the classical locations of the free convolution, which will be used in \Cref{s:Strongbound}.

\begin{prop}\label{p:detercoupling}
Fix $\sfT>0$. 
Given a probability measure $\mu_0\in \mathscr{P}$, we denote by $\mu_t=\mu_0\boxplus \mu_{\semci}^{(t)}$ the free convolution of $\mu$ with the rescaled semicircle distribution $\mu_{\semci}^{(t)}$. We denote by $\gamma_i(t)=\gamma_{i;n}^{\mu_t}(t)$ the classical locations of $\mu_t(\rd x)=\varrho_t(x)\rd x$ (recall from \eqref{e:classical_loc}). We assume there exists a small constant $c>0$ such that the following two statements hold for any $0\leq t\leq \sfT$.
\begin{enumerate}
\item $\mu_t=\varrho_t(x)\rd x$  has a bounded density: $\sup_{x \in \mathbb{R}} \varrho_t(x)\leq (2c)^{-1}$.
\item The derivative of $\varrho_t(x)$ is bounded in the following way: when $\dist(x, \{\gamma_1(t),\gamma_2(t),\cdots, \gamma_n(t)\})\leq c/2n$, we have $|\varrho_t'(x)|\leq n/c$. 
\end{enumerate} 
Then there exists $C=C(c,\sfT) > 1$ such that the following holds for  any $0\leq t\leq \sfT$
\begin{align}\label{e:ceigbound}
\bP\left(\sup_{0\leq s\leq t, \atop 1\leq i\leq n}|\lambda_i(s)-\gamma_i(s)|\leq \max_{1\leq j\leq n}|\lambda_j(0)-\gamma_j(0)|+C\left(t+\frac{(\log n)\sqrt t}{\sqrt n}\right)\right)\geq 1-Ce^{-(\log n)^2} . 
\end{align}
\end{prop}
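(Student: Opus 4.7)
The plan is to study the displacement $\delta_i(s) := \lambda_i(s) - \gamma_i(s)$ via a maximum-principle argument together with a martingale concentration bound. As a preliminary reduction, I would apply \Cref{l:coupling} (first statement) to initial data shifted by $K_0 := \max_{1\le j\le n}|\lambda_j(0)-\gamma_j(0)|$: since the drift in \eqref{e:DBM} depends only on gaps and the Brownian motions are shared, adding a common constant to all $\lambda_j(0)$ translates each path of $\bm\lambda(s)$ by that same constant. Sandwiching between an upshifted and downshifted DBM $\widetilde{\bm\lambda}(s)$ starting from $\widetilde\lambda_i(0) = \gamma_i(0)$ yields $|\lambda_i(s)-\widetilde\lambda_i(s)| \le K_0$ for all $s \ge 0$, reducing the task to the case $\lambda_i(0)=\gamma_i(0)$, $\delta_i(0)=0$.

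Differentiating the defining relation $\int_{\gamma_i(s)}^\infty \varrho_s(x)\,dx = (i-\tfrac12)/n$ in $s$ and invoking the evolution equation \eqref{trhoty} from \Cref{yconvolution} produces the deterministic ODE $\frac{d\gamma_i}{ds} = -\pi H\varrho_s(\gamma_i) = \PV\int \varrho_s(w)(\gamma_i-w)^{-1}\,dw$. Rewriting this as the Riemann sum $\frac{1}{n}\sum_{j\ne i}(\gamma_i-\gamma_j)^{-1}$ plus a remainder $\epsilon_i(s)$ and subtracting from \eqref{e:DBM} gives
\begin{equation*}
d\delta_i(s) = \frac{1}{n}\sum_{j\ne i}\left(\frac{1}{\lambda_i-\lambda_j}-\frac{1}{\gamma_i-\gamma_j}\right)ds - \epsilon_i(s)\,ds + \sqrt{\frac{2}{\beta n}}\,dB_i(s).
\end{equation*}
At the (smallest) index $i^*(s) := \argmax_i \delta_i(s)$, the inequality $\delta_{i^*}\ge \delta_j$ rearranges to $\lambda_{i^*}-\lambda_j \ge \gamma_{i^*}-\gamma_j$; combined with the matching sign patterns forced by the orderings $\lambda_1>\cdots>\lambda_n$ and $\gamma_1>\cdots>\gamma_n$, each pair term $\frac{1}{\lambda_{i^*}-\lambda_j}-\frac{1}{\gamma_{i^*}-\gamma_j}$ is nonpositive. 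Hence the upper envelope $Z(s) := \max_i \delta_i(s)$ satisfies $dZ(s) \le |\epsilon_{i^*(s)}(s)|\,ds + \sqrt{2/(\beta n)}\,dB_{i^*(s)}(s)$, which I would rigorously justify by smoothing the max via $\alpha^{-1}\log \sum_i e^{\alpha\delta_i}$ and sending $\alpha \to \infty$ (or by observing that $i^*(s)$ is piecewise constant so that the stochastic integral against $dB_{i^*(s)}$ is a well-defined continuous martingale). An analogous argument for $\min_i \delta_i$ yields the matching lower bound.

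Integrating, $\sup_{0\le s\le t}|Z(s)| \le \int_0^t |\epsilon_{i^*(r)}(r)|\,dr + \sup_{0\le s\le t}\bigl|\int_0^s \sqrt{2/(\beta n)}\,dB_{i^*(r)}(r)\bigr|$. The stochastic integral has quadratic variation at most $2t/(\beta n)$, so the exponential-martingale bound $\mathbb{P}[\sup_{r\le t}|M_r|\ge x] \le 2e^{-x^2/(2\langle M\rangle_t)}$ applied with $x = C(\log n)\sqrt{t/n}$ controls the supremum outside probability $2e^{-(\log n)^2}$, matching the target $(\log n)\sqrt t/\sqrt n$ term. The main obstacle is then the uniform bound $|\epsilon_i(s)| \le C(c)$ from assumptions (1)--(2) alone: the upper density bound forces quantile gaps $\gamma_{j-1/2}(s) - \gamma_{j+1/2}(s) \ge 2c/n$ (since each interval $I_j := [\gamma_{j+1/2}(s),\gamma_{j-1/2}(s)]$ has $\mu_s$-mass $1/n$), and the local Lipschitz control $|\varrho_s'|\le n/c$ at scale $c/(2n)$ keeps $\varrho_s$ nearly constant across $I_i$. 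Decomposing $\epsilon_i$ over the $I_j$, the near-diagonal contribution $\PV\int_{I_i}\varrho_s(w)/(\gamma_i-w)\,dw$ is $\OO(1)$ because $I_i$ is nearly symmetric about $\gamma_i$ (with the log of the length ratio controlled by the Lipschitz bound), while the off-diagonal remainder $\sum_{j\ne i}\int_{I_j}\varrho_s(w)(w-\gamma_j)/[(\gamma_i-w)(\gamma_i-\gamma_j)]\,dw$ is $\OO(1)$ by the gap lower bound; the deterministic integral then gives $\int_0^t|\epsilon_{i^*(r)}|\,dr \le C(c,\sfT)\,t$, and combining with $K_0$ yields \eqref{e:ceigbound}.
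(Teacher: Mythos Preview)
Your proposal is correct and follows essentially the same route as the paper: derive $\partial_s\gamma_i(s)=-\pi H\varrho_s(\gamma_i(s))$, approximate the Hilbert transform by the Riemann sum $n^{-1}\sum_{j\neq i}(\gamma_i-\gamma_j)^{-1}$ with an $\OO(1)$ remainder using assumptions (1)--(2), then apply the maximum principle at $i^*(s)=\argmax_i(\lambda_i-\gamma_i)$ together with a sub-Gaussian bound on the resulting martingale. The preliminary sandwich reduction via \Cref{l:coupling} is valid but unnecessary (the paper simply carries $\max_j|\lambda_j(0)-\gamma_j(0)|$ through the integration), and for the off-diagonal remainder the paper uses a monotonicity/telescoping argument rather than your Taylor-type expansion, which is slightly cleaner since it avoids needing any upper bound on $|I_j|$.
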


\begin{proof}[Proof of \Cref{p:detercoupling}]
We denote $\fe_+(t)= \max (\supp \mu_t)$ and $\fe_-(t)=\min \supp \mu_t$, as in \Cref{i:edge} of \Cref{yconvolution}.
By our assumption $\mu_t=\varrho_t(x)\rd x$ has a
bounded density, so the inverted cumulative density (recall from \eqref{gammay}) satisfies
\begin{align}\label{e:chgammat}
y=\int_{\gamma_t(y)}^\infty \varrho_t(x)\rd x. 
\end{align}
By taking the time derivative of \eqref{e:chgammat}, and using \Cref{i:rhoevolution} in \Cref{yconvolution}, we get
\begin{align}
\del_t \gamma_t(y)\varrho_t(\gamma_t(y))=\int_{\gamma_t(y)}^\infty \del_t\varrho_t(x)\rd x
=\int_{\gamma_t(y)}^\infty \pi \del_x (\varrho_t(x) H\varrho_t(x)) \rd x
=-\pi \varrho_t(\gamma_t(y)) H\varrho_t(\gamma_t(y)).
\end{align}
It follows that $\del_t \gamma_t(y)=-\pi H\varrho_t(\gamma_t(y))$.
In particular, 
the classical locations of $\varrho_t$ (recall from \eqref{e:classical_loc}) satisfy $\del_t \gamma_i(t)=-\pi H \varrho_t (\gamma_i(t))$. Next we show that the Hilbert transform on the right side of this equation can be approximated by 
\begin{align}\label{e:apDBMa}
\del_t \gamma_i(t)=-\pi H\varrho_t(\gamma_i(t))=\frac{1}{n}\sum_{j:j\neq i}\frac{1}{\gamma_i(t)-\gamma_j(t)}+\OO\left(1\right), \quad 1\leq i\leq n.
\end{align}
where $\OO(1)$ depends on $c$.

Since the density $\varrho_t$ is bounded by $1/(2c)$,  we have $|\gamma_i(t)-\gamma_{i\pm 1}(t)|\geq c/n$, and $|\gamma_i(t)-\fe_\pm(t)|\geq c/n$ (which follows from \eqref{e:classical_loc}, as it implies that $\int_{\fe_-(t)}^{\gamma_i(t)}\varrho_t(x)\rd x, \int_{\gamma_i(t)}^{\fe_+(t)}\varrho_t(x)\rd x\geq 1/(2n)$). We consider the integral
\begin{align}\begin{split}\label{e:diff1}
\PV & \int_{\gamma_{i-1}(t)}^{{\gamma_{i+1}}(t)}\frac{ \varrho_t(x)}{\gamma_i(t)-x}d x \\
&=\PV  \int_{\gamma_{i}(t)-c/2n}^{{\gamma_i(t)}+c/2n}\frac{ \varrho_t(x)}{\gamma_i(t)-x}d x+\int_{\gamma_{i}(t)+c/2n}^{{\gamma_{i-1}}(t)}\frac{ \varrho_t(x)}{\gamma_i(t)-x}d x+\int_{\gamma_{i+1}(t)}^{{\gamma_{i}}(t)-c/2n}\frac{ \varrho_t(x)}{\gamma_i(t)-x}d x\\
&=\PV  \int_{\gamma_{i}(t)-c/2n}^{{\gamma_i(t)}+c/2n}\frac{ \varrho_t(x)}{\gamma_i(t)-x}d x+\OO\left(1\right)\\
&=\PV  \int_{\gamma_{i}(t)-c/2n}^{{\gamma_i(t)}+c/2n}\frac{ \varrho_t(x)- \varrho_t(\gamma_i(t))}{\gamma_i(t)-x}d x+\OO\left(1\right) =\OO\left(\frac{\max_{x: |x-\gamma_i(t)|\leq c/2n}|\varrho_t'(x)|}{n}+1\right)=\OO\left(1\right),
\end{split}\end{align}
where the implicit error $\OO(1)$ depends only on $c$. We used 
in the third line  that on $[\gamma_i(t)+c/2n, \gamma_{i-1}(t)]$ and $[\gamma_{i+1}(t), \gamma_i(t)-c/2n]$ we have $|1/(\gamma_i(t)-x)|\leq 1/(c/2n))=2n/c$, so 
\begin{align*}
&\phantom{{}={}}\left|\int_{\gamma_{i}(t)+c/2n}^{{\gamma_{i-1}}(t)}\frac{ \varrho_t(x)}{\gamma_i(t)-x}d x\right|+\left|\int_{\gamma_{i+1}(t)}^{{\gamma_{i}}(t)-c/2n}\frac{ \varrho_t(x)}{\gamma_i(t)-x}d x\right|\\
&\leq \frac{2n}{c}\left|\int_{\gamma_{i}(t)+c/2n}^{{\gamma_{i-1}}(t)} \varrho_t(x) d x\right|+\frac{2n}{c}\left|\int_{\gamma_{i+1}(t)}^{{\gamma_{i}}(t)-c/2n} \varrho_t(x)d x\right|\leq \frac{2n}{c}\cdot \frac{1}{n}
+ \frac{2n}{c}\cdot \frac{1}{n}=\frac{4}{c}.
\end{align*}
where we also used the definition of classical locations (recall from \eqref{e:classical_loc});
 in the fourth  line we used that $\PV \int_{\gamma_{i}(t)-c/2n}^{{\gamma_i(t)}+c/2 n} (\gamma_{i}(t)-x)^{-1}d x =0$; in the last line, we used that for any $\dist(x, \{\gamma_1(t),\gamma_2(t),\cdots, \gamma_n(t)\})\leq c/2n$, it holds $|\varrho_t'(x)|\leq n/c$ from our assumption. 
For the integral outside the interval $[\gamma_{i-1}(t), \gamma_{i+1}(t)]$, we have the trivial bounds
\begin{align*}
\int_{-\infty}^{\gamma_{i+1}(t)}\frac{\varrho_t(x)d x}{\gamma_i(t)-x}
\leq \sum_{j=i+1}^{n}\int_{\gamma_{j+1}(t)}^{\gamma_{j}(t)}\frac{\varrho_t(x)d x}{\gamma_i(t)-\gamma_j(t)}\leq \sum_{j=i+1}^{n}\frac{1}{n(\gamma_i(t)-\gamma_j(t))},
\end{align*}
and 
\begin{align*}
\int_{-\infty}^{\gamma_{i+1}(t)}\frac{\varrho_t(x)d x}{\gamma_i(t)-x}
\geq \sum_{j=i+1}^{n}\int_{\gamma_{j+1}(t)}^{\gamma_{j}(t)}\frac{\varrho_t(x)d x}{\gamma_i(t)-\gamma_{j+1} (t)}\geq \sum_{j=i+2}^n\frac{1}{n(\gamma_i(t)-\gamma_j(t))}.
\end{align*}
Thus we conclude that
\begin{align}\label{e:diff2}
\left|\int_{-\infty}^{\gamma_{i+1}(t)}\frac{\varrho_t(x)d x}{\gamma_i(t)-x}-\sum_{j=i+1}^{n}\frac{1}{n(\gamma_i(t)-\gamma_j(t))}\right|
\leq \frac{1}{n(\gamma_i(t)-\gamma_{i+1}(t))}\leq\frac{1}{c}.
\end{align}
We have the same estimate for the integral from $\gamma_{i-1}(t)$ to $\infty$. The claim \eqref{e:apDBMa} follows from combining \eqref{e:diff1} and \eqref{e:diff2}.

Next we show \eqref{e:ceigbound} using \eqref{e:apDBMa} as input. By taking  the difference of \eqref{e:DBM} and \eqref{e:apDBMa}, we get
\begin{align}\label{e:dTt}\begin{split}
\rd ( \lambda_i(t)-\gamma_i(t))
&= \Big( \displaystyle\frac{2}{\beta n} \Big)^{1/2} \rd B_i (t)-\frac{1}{n}\sum_{j:j\neq i} \frac{( \lambda_i(t)-\gamma_i(t))-( \lambda_j(t)-\gamma_j(t))}{( \lambda_i(t)- \lambda_j(t))(\gamma_i(t)- \gamma_j(t))}\rd t+\OO\left(1\right),\quad 1\leq i\leq n.
\end{split}\end{align}
We take $i_*(t)=\arg\max_{1\leq i\leq n} ( \lambda_i(t)-\gamma_i(t))$ (if $i_*(t)$ is not unique, we can simply take the smallest one). Since the $\lambda_i(t)-\gamma_i(t)$ are continuous, $i_*(t)$ is piecewise constant. By plugging $i=i_*(t)$ (if $i_*$ has a jump at $t$, we interpret this as either $i_*(t^-)$ or $i_*(t^+)$) in \eqref{e:dTt},  noticing that  the second term on the righthand side of \eqref{e:dTt} is nonpositive, we deduce (through a very similar argument to the one used to obtain \eqref{e:positiveder} and \eqref{e:positiveder2}) that
\begin{align}\label{e:difflaga}
\rd ( \lambda_{i_*(t)}(t)-\gamma_{i_*(t)}(t))\leq  \Big( \displaystyle\frac{2}{\beta n} \Big)^{1/2} \rd B_{i_*(t)} (t)+\OO\left(1\right)\rd t.
\end{align}
The martingale term $\rd B_{i_*(t)}(t)$ has the same law as a standard Brownian motion. Thus by the reflection principle we have
\begin{align}
\bP\left(\sup_{0\leq s\leq t}\int_0^s \rd B_{i_*(\tau)}(\tau)\geq a\sqrt t\right)
=\bP\left(\int_0^t \rd B_{i_*(\tau)}(\tau)\geq a\sqrt t\right)
\leq 2e^{-a^2/2}.
\end{align}
By integrating both sides of \eqref{e:difflaga} (and taking $a=\log n$ in the above tail estimate), we deduce that there exists a large constant $C=C(c,\sfT)>0$ such that, the following holds with probability $1-Ce^{-(\log n)^2}$ for any $0\leq s\leq t$,
\begin{align*}
 \lambda_{i_*(s)}(s)-\gamma_{i_*(s)}(s)
&\leq  \lambda_{i_*(0)}(0)-\gamma_{i_*(0)}(0)+\Big( \displaystyle\frac{2}{\beta n} \Big)^{1/2} \int_0^s\rd B_{i_*(\tau)} (\tau)+\OO(s)\\
&\leq \max_i| \lambda_{i}(0)-\gamma_{i}(0)|+C\left(t+\frac{(\log n)\sqrt t}{ \sqrt n}\right),
\end{align*}
It follows that for any $0\leq t\leq \sfT$ and $n$ large enough, 
\begin{align}\label{e:ub}
\bP\left(\sup_{0\leq s\leq t,\atop 1\leq i\leq n} \lambda_{i}(s)-\gamma_{i}(s)\leq  \max_i| \lambda_{i}(0)-\gamma_{i}(0)|+C\left(t+\frac{(\log n)\sqrt t}{ \sqrt n}\right)\right)\geq 1-Ce^{-(\log n)^2}.
\end{align}
By the same argument, we have a similar lower bound by considering  $i_*(t)=\arg\min_{1\leq i\leq n} ( \lambda_i(t)-\gamma_i(t))$, that is, we have  for any $0\leq t\leq \sfT$ and $n$ large enough,
\begin{align}\label{e:lowb}
\bP\left(\sup_{0\leq s\leq t,\atop 1\leq i\leq n}\gamma_{i}(s)- \lambda_{i}(s)\leq\max_i| \lambda_{i}(0)-\gamma_{i}(0)|+C\left(t+\frac{(\log n)\sqrt t}{ \sqrt n}\right)\right)\geq 1-Ce^{-(\log n)^2},
\end{align}
 \Cref{p:detercoupling} follows from combining \eqref{e:ub} and \eqref{e:lowb}. 
\end{proof}

We also need the following special case of \Cref{p:detercoupling}, which compares Dyson Brownian motion starting from a delta mass with the quantiles of semicircle distribution.

\begin{prop}\label{p:detercoupling2}
Fix a real number $\sfT>0$ and integers $1\leq m\leq n$; let $A=m/n\in (0,1]$. Define the measure $\mu_0=A\delta_0\in \mathscr P$, and denote by $\mu_t=\mu_0\boxplus \mu_{\semci}^{(t)}$ the free convolution of $\mu$ with the rescaled semicircle distribution $\mu_{\semci}^{(t)}$. We denote by $\gamma_i(t)=\gamma_{i;n}^{\mu_t}(t)$ the classical locations (recall from \eqref{e:classical_loc}) of $\mu_t(\rd x)=\varrho_t(x)\rd x$ for $1\leq i\leq m$. We consider the $m$-particle Dyson Brownian motion \eqref{e:DBM} starting from $\lambda_1(0)=\lambda_2(0)=\cdots=\lambda_m(0)=0$ (and $\lambda_{m+1}(0)=\cdots=\lambda_n(0)=-\infty$). Then there exists a constant $C=C(\sfT)>0$ such that the following holds for  any $0\leq t\leq \sfT$
\begin{align}\label{e:extremeee}\begin{split}
&\bP\left(
\sup_{0\leq s\leq t,\atop 1\leq i\leq n}|\lambda_i(s)-\gamma_i(s)|\leq C\left(\sqrt{At}+\frac{(\log n)\sqrt t}{\sqrt n}\right)\right)\geq 1-Ce^{-(\log n)^2},
\\
&\bP\left(
\sup_{0\leq s\leq t,\atop 1\leq i\leq n}
|\lambda_i(s)|\leq C\left(\sqrt{At}+\frac{(\log n)\sqrt t}{\sqrt n}\right)
\right)\geq 1-Ce^{-(\log n)^2}.
\end{split}\end{align}

\end{prop}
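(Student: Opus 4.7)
By \Cref{r:Etbound}, the free convolution $\mu_t = A\delta_0 \boxplus \mu_{\semci}^{(t)}$ has density $\varrho_t(x)=\sqrt{4At - x^2}/(2\pi t)$ supported on $[-2\sqrt{At}, 2\sqrt{At}]$, so the classical locations obey $|\gamma_i(t)|\leq 2\sqrt{At}$ for $1\leq i\leq m$. By the triangle inequality, the two bounds in \eqref{e:extremeee} differ only by an additive $2\sqrt{At}$, so it suffices to prove the first.

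The plan is to adapt the proof of \Cref{p:detercoupling}, the essential obstacle being that $\|\varrho_s\|_\infty=\sqrt{A}/(\pi\sqrt{s})$ diverges as $s\downarrow 0$, so no uniform constant $c>0$ verifies its hypotheses. Instead, I would allow $c$ to be time-dependent by setting $c(s):=c_0\sqrt{s/A}$ for some small absolute constant $c_0>0$. With this choice the density bound $\varrho_s(x)\leq 1/(2c(s))$ holds identically, and a direct calculation using the edge asymptotics $\gamma_1(s)-2\sqrt{As}=-\Theta(s^{1/2}A^{-1/6}n^{-2/3})$ (Tracy--Widom scaling) verifies the derivative bound $|\varrho_s'(x)|\leq n/c(s)$ within $c(s)/(2n)$ of every classical location, provided $s\geq s_0:=c_0^2\, A^{-1/3}n^{-4/3}$.

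Substituting the time-dependent $c(s)$ for $c$ in the discrete-sum/Hilbert-transform comparison \eqref{e:diff1}, its error becomes $\OO(1/c(s))=\OO(\sqrt{A/s})$. Following the arg-max argument of the proof of \Cref{p:detercoupling} verbatim, with $i_*(s)=\argmax_i(\lambda_i(s)-\gamma_i(s))$, one then obtains the pathwise differential inequality (the analogue of \eqref{e:difflaga})
\begin{align*}
\rd\bigl(\lambda_{i_*(s)}(s)-\gamma_{i_*(s)}(s)\bigr)\leq \sqrt{2/(\beta n)}\,\rd B_{i_*(s)}(s)+\OO\bigl(\sqrt{A/s}\bigr)\,\rd s.
\end{align*}
Integrating the drift over $[s_0, t]$ produces $\OO\bigl(\int_{s_0}^{t}\sqrt{A/s}\,\rd s\bigr)=\OO(\sqrt{At})$, and the martingale piece is bounded by $\OO((\log n)\sqrt{t}/\sqrt{n})$ with probability at least $1-Ce^{-(\log n)^2}$ by the reflection principle, exactly as in \eqref{e:ub}. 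A symmetric argument with $i_*(s)=\argmin_i(\lambda_i(s)-\gamma_i(s))$ yields the matching lower bound.

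It remains to control the small-time regime $s\in[0,s_0]$, which I would handle using \Cref{l:coupling} to compare $\lambda$ with DBMs starting from slightly smeared initial data (a rescaled semicircle of scale $\eta\sim\sqrt{As_0}=A^{1/3}n^{-2/3}$) to which \Cref{p:detercoupling} applies directly with a constant $c\sim 1/(A^{2/3}n^{2/3})$; combined with $|\gamma_i(s)|\leq 2\sqrt{As_0}$ this gives $|\lambda_i(s)-\gamma_i(s)|\leq C(\sqrt{At}+(\log n)\sqrt{t}/\sqrt{n})$ on the initial window. The main technical obstacle is the precise verification of the derivative bound $|\varrho_s'(x)|\leq n/c(s)$ at Tracy--Widom-scale depths from $\pm 2\sqrt{As}$; a secondary obstacle is the bookkeeping needed to replace the constant $c$ by the time-dependent $c(s)$ throughout the proof of \Cref{p:detercoupling}, and to show that the pathwise arg-max/arg-min argument continues to work when the integrand in the Gr\"onwall-type estimate is $\OO(\sqrt{A/s})$ rather than $\OO(1)$.
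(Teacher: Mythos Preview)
Your approach is essentially the paper's: replace the fixed $c$ in the proof of \Cref{p:detercoupling} by the time-dependent scale $c(s)\asymp\sqrt{s/A}$, obtain the Hilbert-transform approximation error $\OO(\sqrt{A/s})$ in place of $\OO(1)$, and integrate $\int_0^t\sqrt{A/s}\,\rd s=2\sqrt{At}$.

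The one unnecessary complication is your separate treatment of a small-time window $[0,s_0]$. The paper integrates directly from $0$, and no cutoff is needed. The reason is that you should not try to verify the \emph{literal} hypothesis $|\varrho_s'(x)|\le n/c(s)$ of \Cref{p:detercoupling}; if you trace through \eqref{e:diff1}, the quantity that actually enters is the product $\tfrac{c(s)}{n}\max|\varrho_s'|$. Two elementary observations make this work for all $s>0$:
\begin{itemize}
\item The crude density bound $\varrho_s\le \sqrt{A/s}/\pi$ forces $2\sqrt{As}-\gamma_1(s)\ge \sqrt{s/A}/n$ (no Tracy--Widom asymptotics needed), so any $x$ within $\sqrt{s/A}/(2n)$ of a classical location stays at distance $\ge \sqrt{s/A}/(2n)$ from the edges.
\item At that distance, $|\varrho_s'(x)|\le (2\sqrt{As}-|x|)^{-1/2}A^{1/4}s^{-3/4}\le \sqrt{2An}/s$, hence $\tfrac{c(s)}{n}\max|\varrho_s'|\le \tfrac{1}{\sqrt{sn}}\le \sqrt{A/s}$, using $A=m/n\ge 1/n$.
\end{itemize}
Thus the approximation error is $\OO(\sqrt{A/s})$ for every $s>0$, the drift integrates to $\OO(\sqrt{At})$, and the argmax/argmin argument plus reflection-principle bound go through exactly as you describe. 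The coupling-with-smeared-data step can be dropped.
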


\begin{proof}[Proof of \Cref{p:detercoupling2}]
The proof follows essentially the same argument as in \Cref{p:detercoupling}, so we will only sketch the proof. 
We recall from \Cref{r:Etbound} that the free convolution $\mu_t=\mu_0\boxplus \mu_{\semci}^{(t)}$ is supported on $[-2\sqrt {At}, 2\sqrt {At}]$, and given explicitly by the rescaled semicircle distribution
\begin{align}
\mu_t(\rd x)=\varrho_t(x)\rd x=\frac{\sqrt{4At-x^2}}{2\pi t}\rd x, \quad t\geq 0. 
\end{align}
One can directly check that $\varrho_t(x)$ satisfies for $0\leq x\leq 2\sqrt{At}$ the bounds 
\begin{align}\label{e:rhotdbound}
\varrho_t(x)\leq \varrho_t(0)=\frac{\sqrt{A/t}}{\pi}\leq \frac{\sqrt{A/t}}{2},\quad |\varrho_t'(x)|\leq (2\sqrt {At}-x)^{-1/2}\frac{A^{1/4}}{t^{3/4}}.
\end{align}

Using the above estimates as input, similarly to \eqref{e:apDBMa}, next we show 
\begin{align}\label{e:apDBMa2}
\del_t \gamma_i(t)=-\pi H(\varrho_t)(\gamma_i(t))=\frac{1}{n}\sum_{1\leq j\leq m\atop j\neq i}\frac{1}{\gamma_i(t)-\gamma_j(t)}+\OO\left(\sqrt\frac{A}{ t}\right), \quad 1\leq i\leq m.
\end{align}
where the implicit constant in $\OO(\cdot)$ depends only on $\sfT$. Since the density $\varrho_t$ is bounded by $\sqrt{A/t}/2$ (from \eqref{e:rhotdbound},  we have $|\gamma_i(t)-\gamma_{i\pm 1}(t)|\geq \sqrt{t/A}/n$, and $|\gamma_i(t)-(\pm 2\sqrt{At})|\geq \sqrt{t/A}/n$ (which follows from \eqref{e:classical_loc} that $\int_{-2\sqrt{At}}^{\gamma_i(t)}\varrho_t(x)\rd x, \int_{\gamma_i(t)}^{2\sqrt{At}}\varrho_t(x)\rd x\geq 1/(2n)$). We consider the integral 
\begin{align}\begin{split}\label{e:diffA}
&\phantom{{=}}\PV \int_{\gamma_{i-1}(t)}^{{\gamma_{i+1}}(t)}\frac{ \varrho_t(x)}{\gamma_i(t)-x}d x\\
&=\PV  \int_{\gamma_{i}(t)-\sqrt{t/A}/(2n)}^{{\gamma_i(t)}+\sqrt{t/A}/(2n)}\frac{ \varrho_t(x)}{\gamma_i(t)-x}d x+\int_{\gamma_{i}(t)+\sqrt{t/A}/(2n)}^{{\gamma_{i-1}}(t)}\frac{ \varrho_t(x)}{\gamma_i(t)-x}d x+\int_{\gamma_{i+1}(t)}^{{\gamma_{i}}(t)-\sqrt{t/A}/(2n)}\frac{ \varrho_t(x)}{\gamma_i(t)-x}d x\\
&=\PV  \int_{\gamma_{i}(t)-\sqrt{t/A}/(2n)}^{{\gamma_i(t)}+\sqrt{t/A}/(2n)}\frac{ \varrho_t(x)}{\gamma_i(t)-x}d x+\OO\left(\sqrt{\frac{A}{t}}\right)\\
&=\PV  \int_{\gamma_{i}(t)-\sqrt{t/A}/(2n)}^{{\gamma_i(t)}+\sqrt{t/A}/(2n)}\frac{ \varrho_t(x)- \varrho_t(\gamma_i(t))}{\gamma_i(t)-x}d x+\OO\left(\sqrt{\frac{A}{t}}\right)\\
&\leq \frac{\sqrt{t/A}}{n}\max_{x: |x-\gamma_i(t)|\leq \sqrt{t/A}/(2n)}|\varrho_t'(x)|+\OO\left(\sqrt{\frac{A}{t}}\right)\leq \frac{1}{\sqrt{tn}}+\OO\left(\sqrt{\frac{A}{t}}\right)=\OO\left(\sqrt{\frac{A}{t}}\right).
\end{split}\end{align}
We used in the third line that  on $[\gamma_i(t)+\sqrt{t/A}/(2n), \gamma_{i+1}(t)]$ and $[\gamma_{i-1}(t), \gamma_i(t)-\sqrt{t/A}/(2n)]$ 
we have $|1/(\gamma_i(t)-x)|\leq 1/(\sqrt{t/A}/(2n)))=2n\sqrt{A/t}$. So
\begin{align*}
&\phantom{{}={}}
\left|\int_{\gamma_{i}(t)+\sqrt{t/A}/(2n)}^{{\gamma_{i-1}}(t)}\frac{ \varrho_t(x)}{\gamma_i(t)-x}d x\right|+\left|\int_{\gamma_{i+1}(t)}^{{\gamma_{i}}(t)-\sqrt{t/A}/(2n)}\frac{ \varrho_t(x)}{\gamma_i(t)-x}d x\right|\\
&\leq 
2n\sqrt{\frac{A}{t}}\int_{\gamma_{i}(t)+\sqrt{t/A}/(2n)}^{{\gamma_{i-1}}(t)} \varrho_t(x)d x+2n\sqrt{\frac{A}{t}}\int_{\gamma_{i+1}(t)}^{{\gamma_{i}}(t)-\sqrt{t/A}/(2n)} \varrho_t(x)d x\leq 4\sqrt{\frac{A}{t}},
\end{align*}
where we also used the definition of classical locations from \eqref{e:classical_loc};
 in the  fourth  line we used that $\PV \int_{\gamma_{i}(t)-\sqrt{t/A}/(2n)}^{{\gamma_i(t)}+\sqrt{t/A}/(2 n)} (\gamma_{i}(t)-x)^{-1}d x =0$; in the last line, we used that for $x\in [\gamma_m(t)-\sqrt{t/A}/(2n), \gamma_1(t)+\sqrt{t/A}/(2n)]\subset [-2\sqrt{At}+\sqrt{t/A}/(2n), 2\sqrt{At}-\sqrt{t/A}/(2n)]$,  it holds that 
$|\varrho_t'(x)|\leq \sqrt{An}/t$ from \eqref{e:rhotdbound}; in the last equality we used that $A=m/n\geq 1/n$. 
For the integral outside the interval $[\gamma_{i-1}(t), \gamma_{i+1}(t)]$, we have the trivial bounds
\begin{align*}
\int_{-\infty}^{\gamma_{i+1}(t)}\frac{\varrho_t(x)d x}{\gamma_i(t)-x}
\leq \sum_{j=i+1}^{n}\int_{\gamma_{j+1}(t)}^{\gamma_{j}(t)}\frac{\varrho_t(x)d x}{\gamma_i(t)-\gamma_j(t)}\leq \sum_{j=i+1}^{m}\frac{1}{n(\gamma_i(t)-\gamma_j(t))},
\end{align*}
and 
\begin{align*}
\int_{-\infty}^{\gamma_{i+1}(t)}\frac{\varrho_t(x)d x}{\gamma_i(t)-x}
\geq \sum_{j=i+1}^{n}\int_{\gamma_{j+1}(t)}^{\gamma_{j}(t)}\frac{\varrho_t(x)d x}{\gamma_i(t)-\gamma_{j+1} (t)}\geq \sum_{j=i+2}^m\frac{1}{n(\gamma_i(t)-\gamma_j(t))}.
\end{align*}

\noindent Together with the fact that $\gamma_i(t)-\gamma_{i+1}(t)\geq \sqrt {t/A}/n$, this gives 
\begin{align}\label{e:diffA2}
\left|\int_{-\infty}^{\gamma_{i+1}(t)}\frac{\varrho_t(x)d x}{\gamma_i(t)-x}-\sum_{j=i+1}^{n}\frac{1}{n(\gamma_i(t)-\gamma_j(t))}\right|
\leq \frac{1}{n(\gamma_i(t)-\gamma_{i+1}(t))}\leq\sqrt{\frac{A}{t}}.
\end{align}
We have the same estimate for the integral from $\gamma_{i-1}(t)$ to $\infty$. The claim \eqref{e:apDBMa2} follows from combining \eqref{e:diffA} and \eqref{e:diffA2}.

Using \eqref{e:apDBMa2} as input, following the derivations of \eqref{e:ub} and \eqref{e:lowb}, we conclude there exists a large constant $C>0$ such that for any $0\leq t\leq \sfT$ and $n$ large enough, the following holds with probability $1-Ce^{-(\log n)^2}$, 
\begin{align}
\sup_{0\leq s\leq t, \atop 1\leq i\leq n}|\lambda_i(s)-\gamma_i(s)|\leq C\left(\int_0^t \frac{A\rd \tau}{\sqrt \tau}+\frac{(\log n)\sqrt t}{\sqrt n}\right)
= C\left(2\sqrt{tA}+\frac{(\log n)\sqrt t}{\sqrt n}\right).
\end{align}
This finishes the proof of the first statement in \eqref{e:extremeee}. The second statement follows from the first one and the bound $|\gamma_i(s)|\leq 2\sqrt{As}\leq 2\sqrt{At}$.
\end{proof}

\subsection{Free Convolution Density Estimates}
Adopt \Cref{a:densitylowbound}. Denote the free convolution of $\mu_0:=\mu_0^{(n)}$ with the rescaled semicircle distribution $\mu_{\semci}^{(t)}$ by $\mu_t=\mu_0\boxplus \mu_{\semci}^{(t)}$. Denote the Stieltjes transform of $\mu_t$ by $m_t(z)$ as in \eqref{mz0}, and its $p$-th derivative by $m_t^{(p)}(z)$. The free convolution $\mu_t=\varrho_t(x)\rd x$ has a density $\varrho_t (x)$. Thanks to \Cref{i:edge}, the right edge of $\mu_t$, denoted by $E_t = \max (\supp \mu_t)$, satisfies $E_t=\xi(t)-tm_0(\xi(t))$, where $\xi(t)$ is given by 
		\begin{flalign}\label{e:defxit}
			\xi(t) = \sup \Bigg\{ w \in \mathbb{R} : \displaystyle\int_{-\infty}^{\infty} \displaystyle\frac{\mu_0 (\rd  x)}{(x-w)^2} = \displaystyle\frac{1}{t} \Bigg\}.
		\end{flalign}

The following two lemmas collects some properties of $\mu_t$, i.e., square root edge behavior and averaged density estimates. They will be used in \Cref{s:rigidity} to prove our main result \Cref{t:main}.

\begin{lem}\label{l:densityt}
Adopt  \Cref{a:densitylowbound}. Denote the free convolution of $\mu_0:=\mu_0^{(n)}$ with the rescaled semicircle distribution $\mu_{\semci}^{(t)}$ by $\mu_t=\mu_0\boxplus \mu_{\semci}^{(t)}$, and its right edge by $E_t = \max (\supp \mu_t)$. We denote the Stieltjes transform of $\mu_t$ as $m_t(z)$, recall $\xi(t)$ from \eqref{e:defxit}, and introduce the following two parameters:
\begin{align}\label{e:defAs0}
\cA(t)=\frac{2}{-t^3m^{(2)}_0(\xi(t))},\quad \fc(t)=-2^{-9}t m^{(2)}_0(\xi(t))\xi(t)^2.
\end{align}
Then for any  $(30/\sfb)\sqrt \eta_*\leq t\leq \sfT$ we have
\begin{align}\label{e:defAs}
\sqrt t\geq  \xi(t) \geq \left(\frac{\sfb t}{20}\right)^2,\quad \cA(t)\geq 2^{-10}\sfb^2,\quad \fc(t)\geq 2^{-20}(\sfb t)^2,
\end{align}
and for all $x\in [E_t-\fc(t), E_t]$ we have
\begin{align}\label{e:rhotdensity}
 \varrho_t(x)= (1+\cE(E_t-x))\frac{\sqrt{\cA(t)(E_t-x)}}{\pi},\qquad \text{where $\mathcal{E}$ satisfies $|\cE(x)|\leq  \frac{|x|}{\fc(t)}$}.
\end{align}
\end{lem}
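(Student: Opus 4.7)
The argument splits into the moment bounds on $\xi(t)$, $\cA(t)$, $\fc(t)$ and the edge density expansion \eqref{e:rhotdensity}. For the first part, set $F(s) := \mu_0([-s,0])$, so $F(s) \geq \sfb s^{3/2}$ for $s \in [\eta_*, \sfT^2]$ by \Cref{a:densitylowbound}, and rewrite the defining relation $m_0'(\xi(t)) = 1/t$ via integration by parts as
\[
\frac{1}{t} = \int_0^\infty \frac{2 F(s)}{(\xi(t)+s)^3}\rd s, \qquad \frac{-m_0^{(2)}(\xi(t))}{2} = \int_0^\infty\frac{3 F(s)}{(\xi(t) + s)^4}\rd s.
\]
The upper bound $\xi(t) \leq \sqrt t$ follows from $\xi(t) - x \geq \xi(t)$ on $\supp\mu_0 \subseteq (-\infty,0]$. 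For the lower bound, I first rule out $\xi(t) \leq \eta_*$ by contradiction, bounding the first integral from below on $s \in [\eta_*, 2\eta_*]$ (using $F(s) \geq \sfb\eta_*^{3/2}$) to force $1/t \gtrsim \sfb/\sqrt{\eta_*}$, which is incompatible with $t \geq 30\sqrt{\eta_*}/\sfb$; then, with $\xi(t) \geq \eta_*$ in hand, I bound the integrand below on $s \in [\xi(t), \sfT^2]$ by $\sfb s^{3/2}/(8s^3)$ and integrate to get $\xi(t) \geq (\sfb t/20)^2$. For $\cA(t)$, the pointwise inequality $-m_0^{(2)}(\xi)/2 \leq (1/\xi) m_0'(\xi) = 1/(t\xi)$ combined with the lower bound on $\xi(t)$ does the job. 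For $\fc(t)$, restricting the second integral above to $s \in [\xi, 2\xi]$ and using $F(s) \geq \sfb s^{3/2}$ yields $-m_0^{(2)}(\xi) \geq 2\sfb/(27\xi^{3/2})$, which combined with $\sqrt\xi \geq \sfb t/20$ gives the claim.

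For the density expansion, \Cref{i:rhoy} of \Cref{yconvolution} provides $\varrho_t(E_t - y) = \Imaginary w/(\pi t)$ where $w = \xi(t) + \delta \in \bH$ solves $M(w) = E_t - y$ and $M(w) = w - tm_0(w)$. Since $M'(\xi) = 0$ and $M(\xi) = E_t$, I factor $M(w) - E_t = (w - \xi)^2 q(w)$ with $q$ holomorphic and real on the real axis near $\xi$, $q(\xi) = M''(\xi)/2 = 1/(t^2 \cA(t))$ and $q^{(k)}(\xi)/k! = M^{(k+2)}(\xi)/(k+2)!$. Taking the $\bH$-branch of $\delta^2 = -y/q(w)$ gives
\[
\varrho_t(E_t - y) = \frac{\Real\sqrt{y/q(w)}}{\pi t} = \frac{\sqrt{\cA(t) y}}{\pi}\cdot\Real\bigl(\sqrt{q(\xi)/q(w)}\bigr),
\]
so $\cE(y) = \Real\sqrt{q(\xi)/q(w)} - 1$. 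The key analytic input is the elementary pointwise bound $(\xi-x)^{-k} \leq \xi^{-k}$ on $\supp\mu_0$, which gives for every $k \geq 1$
\[
\frac{|q^{(k)}(\xi)|}{k!\, q(\xi)} = \frac{\int (\xi-x)^{-k-3}\mu_0(\rd x)}{\int (\xi-x)^{-3}\mu_0(\rd x)} \leq \frac{1}{\xi^k}.
\]
Writing $\Delta := q(w)/q(\xi) - 1$, this shows $|\Delta| \leq 2|w-\xi|/\xi$, so when $y \leq \fc(t) = \xi^2/(2^8 t^2\cA(t))$ the leading relation $|\delta| \sim t\sqrt{\cA(t) y}$ yields $|\delta| \leq \xi/16$ and $|\Delta| \leq 1/8$.

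The main obstacle is to improve the naive $O(\sqrt y)$ bound coming from $|\cE(y)| \lesssim |\Delta|$ into the claimed $O(y)$ bound. This hinges on a cancellation: because the Taylor coefficients of $q$ at $\xi$ are real and $\delta = it\sqrt{\cA(t) y}\cdot(1 + O(\sqrt y))$ is purely imaginary to leading order, the real part of $\Delta(\delta) = \sum_{k\geq 1}(q^{(k)}(\xi)/k!\, q(\xi))\delta^k$ picks up leading contributions only from the even-degree Taylor terms, giving $\Real\Delta = O(y)$ while $\Imaginary\Delta = O(\sqrt y)$. Expanding
\[
\Real\bigl((1+\Delta)^{-1/2}\bigr) = 1 - \tfrac{1}{2}\Real\Delta + \tfrac{3}{8}\bigl((\Real\Delta)^2 - (\Imaginary\Delta)^2\bigr) + O(|\Delta|^3)
\]
and inserting the derivative estimate above shows that both $|\Real\Delta|$ and $(\Imaginary\Delta)^2$ are bounded by explicit multiples of $t^2\cA(t)y/\xi^2 = y/(2^8\fc(t))$; the $O(|\Delta|^3)$ remainder is absorbed using $|\Delta| \leq 1/8$, and summing the explicit small constants yields $|\cE(y)| \leq y/\fc(t)$ with room to spare. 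Throughout, one must be careful that the self-consistent equation $\delta = i\sqrt{y/q(w)}$ involves $q(w)$ and not $q(\xi)$; this is handled by one iteration, replacing $w$ by $\xi + it\sqrt{\cA(t)y}$ in the estimate of $q$ and showing the resulting correction is absorbed at the next order.
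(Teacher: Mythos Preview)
Your proof is correct and follows the same overall strategy as the paper: the moment bounds in Part~1 come from the same integration-by-parts identity and the same kinds of restrictions of the integration domain (the paper integrates on $[\eta_*,\xi(t)]$ where you use $[\xi(t),\sfT^2]$ and $[\xi(t),2\xi(t)]$, but the arithmetic is equivalent), and Part~2 rests on the identical mechanism---the leading correction to $\sqrt{\cA(t)(E_t-x)}$ is purely imaginary because the Taylor coefficients at $\xi(t)$ are real and $w-\xi(t)$ is imaginary to leading order, so taking the real part kills the $O(\sqrt{x})$ term and leaves $O(x)$.

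The packaging differs in two respects worth noting. First, the paper works directly with the Taylor expansion of $m_0$ and handles the cancellation by explicitly subtracting the term $\tfrac{m_0^{(3)}(\xi)}{6m_0^{(2)}(\xi)}\,\ri\sqrt{2x/(-tm_0^{(2)}(\xi))}$ before bounding the remainder (its display \eqref{e:error}); your formulation via the factor $q(w)$ with the uniform bound $|q^{(k)}(\xi)|/(k!\,q(\xi))\le\xi^{-k}$ is cleaner and subsumes the paper's separate estimates \eqref{e:m03}--\eqref{e:m04} in one stroke. Second, the paper invokes Rouch\'e's theorem to guarantee existence of the solution $w$ in the disk $|w-\xi|\le\xi/8$ before solving; your sketch passes over this, so you should add a line ensuring the implicit equation $\delta=\ri\sqrt{y/q(\xi+\delta)}$ has a (unique) solution with $|\delta|\le\xi/16$ on the stated range of $y$---Rouch\'e or a contraction argument both work. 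Your ``one iteration'' remark correctly identifies the self-referential issue, and the paper resolves it in exactly the same spirit, just by making the subtracted imaginary term explicit rather than arguing by parity.
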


\begin{lem}\label{p:densitybound}
Adopt \Cref{a:densitylowbound}. Denote the free convolution of $\mu_0:=\mu_0^{(n)}$ with the rescaled semicircle distribution $\mu_{\semci}^{(t)}$ by $\mu_t=\mu_0\boxplus \mu_{\semci}^{(t)}$, and its right edge by $E_t = \max (\supp \mu_t)$.  We denote the Stieltjes transform of $\mu_t$ by $m_t$. Then there exists  constants $\fC= 2^{21}/ \sfb^2>0$ and $\fc  =\sfb \fC^{-3/2}>0$ such that the following hold. 
\begin{enumerate}
\item The right edge satisfies $E_t\leq 2\sqrt t$.
\item For any $t\in [0,\sfT]$ and $x\in [0, \sfT^2]$ satisfying either $t\geq \fC\sqrt \eta_*$ or $ x\geq \fC\eta_*$, we have $\mu_t([E_t-x, E_t])\geq \fc  x^{3/2}$.
\item For any $t\in [0,\sfT]$ and $\kappa,\eta \geq 0$ with $\kappa+\eta\leq \sfT^2$ satisfying either $t\geq \fC\sqrt \eta_*$ or $\kappa\geq \fC\eta_*$,  we have
\begin{align}\label{e:Immbound}
\Im[m_t(E_t+\kappa+\ri\eta)]\geq \frac{\fc  \eta}{8\sqrt{\kappa+\eta}}.
\end{align}
\end{enumerate}
\end{lem}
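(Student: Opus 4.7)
I would prove the three statements in sequence; (1) is a direct Cauchy--Schwarz bound, (2) is the main content, and (3) follows from (2) by a short calculation.

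For (1), the formula $E_t = \xi(t) - tm_0(\xi(t))$ from \Cref{i:edge} of \Cref{yconvolution} reduces the problem to bounding $\xi(t)$ and $|m_0(\xi(t))|$. Since $\mu_0$ is supported on $(-\infty, 0]$ with $\mu_0(\bR) \le 1$, for any $w > 0$ we have $\int (x-w)^{-2} d\mu_0 \le w^{-2}$, so the defining equation $\int (x-\xi(t))^{-2}d\mu_0 = 1/t$ forces $\xi(t) \le \sqrt{t}$. Cauchy--Schwarz then gives $|m_0(\xi(t))|^2 \le \mu_0(\bR)\int(x-\xi(t))^{-2}d\mu_0 \le 1/t$, whence $|tm_0(\xi(t))| \le \sqrt{t}$ and $E_t \le 2\sqrt{t}$.

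For (2), I would split into two regimes whose union covers the hypothesis. In \emph{Regime A}, $t \ge \fC\sqrt{\eta_*}$ and $x \le \fc(t)/2$, I would invoke \Cref{l:densityt} (which applies since $\fC \ge 30/\sfb$): the expansion \eqref{e:rhotdensity} with $|\cE| \le 1/2$ gives $\varrho_t(u) \ge \sqrt{\cA(t)(E_t - u)}/(2\pi)$ on $[E_t - x, E_t]$, and integrating yields $\mu_t([E_t - x, E_t]) \ge (\sqrt{\cA(t)}/(3\pi))\, x^{3/2} \ge (2^{-5}\sfb/(3\pi))\, x^{3/2} \ge \fc\, x^{3/2}$. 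In \emph{Regime B}, $x \ge \fC\eta_*$, the $t = 0$ case is immediate from \Cref{a:densitylowbound}: since $E_0 = \sup\supp\mu_0 \le 0$ and $\mu_0$ has no mass above $E_0$, we have $\mu_0([E_0 - x, E_0]) \ge \mu_0([-x, E_0]) = \mu_0([-x, 0]) \ge \sfb x^{3/2}$. For $t \in (0, \sfT]$, I would compare $\mu_0$ with a reference $\nu_0$ carrying density $(3\sfb/2)\sqrt{-u}$ on $[-\sfT^2, 0]$ (augmented by a point mass far to the left so that $\nu_0(\bR) = \mu_0(\bR)$), so that $\nu_0([-x', 0]) = \sfb (x')^{3/2}$ for $x' \in [0, \sfT^2]$. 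Since $\mu_0([-x', 0]) \ge \nu_0([-x', 0])$ for $x' \in [\eta_*, \sfT^2]$, I would propagate this comparison to time $t$ via \Cref{freeconvolutioncompare} (or via the coupling \Cref{l:coupling} on discretized Dyson Brownian motions passed to the limit through \Cref{rhot}); combined with the explicit square-root profile of $\nu_0$ analyzed by \Cref{l:densityt}, this transfers the mass bound to $\mu_t$. The two regimes jointly cover the hypothesis since $\fc(t)/2 \ge 2^{-21}\sfb^2 \fC^2 \eta_* = \fC\eta_*$ when $t \ge \fC\sqrt{\eta_*}$, so any admissible $x < \fC\eta_*$ lies in Regime A.

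For (3), writing $z = E_t + \kappa + i\eta$ and restricting the integral in $\Im m_t(z) = \int \eta|u-z|^{-2} d\mu_t(u)$ to $u \in [E_t - (\kappa+\eta), E_t]$: on this range $|u-z|^2 \le (2\kappa+\eta)^2 + \eta^2 \le 5(\kappa+\eta)^2$, so $\Im m_t(z) \ge \eta\, \mu_t([E_t - (\kappa+\eta), E_t])/(5(\kappa+\eta)^2) \ge \fc\eta/(5\sqrt{\kappa+\eta}) \ge \fc\eta/(8\sqrt{\kappa+\eta})$ using (2) at $x = \kappa + \eta$ (whose hypothesis holds since $\kappa + \eta \le \sfT^2$ and either $t \ge \fC\sqrt{\eta_*}$ or $\kappa + \eta \ge \kappa \ge \fC\eta_*$). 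The main obstacle will be the comparison step in Regime B of (2): \Cref{freeconvolutioncompare} and \Cref{l:coupling} naturally control quantile positions or quantile gaps, whereas the desired estimate is the mass in the moving interval $[E_t - x, E_t]$; bridging these requires careful tracking of how $E_t^{\mu_0}$ relates to $E_t^{\nu_0}$ and converting initial quantile/gap comparisons into interval-mass lower bounds after free convolution.
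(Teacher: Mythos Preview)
Your arguments for (1) and (3) are correct; your proof of (1) is in fact more elementary than the paper's, which deduces $E_t \le 2\sqrt{t}$ by applying the height comparison of \Cref{freeconvolutioncompare} against $\delta_0$. Your Regime~A for (2) also matches the paper's treatment of the range where \Cref{l:densityt} applies.

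The genuine gap is Regime~B of (2), and the obstacle you flag at the end is fatal for the approach as stated. Your reference measure $\nu_0$ has $\nu_0([-x',0]) = \sfb(x')^{3/2}$ for \emph{all} $x' \in [0,\sfT^2]$, whereas we only know $\mu_0([-x',0]) \ge \sfb(x')^{3/2}$ for $x' \ge \eta_*$. On $[0,\eta_*]$ the measure $\mu_0$ may carry no mass whatsoever, so for $0 < y < \sfb\eta_*^{3/2}$ one can have $\gamma_0^{\mu_0}(y) \le -\eta_* < -(y/\sfb)^{2/3} = \gamma_0^{\nu_0}(y)$; the hypothesis of either part of \Cref{freeconvolutioncompare} then fails. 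There is no obvious repair by modifying $\nu_0$ near~$0$: putting a delta mass at $-\eta_*$ restores the quantile ordering but destroys the clean square-root profile you want to analyze $\nu_t$ for small~$t$, and you would still need to relate $E_t^{\mu_0}$ to $E_t^{\nu_0}$, about which the comparison lemmas say nothing directly.

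The paper sidesteps all of this with a different idea. Fix the target $x \ge \fC\eta_*$, set $A = \sfb(x/\fC)^{3/2}$, and let $\wt\mu_0$ be $\mu_0$ \emph{restricted to its top $A$ quantiles} (so $\wt\gamma_0 \equiv \gamma_0$ on $[0,A]$). The gap-comparison hypothesis of \Cref{freeconvolutioncompare}(\ref{i:convolutiongap}) is then satisfied with equality, yielding $\gamma_t(0^+) - \gamma_t(A) \le \wt\gamma_t(0^+) - \wt\gamma_t(A)$. The right side is at most the diameter of $\supp\wt\mu_t$, which the paper bounds by two further height comparisons against the point masses $A\delta_0$ and $A\delta_{-x/\fC}$ (using that $\supp\wt\mu_0 \subset [-x/\fC, 0]$ since $\mu_0([-x/\fC,0]) \ge A$); this gives $\wt\gamma_t(0^+) - \wt\gamma_t(A) \le x/\fC + 4\sqrt{At}$, which is at most $x$ when $t \le \sqrt{\fC x}$ and $\fC = 2^{21}/\sfb^2$. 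Hence $\mu_t([E_t - x, E_t]) \ge A = \fc\, x^{3/2}$. The paper's case split is therefore $t \lessgtr \sqrt{\fC x}$ rather than your Regimes~A/B; the missing ingredient in your proposal is precisely this restriction-plus-gap-comparison trick, which makes no use of any reference profile and requires no information about $\mu_0$ on $[-\eta_*, 0]$.
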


\begin{proof}[Proof of \Cref{l:densityt}]

For simplicity of notation, in the rest of the proof, we will write $\kappa=\xi(t)$ as defined in \eqref{e:defxit}. Let $F(x)=\mu_0 ( [-x,0] )$, which satisfies $F(x) \geq \sfb x^{3/2}$ for $x \in [\eta_*, \mathsf{T}^2]$. Then we have
\begin{align}\label{estk}
\frac{1}{t}=\int_{-\infty}^{0}\frac{\mu_0(\rd x)}{(\kappa-x)^2} =
-\int_{0}^{\infty}\frac{\rd F(x)}{(\kappa+x)^2} \geq \int_0^\infty\frac{2F(x)\rd x}{(\kappa+x)^3}
\geq\int_{\eta_*}^1\frac{2\sfb x^{3/2}\rd x}{(\kappa+x)^3},
\end{align}
where the first statement is from \eqref{e:defxit}; the second statement is from the definition $F(x)=\mu_0 ( [-x,0] )$; the third statement is by integration by parts; and the last statement is by restricting the integration domain to $[\eta_*,1]$, and using that $F(x)\geq \sfb x^{3/2}$ on $[\eta_*,\sfT]$ and $\sfT\geq 1$ from \Cref{a:densitylowbound}.

 If $\kappa\leq 2\eta_*$, then  \eqref{estk} gives
\begin{align}
\frac{1}{t}\geq \int_{\eta_*}^1\frac{2\sfb x^{3/2}\rd x}{27x^3}=\frac{4\sfb}{27}\left(\frac{1}{\sqrt \eta_*}-1\right)\geq \frac{\sfb}{27\sqrt \eta_*},
\end{align}
where the second statement is from performing the integral, and the last statement uses $\eta_*\leq 1/4$ from \Cref{a:densitylowbound}. This contradicts our assumption $t\geq (30/\sfb) \sqrt{\eta_*} > 27\sqrt\eta_*/\sfb$, so we have $\kappa\geq 2\eta_*$, and  \eqref{estk} gives
\begin{align}\label{estk2}
\frac{1}{t}\geq \int_{\eta_*}^{\kappa}\frac{2\sfb x^{3/2}\rd x}{8\kappa^3}=\frac{\sfb(\kappa^{5/2}-\eta_*^{5/2})}{10\kappa^3}\geq \frac{\sfb}{20\kappa^{1/2}}, \qquad \text{so} \qquad 
 \kappa \geq \left(\frac{\sfb t}{20}\right)^2,
\end{align}
where the first inequality follows from restricting the integral  \eqref{estk} to $[\eta_*, \kappa]$ and the fact that $\kappa+x\leq 2\kappa$ on this region; the second statement follows from performing the integral; and the last inequality follows from the bound $\kappa\geq 2\eta_*$. This gives the lower bound in the first statement in \eqref{e:defAs}. The upper bound follows from the following estimate
\begin{align*}
\frac{1}{t}=\int_{-\infty}^{0}\frac{\mu_0(\rd x)}{(\kappa-x)^2} \leq 
\int_{-\infty}^{0}\frac{\mu_0(\rd x)}{\kappa^2} =\frac{1}{\kappa^2}
\end{align*}
where the first statement is from \eqref{e:defxit}; the second statement uses $\kappa-x\geq \kappa$; and the last statement uses $\mu_0(\bR)=1$.

We next bound the derivatives of $m_0$, in order to Taylor expand it. In the following, we recall that the $p$-th derivative of $m_0$ is denoted by $m_0^{(p)}$. Since $m_0(w)$ is analytic outside $\supp \mu_0\subset(-\infty,0]$, it is in particular analytic on $\{w\in \bC: |w-\kappa|<\kappa/2\}$. To bound $m_0^{(2)}(\kappa)$ observe that, similarly to \eqref{estk} and \eqref{estk2}, we have 
\begin{align}\begin{split}\label{e:lowm0}
-m_0^{(2)}(\kappa)
& = \displaystyle\int_0^{\infty} \displaystyle\frac{6 F(x) \mathrm{d} x}{(\kappa + x)^4}  \geq\int_{\eta_*}^{\kappa} \frac{6\sfb x^{3/2}\rd x}{(\kappa+x)^4}
\ge \int^\kappa_{\eta_*} \frac{4\sfb x^{3/2}\rd x}{(2\kappa)^4}
= \frac{\sfb(\kappa^{5/2}-\eta_*^{5/2})}{10\kappa^4}
\geq \frac{\sfb}{20\kappa^{3/2}},
\end{split}\end{align}
where in the second statement we restricted the integral to $[\eta_*,\kappa]$, and used $\kappa\leq \sqrt t\leq \sqrt \sfT\leq \sfT^2$ from the first statement in \eqref{e:defAs}, and $F(x)=\mu_0 ( [-x,0] )\geq \sfb x^{3/2}$ from \Cref{a:densitylowbound}; in the third statement we used $\kappa+x\leq 2\kappa$; and in the last two statements used that $\kappa\geq 2\eta_*$. We also have the upper bounds of $-m_0^{(2)}$
\begin{align}\label{e:upm0}
-m_0^{(2)}(\kappa)=\int_\bR2!\frac{\rd\mu_0(x)}{(\kappa-x)^3}\leq \int_\bR2!\frac{\rd\mu_0(x)}{(\kappa-x)^2\kappa}=\frac{2}{t\kappa},
\end{align}
\noindent and 
\begin{align}\label{e:m03}
m_0^{(3)}(\kappa)=\int_\bR 3!\frac{\rd\mu_0(x)}{(\kappa-x)^4}\leq 6\int_\bR \frac{\rd\mu_0(x)}{\kappa(\kappa-x)^3}=\frac{-3 m_0^{(2)}(\kappa)}{\kappa}.
\end{align}
where we bounded $|\kappa-x|\geq \kappa$ for $x\in \supp \mu_0\subset (-\infty, 0]$ (to obtain both estimates) and used \eqref{e:defxit} (in the last statement of \eqref{e:upm0}). 
For any $w\in \bC$ such that $|w-\kappa|<\kappa/2$, we have
\begin{align}\label{e:m04}
|m_0^{(4)}(w)|\leq\int_\bR 4!\frac{\rd\mu_0(x)}{|x-w|^5}\leq 4!\int_\bR \frac{2^3\rd\mu_0(x)}{(\kappa/2)^2(x-\kappa)^3}=\frac{-3\times 2^7 m_0^{(2)}(\kappa)}{\kappa^2},
\end{align}
where we used that $|x-w|\geq \kappa/2$ and $2|x-w|\geq |x-\kappa|$ for $x \in \supp \mu_0\subseteq (-\infty, 0]$ for the second inequality.

We can thus Taylor expand the Stieltjes transform $m_0(w)$ of $\mu_0$ around $\kappa$ when $|w - \kappa| < \kappa / 2$:
\begin{align}\begin{split}\label{e:m0taylor}
m_0(w)
&=m_0(\kappa)+m_0'(\kappa)(w-\kappa)+m_0^{(2)}(\kappa)\frac{(w-\kappa)^2}{2}+m_0^{(3)}(\kappa) \frac{(w-\kappa)^3}{3!}+\widetilde \cE(w) \frac{(w-\kappa)^4}{4!}\\
&=m_0(\kappa)+\frac{(w-\kappa)}{t}+m_0^{(2)}(\kappa)\frac{(w-\kappa)^2}{2}+m_0^{(3)}(\kappa) \frac{(w-\kappa)^3}{3!}+\widetilde \cE(w) \frac{(w-\kappa)^4}{4!},
\end{split}\end{align} 

\noindent where in the second line we used that $\kappa=\xi(t)$ and $m_0'(\xi(t))=1/t$ from the definition of $\xi(t)$ \eqref{e:defxit}. Moreover, thanks to \eqref{e:m04}, the Taylor remainder is bounded by 
\begin{align}\label{e:tcEup}
|\widetilde \cE(z) |\leq \max_{w\in \bC:|w-\kappa|\leq \kappa/2} |m_0^{(4)}(w)|\leq \frac{-3\times 2^7 m_0^{(2)}(\kappa)}{\kappa^2}.
\end{align}

 By \Cref{i:rhoy} in \Cref{yconvolution}, any $x \in \mathbb{R}$ satisfies $\varrho_t (E_t - x) > 0$ if and only if there exists a $w\in \del \Lambda_t\cap \bH$ such that $E_t-x=w-tm_0(w)$. Recalling that $E_t=\kappa-tm_0(\kappa)$ (from $\kappa=\xi(t)$ and \Cref{i:edge} in \Cref{yconvolution}), rearranging \eqref{e:m0taylor} gives
\begin{align}\begin{split}\label{e:constructx}
\bR\ni x
&=(\kappa-tm_0(\kappa))-(w-tm_0(w))\\
&=tm_0^{(2)}(\kappa)\frac{(w-\kappa)^2}{2}+tm_0^{(3)}(\kappa) \frac{(w-\kappa)^3}{3!}+t\widetilde \cE(w) \frac{(w-\kappa)^4}{4!}\\
&=\left(1+\cE(w)\right)\frac{t m_0^{(2)}(\kappa)(w-\kappa)^2}{2},
\end{split}\end{align}
where
\begin{align}\label{e:defcE}
\cE(w):=\frac{m_0^{(3)}(\kappa)}{m_0^{(2)}(\kappa)}\frac{(w-\kappa)}{3}+\frac{\widetilde \cE(w)}{m_0^{(2)}(\kappa)}\frac{(w-\kappa)^2}{12}.
\end{align}

When $|w-\kappa|\leq 2^{-3}\kappa$, from \eqref{e:defcE} we have 
\begin{align}\label{e:cEww}
|\cE(w)|\leq \frac{|w-\kappa|}{\kappa}+\frac{2^5|w-\kappa|^2}{\kappa^2}
\leq \frac{5|w-\kappa|}{\kappa}
\leq \frac{5}{8}<1,
\end{align}
where we used \eqref{e:m03} and \eqref{e:tcEup} for the first inequality, and $|w-\kappa|\leq 2^{-3}\kappa$ for the second. Next we show that the map $w\mapsto \left(1+\cE(w)\right)t m_0^{(2)}(\kappa)(w-\kappa)^2/2$ is a surjection from $|w-\kappa|\leq 2^{-3}\kappa$ onto  $\{u\in \bC: |u|\leq -2^{-9}t m_0^{(2)}(\kappa)\kappa^2\}$.

Fix any $|u| \leq -2^{-9}t m_0^{(2)}(\kappa)\kappa^2$; we will use Rouch\'{e}'s theorem to compare the two holomorphic functions $t m_0^{(2)}(\kappa)(w-\kappa)^2/2-u$ and $\left(1+\cE(w)\right)t m_0^{(2)}(\kappa)(w-\kappa)^2/2-u$ on the domain $\{w\in \bC: |w-\kappa|\leq 2^{-3}\kappa\}$. First we notice that  $t m_0^{(2)}(\kappa)(w-\kappa)^2/2-u$ has a root with $|w-\kappa|\leq 2^{-3}\kappa$. Next, for any $w$ such that $|w-\kappa|=2^{-3}\kappa$, we can upper bound the difference
\begin{align}\label{e:boundarydiff}\begin{split}
&\phantom{{}={}}\left|\left(\left(1+\cE(w)\right)t m_0^{(2)}(\kappa)(w-\kappa)^2/2-u\right) -\left(t m_0^{(2)}(\kappa)(w-\kappa)^2/2-u\right)\right|\\
&=-t m_0^{(2)}(\kappa)|w-\kappa|^2|\cE(w)|/2\leq -(5/8)\cdot t m_0^{(2)}(\kappa)|w-\kappa|^2/2\leq \left|t m_0^{(2)}(\kappa)(w-\kappa)^2/2-u\right|,
\end{split}\end{align} 
where in the second inequality we used \eqref{e:cEww} indicating that $|\cE(w)|\leq 5/8$, and in the last inequality we used that, when $|w-\kappa|=2^{-3}\kappa$,
\begin{align}
-\frac{3}{8} \frac{t m_0^{(2)}(\kappa)|w-\kappa|^2}{2}
= -\frac{3}{8} \frac{t m_0^{(2)}(\kappa)2^{-6}\kappa^2}{2}
> -2^{-9} t m_0^{(2)}(\kappa)\kappa^2\geq |u|.
\end{align}
The relation \eqref{e:boundarydiff} verifies the conditions of Rouch\'{e}'s theorem, and hence $\left(1+\cE(w)\right)t m_0^{(2)}(\kappa)(w-\kappa)^2/2-u$ also has a root with $|w-\kappa|\leq 2^{-3}\kappa$. 
 
 In particular for any $0\leq x\leq -2^{-9}tm_0^{(2)}(\kappa)\kappa^2$, \eqref{e:constructx} has a solution $w=w(x)$ with $|w-\kappa|\leq 2^{-3}\kappa$:
\begin{align}\label{e:solve0}
w-\kappa=\ri\sqrt{\frac{2x}{-tm_0^{(2)}(\kappa)}}\left(1+\cE(w)\right)^{-1/2},\quad |\cE(w)|\leq 5/8.
\end{align}

To further simplify \eqref{e:solve0}, we will use the following elementary inequalities: for any $|\varepsilon|\leq 5/8$.
\begin{align}\label{e:basic}
\left|\frac{1}{\sqrt{1+\varepsilon}}-1\right|\leq 2\varepsilon,
\quad 
\left|\frac{1}{\sqrt{1+\varepsilon}}-1+\frac{\varepsilon}{2}\right|\leq \frac{8\varepsilon^2}{3},
\end{align}
Using them we can rewrite $\cE(w)$ from \eqref{e:defcE} as
\begin{align}\begin{split}\label{e:error}
&\phantom{{}={}}\left|\left(1+\cE(w)\right)^{-1/2}-1+\frac{m_0^{(3)}(\kappa)}{6m_0^{(2)}(\kappa)}\ri\sqrt{\frac{2x}{-tm_0^{(2)}(\kappa)}}\right|
\\
&\leq \left|\left(1+\cE(w)\right)^{-1/2}-1+\frac{\cE(w)}{2}\right|
+\left|\frac{\cE(w)}{2}-\frac{m_0^{(3)}(\kappa)}{m_0^{(2)}(\kappa)}\frac{(w-\kappa)}{6}\right|
+\frac{m_0^{(3)}(\kappa)}{6|m_0^{(2)}( \kappa)|}\left| w-\kappa -\ri\sqrt{\frac{2x}{-tm_0^{(2)}(\kappa)}}\right|
\\
&\leq \frac{8|\cE(w)|^2}{3}
+\frac{|\widetilde \cE(w)|}{m_0^{(2)}(\kappa)}\frac{|w-\kappa|^2}{24}
+\frac{3}{\kappa}\sqrt{\frac{2x}{-tm_0^{(2)}(\kappa)}}\frac{|\cE(w)|}{3}\\
&\leq \frac{|w-\kappa|^2}{\kappa^2}\frac{200}{3}+\frac{16|w-\kappa|^2}{\kappa^2}+\frac{5|w-\kappa|}{\kappa^2}\sqrt{\frac{2x}{-tm_0^{(2)}(\kappa)}}\\
&\leq \frac{2x}{-t\kappa^2 m_0^{(2)}(\kappa)} \left(\frac{248}{3|1+\cE(w)|}+\frac{5}{|1+\cE(w)|^{1/2}} \right)
\leq \frac{2^9 x}{-t \kappa^2 m_0^{(2)}(\kappa)},
\end{split}\end{align}
where in the second line we used the triangle inquality; in the third line we used \eqref{e:m03}, the definition of $\cE(w)$ from \eqref{e:defcE}, \eqref{e:solve0} and \eqref{e:basic}; in the fourth line we used  \eqref{e:cEww} for the first and third terms and \eqref{e:tcEup} for the second term; in the last line we used  \eqref{e:solve0} and $1/|1+\cE(w)|\leq 8/3$ from \eqref{e:cEww}. 

By plugging \eqref{e:error} into \eqref{e:solve0} and using \eqref{rhoyw} in \Cref{yconvolution}, we conclude that for any $0\leq x\leq -2^{-9}tm_0^{(2)}(\kappa)\kappa^2$ 
\begin{align*}
&\phantom{{}={}}\varrho_t(E_t-x)=\frac{\Im[w]}{t\pi}
=
\frac{1}{\pi}\sqrt{\frac{2x}{-t^3 m_0^{(2)}(\kappa)}}\Re\left[\left(1+ \cE(w)\right)^{-1/2}\right]\\
&=\frac{1}{\pi}\sqrt{\frac{2x}{-t^3 m_0^{(2)}(\kappa)}}\Re\left[1-\frac{m_0^{(3)}(\kappa)}{6m_0^{(2)}(\kappa)}\ri\sqrt{\frac{2x}{-tm_0^{(2)}(\kappa)}} +\widehat \cE(x)\right]=\frac{1}{\pi}\sqrt{\frac{2x}{-t^3 m_0^{(2)}(\kappa)}}(1+\Re[\widehat \cE(x)])
 \end{align*}
where for the last equality, we used that the middle term inside $\Re[\cdot]$ is purely imaginary, and introduced the notation
\begin{align}
\widehat \cE(x)=\left(1+\cE(w(x))\right)^{-1/2}-1+\frac{m_0^{(3)}(\kappa)}{6m_0^{(2)}(\kappa)}\ri\sqrt{\frac{2x}{-tm_0^{(2)}(\kappa)}},\quad 
|\Real \widehat \cE(x)|\leq |\widehat \cE(x)|\leq  \frac{2^9 |x|}{-t \kappa^2 m_0^{(2)}(\kappa)},
\end{align}
where the bound of $\widehat \cE(x)$ is from \eqref{e:error}.
This gives the statement \eqref{e:rhotdensity}. 
Using \eqref{e:upm0} and the lower bound $\kappa\geq (\sfb t/20)^2$ from \eqref{estk2}, get 
$\cA(t)=-2/(t^3 m_0^{(2)}(\kappa))\geq \kappa/t^2\geq \sfb^2/400\geq 2^{-10}\sfb^2$. Using the lower bound \eqref{e:lowm0} and \eqref{estk2}, we have $\fc(t)=-2^{-9}tm_0^{(2)}(\kappa)\kappa^2\geq 2^{-9}t\sfb\kappa^{1/2}/20\geq 2^{-9}t^2\sfb^2/400\geq 2^{-20}(\sfb t)^2$. These two estimates give the last two statements in \eqref{e:defAs}.
\end{proof}

\begin{proof}[Proof of \Cref{p:densitybound}]
We denote the inverted cumulative density (recall from \eqref{gammay})  associated with $\{\mu_t\}_{0\leq t\leq \sfT}$,  by $\gamma_t: [0,1]\mapsto \bR$ for $0\leq t\leq \sfT$.
Let $F(x)=\mu_0([-x,0])\geq \sfb x^{3/2}$ for $\eta_*\leq x\leq \sfT^2$.

The first statement follows from a comparison argument using \Cref{freeconvolutioncompare}. Denote $\breve\mu_0:=\delta_0\in \mathscr P$, and let $\breve\mu_t=\breve\mu_0\boxplus \mu_{\semci}^{(t)}$ be the free convolution of $\breve\mu_0$ with the rescaled semicircle distribution $\mu_{\semci}^{(t)}$. Then recall from \Cref{r:Etbound} (with $A=1$) that $\breve\mu_t$ is given explicitly by
\begin{align*}
\rd \breve\mu_t= \frac{\sqrt{4t-x^2}}{2\pi t}\rd x.
\end{align*}
For the measure valued process $\{\breve\mu_t\}_{0\leq t\leq \sfT}$, we denote the associated inverted cumulative density function by $\breve\gamma_t: [0,1]\mapsto \bR$. From construction, for $y\in [0, 1]$ we have $\breve\gamma_0(y)= 0\geq\gamma_0(y)$. Thus Item \ref{i:convolutionheight} in \Cref{freeconvolutioncompare} implies that for any $t\geq 0$, 
\begin{align*}
E_t= \gamma_t(0)\leq \breve\gamma_t(0)=2\sqrt{t}.
\end{align*}
This gives the first statement in \Cref{p:densitybound}.

For the second claim, we analyze the two scenarios; the first is if $t\leq \sqrt{\fC x}$, and the second is if $t\geq \sqrt{\fC x}$. 
If $t\leq \sqrt{\fC x}$, then we either have $\fC \sqrt \eta_*\leq t\leq \sqrt{ \fC x}$ or $x\geq \fC \eta_*$. In both cases $x\geq \fC \eta_*$. We will prove the claim $\mu_t(E_t-x, E_t)\geq \fc x^{3/2}$ by using the  comparison results from \Cref{freeconvolutioncompare}.

From the discussion above (and our assumption in \Cref{p:densitybound}), in the case that $t\leq \sqrt{\fC x}$, we have $\sfT^2 \geq x\geq \fC \eta_*$. By \Cref{a:densitylowbound} we have  $1\geq \mu_0([-x/\fC , 0])\geq \sfb(x/\fC )^{3/2}$ for any $x/\fC\leq \sfT^2$. We denote 
$A:=\sfb(x/\fC )^{3/2}\leq 1$. Then $\gamma_0(A)\geq -x/\fC $. Let $\widetilde \gamma_0$ be the restriction of $\gamma_0$ to $[0,A]$; then $\wt\gamma_0$ is the inverted height function of a finite measure $\wt\mu_0\in \mathscr P_{\rm fin}$ (by the definition \eqref{gammay}, $\wt\mu_0$ is the pushforward of the Lebesgue measure on $[0,A]$ by $\wt \gamma_0$), and $\wt\mu_0(\bR)=A$. We also denote by $\widetilde \mu_t=\widetilde\mu_0\boxplus \mu_{\semci}^{(t)}$ the free convolution of $\widetilde \mu_0$ with the rescaled semicircle distribution $\mu_{\semci}^{(t)}$. Then $\{\widetilde\mu_t\}_{t\geq 0}$ is a measure-valued process with constant total mass $A$. We denote the associated inverted cumulative density function (recall from \eqref{gammay}) of $\{\widetilde\mu_t\}_{0\leq t\leq \sfT}$ by $\widetilde\gamma_t : [0,A]\mapsto \bR$. Then from construction we have $\widetilde \gamma_0( y)=\gamma_0(y)$ for $y\in[0, A]$. Thus for $y,y'\in [0, A]$ we have $|\gamma_0(y)-\gamma_0(y')|\leq |\widetilde\gamma_0(y)-\widetilde \gamma_0(y')|$, and Item \ref{i:convolutiongap} in \Cref{freeconvolutioncompare} implies that, for any $t\geq 0$,  
\begin{align}\label{e:Gstarbound}
\gamma_t(0)-\gamma_t(A)\leq \widetilde \gamma_t(0)-\widetilde \gamma_t(A).
\end{align} 

Next we use \Cref{i:convolutionheight} in \Cref{freeconvolutioncompare}, by comparing to delta masses at $0$ and $-x/\fC $, respectively, to show that $\wt\gamma_t(0)\leq 2\sqrt{tA}$ and $\widetilde \gamma_t(A)\geq -x/\fC -2\sqrt{tA}$. The proofs of the two statements are very similar, so we will only prove the first one.  Denote $\widehat \mu_0:=A\delta_0$, and let $\widehat \mu_t=\widehat\mu_0\boxplus \mu_{\semci}^{(t)}$ be the free convolution of $\widehat \mu_0$ with the rescaled semicircle distribution $\mu_{\semci}^{(t)}$. Then recall from \Cref{r:Etbound} that $\widehat \mu_t$ is given explicitly by
\begin{align*}
\rd \widehat \mu_t= \frac{\sqrt{4At-x^2}}{2\pi t}\rd x.
\end{align*}
For the measure valued process $\{\widehat\mu_t\}_{0\leq t\leq \sfT}$, we denote the associated inverted cumulative density function by 
 $\widehat \gamma_t: [0,A]\mapsto \bR$. From construction, for $y\in [0, A]$ we have $\widetilde \gamma_0(y)\leq 0=\widehat \gamma_0(y)$. Thus Item \ref{i:convolutionheight} in \Cref{freeconvolutioncompare} implies that for any $t\geq 0$, 
\begin{align*}
\widetilde \gamma_t(0)\leq \widehat \gamma_t(0)=2\sqrt{At}.
\end{align*}
The same argument also gives that $\widetilde \gamma_t(A)\geq -x/\fC -2\sqrt{At}$. Together with \eqref{e:Gstarbound}, we conclude that for $t \le \sqrt{\mathfrak{C}x}$ we have 
\begin{align*}
\gamma_t(0)-\gamma_t(A)\leq \widetilde \gamma_t(0)-\widetilde \gamma_t(A)\leq x/\fC +4\sqrt{At}\leq x\left(\frac{1}{\fC }+4\sqrt{\frac{\sfb}{\fC }}\right)\leq x,
\end{align*}

\noindent where in the third inequality we used $A= \sfb (x/\fC )^{3/2}$ and $t\leq \sqrt{\fC x}$; in the last inequality we used $\fC =2^{21}/ \sfb^2\geq \max\{2, 2^6 \sfb\}$. Hence, for $t\leq \sqrt{\fC x}$, we have
\begin{align}\label{e:lowdd1}
\mu_t([E_t-x, E_t])\geq A=\sfb(x/\fC )^{3/2}.
\end{align}

For $t\geq \sqrt{\fC x}$, either $t\geq \fC \sqrt{\eta_*}$ or $x\geq \fC \eta_*$. In both cases we have $t\geq \fC \sqrt{\eta_*}\geq (30/\sfb)\sqrt{\eta_*}$. Recall $\xi(t)$, $\cA(t)$, $\mathcal{E}(t)$, and $\fc(t)$ from \Cref{l:densityt}; for $(30/\sfb)\sqrt{\eta_*}\leq t\leq \sfT$, we have that
\begin{align}\label{e:rhotylow}
 \varrho_t(y)= (1+\cE(E_t-y))\frac{\sqrt{\cA(t)(E_t-y)}}{\pi}\geq \frac{1}{2}\cdot \frac{\sfb}{2^{5}}\frac{\sqrt{E_t-y}}{\pi},
 \end{align}
provided that $|y|\leq 2^{-21}\sfb^2 t^2\leq \fc(t)/2$. 
 Moreover, for $t\geq \sqrt{\fC x}$, we have $x\leq t^2/\fC \leq 2^{-21}\sfb^2 t^2\leq \fc(t)/2$, where we used that $\fC \geq 2^{21}/\sfb^2$.  Then integrating \eqref{e:rhotylow} gives
\begin{align}\label{e:lowdd2}
\mu_t([E_t-x,E_t])
&=
\int_0^x \varrho_t(E_t-y)\rd y
\geq \int_0^x \frac{\sfb}{2^{6}\pi}\sqrt{y}
\rd y= \frac{\sfb}{96\pi}x^{3/2}.
\end{align}
The estimates \eqref{e:lowdd1} and \eqref{e:lowdd2} together give the first statement in \Cref{p:densitybound}.

Next we prove \eqref{e:Immbound}. Recall our assumption that we have either $t\geq \fC \sqrt \eta_*$, or $\kappa\geq \fC \eta_*$, implying that 
\begin{align*}
\Im[m_t(E_t+\kappa+\ri\eta)]
&=\int_{\bR} \frac{\eta\varrho_t(x)\rd x}{(E_t+\kappa-x)^2+\eta^2}
\geq \int_{E_t-(\kappa+\eta)}^{E_t}\frac{\eta\varrho_t(x)\rd x}{(E_t+\kappa-x)^2+\eta^2}\\
&\geq \int_{E_t-(\kappa+\eta)}^{E_t}\frac{\eta\varrho_t(x)\rd x}{(2\kappa+\eta)^2+\eta^2}
\geq \frac{\fc  \eta(\kappa+\eta)^{3/2}}{2(2\kappa+2\eta)^2}
=\frac{\fc  \eta}{8\sqrt{\kappa+\eta}},
\end{align*}
where in the second statement we restricted the integration on $[E_t-\kappa-\eta, E_t]$; in the third statement we used that for $x\in [E_t-\kappa-\eta, E_t]$, we have $E_t+\kappa-x\leq 2\kappa+\eta$; in the fourth statement inequality we used $\kappa+\eta\leq \sfT^2$ from our assumption, and $\mu_t([E_t-(\kappa+\eta), E_t])\geq \fc  (\kappa+\eta)^{3/2}$ from the second statement in \Cref{p:densitybound}. 
\end{proof}

\section{Edge Rigidity and Universality}\label{s:rigidity}

In this section we study Dyson Brownian motion, with initial data $\mu_0=\mu_0^{(n)}=(1/n)\sum_{i=1}^n \delta_{\lambda_i(0)}$ satisfying  \Cref{a:densitylowbound}, and prove \Cref{t:main} and \Cref{t:universality}. Recall $\mu_t=\mu_0\boxplus \mu_{\rm sc}^{(t)}$, $E_t=\max (\supp \mu_t)$, and the constants $\fc$ and $\fC $ from \Cref{p:densitybound}. 
We will first study the case that
\begin{flalign}
	\label{etan} 
	\fC ^2\eta_*\geq (\log n)^{15}/n^{2/3}.
\end{flalign}
Under the assumption \eqref{etan}, define the function 
\begin{align}\label{e:deff}
f(t)=\max\{ \fC \sqrt{\eta_*}-\fc   \max\{0, t-\fC \sqrt{\eta_*}\}/8,(\log n)^{15/2} n^{-1/3}\}^2.
\end{align}
Then for $t\leq \fC \sqrt{\eta_*}$, we have by \eqref{etan} that $f(t)=\fC^2 \eta_*$; for $t\geq \fC \sqrt \eta_*$, $f(t)$ decreases to $(\log n)^{15}/n^{2/3}$, and
\begin{align}\label{e:ftfsdiff}
(f(s))^{1/2}-(f(t))^{1/2}\leq \frac{\fc  (t-s)}{8},\quad \fC \sqrt{\eta_*}\leq s\leq t.
\end{align}

The following proposition states that $\lambda_1(t)\leq E_t+f(t)$ for $0\leq t\leq \sfT$ with overwhelming probability. \Cref{t:main} is an easy consequence of it. The proof of \Cref{p:edgerigidity} will be given in \Cref{s:prove31}. 
	
	\begin{prop}\label{p:edgerigidity}
	Consider Dyson Brownian motion $\bm\la(t)$ as in \eqref{e:DBM}. We assume its initial data $\mu_0=\mu_0^{(n)}=(1/n)\sum_{i=1}^n \delta_{\lambda_i(0)}$ satisfies \Cref{a:densitylowbound}. We further assume that \eqref{etan} holds and $\sfT\geq \fB\sqrt {\eta_*}=2^{60}\sfb^{-6}\sqrt{\eta_*}$. Recall $\mu_t=\mu_0\boxplus \mu_{\rm sc}^{(t)}$ and $E_t=\max (\supp \mu_t)$ from \Cref{p:densitybound}. There exists a large constant $C=C(\sfb,\sfT)>1$, such that, with probability $1-Ce^{-(\log n)^2}$, we have for any $0\leq t\leq \sfT$ that 
	\begin{align}\label{e:ubt}
	\lambda_1(t)\leq E_t+f(t),
	\end{align}
where $f(t)$ is as defined in \eqref{e:deff}.
	\end{prop}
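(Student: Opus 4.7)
The plan is a stopping-time argument driven by the method of characteristics for the complex Burgers equation satisfied by $m_t$. I would define
\[
\tau = \inf\bigl\{ t \in [0, \sfT] : \lambda_1(t) > E_t + f(t) \bigr\} \wedge \sfT,
\]
and aim to show $\tau = \sfT$ with probability $1 - Ce^{-(\log n)^2}$ by deriving a contradiction whenever $\tau < \sfT$. On $[0, \tau]$ every particle lies in $(-\infty, E_t + f(t)]$, so one can meaningfully compare the empirical Stieltjes transform $m_t^{(n)}(z) = \tfrac{1}{n} \sum_i (\lambda_i(t) - z)^{-1}$ with the deterministic $m_t(z)$ at test points slightly outside the support of $\mu_t$.

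Next I would introduce the characteristic flow $z_t = z_t(w)$ defined by $\partial_t z_t = -m_t(z_t)$, which by \Cref{mz} and \eqref{mt} satisfies $m_t(z_t) = m_0(w)$ for every $t \ge 0$. For $w = \xi(t_0) + \ri \eta_0$ with an appropriately small $\eta_0$, the quantitative corollaries \Cref{c:kappa} and \Cref{c:domainic} announced in the introduction control $\Re(z_t - E_t)$ and $\Im z_t$ in terms of a square-root profile; in particular, the characteristics stay close to but strictly outside the moving edge. A direct It\^o computation on $m_t^{(n)}(z_t)$ then gives
\[
\rd \bigl( m_t^{(n)}(z_t) - m_t(z_t) \bigr) = \partial_z m_t^{(n)}(z_t) \cdot \bigl( m_t^{(n)}(z_t) - m_t(z_t) \bigr) \rd t + \rd M_t + \mathcal E_t \rd t,
\]
where $M_t$ is a martingale with $\rd \langle M \rangle_t \le C n^{-2}\, \Im m_t^{(n)}(z_t)/\Im z_t \, \rd t$ and $\mathcal E_t = \OO\bigl( n^{-1}|(m_t^{(n)})'(z_t)| \bigr)$ is the It\^o correction.

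Granting the bulk rigidity bound of \Cref{t:bulkrigidity}, a stochastic Gr\"onwall argument upgrades the martingale and It\^o correction to $(\log n)^{\OO(1)}/(n \Im z_t)$ with probability $1 - Ce^{-(\log n)^2}$, yielding
\[
|m_t^{(n)}(z_t) - m_t(z_t)| \le \frac{(\log n)^{\OO(1)}}{n \Im z_t}, \qquad 0 \le t \le \tau,
\]
uniformly over a one-parameter family of characteristics. Combining this with the lower bound \eqref{e:Immbound} on $\Im m_t$, and choosing $z_\tau$ so that $\Re z_\tau - E_\tau \asymp f(\tau)$ and $\Im z_\tau \asymp \sqrt{f(\tau)}$, one sees that a hypothetical particle at $\lambda_1(\tau) = E_\tau + f(\tau)$ would contribute $\asymp 1/(n \Im z_\tau)$ to $\Im m_\tau^{(n)}(z_\tau)$, an amount exceeding the fluctuation bound once $f(\tau) \gg (\log n)^{15}/n^{2/3}$. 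This produces the required contradiction on $\{\tau < \sfT\}$.

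The main obstacle will be controlling the characteristics \emph{without} an upper bound on the initial density: under \Cref{a:densitylowbound} one only knows $\mu_0([-x,0]) \ge \sfb x^{3/2}$, so the standard square-root edge behavior of $\mu_0$ used in \cite{adhikari2020dyson} is unavailable, and the relative motion of $\xi(t)$ and $E_t$ need not be comparable in any simple way. This is precisely where \Cref{c:kappa} and \Cref{c:domainic} enter: by a careful analysis of the characteristic velocity $-m_t(z_t)$ against the edge velocity $\partial_t E_t$, they guarantee that characteristics move strictly slower than the edge, so that any initial outlier at distance $\OO(\sqrt{\eta_*})$ is absorbed toward $[E_t - \OO(\sqrt{f(t)}), E_t]$ by time $\fC \sqrt{\eta_*}$; the Gr\"onwall contradiction above then takes over and propagates the optimal bound through all of $[\fB \sqrt{\eta_*}, \sfT]$.
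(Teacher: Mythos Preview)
Your outline has the right architecture---characteristics, It\^o on $\widetilde m_t(z_t)$, a stopping time, and a Gr\"onwall closure using \Cref{c:kappa} and \Cref{c:domainic}---and this is exactly what the paper does. But three concrete points would prevent the argument, as written, from closing.

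\textbf{The fluctuation bound is too weak.} You claim $|\widetilde m_t(z_t)-m_t(z_t)|\le (\log n)^{\OO(1)}/(n\Im z_t)$, but the paper needs and proves the sharper
\[
|\widetilde m_t(z_t)-m_t(z_t)|\le \frac{(\log n)^{7/2}}{n\sqrt{(\kappa_t+\eta_t)\eta_t}},
\]
where $\kappa_t=\Re z_t-E_t$. In the edge regime $\kappa_t\gg\eta_t$ this is stronger by a factor $\sqrt{\kappa_t/\eta_t}$, and that gain is exactly what lets one push $\eta$ down far enough to detect a single stray particle. The gain comes from \eqref{e:ksktrelation} in \Cref{c:kappa}, which says $\kappa_s\ge(\eta_s/\eta_t)\kappa_t$ along the flow; feeding this into the quadratic-variation and Gr\"onwall integrals is where the extra $\sqrt{\kappa_t}$ appears (see \eqref{e:kslowb}--\eqref{e:dier}). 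Your use of \Cref{c:kappa} and \Cref{c:domainic} only to keep the characteristic outside the edge misses this.

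\textbf{The stopping time and the test point.} With your stopping time on $\lambda_1$ you have no a~priori control of $\Im\widetilde m_s(z_s)$, so the Gr\"onwall coefficient $|\partial_z\widetilde m_s(z_s)|\le\Im\widetilde m_s(z_s)/\eta_s$ is unbounded and the loop does not close. The paper instead stops on the Stieltjes transform itself (see \eqref{stoptime}), which immediately gives $\Im\widetilde m\le 2\Im m$ (\Cref{c:Im-Im}); the particle bound $\lambda_1(t)\le E_t+f(t)$ is then a \emph{consequence} of the local law before the stopping time (\Cref{l:beforestop}), obtained by choosing $\eta$ so that $\eta\,\Im m_t(E_t+f(t)+\ri\eta)=1/(n\log n)$---far smaller than your $\eta\asymp\sqrt{f(\tau)}$. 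At your scale the single-particle contribution and the fluctuation bound are the same size, so no contradiction results.

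\textbf{Bulk rigidity is not used.} \Cref{t:bulkrigidity} gives nothing for $\lambda_1$ (the upper quantile $\gamma_{1-\lfloor(\log n)^5\rfloor}$ is $+\infty$) and plays no role here; the Gr\"onwall closes through the self-consistent stopping time, not through an external rigidity input.
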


\begin{proof}[Proof of \Cref{t:main}]
Recall $\fc$ and $\fC$ from \Cref{p:densitybound}. We first discuss the case when \eqref{etan} holds.
From the construction of $f(t)$ in \eqref{e:deff} and discussion after it, for $t\geq 2^{60}\sfb^{-6}\sqrt{\eta_*}\geq (1+8/\fc)\fC\sqrt{\eta_*}$,
 we have $f(t)=\max\{ 0,(\log n)^{15/2} n^{-1/3}\}^2=(\log n)^{15} n^{-2/3}$. The claim \eqref{e:edgerig} thus follows from \eqref{e:ubt}.  When \eqref{etan} does not hold, then \Cref{a:densitylowbound} still holds by replacing $\eta_*$ with $(\log n)^{15}/(\fC^2 n^{2/3})$. If $\eta_*=(\log n)^{15}/(\fC^2 n^{2/3})$, we can construct $f(t)$ as in \eqref{e:deff}, then $f(t)=(\log n)^{15} n^{-2/3}$ for any $t\geq 0$. Thus \Cref{p:edgerigidity} gives that  with probability $1-Ce^{-(\log n)^2}$, $\lambda_1(t)\leq E_t+(\log n)^{15}/ n^{2/3}$ for any $0\leq t\leq \sfT$, and the claim \eqref{e:edgerig}  follows.
 \end{proof}

\subsection{Characteristic flow}\label{s:Cf}

In the remainder of \Cref{s:rigidity}, we will use the notations $\sfT, \sfb, \eta_*$ and the measure $\mu_0=\mu_0^{(n)}$ from \Cref{a:densitylowbound}. 
We recall the functions $\cA(t), \fc(t)$, and constants $\fc$ and $\fC$ from \Cref{p:densitybound}. We also recall the functions $f(t)$ from \eqref{e:deff}. We denote the free convolution of $\mu_0$ with rescaled semicircle distribution by as $\mu_t$ (recall from \Cref{TransformConvolution}). We let $E_t=\max (\supp \mu_t)$ and denote the Stieltjes transform of $\mu_t$ by $m_t(z)$.  We recall the set $\Lambda_t$ from \eqref{mtlambdat}. 

We can reformulate the defining relation \eqref{mt} of the free convolution by introducing the characteristic flow:
\begin{align}\label{e:flow}
m_t(z_t(u))=m_0(u),\quad z_t(u)=u-tm_0(u)=:E_t+\kappa_t(u)+\ri\eta_t(u),\quad u\in \Lambda_t.
\end{align}
When the context is clear, we will simply write $z_t(u), \kappa_t(u), \eta_t(u)$ as $z_t, \kappa_t, \eta_t$. 
It follows from \eqref{e:flow} that $z_t=u-tm_t(z_t)\in \bH$, since $z_t : \Lambda_t \rightarrow \mathbb{H}$ is a bijection, by \Cref{mz}. The characteristic flow satisfies the fundamental property that 
\begin{flalign}\label{e:m0=ms}
	m_t (z_t) \qquad \text{is constant in $t \ge 0$},
\end{flalign} 

\noindent which we will often use without comment.

In the following we collect  some basic facts about the Stieltjes transform and the characteristic flows, which will be used repeatedly in the rest of this section. Take any point $u\in \bH$, and time $t\geq 0$, such that at time $t$ we have $z_t=z_t(u)=E_t+\kappa_t+\ri\eta_t\in \Lambda_t$. Then for any time $0\leq s\leq t$, using $z_s=u-sm_0(u)=u-sm_t(z_t)$ (from \eqref{e:flow}) we have 
\begin{align}\label{e:etas}
z_s=z_t+(t-s)m_t(z_t),\quad  \eta_s=\eta_t+(t-s) \Im[m_t(z_t)]\geq \eta_t,
\end{align}
where the last inequality follows from $\Im[m_t(z_t)]=\Im[m_0(u)]\geq 0$.

We collect some further properties of the characteristic flow in the following two lemmas.
\Cref{c:kappa} states that if $\kappa_t\geq 0$, then $\kappa_s\geq 0$ for all $0\leq s\leq t$; it also gives lower bounds on $\kappa_s-\kappa_t$. \Cref{c:domainic} states that if $\kappa_t\geq f(t)$ (recall from \eqref{e:deff}), then $\kappa_s\geq f(s)$ for all $0\leq s\leq t$; it also gives lower bounds on $\kappa_s-f(s)$.

\begin{lem}\label{c:kappa}
Adopt \Cref{a:densitylowbound}, and recall the constants $\fc$ and $\fC  $ from \Cref{p:densitybound}. Take $u\in \Lambda_t$, and fix real numbers $t > 0$ and $0\leq s\leq t$. Denote $z_s=z_s(u)=E_s+\kappa_s+\ri\eta_s$, and assume that $\kappa_s\leq \sfT^2$ (recall $\sfT$ from \Cref{a:densitylowbound}). If  $\kappa_t\geq 0$ then, for $0\leq s\leq t$, we have
\begin{align}\label{e:ksktrelation}
\kappa_s-\kappa_t\geq \left(\frac{\eta_s}{\eta_t}-1\right)\kappa_t= (t-s)\Im[m_t(z_t)]\frac{\kappa_t}{\eta_t}\geq 0. 
\end{align}

\noindent Moreover, if $\kappa_t\geq 0$ then, for $\fC \sqrt{\eta_*}\leq s\leq t$, we have
\begin{align}
	\label{e:ktksdiff}
\kappa_s^{1/2}-\kappa_t^{1/2}\geq \frac{\fc(t-s)}{4}.
\end{align}
\end{lem}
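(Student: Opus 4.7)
The plan is to analyze the ODE satisfied by $\xi_r := z_r - E_r = \kappa_r + i\eta_r$ along the characteristic emanating from $u$. Differentiating $z_r = u - r\,m_0(u)$ at fixed $u$ yields $\partial_r z_r = -m_0(u) = -m_r(z_r)$ by \eqref{e:m0=ms}, while the edge obeys $\partial_r E_r = -m_r(E_r)$, the real boundary value from $\bH$. Hence
\[
\partial_r \xi_r \;=\; m_r(E_r) - m_r(z_r) \;=\; -\xi_r \int_{\bR} \frac{\rd\mu_r(x)}{(x-z_r)(x-E_r)} \;=:\; -\xi_r\,(I_1 + i I_2).
\]
Letting $y = E_r - x \ge 0$ on $\supp\mu_r$ and writing $\tilde\mu_r$ for the push-forward of $\mu_r$ under $x\mapsto E_r - x$, explicit computation gives
$I_1 = \int_0^\infty (y+\kappa_r)\bigl(y|y+\kappa_r+i\eta_r|^2\bigr)^{-1}\,\rd\tilde\mu_r(y) > 0$
and
$I_2 = -\int_0^\infty \eta_r\bigl(y|y+\kappa_r+i\eta_r|^2\bigr)^{-1}\,\rd\tilde\mu_r(y) < 0$.

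For \eqref{e:ksktrelation}, the key identity is
\[
\kappa_r\,\partial_r\eta_r - \eta_r\,\partial_r\kappa_r \;=\; \Im\bigl[\bar\xi_r\,\partial_r\xi_r\bigr] \;=\; -|\xi_r|^2\,I_2 \;=\; |\xi_r|^2\,|I_2| \;\ge\; 0,
\]
which rearranges to $\partial_r(\kappa_r/\eta_r) \le 0$. Hence $\kappa_s/\eta_s \ge \kappa_t/\eta_t$ for $0 \le s \le t$; combining with $\eta_s - \eta_t = (t-s)\Im[m_t(z_t)]$ from \eqref{e:etas} gives exactly \eqref{e:ksktrelation}, and the hypothesis $\kappa_t \ge 0$ together with $\eta_s \ge \eta_t > 0$ guarantees the right-hand side is nonnegative.

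For \eqref{e:ktksdiff}, I exploit the density lower bound from \Cref{p:densitybound}(2): since $r \in [s,t] \subseteq [\fC\sqrt{\eta_*},\sfT]$, we have $\tilde\mu_r([0,y]) \ge \fc\,y^{3/2}$ for $y \in [0,\sfT^2]$. A layer-cake / integration-by-parts manipulation turns this cumulative bound into $\int_0^\infty f(y)\,\rd\tilde\mu_r(y) \ge \tfrac{3\fc}{2}\int_0^{\sfT^2}\sqrt y\,f(y)\,\rd y$ for any nonnegative decreasing $f$. Applying this to the integrands in $I_1$ and $|I_2|$, bounding $|y+\kappa_r+i\eta_r|^2 \le (y+\kappa_r+\eta_r)^2$, and evaluating the resulting elementary one-dimensional integrals on $y \in [0,\min\{\kappa_r+\eta_r,\sfT^2\}]$ yields, for an absolute constant $c>0$,
\[
-\partial_r\kappa_r \;=\; \kappa_r I_1 + \eta_r|I_2| \;\ge\; \frac{c\,\fc\,(\kappa_r^2 + \eta_r^2)}{(\kappa_r + \eta_r)^{3/2}} \;\ge\; \frac{c\,\fc}{2}\sqrt{\kappa_r + \eta_r} \;\ge\; \frac{c\,\fc}{2}\sqrt{\kappa_r},
\]
using $\kappa_r^2 + \eta_r^2 \ge (\kappa_r+\eta_r)^2/2$. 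Dividing by $2\sqrt{\kappa_r}$ gives $-\partial_r\sqrt{\kappa_r} \ge c\,\fc/4$, and integrating from $s$ to $t$ yields \eqref{e:ktksdiff} after tracking the numerical constant through the layer-cake step.

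The main obstacle is the restriction $y \le \sfT^2$ in \Cref{p:densitybound}(2): while \eqref{e:ksktrelation} ensures $\kappa_r \in [\kappa_t,\kappa_s] \subseteq [0,\sfT^2]$, the quantity $\eta_r = \eta_t + (t-r)\Im[m_t(z_t)]$ need not be bounded by $\sfT^2$. When $\kappa_r+\eta_r > \sfT^2$ the layer-cake step above misses some mass; this is handled by either cutting off the integration at $y = \sfT^2$ and arguing the missing contribution is negligible (since $\sfT$ is large compared to the other relevant scales), or by invoking \Cref{p:densitybound}(3) in a regime where $\kappa_r + \eta_r \le \sfT^2$ to bound $\Im[m_t(z_t)]$ from below in terms of $\eta_r/\sqrt{\kappa_r+\eta_r}$, which combined with $\eta_r \ge (t-r)\Im[m_t(z_t)]$ directly gives $\sqrt{\kappa_r+\eta_r} \ge \fc(t-r)/8$ and closes the estimate.
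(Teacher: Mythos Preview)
Your argument for \eqref{e:ksktrelation} is correct and essentially the paper's, repackaged: computing $\Im[\bar\xi_r\,\partial_r\xi_r]=-|\xi_r|^2 I_2\ge 0$ directly yields $\partial_r(\kappa_r/\eta_r)\le 0$, which is exactly the differential inequality the paper obtains after its splitting of $\partial_s\kappa_s$. Your route is arguably cleaner in that the monotonicity of $\kappa_r/\eta_r$ holds regardless of the sign of $\kappa_r$ (since $I_2<0$ unconditionally), so the positivity $\kappa_s\ge 0$ falls out as a consequence rather than requiring the paper's separate continuity/contradiction step.

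For \eqref{e:ktksdiff}, however, you are working much harder than necessary, and this causes both the constant mismatch and the ``obstacle'' you raise. The paper does not run a layer--cake argument on the full integrand $\kappa_r I_1+\eta_r|I_2|$. Instead it uses a different splitting of $-\partial_s\kappa_s$,
\[
-\partial_s\kappa_s \;=\; \eta_s^2\!\int\frac{\rd\tilde\mu_s(y)}{(y+\kappa_s)\bigl((y+\kappa_s)^2+\eta_s^2\bigr)}\;+\;\kappa_s\!\int\frac{\rd\tilde\mu_s(y)}{y(y+\kappa_s)},
\]
drops the first (nonnegative) term, and in the second simply restricts the integral to $y\in[0,\kappa_s]$, where $y\le\kappa_s$ and $y+\kappa_s\le 2\kappa_s$. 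This gives
\[
-\partial_s\kappa_s \;\ge\; \frac{\kappa_s}{2\kappa_s^2}\,\tilde\mu_s\bigl([0,\kappa_s]\bigr)\;\ge\;\frac{\fc\,\kappa_s^{3/2}}{2\kappa_s}\;=\;\frac{\fc}{2}\sqrt{\kappa_s},
\]
invoking \Cref{p:densitybound}(2) only with $x=\kappa_s\le\sfT^2$. The stated constant $\fc/4$ then follows immediately after dividing by $2\sqrt{\kappa_s}$ and integrating. Note that $\eta_s$ never enters this estimate, so your worry about $\eta_r$ possibly exceeding $\sfT^2$ is a non-issue for the paper's argument---it is an artifact of keeping both $I_1$ and $I_2$ in play. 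Your approach, with the crude bound $|y+\kappa_r+i\eta_r|^2\le(y+\kappa_r+\eta_r)^2$ and integration over $[0,\kappa_r+\eta_r]$, yields roughly $-\partial_r\kappa_r\ge\tfrac{3\fc}{8}\sqrt{\kappa_r}$, which is strictly weaker than $\tfrac{\fc}{2}\sqrt{\kappa_r}$ and does not recover the claimed $\fc/4$ without further work.
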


\begin{lem}\label{c:domainic}
Adopt \Cref{a:densitylowbound}, and recall the constants $\fc$ and $\fC$ from \Cref{p:densitybound}. Take $u\in \Lambda_t$, and fix real numbers $t > 0$ and $0\leq s\leq t$. Denote $z_s=z_s(u)=E_s+\kappa_s+\ri\eta_s$, and assume that $\kappa_s\leq \sfT^2$ (recall $\sfT$ from \Cref{a:densitylowbound}). If $\kappa_t\geq f(t)$ (recall from \eqref{e:deff}) then, for $\fC \sqrt{\eta_*}\leq s\leq t$, we have
\begin{align}\label{e:ksfsdiff1}
\kappa_s-f(s)\geq \frac{(t-s)}{2}\Im[m_t(z_t)]\frac{\kappa_{ t}}{\eta_{t}};
\end{align}
Moreover, if $\kappa_t\geq f(t)$ then, for $0\leq s\leq \widetilde t:=\min\{t,\fC \sqrt{\eta_*}\}$, we have
\begin{align}\label{e:ksfsdiff2}
\kappa_s-f(s)\geq 
(\widetilde t-s)\Im[m_t(z_t)]\frac{\kappa_{\widetilde t}}{\eta_{\widetilde t}}
\end{align}
\end{lem}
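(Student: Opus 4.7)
The plan for \Cref{c:domainic} is to combine both conclusions of \Cref{c:kappa} with the Lipschitz-in-square-root estimate \eqref{e:ftfsdiff} for $f$ and the flow invariance $m_t(z_t)=m_s(z_s)$ from \eqref{e:m0=ms}. I expect the proof to be largely arithmetic, with \eqref{e:ksfsdiff1} and \eqref{e:ksfsdiff2} proved in sequence, the second reducing to the first evaluated at $s=\widetilde t$.

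For \eqref{e:ksfsdiff1}, valid on $\fC\sqrt{\eta_*}\le s\le t$, I would combine \eqref{e:ktksdiff} with the hypothesis $\kappa_t\ge f(t)$ to get $\kappa_s^{1/2}\ge f(t)^{1/2}+\fc(t-s)/4$; subtracting \eqref{e:ftfsdiff} then yields $\kappa_s^{1/2}-f(s)^{1/2}\ge \fc(t-s)/8$, and squaring gives
\[
\kappa_s-f(s)\ge \tfrac{\fc(t-s)}{4}f(s)^{1/2}+\tfrac{\fc^2(t-s)^2}{64}.
\]
Separately, \eqref{e:ksktrelation} yields $\kappa_s-f(s)\ge \kappa_t-f(s)+(t-s)\Im[m_t(z_t)]\kappa_t/\eta_t$. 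Averaging these two bounds produces exactly the target $\tfrac{(t-s)}{2}\Im[m_t(z_t)]\kappa_t/\eta_t$, plus the extra quantity $\tfrac12[\kappa_t-f(s)+\tfrac{\fc(t-s)}{4}f(s)^{1/2}+\tfrac{\fc^2(t-s)^2}{64}]$, which I would show is nonnegative. Indeed, using $\kappa_t\ge f(t)$, this reduces to checking $f(s)-f(t)\le \tfrac{\fc(t-s)}{4}f(s)^{1/2}$; the identity $f(s)-f(t)=(f(s)^{1/2}-f(t)^{1/2})(f(s)^{1/2}+f(t)^{1/2})\le 2f(s)^{1/2}(f(s)^{1/2}-f(t)^{1/2})$ (valid since $f$ is nonincreasing) together with \eqref{e:ftfsdiff} yields exactly this inequality.

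For \eqref{e:ksfsdiff2}, set $\widetilde t=\min\{t,\fC\sqrt{\eta_*}\}$ and use assumption \eqref{etan} to observe that $f(s)=f(\widetilde t)=\fC^2\eta_*$ for every $s\in[0,\widetilde t]$. If $t\le \fC\sqrt{\eta_*}$, then $\widetilde t=t$ and $f(s)=f(t)$, so the bound is immediate from \eqref{e:ksktrelation} together with $\kappa_t\ge f(t)$. Otherwise $t>\fC\sqrt{\eta_*}$; I would first apply \eqref{e:ksfsdiff1} at $s=\widetilde t$ to deduce $\kappa_{\widetilde t}\ge f(\widetilde t)$, then apply \eqref{e:ksktrelation} with $t$ replaced by $\widetilde t$, using $m_{\widetilde t}(z_{\widetilde t})=m_t(z_t)$ from \eqref{e:m0=ms}, to obtain
\[
\kappa_s-f(s)\ge \kappa_{\widetilde t}-f(\widetilde t)+(\widetilde t-s)\Im[m_t(z_t)]\kappa_{\widetilde t}/\eta_{\widetilde t}\ge (\widetilde t-s)\Im[m_t(z_t)]\kappa_{\widetilde t}/\eta_{\widetilde t}.
\]

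I do not anticipate a conceptual obstacle. The delicate step is the averaging in the first estimate, which works precisely because the Lipschitz constant $\fc/8$ in \eqref{e:ftfsdiff} is a factor of two smaller than the constant $\fc/4$ in \eqref{e:ktksdiff}; this slack is exactly what forces the extra averaged quantity to be nonnegative.
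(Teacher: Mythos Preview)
Your proof is correct and follows essentially the same strategy as the paper's: for \eqref{e:ksfsdiff1} you average the two conclusions of \Cref{c:kappa} and use the factor-of-two slack between \eqref{e:ftfsdiff} and \eqref{e:ktksdiff} to absorb the $f(s)-f(t)$ term, and for \eqref{e:ksfsdiff2} you reduce to \eqref{e:ksfsdiff1} at $s=\widetilde t$ and then apply \eqref{e:ksktrelation} with $t$ replaced by $\widetilde t$. The only cosmetic difference is that the paper writes the averaging as $\kappa_s-f(s)\ge \tfrac12(\kappa_s-\kappa_t)+\tfrac12(\kappa_s-\kappa_t)-(f(s)-f(t))$ and bounds $f(s)-f(t)\le \tfrac{\fc(t-s)}{8}(\sqrt{\kappa_s}+\sqrt{\kappa_t})$ via $\sqrt{f(s)}\le\sqrt{\kappa_s}$, $\sqrt{f(t)}\le\sqrt{\kappa_t}$, so that the two $\tfrac{\fc(t-s)}{8}(\sqrt{\kappa_s}+\sqrt{\kappa_t})$ terms cancel exactly; your version squares first and then checks nonnegativity of the residual, which amounts to the same arithmetic.
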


\begin{proof}[Proof of \Cref{c:kappa}]
We will first show that, if $\kappa_t \ge 0$, then $\kappa_s\geq 0$ for $0\leq s\leq t$, and we will also estimate its derivative. Notice 
\begin{align}\label{e:delzs}
\del_s \Re[z_s]=\del_s \Re[u-sm_0(u)]=-\Re[m_0(u)]=-\Re[m_s(z_s)],
\end{align}
where the first statement is from the definition  \eqref{e:flow} that $z_s=u-sm_0(u)$;
the second statement is from taking derivative with respect to $s$;
the last statement follows from \eqref{e:m0=ms}. Further recall the construction from Item \ref{i:edge} in \Cref{yconvolution} that there exists $w_s\in \bR$ such that $m_0'(w_s)=1/s$ and $E_s=w_s-sm_0(w_s)=z_s(w_s)$. By taking the time derivative on both sides, we get a similar equation as \eqref{e:delzs}, namely,
\begin{align}\label{e:delEs}\begin{split}
\del_s E_s&=\del_s w_s-sm_0'(w_s)\del_s w_s-m_0(w_s)\\
&=\del_s w_s(1-sm_0'(w_s))-m_0(w_s)=-m_0(w_s)=-m_s(E_s),
\end{split}\end{align}
where the first statement follows from taking the derivative with respect to $s$ on both sides of $E_s=w_s-sm_0(w_s)$;
the second and third statements use $m_0'(w_s)=1/s$; and the last statement uses \eqref{e:m0=ms} and $E_s=z_s(w_s)$.
Then by taking difference between \eqref{e:delzs} and \eqref{e:delEs}, we get
\begin{align}\begin{split}\label{e:dkappa}
\del_s \kappa_s
&=-(\Re[m_s(z_s)]-m_s(E_s))
=-((\Re[m_s(z_s)]-m_s(\Re[z_s])+(m_s(\Re[z_s])-m_s(E_s)))\\
&
= -\eta_s^2\int_\bR \frac{\varrho_s(x)\rd x}{((E_s+\kappa_s-x)^2+\eta_s^2)(E_s+\kappa_s-x)}
-\kappa_s \int_\bR \frac{\varrho_s(x)\rd x}{(E_s-x)(E_s+\kappa_s-x)},
\end{split}\end{align}
where the first statement follows from $\kappa_s=\Re[z_s]-E_s$; the second statement is from separating the difference into two; in the third statement we used $\Re[z_s]=E_s+\kappa_s$ and $\Re[m_s(z_s)]=\int_\bR (x-E_s-\kappa_s)\varrho_s(x)\rd x/((E_s+\kappa_s-x)^2+\eta_s^2)$.
By our assumption, we have $\kappa_t\geq 0$. Next we will show that $\kappa_s\geq 0$ for all $0\leq s\leq t$. Otherwise, there exists $\tau=\sup\{0\leq s\leq t: \kappa_s< 0\}$. Since $\kappa_s$ is continuous, we have that $\tau\in [0,t)$,  $\kappa_s\geq 0$ for $s\in [\tau,t]$ and $\kappa_\tau=0$. Thus for $s\in [\tau,t]$, \eqref{e:dkappa} gives that
\begin{align}\label{e:dkappa25}
\del_s \kappa_s
\leq 0,
\end{align}
since $E_s+\kappa_s\geq E_s=\max\supp \varrho_s$, and $\kappa_s\geq 0$. By integrating \eqref{e:dkappa25} from $\tau$ to $t$, we conclude that $\kappa_\tau\geq \kappa_t\geq 0$, which contradicts with our choice of $\tau$. Thus we conclude 
\begin{align}\label{e:kappas>0}
\kappa_s\geq 0  \quad \text{and $\kappa_s$ is decreasing for}\quad 0\leq s\leq t,
\end{align}
where the second statement follows from $\del_s\kappa_s<0$ (from \eqref{e:dkappa}). 

We can also lower bound the second integral in \eqref{e:dkappa} by 
\begin{align}\label{e:dkappa2}
\int_\bR \frac{\varrho_s(x)\rd x}{(E_s-x)(E_s+\kappa_s-x)}
\geq \int_\bR \frac{\varrho_s(x)\rd x}{|z_s-x|^2}=\frac{1}{\eta_s}\Im\int_\bR \frac{\varrho_s(x)\rd x}{x-z_s}=\frac{\Im[m_s(z_s)]}{\eta_s},
\end{align}
where in the first statement we used $z_s=E_s+\kappa_s+\ri\eta_s$ with $\kappa_s\geq 0$ from \eqref{e:kappas>0} and $\max\supp \varrho_s= E_s$, thus $|z_s-x|\geq E_s+\kappa_s-x\geq E_s-x$; the last two statements are from the definition of Stieltjes transform \eqref{mz0}. Combining \eqref{e:dkappa} and \eqref{e:dkappa2}, we get
\begin{align}\label{e:delks}
\del_s \kappa_s\leq
-\frac{\kappa_s}{\eta_s}\Im[m_s(z_s)].
\end{align}
Recall from the definition of the characteristic flow \eqref{e:flow} and \eqref{e:etas} that the derivative for the imaginary part $\del_s \eta_s$ satisfies
$
\del_s \eta_s=-\Im[m_s(z_s)].
$
By plugging this relation to \eqref{e:delks}, we conclude 
\begin{align*}
\del_s \kappa_s\leq
\frac{\kappa_s}{\eta_s}\del_s \eta_s, \qquad \text{so that} \qquad \frac{\del_s \eta_s}{\eta_s}\geq \frac{\del_s\kappa_s}{\kappa_s},
\end{align*}

\noindent and hence for $0 \le s \le t$ we obtain 
\begin{align}\label{e:ksktrelation2}
\kappa_s\geq \frac{\eta_s}{\eta_t}\kappa_t. 
\end{align}
Recall from \eqref{e:etas} that $(\eta_s-\eta_t)/\eta_t=(t-s)\Im[m_t(z_t)]/\eta_t$. Hence, the claim \eqref{e:ksktrelation} follows from \eqref{e:ksktrelation2}:
\begin{align}
\kappa_s-\kappa_t\geq \left(\frac{\eta_s}{\eta_t}-1\right)\kappa_t=(t-s)\Im[m_t(z_t)]\frac{\kappa_t}{\eta_t}. 
\end{align}

If we moreover have that $s\geq \fC \sqrt{\eta_*}$, then we can also  bound the integral in \eqref{e:dkappa} by
\begin{align}\begin{split}
\label{e:dkappa3}
 -\kappa_s\int_\bR\frac{\varrho_s(x)\rd x}{(E_s-x)(E_s+\kappa_s-x)}
\leq -\kappa_s\int_{E_s-\kappa_s}^{E_s} \frac{\varrho_s(x)\rd x}{2\kappa_s^2} =-\frac{\mu([E_s-\kappa_s, E_s])}{2\kappa_s}\leq-\frac{1}{2}\fc\kappa_s^{1/2},
\end{split}\end{align}
where we in the first inequality we restricted the integral on $[E_s-\kappa_s, E_s]$ and used $\kappa_s \geq E_s-x$ and $2\kappa_s\geq E_s+\kappa_s-x$ on this interval; in the last two inequalities, we used $\kappa_s\leq \sfT^2$ and $\mu_s([E_s-\kappa_s, E_s])\geq \fc\kappa_s^{3/2}$ from \Cref{p:densitybound}. Plugging \eqref{e:dkappa3} into \eqref{e:dkappa} we get
\begin{align}
\del_s\kappa_s \leq -\frac{\fc}{2}\kappa_s^{1/2},\quad \fC \sqrt{\eta_*}\leq s\leq t.
\end{align}
Solving it we obtain 
\begin{align}\label{e:gap}
\kappa_s^{1/2}-\kappa_t^{1/2}\geq \frac{\fc(t-s)}{4},\quad \fC \sqrt{\eta_*}\leq s\leq t. 
\end{align}
This finishes the proof of \eqref{e:ktksdiff}
\end{proof}

\begin{proof}[Proof of \Cref{c:domainic}]
For $t\geq s\geq \fC \sqrt{\eta_*}$, the estimates \eqref{e:ksktrelation} and \eqref{e:ktksdiff} imply
\begin{align}\label{e:kskttwodiff}
\kappa_s-\kappa_t\geq (t-s)\Im[m_t(z_t)]\frac{\kappa_{t}}{\eta_{ t}},\quad \kappa_s-\kappa_t\geq \frac{\fc(t-s)}{4}(\sqrt{\kappa_s}+\sqrt{\kappa_t}),
\end{align}

From our construction of $f(t)$, using \eqref{e:ftfsdiff} and \eqref{e:ktksdiff}, and recalling that $\kappa_t\geq f(t)$, we have
\begin{align}\label{e:ksbound0}
\sqrt{f(s)}\leq \frac{\fc(t-s)}{8}+\sqrt{f(t)}\leq  \frac{\fc(t-s)}{8}+\sqrt{\kappa_t}\leq \sqrt{\kappa_s}, \quad \fC \sqrt{\eta_*}\leq s\leq t,
\end{align}

\noindent Rearranging \eqref{e:ftfsdiff}, together with \eqref{e:ksbound0}, gives $\kappa_s\geq f(s)\geq f(t)$. Thus, 
\begin{align}\label{e:ksbound}
f(s)-f(t)\leq \frac{\fc}{8}(t-s)(\sqrt{f(s)}+\sqrt{f(t)})\leq \frac{\fc(t-s)}{8}(\sqrt{\kappa_s}+\sqrt{\kappa_t}),\quad \fC \sqrt{\eta_*}\leq s\leq t,
\end{align}
Put everything together, we get
\begin{align*}
\phantom{{}={}}\kappa_s-f(s)
& \geq \frac{1}{2}(\kappa_s-\kappa_t)+\frac{1}{2}(\kappa_s-\kappa_t)-(f(s)-f(t))\\
&\geq \frac{(t-s)}{2}\Im[m_t(z_t)]\frac{\kappa_{t}}{\eta_{ t}}
+\frac{\fc(t-s)}{8}(\sqrt{\kappa_s}+\sqrt{\kappa_t})
-\frac{\fc(t-s)}{8}(\sqrt{\kappa_s}+\sqrt{\kappa_t}) \\
& =  \frac{(t-s)}{2}\Im[m_t(z_t)]\frac{\kappa_{t}}{\eta_{ t}},
\end{align*}
where in the first inequality we used our assumption $\kappa_t\geq f(t)$ in \Cref{c:domainic}; in the second inequality we used  \eqref{e:kskttwodiff} to bound the first two terms and \eqref{e:ksbound} to bound the last term. 
This gives \eqref{e:ksfsdiff1}.

Next we prove \eqref{e:ksfsdiff2}, by considering two cases.
If $t=\widetilde t$, then $f(\widetilde t)=f(t)\leq \kappa_t=\kappa_{\widetilde t}$. If $t>\widetilde t$, then $\widetilde t=\fC \sqrt{\eta_*}$ and \eqref{e:ksfsdiff1} (applied with the $s$ there equal to $\widetilde{t}$ here) implies $f(\widetilde t)\leq \kappa_{\widetilde t}$. In both cases we have $f(\widetilde t)\leq \kappa_{\widetilde t}$. 
For $s\leq \widetilde t$ we have $f(s)=f(\widetilde t)\leq \kappa_{\widetilde t}$, and \eqref{e:ksktrelation} (applied with the $t$ there equal to $\widetilde{t}$ here, observing that $\kappa_{\tilde{t}} \ge \kappa_t \ge f(t) \ge 0$ holds) implies
\begin{align}
\kappa_s-f(s) \ge \kappa_s-\kappa_{\widetilde t} \ge  ({\widetilde t}-s)\Im[m_s(z_s)]\frac{\kappa_{\widetilde t}}{\eta_{\widetilde{s}}} =  ({\widetilde t}-s)\Im[m_t(z_t)]\frac{\kappa_{\widetilde t}}{\eta_{\widetilde t}}.
\end{align}

\noindent This finishes the proof of \eqref{e:ksfsdiff2}. 
\end{proof}

\begin{figure}
\centering
\includegraphics[width=1\textwidth, trim=2cm 1cm 2cm 0cm, clip]{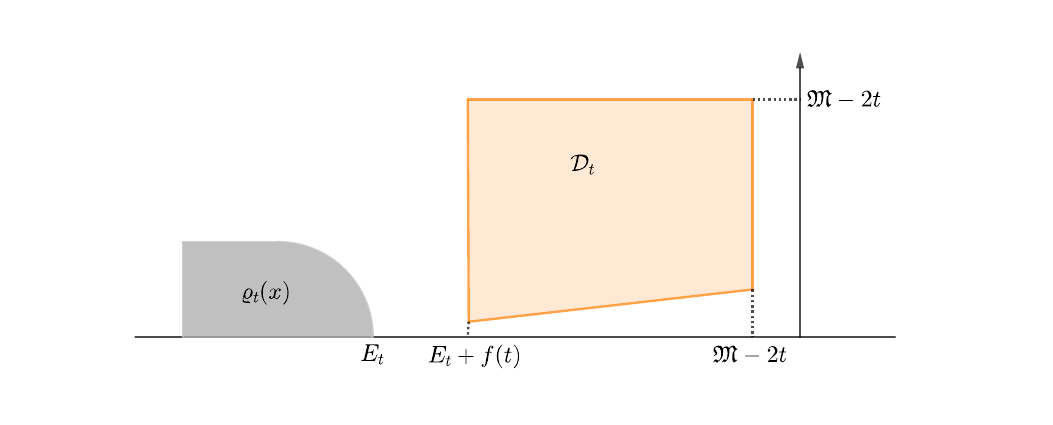}
\caption{The spectral domain $\cD_t$ constructed in \eqref{e:defDt}. }
\label{f:Spectral_domain}
\end{figure}

Fix the real number $\fM=6(\sfT+\fC^2\eta_*)$, and recall $f(t)$ from \eqref{e:deff}. For any real number $t \in [0, \mathsf{T}]$, we define the spectral domain $\cD_t=\cD_t(\mu_t)$ (see \Cref{f:Spectral_domain}) by 
\begin{align}\label{e:defDt}
\cD_t\deq\left\{z\in \bH:  E_t+f(t)\leq \Re[z]\leq \fM-2t, \frac{1}{(\log n)n\Im[m_t(z)]}\leq \Im[z]\leq \fM-2t  \right\}
\end{align}
 Roughly speaking, the information of the Stieltjes transform $m_t(z)$ on $\cD_t$ can be used to detect particles of distance $f(t)$ from the spectral edge $E_t$. The following lemma collects some properties of the domain $\cD_t$. 
  
\begin{lem}\label{c:Dtproperty}
Adopt \Cref{a:densitylowbound}. Recall the domain $\cD_t$ from \eqref{e:defDt}. For  $n$ large enough and any time $0\leq t\leq \sfT$, the following hold.
\begin{enumerate}
\item \label{i:Im}$\cD_t$ is non-empty. For any $z=E_t+\kappa+\ri\eta\in \cD_t$, we have
\begin{align}\label{e:Immz1}
\fM\geq \kappa\geq f(t)\geq \frac{1}{n^{2/3}}, \quad \fM\geq \eta\geq \frac{1}{n^2}, \quad \kappa+\eta\leq \sfT^2, \quad  \Im[m_t(z)]\geq \frac{\fc\eta}{8\sqrt{\kappa+\eta}},
\end{align}
where $\fc$ is from \Cref{l:densityt}, and 
\begin{align}\label{e:Immz2}
\frac{(\log n)^{7/2}}{n\sqrt{(\kappa+\eta)\eta}}\leq \frac{\Im[m_t(z)]}{\log n}.
\end{align}
\item \label{i:zs}If $z_t(u)=E_t+\kappa_t+\ri\eta_t\in \cD_t$, then $z_s(u)\in \cD_s$ for any $0\leq s\leq t$.
\end{enumerate}
\end{lem}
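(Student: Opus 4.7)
The plan is to check each item of \Cref{c:Dtproperty} by combining the defining constraints of $\cD_t$ in \eqref{e:defDt} with the density bounds of \Cref{p:densitybound} and the characteristic-flow estimates of \Cref{c:kappa} and \Cref{c:domainic}.

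For \Cref{i:Im}, the bounds $\kappa \ge f(t) \ge (\log n)^{15}/n^{2/3}$ and $\kappa, \eta \le \fM - 2t$ come straight from the definitions of $\cD_t$ and $f$, while $\kappa + \eta \le \sfT^2$ is a routine check from $\fM = 6(\sfT + \fC^2 \eta_*)$, $\sfT \ge 100$, and $\eta_* < 1/4$ (taking $n$ large if needed). To apply \eqref{e:Immbound} of \Cref{p:densitybound} I verify that one of its hypotheses is active: if $t < \fC \sqrt{\eta_*}$ then $f(t) = \fC^2 \eta_*$ forces $\kappa \ge \fC \eta_*$; otherwise $t \ge \fC\sqrt{\eta_*}$ directly. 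This yields $\Im[m_t(z)] \ge \fc \eta/(8\sqrt{\kappa+\eta})$. Substituting this into the defining inequality $\eta \ge 1/((\log n)n\Im[m_t(z)])$ gives $\eta^2 \ge 8\sqrt{\kappa+\eta}/(\fc(\log n)n)$, and using $\kappa + \eta \ge f(t) \ge (\log n)^{15}/n^{2/3}$ forces $\eta \gtrsim (\log n)^{11/4}/n^{2/3}$, well above $1/n^2$. Non-emptiness is confirmed by verifying these constraints for the explicit point $z = E_t + f(t) + \mathrm{i}\sqrt{f(t)}$. The refined inequality \eqref{e:Immz2} then comes from combining the two available lower bounds on $\Im[m_t(z)]$ -- the trivial $1/((\log n) n \eta)$ coming from the defining constraint and the square-root bound $\fc \eta/(8\sqrt{\kappa+\eta})$ -- by multiplying them to obtain $\Im[m_t(z)]^2 \ge \fc/(8(\log n) n \sqrt{\kappa+\eta})$, then combining with the improved lower bound on $\eta$ just established.

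For \Cref{i:zs}, the central point is the invariance $m_s(z_s) = m_t(z_t)$ from \eqref{e:m0=ms}, so real and imaginary parts of the Stieltjes transforms coincide along the flow. The lower bound $\kappa_s \ge f(s)$ is exactly the content of \Cref{c:domainic}. The upper bound $\Re[z_s] \le \fM - 2s$ follows because $\supp \mu_t \subseteq (-\infty, E_t]$ together with $\Re[z_t] > E_t$ forces $\Re[m_t(z_t)] \le 0$, so $\Re[z_s] = \Re[z_t] + (t-s)\Re[m_t(z_t)] \le \Re[z_t] \le \fM - 2t \le \fM - 2s$. The lower bound on $\Im[z_s]$ follows from $\eta_s \ge \eta_t$ (via \eqref{e:etas}) combined with the invariance. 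For the upper bound $\eta_s \le \fM - 2s$, combining the crude estimate $\Im[m_s(z_s)] \le 1/\eta_s$ with $\eta_s = \eta_t + (t-s)\Im[m_s(z_s)]$ yields the quadratic inequality $\eta_s^2 \le \eta_t \eta_s + (t-s)$, whose positive-root bound gives $\eta_s \le \eta_t + \sqrt{t-s} \le (\fM - 2t) + \sqrt{\sfT}$; since $\fM \ge 6\sfT$ and $\sfT \ge 100$, this is comfortably at most $\fM - 2s$.

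The main obstacle I anticipate is the balance of logarithmic factors in \eqref{e:Immz2}: the exponents there are somewhat tight, so combining the two lower bounds on $\Im[m_t(z)]$ together with the improved lower bound on $\eta$ to extract exactly $(\log n)^{7/2}$ in the numerator and $\log n$ in the denominator on the right-hand side requires careful bookkeeping of powers rather than any new input. Everything else reduces to routine (if somewhat fiddly) verification using the preliminary results already established.
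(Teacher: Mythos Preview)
Your argument for the lower bound on $\eta$ in \Cref{i:Im} has the inequality going the wrong way. From the defining constraint $\eta \ge 1/((\log n)\,n\,\Im[m_t(z)])$ you get $\eta\,\Im[m_t(z)] \ge 1/((\log n)n)$; substituting a \emph{lower} bound for $\Im[m_t(z)]$ into this gives nothing about $\eta$. To extract a lower bound on $\eta$ you need an \emph{upper} bound on $\Im[m_t(z)]$, and the paper uses the crude one $\Im[m_t(z)] \le |m_t(z)| \le 1/\kappa$ (from \Cref{l:STproperty}), yielding $\eta \ge \kappa/((\log n)n) \ge 1/n^2$. No upper bound of the form $\Im[m_t(z)] \le C\eta/\sqrt{\kappa+\eta}$ is available under \Cref{a:densitylowbound}, which only controls the density from below; so your claimed $\eta \gtrsim (\log n)^{11/4}/n^{2/3}$ is not justified.

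This cascades into your plan for \eqref{e:Immz2}. Multiplying the two lower bounds on $\Im[m_t(z)]$ to get $\Im[m_t(z)]^2 \ge \fc/(8(\log n)n\sqrt{\kappa+\eta})$ is valid, but turning this into \eqref{e:Immz2} requires $n\sqrt{\kappa+\eta}\,\eta \gtrsim (\log n)^{10}$, and with only $\eta \ge \kappa/((\log n)n)$ this fails when $\kappa$ is near its minimum $(\log n)^{15}/n^{2/3}$. The paper instead splits into two cases at $\eta = (\log n)^4 n^{-2/3}$: for $\eta$ above this threshold the square-root bound alone gives $\sqrt{(\kappa+\eta)\eta}\,\Im[m_t(z)] \ge (\fc/8)\eta^{3/2} \ge (\log n)^{9/2}/n$; for $\eta$ below it one has $\kappa/\eta \ge (\log n)^{11}$, so $\sqrt{(\kappa+\eta)\eta}\,\Im[m_t(z)] \ge \sqrt{\kappa/\eta}\cdot\eta\,\Im[m_t(z)] \ge (\log n)^{11/2}/((\log n)n)$. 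This dichotomy is the missing ingredient.

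On \Cref{i:zs}: your argument for $\Re[z_s] \le \fM - 2s$ via $\Re[m_t(z_t)] < 0$ is correct and in fact cleaner than the paper's contradiction argument. Your quadratic approach for $\Im[z_s]$ is the right idea, but the final step as written does not close: $\eta_t + \sqrt{t-s} \le (\fM - 2t) + \sqrt{\sfT} \le \fM - 2s$ would require $\sqrt{\sfT} \le 2(t-s)$, which fails for $t-s$ small. The fix is to evaluate the quadratic $p(x) = x^2 - \eta_t x - (t-s)$ directly at $x = \fM - 2s$: since $(\fM-2s) - \eta_t \ge 2(t-s)$ one gets $p(\fM-2s) \ge (t-s)(2(\fM-2s)-1) \ge 0$, which forces $\eta_s \le \fM - 2s$.
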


\noindent By the second part of \Cref{c:Dtproperty}, if for any $u = z_0 (u) \in \cD_0$ we define
\begin{align}\label{e:deftu}
\ft(u):=\sfT\wedge \sup\{t\geq 0: z_t(u)\in \cD_t\},
\end{align}

\noindent then $z_s(u)\in \cD_s$ for any $0\leq s\leq \ft(u)$.

\begin{proof}[Proof of \Cref{c:Dtproperty}]
	For Item \ref{i:Im},  $\fM=6(\sfT+\fC^2 \eta_*)\geq 2\sfT +2\sqrt{\sfT}+\fC^2\eta_*\geq 2t+E_t+f(t)$, where we used that $t\leq \sfT$, $E_t\leq 2\sqrt t\leq 2\sqrt{\sfT}$ from the first statement in \Cref{p:densitybound}, and $f(t)\leq f(0)=\fC^2 \eta_*$ from \eqref{e:deff}. Moreover for any $z\in \bH$ with $E_t+f(t)\leq \Re[z]\leq \fM-2t$ and $\Im[z]\leq \fM-2t$ we have 
	\begin{align}
	\Im[m_t(z)]=\int_\bR\frac{\rd \mu_t(x)}{|x-z|^2}\geq \int^{E_t}_{E_t-1}\frac{\rd \mu_t(x)}{(2\fM)^2}\geq \frac{\fc}{4\fM^2},
	\end{align}
	where the first inequality is from $|z-x|\leq 2\fM$ for $x\in [E_t, E_t-1]$; and the last inequality follows from the second statement of \Cref{p:densitybound} (with $x=1$). 
	Thus for $n$ large enough, $\fM-2t> 1>1/(n \Im[m_t(z)]\log n)$, and $\cD_t$ is not empty. 
	
	If $E_t+\kappa+\ri\eta\in \cD_t$, then we have $\eta, \kappa\leq \fM$ from the construction of $\cD_t$ in \eqref{e:defDt}; $\kappa\geq f(t)\geq 1/n^{2/3}$ from the definition \eqref{e:deff};
and $\eta\geq 1/((\log n) n\Im[m_t(z)])\geq1/((\log n) n(1/\kappa))\geq 1/n^2$ (where the first statement is from the definition of $\cD_t$ \eqref{e:defDt}; the second statement is from $\max (\supp \mu_t)=E_t$ and \Cref{l:STproperty} that $|m_t(z)|\leq 1/\dist(z, \supp \mu_t)\leq 1/\kappa$; the last statement follows from $\kappa\geq n^{-2/3}$) provided $n$ is large enough. This gives the first two statement in \eqref{e:Immz1}. For the third statement, we recall from our assumption from \eqref{p:edgerigidity} that $\sfT\geq (2^{60}/\sfb^6)\sqrt\eta_*\geq 2^{10}\fC\sqrt{\eta_*}$ (we recall that $\fC=2^{21}/\sfb^2$ from \eqref{p:densitybound} and  $\sfb,\eta_*<1$ from \Cref{a:densitylowbound}). Then $\kappa+\eta\leq 2\fM=12(\sfT+\fC^2\eta_*)\leq \sfT^2$ (where we also used $\sfT\geq 100$ from \Cref{a:densitylowbound}.)
The last relation in \eqref{e:Immz1} follows from \eqref{e:Immbound} and $\kappa+\eta\leq \sfT^2$. 

For  \eqref{e:Immz2}, by the definition of $\cD_t$, we have
\begin{align}
\eta\Im[m_t(E_t+\kappa+\ri\eta)]\geq \frac{1}{(\log n)n}.
\end{align}
There are two cases; we either have $\eta\geq (\log n)^4n^{-2/3}$ or $\eta\leq (\log n)^4n^{-2/3}$. If $\eta\geq (\log n)^4n^{-2/3}$, then
\begin{align}
\sqrt{(\kappa+\eta)\eta}\Im[m_t(E_t+\kappa+\ri\eta)]
\geq \sqrt{(\kappa+\eta)\eta}\frac{\fc\eta}{8\sqrt{\kappa+\eta}}=\frac{\fc}{8}\eta^{3/2}\geq \frac{(\log n)^{9/2}}{n},
\end{align}
provided $n$ is large enough.
If $\eta\leq (\log n)^4n^{-2/3}$, then $\kappa\geq f(t)\geq (\log n)^{15}n^{-2/3}\geq (\log n)^{11}\eta$, and
\begin{align}
\sqrt{(\kappa+\eta)\eta}\Im[m_t(E_t+\kappa+\ri\eta)] \ge \sqrt{\frac{\kappa}{\eta}} \cdot \eta\Im[m_t(E_t+\kappa+\ri\eta)]\geq \sqrt{\frac{\kappa}{\eta}} \cdot \frac{1}{n\log n}\geq \frac{(\log n)^{9/2}}{n}.
\end{align}
This finishes the proof of  \eqref{e:Immz2}.

For Item \ref{i:zs}, for observe that \Cref{c:domainic} gives if $\kappa_t\geq f(t)$ then $\kappa_s\geq f(s)$ for $s\leq t$. Since $\eta_s\geq \eta_t$ from \eqref{e:etas}, if $\eta_t\geq 1/((\log n)n\Im[m_t(z_t)])$, then $\eta_s\geq \eta_t\geq 1/((\log n)n\Im[m_t(z_t)])=1/((\log n)n\Im[m_s(z_s)])$. Thus, $\Real z_s (u)$ and $\Imaginary z_s (u)$ satisfy the lower bounds required for $z_s (u)$ to be in $\mathcal{D}_s$. 

Next we prove by contradiction that $\Re[z_s(u)]\leq \fM-2s$ and $\Im[z_s(u)]\leq \fM-2s$.
Otherwise, assume $\max\{\Re[z_s(u)], \Im[z_s(u)]\}>\fM-2s$. Recall our choice $\fM=6(\sfT+\fC^2\eta_*)\geq 6\sfT$ and $E_s\leq 2\sqrt s\leq 2\sqrt \sfT$ from the first statement of \Cref{p:densitybound}.  Then $\fM-2\sfT-E_s\geq \fM-3\sfT\geq \fM/2$ (we used that $\sfT\geq 100$ from \Cref{a:densitylowbound} and $\fM-2\sfT\geq \fM/2$). Thus $\dist(z_s(u), (-\infty, E_s])\geq \fM/2$; so, $|m_s(z_s)|\leq 1/(\dist(z_s(u), (-\infty, E_t])\leq 2/\fM$ (from \Cref{l:STproperty}), and $|z_s-z_t|=|t-s||m_s(z_s)|\leq 2(t-s)/\fM$ (here we used the definition of characteristic flow \eqref{e:etas} and \eqref{e:m0=ms}). Then $\Re[z_t(u)]\geq \Re[z_s(u)]-2(t-s)/\fM$ and $\Im[z_t(u)]\geq \Im[z_s(u)]-2(t-s)/\fM$. Therefore, $\max\{\Re[z_t(u)], \Im[z_t(u)]\}\geq \max\{\Re[z_s(u)], \Im[z_s(u)]\}-2(t-s)/\fM> \fM-2s-2(t-s)/\fM>\fM-2t$ (we used our hypothesis $\max\{\Re[z_s(u)], \Im[z_s(u)]\}>\fM-2s$ and $\fM\geq 1$), which contradicts the fact that $z_t(u)\in \cD_t$. It follows that $\Real z_s (u)$ and $\Imaginary z_s (u)$ satisfy the upper bounds required for $z_s (u)$ to be in $\mathcal{D}_s$, and $z_s(u)\in \cD_s$.
\end{proof}

We next define the lattice on the upper half-plane $\bH$ given by
\begin{align}\label{def:L}
\mathcal L=\left\{E+\ri \eta\in \dom_0: E\in \bZ/ n^{8}, \eta\in \bZ/n^{8}\right\}.
\end{align}

\begin{lem}\label{c:Lapproximate}
Adopt \Cref{a:densitylowbound}.
For any $t\in [0,\sfT]$ and $w\in \dom_t$, there exists some lattice point $u\in \mathcal L\cap z_t^{-1}(\dom_t)$, such that
$
|z_t(u)-w|\leq n^{-5},
$
provided $n$ is large enough. 
\end{lem}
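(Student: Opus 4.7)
The plan is to approximate the characteristic preimage $u^* := z_t^{-1}(w)$ by a nearby lattice point and control the resulting error via a Lipschitz bound for $z_t$. By \Cref{mz}, since $z_t : \Lambda_t \to \bH$ is a bijection, such $u^* \in \Lambda_t$ is unique. Applying Item \ref{i:zs} of \Cref{c:Dtproperty} to $w = z_t(u^*) \in \dom_t$ yields $u^* = z_0(u^*) \in \dom_0$, so in particular $\Re[u^*] \geq E_0 + f(0)$. Since $\supp \mu_0 \subseteq (-\infty, E_0]$ (by the definition of $E_0$) and $f(0) \geq n^{-2/3}$ (from Item \ref{i:Im} of \Cref{c:Dtproperty}), any $v \in \bH$ with $|v - u^*| \leq f(0)/2$ satisfies $\dist(v, \supp \mu_0) \geq f(0)/2 \geq n^{-2/3}/2$.

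By \Cref{l:STproperty}, one then has $|m_0'(v)| \leq 1/\dist(v, \supp \mu_0)^2 \leq 4 n^{4/3}$ throughout this neighborhood, hence $|z_t'(v)| = |1 - t m_0'(v)| \leq 1 + 4 \sfT n^{4/3} \leq C n^{4/3}$ for some $C = C(\sfT) > 0$. Taking $u$ to be the nearest point of $(\bZ/n^8) + \ri(\bZ/n^8)$ to $u^*$, we have $|u - u^*| \leq \sqrt 2 / n^8 \ll f(0)/2$, so the Lipschitz bound gives
\begin{equation*}
|z_t(u) - w| = |z_t(u) - z_t(u^*)| \leq C n^{4/3} \cdot \sqrt 2 / n^8 = O(n^{-20/3}),
\end{equation*}
which is much smaller than $n^{-5}$ for $n$ large. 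By Item \ref{i:zs} of \Cref{c:Dtproperty} applied again, if additionally $z_t(u) \in \dom_t$ then $u = z_0(u) \in \dom_0$, so $u \in \cL \cap z_t^{-1}(\dom_t)$ as required.

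The main obstacle is verifying $z_t(u) \in \dom_t$ when $w$ lies on or near $\partial \dom_t$. In that case, I would first perturb $w$ to $w' \in \dom_t$ with $\dist(w', \partial \dom_t) \geq n^{-5}/4$ and $|w' - w| \leq n^{-5}/2$, then repeat the argument with $u^{**} := z_t^{-1}(w')$ in place of $u^*$; the additional error $|w' - w|$ is absorbed by the triangle inequality, so $|z_t(u) - w| \leq n^{-5}$ still holds, and the $n^{-20/3}$ margin guarantees $z_t(u) \in \dom_t$. Such a $w'$ exists by moving $w$ away from whichever face of $\partial \dom_t$ it lies close to: the three axis-parallel boundaries of $\dom_t$ are handled directly, while for the implicit bottom boundary $\Im[z] = 1/((\log n) n \Im[m_t(z)])$ one perturbs $w$ upward and uses the monotonicity
\begin{equation*}
\partial_y \big( y \Im[m_t(\xi + \ri y)] \big) = 2 y \int \frac{(x - \xi)^2\, \rd \mu_t(x)}{((x - \xi)^2 + y^2)^2} \geq 0,
\end{equation*}
obtained by direct differentiation under the integral. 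Choosing a single perturbation direction compatible with all nearby boundary pieces (including corners where two faces meet) is the main bookkeeping subtlety, but is routine given the explicit form of $\dom_t$ in \eqref{e:defDt}.
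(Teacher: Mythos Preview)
Your argument is essentially the same as the paper's: both use the Lipschitz bound $|z_t'(u)|\leq 1+\sfT n^{4/3}$ (derived from $u\in\cD_0$, hence $\dist(u,\supp\mu_0)\geq f(0)\geq n^{-2/3}$, together with \Cref{l:STproperty}) applied to the lattice point nearest $z_t^{-1}(w)$. The paper's proof is in fact more terse and simply asserts that the resulting lattice point lies in $\cL\cap z_t^{-1}(\dom_t)$ without addressing the boundary case; your perturbation step, moving $w$ slightly into the interior of $\dom_t$ before choosing $u$ and using the monotonicity of $\eta\,\Im[m_t(\xi+\ri\eta)]$ for the curved bottom face, is a legitimate way to close that gap and makes the argument more complete.
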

\begin{proof}[Proof of \Cref{c:Lapproximate}]
It follows from  \Cref{c:Dtproperty} that if $z_t(u)\in \dom_t$, then $u\in \cD_0$, and  $z_t(u)=u-tm_0(u)$ with $u\in \dom_0$. In particular $|u-E_0|\geq f(0) = \mathfrak{C}^2 \eta_* \geq1/n^{2/3}$ (by \eqref{etan}), and 
$\del_u z_t(u)=1-tm_0'(u)\leq 1+t/|u-E_0|^2\leq 1+\sfT n^{4/3}$ (using \Cref{l:STproperty}). Thus
$z_t(u)$ is Lipschitz in $u$ with Lipschitz constant bounded by $(1+\sfT n^{4/3})$. Thus for any $w\in \dom_t$, there exists some lattice point $u\in \mathcal L\cap z_t^{-1}(\dom_t)$, such that
$
|z_t(u)-w|\leq (1+\sfT n^{4/3})/n^8\leq n^{-5},
$
provided $n$ is large enough. 
\end{proof}

\subsection{Proof of \Cref{p:edgerigidity}}\label{s:prove31}
To prove \Cref{p:edgerigidity}, we denote the Stieltjes transform of the empirical particle density by
\begin{align}\label{e:defmt}
\widetilde m_t(z)=\frac{1}{n}\sum_{i=1}^n \frac{1}{\lambda_i(t)-z},\quad z\in \bH.
\end{align}
By It\^{o}'s formula, $\widetilde m_s(z)$ satisfies a Burgers type stochastic differential equation, 
\begin{align}\begin{split}\label{eq:dm}
\rd\widetilde  m_s(z)
&=-\frac{1}{n}\sum_{i=1}^n \frac{\rd \la_i(s)}{(\la_i(s)-z)^2}
+\frac{1}{n}\sum_{i=1}^n \frac{\langle \rd \la_i(s), \rd \la_i(s)\rangle}{(\la_i(s)-z)^3}\\
&=-\sqrt{\frac{2}{\beta n^3}}\sum_{i=1}^n \frac{\rd B_i}{(\la_i(s)-z)^2}-\frac{1}{n^2}\sum_{i=1}^n \frac{1}{(\la_i(s)-z)^2}\sum_{j:j\neq i}\frac{1}{\lambda_i(s)-\lambda_j(s)}
+\frac{2}{\beta n^2}\sum_{i=1}^n \frac{\rd s}{(\la_i(s)-z)^3}\\
&=-\sqrt{\frac{2}{\beta n^3}}\sum_{i=1}^n \frac{{\rm d} B_i(s)}{(\la_i(s)-z)^2}+ \widetilde m_s(z)\del_z\widetilde  m_s(z)\rd s +\frac{2-\beta}{\beta n^2}\sum_{i=1}^{n}\frac{\rd s}{(\lambda_i(s)-z)^3},
\end{split}\end{align}
where the last line follows from the fact that
\begin{align*}
\frac{1}{n^2}& \sum_{i=1}^n \frac{1}{(\la_i(s)-z)^2}\sum_{j:j\neq i}\frac{1}{\lambda_i(s)-\lambda_j(s)} \\
& =\frac{1}{2n^2}\sum_{i\neq j}^n \bigg( \frac{1}{(\la_i(s)-z)^2}\frac{1}{\lambda_i(s)-\lambda_j(s)}+\frac{1}{(\la_j(s)-z)^2}\frac{1}{\lambda_j(s)-\lambda_i(s)} \bigg) \\
&= -\frac{1}{2n^2}\sum_{i\neq j}\frac{\la_i(s)-z+ \la_j(s)-z}{(\la_i(s)-z)^2(\lambda_j(s)-z)^2} \\
& =-\frac{1}{n^2}\sum_{i\neq j}\frac{1}{(\la_i(s)-z)(\lambda_j(s)-z)^2}
=\widetilde m_s(z)\del_z \widetilde m_s(z)+\frac{1}{n^2}\sum_{i=1}^n\frac{1}{(\la_i(s)-z)^3}.
\end{align*}
 Plugging the characteristic flow \eqref{e:flow} into \eqref{eq:dm}, and using the chain rule,  we have 
\begin{align}\label{e:tdmzt}\begin{split}
\rd \widetilde m_s(z_s) = \partial_s \widetilde{m}_s (z_s) + \partial_z \widetilde{m}_s (z_s) \partial_s z_s =-&\sqrt{\frac{2}{\beta n^3}}\sum_{i=1}^n \frac{{\rm d} B_i(s)}{(\la_i(s)-z_s)^2}\\
&+\del_z\widetilde m_s(z_s)\left(\widetilde m_s(z_s)-m_s(z_s)\right)\rd s+\frac{2-\beta}{\beta n^2}\sum_{i=1}^{n}\frac{\rd s}{(\la_i(s)-z_s)^3}.
\end{split}
\end{align}

Let $\mu_t = \mu_0 \boxplus \mu_{\semci}^{(t)}$ denote the free convolution between $\mu_0$ with the rescaled semicircle distribution, and let $m_t$ denote its Stieltjes transform. Then, we can integrate both sides of \eqref{e:tdmzt} from $0$ to $t$ to obtain
\begin{align}\label{eq:mzt}
\widetilde m_t(z_t)- m_t(z_t)= \widetilde{m}_t (z_t) - m_0 (z_0) = \widetilde{m}_t (z_0) - \widetilde{m}_0 (z_0) =  \displaystyle\int_0^t \mathrm{d} \widetilde{m}_s (z_s) ds = \int_0^t \left( \cE_1(s)\rd s+\rd \cE_2(s) \right) ,
\end{align}
where the error terms are
\begin{align}
\label{defcE1}\cE_1(s)=&\left(\widetilde m_s(z_s)-m_s(z_s)\right)\del_z \widetilde m_s(z_s)
\\
\label{defcE2}\rd \cE_2(s)=&\frac{2-\beta}{\beta n^2}\sum_{i=1}^n\frac{\rd s}{(\lambda_i(s)-z_s)^3}-\sqrt{\frac{2}{\beta n^3}}\sum_{i=1}^n \frac{{\rd} B_i(s)}{(\la_i(s)-z_s)^2}.
\end{align}
We remark that $\mathcal E_1$ and $\mathcal E_2$ implicitly depend on $u$, the initial value of the flow $z_0=u$.

We define the stopping time 
\begin{align}\begin{split}\label{stoptime}
\sigma\deq
\sfT \wedge
\inf_{s\geq0}\left\{\exists w\in \dom_s: \left|\widetilde m_s(w)-m_s(w)\right|\geq \frac{  (\log n)^{7/2} }{n\sqrt{\Im[w](\Re[w-E_s]+\Im[w])}}\right\}.
\end{split}\end{align}

For $u\in \cD_0$ and $s\leq \ft(u)\wedge \sigma$, by the definition of the stopping time \eqref{stoptime}, $\Im[\widetilde m_s(w)]$ and $\Im[m_s(w)]$ are close. A more precise estimate is given in the following lemma. 
\begin{lem}\label{c:Im-Im}
Adopt the notation and assumptions of \Cref{p:edgerigidity}, and assume $n$ is large enough. Given any $u \in  \cD_0$, recall $\ft(u)$ from \eqref{e:deftu}. For any $0\leq s\leq \ft(u)$, abbreviate $z_s=z_s(u)$; we have
\begin{align}\begin{split}\label{e:Im-Im}
    &\left|\Im[\widetilde m_{s\wedge \sigma}(z_{s\wedge \sigma})]-\Im[m_{s\wedge \sigma}(z_{s\wedge \sigma})]\right|\leq \frac{\Im[m_{s\wedge \sigma}(z_{s\wedge \sigma})]}{\log n},\\
     &\Im[\widetilde m_{s\wedge \sigma}(z_{s\wedge \sigma})]\leq 2\Im[m_{s\wedge \sigma}(z_{s\wedge \sigma})].
\end{split}\end{align}
\end{lem}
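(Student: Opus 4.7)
The plan is that both claims should follow almost directly by unwinding the definition of the stopping time $\sigma$ and combining it with the key domain estimate \eqref{e:Immz2} from Lemma \ref{c:Dtproperty}. No new analytic ideas seem necessary beyond what has already been set up.

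The first step is to verify that we may legitimately evaluate the bound from \eqref{stoptime} at the point $z_{s \wedge \sigma}(u)$. Since $u \in \cD_0$ and $s \wedge \sigma \leq s \leq \ft(u)$, Item \ref{i:zs} of Lemma \ref{c:Dtproperty} gives $z_{s \wedge \sigma}(u) \in \cD_{s \wedge \sigma}$. For $s \wedge \sigma < \sigma$, the bound defining $\sigma$ applies directly; at $s \wedge \sigma = \sigma$ it holds by continuity of $\widetilde{m}_\cdot$ and $m_\cdot$ in the spectral parameter and in time (together with the left-continuity built into the infimum in \eqref{stoptime}). Writing $z_{s \wedge \sigma}(u) = E_{s \wedge \sigma} + \kappa_{s \wedge \sigma} + \ri \eta_{s \wedge \sigma}$, we therefore have
\begin{equation*}
\bigl| \widetilde{m}_{s \wedge \sigma}(z_{s \wedge \sigma}) - m_{s \wedge \sigma}(z_{s \wedge \sigma}) \bigr| \leq \frac{(\log n)^{7/2}}{n \sqrt{\eta_{s \wedge \sigma}(\kappa_{s \wedge \sigma} + \eta_{s \wedge \sigma})}}.
\end{equation*}

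Next, I would apply the estimate \eqref{e:Immz2} of Lemma \ref{c:Dtproperty} to $z_{s \wedge \sigma} \in \cD_{s \wedge \sigma}$, which bounds the right-hand side above by $\Im[m_{s \wedge \sigma}(z_{s \wedge \sigma})] / \log n$. Combining,
\begin{equation*}
\bigl| \widetilde{m}_{s \wedge \sigma}(z_{s \wedge \sigma}) - m_{s \wedge \sigma}(z_{s \wedge \sigma}) \bigr| \leq \frac{\Im[m_{s \wedge \sigma}(z_{s \wedge \sigma})]}{\log n}.
\end{equation*}
The first inequality in \eqref{e:Im-Im} then follows immediately from $\bigl|\Im[a-b]\bigr| \leq |a-b|$ applied to $a = \widetilde m_{s \wedge \sigma}(z_{s \wedge \sigma})$ and $b = m_{s\wedge\sigma}(z_{s\wedge\sigma})$. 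For the second inequality, the reverse triangle inequality together with the bound just obtained yields
\begin{equation*}
\Im[\widetilde{m}_{s \wedge \sigma}(z_{s \wedge \sigma})] \leq \Im[m_{s \wedge \sigma}(z_{s \wedge \sigma})] \Bigl( 1 + \tfrac{1}{\log n} \Bigr) \leq 2 \Im[m_{s \wedge \sigma}(z_{s \wedge \sigma})],
\end{equation*}
for $n$ sufficiently large.

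There is no genuine obstacle: the lemma is essentially a bookkeeping statement translating the definition of $\sigma$ (which is phrased in terms of a universal lower bound by $n^{-1} (\log n)^{7/2} / \sqrt{\eta(\kappa+\eta)}$) into the more convenient form of being a $(\log n)^{-1}$-perturbation of $\Im[m]$ itself. The only minor care needed is the handling of the boundary case $s \wedge \sigma = \sigma$, which is resolved by the continuity of the Stieltjes transforms, and the verification that $z_{s \wedge \sigma}(u)$ remains in the spectral domain $\cD_{s \wedge \sigma}$, which is precisely the content of Item \ref{i:zs} of Lemma \ref{c:Dtproperty}.
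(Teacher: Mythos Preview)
Your proposal is correct and follows essentially the same approach as the paper: use the definition of $\sigma$ to bound $|\widetilde m_{s\wedge\sigma}(z_{s\wedge\sigma}) - m_{s\wedge\sigma}(z_{s\wedge\sigma})|$ by $(\log n)^{7/2}/(n\sqrt{(\kappa_{s\wedge\sigma}+\eta_{s\wedge\sigma})\eta_{s\wedge\sigma}})$, and then apply \eqref{e:Immz2} from \Cref{c:Dtproperty} to convert this into $\Im[m_{s\wedge\sigma}(z_{s\wedge\sigma})]/\log n$. The paper's proof is terser, but your additional remarks on the boundary case $s\wedge\sigma=\sigma$ and the explicit triangle-inequality steps are fine elaborations of the same argument.
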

\begin{proof}[Proof of \Cref{c:Im-Im}]
From the definition of $\ft(u)$ as in \eqref{e:deftu}, $z_s=z_s(u):=E_s+\kappa_s+\ri\eta_s\in \cD_s$ for $0\leq s\leq t(u)$. From the definition \eqref{stoptime} of $\sigma$, we have that 
\begin{align*}
|\widetilde m_{s\wedge \sigma}(z_{s\wedge \sigma}) - {m}_{s\wedge \sigma}(z_{s\wedge \sigma})|
\leq 
\frac{(\log n)^{7/2}}{n\sqrt{(\kappa_{s\wedge \sigma}+\eta_{s\wedge \sigma})\eta_{s\wedge \sigma}}}
\leq \frac{\text{Im}[{m}_{s\wedge \sigma}(z_{s\wedge \sigma})]}{\log n},
\end{align*}
where we used \eqref{e:Immz2} in the last inequality. The last statement in \eqref{e:Im-Im} also follows for $n$ large enough. 
\end{proof}

The following lemma states that before the stopping time $\sigma$, the largest particle is bounded by $E_t+f(t)$. 
\begin{lem}\label{l:beforestop}
Adopt the notation and assumptions in \Cref{p:edgerigidity}, and assume $n$ is large enough. For any $t\leq \sigma$ (from \eqref{stoptime}), we have 
\begin{align}
\lambda_1(t)\leq E_t+f(t).
\end{align} 
\end{lem}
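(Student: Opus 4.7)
My plan is to argue by contradiction using a stopping-time argument. Since $\supp \mu_0 \subseteq (-\infty, 0]$, we have $\lambda_1(0) \leq 0 = E_0 < E_0 + f(0)$, and since $\lambda_1(t), E_t, f(t)$ are all continuous in $t$, the random time
\begin{align*}
\tau := \inf \{ t \geq 0 : \lambda_1(t) > E_t + f(t) \}
\end{align*}
is strictly positive and, if finite, satisfies $\lambda_1(\tau) = E_\tau + f(\tau)$ by continuity. It suffices to show $\tau \geq \sigma$; then the lemma holds for $t < \sigma$ by the definition of $\tau$ and for $t = \sigma$ by continuity.

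Suppose for contradiction that $\tau < \sigma$. My aim is to produce a single test point $w \in \cD_\tau$ at which $|\widetilde m_\tau(w) - m_\tau(w)|$ strictly exceeds the bound in the definition \eqref{stoptime} of $\sigma$. Set $\kappa := f(\tau) = \lambda_1(\tau) - E_\tau$ and take $w := \lambda_1(\tau) + \ri \eta$ with $\eta := \kappa^{1/4}/(2\sqrt{C_1 n})$, where $C_1$ is a sufficiently large constant depending on $\fc$, $\cA(\tau)$, and $\fc(\tau)$ (the latter two from \Cref{l:densityt}). The first step is to verify $w \in \cD_\tau$: the real-part condition $\Re[w] \geq E_\tau + f(\tau)$ holds with equality; the upper bounds $\Re[w], \Im[w] \leq \fM - 2\tau$ follow immediately from $\fM = 6(\sfT + \fC^2 \eta_*)$ and \Cref{p:densitybound}; and the imaginary-part lower bound $\eta \geq 1/((\log n) n \Im[m_\tau(w)])$ follows from $\Im[m_\tau(w)] \geq \fc \eta/(8\sqrt{\kappa+\eta})$ in \Cref{p:densitybound} (applicable in all regimes since for $\tau < \fC\sqrt{\eta_*}$ we have $\kappa = \fC^2 \eta_* \geq \fC \eta_*$), together with $\eta \leq \kappa$, which give $\eta \Im[m_\tau(w)] \geq \fc/(64\sqrt 2 \, C_1 n) \geq 1/(n\log n)$ for $n$ large.

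The second step is to lower-bound $|\widetilde m_\tau(w) - m_\tau(w)|$. Retaining only the $i=1$ term in \eqref{e:defmt} gives $\Im[\widetilde m_\tau(w)] \geq 1/(n\eta)$. For an upper bound on $\Im[m_\tau(w)]$, I would use $\supp \mu_\tau \subseteq (-\infty, E_\tau]$, $\Re[w] - E_\tau = \kappa$, and the square-root edge profile $\varrho_\tau(E_\tau - y) \leq 2\sqrt{\cA(\tau) y}/\pi$ from \Cref{l:densityt}, integrated against the Poisson kernel, to obtain $\Im[m_\tau(w)] \leq C_0 \eta/\sqrt{\kappa}$ for some $C_0 = C_0(\cA(\tau), \fc(\tau))$. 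Choosing $C_1 \geq 4 C_0$ then gives $|\widetilde m_\tau(w) - m_\tau(w)| \geq \Im[\widetilde m_\tau(w) - m_\tau(w)] \geq 3/(2n\eta)$. On the other hand, since $\tau < \sigma$ and $w \in \cD_\tau$, the definition \eqref{stoptime} forces $|\widetilde m_\tau(w) - m_\tau(w)| < (\log n)^{7/2}/(n\sqrt{\eta(\kappa+\eta)}) \leq (\log n)^{7/2}/(n\sqrt{\eta\kappa})$. Substituting $\eta = \kappa^{1/4}/(2\sqrt{C_1 n})$ reduces the resulting inequality to $\kappa^{3/8} n^{1/4} \lesssim (\log n)^{7/2}$, but $\kappa \geq (\log n)^{15}/n^{2/3}$ forces $\kappa^{3/8} n^{1/4} \geq (\log n)^{45/8} \gg (\log n)^{28/8} = (\log n)^{7/2}$ for $n$ large, a contradiction.

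The main obstacle is the delicate balance in choosing $\eta$: it must be small enough that the empirical signal $1/(n\eta)$ from the outlier $\lambda_1(\tau)$ dominates the deterministic $\Im[m_\tau(w)] \sim \eta/\sqrt{\kappa}$, yet large enough that $w$ actually lies in $\cD_\tau$ so that the stopping-time bound is available. The scaling $\eta \sim \kappa^{1/4}/\sqrt n$ is precisely what matches these competing requirements, and the lower bound $f(\tau) \geq (\log n)^{15} n^{-2/3}$ built into \eqref{e:deff} translates this balance into the polylogarithm-versus-polynomial contradiction above.
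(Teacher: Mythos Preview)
Your overall strategy---detect the outlier $\lambda_1(\tau)=E_\tau+f(\tau)$ via the imaginary part of the empirical Stieltjes transform at a nearby test point $w\in\cD_\tau$---is the same as the paper's. The gap is in the step ``$\Im[m_\tau(w)]\le C_0\,\eta/\sqrt{\kappa}$ for some $C_0=C_0(\cA(\tau),\fc(\tau))$''. This bound requires a square-root \emph{upper} bound on $\varrho_\tau$ near the edge, but \Cref{l:densityt} only supplies that for $\tau\ge(30/\sfb)\sqrt{\eta_*}$, and even then only on the window $[E_\tau-\fc(\tau),E_\tau]$ with constants $\cA(\tau),\fc(\tau)$ that blow up as $\tau\downarrow 0$ (indeed $\cA(\tau)\lesssim t^{-9/4}$ by \eqref{e:lowm0}--\eqref{e:defAs}). \Cref{a:densitylowbound} imposes only a \emph{lower} bound on the initial density, so for small $\tau$ the free-convolution density near $E_\tau$ can be arbitrarily large and no uniform $C_0$ exists. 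If you try to absorb this by letting $C_1$ grow with $\eta_*^{-1}$, your verification that $w\in\cD_\tau$ (which needs $(\log n)\gtrsim C_1$) fails. A secondary issue: $\Im[\widetilde m_\tau(w)]-\Im[m_\tau(w)]\ge 1/(n\eta)-C_0\eta/\sqrt\kappa$ can be at most $1/(n\eta)$, so ``$\ge 3/(2n\eta)$'' is impossible; presumably you meant $1/(2n\eta)$.

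The paper sidesteps the need for any upper bound on $\Im[m_\tau]$ by choosing $\eta$ \emph{implicitly}: it takes $w_t=E_t+f(t)+\ri\eta_t$ with $\eta_t$ defined by $\eta_t\,\Im[m_t(w_t)]=1/((\log n)n)$, i.e.\ $w_t$ sits exactly on the lower boundary of $\cD_t$. Then $\Im[m_t(w_t)]=1/((\log n)n\eta_t)$ is known \emph{by construction}, the already-proved inequality \eqref{e:Immz2} (which uses only the density lower bound) turns the stopping-time estimate into $|\widetilde m_t(w_t)-m_t(w_t)|\le \Im[m_t(w_t)]$, and the presence of a particle within $\eta_t$ of $E_t+f(t)$ forces $\Im[\widetilde m_t(w_t)]\ge 1/(2n\eta_t)\gg 2/((\log n)n\eta_t)$. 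Replacing your explicit $\eta=\kappa^{1/4}/(2\sqrt{C_1 n})$ by this implicit $\eta_t$ repairs your argument and makes it coincide with the paper's.
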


\begin{proof}[Proof of \Cref{l:beforestop}]
Fix a time $t\geq 0$ and recall $\cD_t$ from \eqref{e:defDt}. We notice that
\begin{align}\label{e:monotone}
\eta \Im[m_t(E_t+f(t)+\eta)]=\int_\bR \frac{\eta^2 \varrho_t(x)\rd x}{(x-E_t-f(t))^2+\eta^2},
\end{align}

\noindent is increasing in $\eta \ge 0$. When $\eta\rightarrow +\infty$, \eqref{e:monotone} goes to one; when $\eta\rightarrow 0$, \eqref{e:monotone} goes to zero. Therefore, there exists a (unique) $\eta_t>0$ satisfying
\begin{align}\label{e:defw}
 \frac{1}{(\log n)n}=\eta_t\Im[m_t(E_t+f(t)+\ri\eta_t)].
 \end{align}
 Then let $w_t=E_t+f(t)+\ri\eta_t$. Recalling the definition of $\cD_t$ from \eqref{e:defDt}, \eqref{e:defw} and the monotonicity \eqref{e:monotone} imply that $w_t$ is on the south-west corner of $\cD_t$.

Next we claim that the bound
\begin{align}\label{e:twmestimate}
|\widetilde m_t(w_t)- m_t(w_t)|\leq \frac{(\log n)^{7/2}}{n\sqrt{\Im[w_t](\Re[w_t-E_t]+\Im[w_t])}},
\end{align}
implies that no $\lambda_i(t)$ exists in the interval $[E_t+f(t)-\eta_t, E_t+f(t)+\eta_t]$. Indeed, the estimate \eqref{e:twmestimate} implies 
\begin{align}
     |\widetilde m_t(w_t)- m_t(w_t)|\leq \frac{(\log n)^{7/2}}{n\sqrt{\Im[w_t](\Re[w_t-E_t]+\Im[w_t])}}\leq \Im[m_t(w_t)]= \frac{1}{(\log n)n\Im[w_t]},
\end{align}
where the second inequality is from $w_t\in \cD_t$ and \eqref{e:Immz2}; the last equality is from our construction \eqref{e:defw}.
Together with our choice of $\eta_t$ from \eqref{e:defw}, it follows that
\begin{align}\label{e:sumerror}
    \Im[\widetilde m_t(w_t)]
    \leq \Im[ m_t(w_t)]+ |\widetilde m_t(w_t)- m_t(w_t)|
    \leq \frac{2}{(\log n)n\Im[w_t]}.
\end{align}
If there exists some $\la_i(t)$ such that $\la_i(t)\in [E_t+f(t)-\eta_t, E_t+f(t)+\eta_t]$, then the left-hand side of \eqref{e:sumerror} is at least 
\begin{align*}
\Im[\widetilde m_t(w_t)]=\Im\left[\sum_{i=1}^n \frac{1}{\lambda_i(t)-w_t}\right]
=\frac{1}{n}\sum_{i=1}^n \frac{\Im[w_t]}{|\lambda_i(t)-w_t|^2}\geq \frac{\Im[w_t]}{n|w_t-\lambda_i(t)|^2}\geq \frac{1}{2n\Im[w_t]},
\end{align*}
where in the first inequality we used that $\Im[w_t]=\eta_t>0$, so each term in the summation is positive; in the last inequality we used that $|w_t-\lambda_i(t)|^2 \leq 2\eta_t^2$ since $\la_i(t)\in [E_t+f(t)-\eta_t, E_t+f(t)+\eta_t]$. This leads to a contradiction with \eqref{e:sumerror}, provided $n$ is large enough.

For $t\leq \sigma$, the bound \eqref{e:twmestimate} holds (by the definition \ref{stoptime}), and so there is no $\lambda_j(t)$ in the interval $[E_t+f(t)-\eta_t, E_t+f(t)+\eta_t]$.  At time $t=0$, we have $\la_1(0)\leq 0\leq f(0)$. Since $\la_1(t)$ is continuous, and also $f(t)$ and $E_t$ are continuous (the first follows from the definition \eqref{e:deff}, and the second follows from the facts $E_t=\xi(t)-tm_0(\xi(t))$ and $\xi(t)$ is continuous from \eqref{e:defxit}) we conclude that $\la_1(t)\leq E_t+f(t)$ for all $0\leq t\leq \sigma $.
\end{proof}

\Cref{p:edgerigidity} follows from the following statement and \Cref{l:beforestop}. Its proof will occupy the remainder of this section. 
\begin{prop}\label{p:stoptime}
Adopt the notation and assumptions in \Cref{p:edgerigidity}. There exists a constant $C=C(\sfb, \sfT)>1$, with probability $1-Ce^{-(\log n)^2}$, we have $\sigma=\sfT $.
\end{prop}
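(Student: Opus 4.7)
The plan is to establish, with probability $1 - Ce^{-(\log n)^2}$, the strict bound
\[
\bigl|\widetilde m_s(w) - m_s(w)\bigr| < \frac{(\log n)^{7/2}}{n\sqrt{\Im[w]\,(\Re[w] - E_s + \Im[w])}}
\]
uniformly in $w \in \cD_s$ and $s \in [0, \sigma]$. Continuity in $(s,w)$ then prohibits the supremum in \eqref{stoptime} from being attained at any $s < \sfT$, forcing $\sigma = \sfT$. By \Cref{c:Lapproximate} together with the polynomial-in-$n$ Lipschitz continuity of $\widetilde m_s - m_s$ on $\cD_s$, it suffices to prove this at lattice points $w = z_s(u)$ for $u \in \mathcal{L} \cap \cD_0$ with $s \le \ft(u)\wedge \sigma$; since $|\mathcal{L}|$ is polynomial in $n$, a union bound is affordable.

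Fix $u \in \mathcal{L}\cap \cD_0$ and abbreviate $z_s = z_s(u) = E_s + \kappa_s + \ri\eta_s$, with $D_s := \widetilde m_s(z_s) - m_s(z_s)$. Since $\widetilde m_0$ and $m_0$ are both the Stieltjes transform of $\mu_0^{(n)}$, $D_0 = 0$, and \eqref{eq:mzt} yields $D_s = \int_0^s \cE_1(\tau)\,\rd\tau + \int_0^s \rd\cE_2(\tau)$. For the $\cE_2$ contribution I will use a Burkholder--Davis--Gundy plus Gaussian-tail argument: the martingale part has quadratic variation
\[
\int_0^s \frac{2}{\beta n^3}\sum_i \frac{\rd\tau}{|\lambda_i(\tau) - z_\tau|^4} \le \int_0^s \frac{2\Im[\widetilde m_\tau(z_\tau)]}{\beta n^2 \eta_\tau^3}\,\rd\tau,
\]
and applying $\Im \widetilde m_\tau \le 2\Im m_\tau = -2\del_\tau \eta_\tau$ from \Cref{c:Im-Im}, the change of variable $\eta = \eta_\tau$, and the monotonicity $\kappa_\tau + \eta_\tau \ge \kappa_s + \eta_s$ from \Cref{c:kappa}, the right side is bounded by $C/(n^2 \eta_s (\kappa_s + \eta_s))$. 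A high-moment BDG estimate then delivers $|\int_0^s \rd\cE_2| \le C(\log n)^2/(n\sqrt{\eta_s(\kappa_s + \eta_s)})$ off an event of probability $e^{-c(\log n)^2}$. The deterministic correction $\tfrac{2-\beta}{\beta n^2}\sum_i (\lambda_i(\tau) - z_\tau)^{-3}$ is smaller by a factor of $1/n$.

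For the feedback contribution $\cE_1(\tau) = D_\tau \del_z \widetilde m_\tau(z_\tau)$, I will combine the a priori bound $|D_\tau| \le (\log n)^{7/2}/(n\sqrt{\eta_\tau(\kappa_\tau + \eta_\tau)})$ valid on $[0, \sigma]$ with the identity $|\del_z \widetilde m_\tau(z_\tau)| = \Im[\widetilde m_\tau]/\eta_\tau \le -2\del_\tau \log \eta_\tau$. Together with $\kappa_\tau + \eta_\tau \ge \kappa_s + \eta_s$, the substitution $\eta = \eta_\tau$ gives
\[
\int_0^s |\cE_1(\tau)|\,\rd\tau \le \frac{2(\log n)^{7/2}}{n\sqrt{\kappa_s + \eta_s}}\int_{\eta_s}^{\eta_0}\frac{\rd\eta}{\eta^{3/2}} \le \frac{C(\log n)^{7/2}}{n\sqrt{\eta_s(\kappa_s + \eta_s)}}.
\]

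The principal obstacle is that this feedback bound has exactly the same functional shape $n^{-1}\eta^{-1/2}(\kappa+\eta)^{-1/2}$ as the stopping-time threshold, with only a constant-factor gap; a naive substitution yields only $C \cdot$ threshold rather than the strict inequality needed to forbid $\sigma < \sfT$. Closing the bootstrap therefore hinges on exploiting the strictly smaller $(\log n)^2$ prefactor of the $\cE_2$ contribution as slack, for instance by iterating the estimate starting from a cruder a priori bound with $(\log n)^A$ for large $A$ in place of $(\log n)^{7/2}$ and shrinking $A$ at each pass (the $\cE_1$ step is essentially linear in $A$, while the $\cE_2$ baseline is fixed at $(\log n)^2$); alternatively, an integrating-factor representation of $\rd D_\tau = \del_z\widetilde m_\tau(z_\tau)D_\tau \rd\tau + \rd\cE_2$ exploiting the conservation $m_\tau(z_\tau) = m_0(u)$ along the characteristic can produce a genuine contraction. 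Once the strict bound is in place on $\mathcal{L}$, the union bound and continuity in $(s, w)$ conclude $\sigma = \sfT$.
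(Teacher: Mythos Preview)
Your overall architecture matches the paper's, but two steps do not go through as written.

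First, your quadratic-variation bound. Using only $|\lambda_i-z_\tau|\ge\eta_\tau$ gives $\sum_i|\lambda_i-z_\tau|^{-4}\le n\Im[\widetilde m_\tau(z_\tau)]/\eta_\tau^3$, and after the substitution $\Im m_\tau=-\partial_\tau\eta_\tau$ the time integral is $O(1/(n^2\eta_s^2))$; the monotonicity of $\kappa_\tau+\eta_\tau$ never enters that integral and cannot convert one of the factors $\eta_s$ into $\kappa_s+\eta_s$. In the edge regime $\kappa_s\gg\eta_s$ (precisely where the threshold is tightest) the resulting martingale bound $(\log n)^2/(n\eta_s)$ already exceeds the stopping-time threshold. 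What is missing is that for $\tau\le\sigma$ one has $\lambda_1(\tau)\le E_\tau+f(\tau)$ by \Cref{l:beforestop}, hence $|\lambda_i-z_\tau|\ge|(\kappa_\tau-f(\tau))+\ri\eta_\tau|$, after which the lower bounds on $\kappa_\tau-f(\tau)$ along the characteristic from \Cref{c:domainic} are what drive $\int_0^s \Im m_\tau\,\rd\tau/\bigl(\eta_\tau[(\kappa_\tau-f(\tau))^2+\eta_\tau^2]\bigr)=O\bigl(1/(\eta_s(\kappa_s+\eta_s))\bigr)$. The cubic term in $\cE_2$ needs the same refinement; with only $|\lambda_i-z_\tau|\ge\eta_\tau$ it integrates to $O(1/(n\eta_s))$, which in the edge regime is \emph{larger} than the martingale bound, not ``smaller by a factor of $1/n$''.

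Second, closing the bootstrap. Your iteration cannot shrink the exponent: each pass returns a constant multiple $C(\log n)^A$ with $C>1$ from the $\cE_1$ integral, so $A$ does not decrease. The Gr\"{o}nwall/integrating-factor route (your option (b)) is the correct one and is exactly what the paper uses, but the decisive point you have not exploited is the \emph{sharp} first line of \Cref{c:Im-Im}, which gives $|\partial_z\widetilde m_\tau(z_\tau)|\le (1+1/\log n)\,\Im[m_\tau(z_\tau)]/\eta_\tau$ rather than the crude factor $2$ you wrote. With the $(1+1/\log n)$ factor the Gr\"{o}nwall exponential becomes $(\eta_s/\eta_{t\wedge\sigma})^{1+1/\log n}=O(\eta_s/\eta_{t\wedge\sigma})$ since $\log(\eta_0/\eta_{t\wedge\sigma})=O(\log n)$ on $\cD_t$, and feeding in the $(\log n)^2$ baseline from $\cE_2$ then picks up only one additional $\log n$ from the time integral, yielding $O((\log n)^3)$, strictly below $(\log n)^{7/2}$. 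With the crude factor $2$ the exponential would be $(\eta_s/\eta_{t\wedge\sigma})^2$ and the bound diverges.
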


\begin{proof}[Proof of \Cref{p:edgerigidity}]

On the event $\sigma=\sfT $, \Cref{l:beforestop} implies that $\la_1(t)\leq E_t+f(t)$ for $0\leq t\leq \sfT $. Thus  \Cref{p:edgerigidity}  follows from \Cref{p:stoptime}.
\end{proof}

In the rest of this section we prove \Cref{p:stoptime} by analyzing the stochastic differential equation \eqref{eq:mzt}. The following proposition will be used to upper bound the error terms $\cE_1(s)$ and $\cE_2(s)$. 

\begin{prop}\label{p:thirdbound}

There exists a constant $C=C(\sfb, \sfT)>1$ and an event $\Omega$, measurable with respect to the Brownian paths $\{B_1(s), B_2(s), \cdots, B_n(s)\}_{0\leq s\leq \sfT }$, such that $\mathbb{P} [\Omega] \ge 1-Ce^{-(\log n)^2}$ and the following holds. On $\Omega$, for any $u\in \cL$ (recall this lattice from \eqref{def:L}), denote $z_s(u)=E_s(u)+\kappa_s(u)+\ri \eta_s(u)$. Then, for any $0\leq s\leq \ft(u)$ (recall \eqref{e:deftu}), we have
\begin{equation}\label{StochasticBnd}
\int_{0}^{s \wedge \sigma}\sqrt{\frac{2}{\beta n^3}} \sum_{i=1}^{n} \frac{\rd B_i(\tau)}{|z_\tau(u) - \lambda_i(\tau)|^2} \rd \tau \leq  \frac{C(\log n)^2}{n \sqrt{(\kappa_{s \wedge \sigma}(u)+\eta_{s \wedge \sigma}(u)) \eta_{s \wedge \sigma}(u)}}.
\end{equation}
and
 \begin{align}\label{e:ssbound1}
\frac{|2 - \beta|}{\beta} \int_{0}^{s \wedge \sigma}  \frac{1}{n^2} \sum_{i=1}^{n}\frac{1}{|\lambda_{i}(\tau) - z_{\tau}(u)|^3}\rd \tau\leq  \frac{ C\log n}{n (\kappa_{s \wedge \sigma}(u)+\eta_{s \wedge \sigma}(u))},
 \end{align}
\end{prop}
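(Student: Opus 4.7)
The plan is to reduce both bounds to pointwise estimates on the sums $\Sigma_k(\tau,u):=\sum_{i=1}^n |z_\tau(u)-\lambda_i(\tau)|^{-k}$ for $k=3,4$, and then to exploit the characteristic flow identity $\partial_\tau \eta_\tau = -\Im[m_\tau(z_\tau)]$ from \eqref{e:etas} to convert time integrals into integrals over $\eta$. Using $\sum_i |z_\tau-\lambda_i|^{-2} = n\Im[\widetilde m_\tau(z_\tau)]/\eta_\tau \leq 2n\Im[m_\tau(z_\tau)]/\eta_\tau$ for $\tau\leq \sigma$ (by \Cref{c:Im-Im}), combined with $|z_\tau-\lambda_i|\geq \eta_\tau$ and $|z_\tau-\lambda_i|\geq \kappa_\tau/2$ (the latter valid before $\sigma$ since \Cref{l:beforestop} gives $\lambda_i(\tau)\leq \lambda_1(\tau)\leq E_\tau+f(\tau)$ together with $\kappa_\tau\geq f(\tau)$; the marginal case $\kappa_\tau\leq 2f(\tau)$ is handled using only the $\eta_\tau$-bound, in which regime $\kappa_\tau + \eta_\tau$ is comparable to $\eta_\tau$ up to $O(1)$), one obtains $\Sigma_k(\tau,u)\leq Cn\Im[m_\tau(z_\tau)]/(\eta_\tau(\kappa_\tau+\eta_\tau)^{k-2})$.

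For the deterministic integral \eqref{e:ssbound1}, substituting the $k=3$ estimate, rewriting $\Im[m_\tau(z_\tau)]\,d\tau = -d\eta_\tau$, and invoking the monotonicity $\kappa_\tau + \eta_\tau \geq \eta_\tau(\kappa_{s\wedge\sigma}+\eta_{s\wedge\sigma})/\eta_{s\wedge\sigma}$ for $\tau\leq s\wedge\sigma$ (immediate from \Cref{c:kappa}) yields
\begin{align*}
\text{LHS of \eqref{e:ssbound1}}\leq \frac{C}{n}\int_{\eta_{s\wedge\sigma}}^{\eta_0} \frac{\eta_{s\wedge\sigma}}{\eta^2(\kappa_{s\wedge\sigma}+\eta_{s\wedge\sigma})}\, d\eta \leq \frac{C}{n(\kappa_{s\wedge\sigma}+\eta_{s\wedge\sigma})},
\end{align*}
which is even stronger than \eqref{e:ssbound1} and needs no concentration argument. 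For the martingale $M^{(u)}_t := \int_0^{t\wedge\sigma} \sqrt{2/(\beta n^3)}\, \sum_i (z_\tau(u)-\lambda_i(\tau))^{-2}\, dB_i(\tau)$ in \eqref{StochasticBnd}, the same computation applied to its quadratic variation gives $\langle M^{(u)}\rangle_{s\wedge\sigma}\leq C/(n^2(\kappa_{s\wedge\sigma}+\eta_{s\wedge\sigma})^2)$ as a deterministic function of $s\wedge\sigma$.

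To convert this QV bound into a pathwise bound $|M^{(u)}_{s\wedge\sigma}|\leq C(\log n)/(n(\kappa_{s\wedge\sigma}+\eta_{s\wedge\sigma}))$ uniformly in $s$, I will dyadically partition $[0,\sfT]$ into $O(\log n)$ intervals on which $\kappa_t+\eta_t$ changes by at most a factor of $2$ (possible because, by \Cref{c:kappa}, both $\kappa_t$ and $\eta_t$ are monotone decreasing in $t$), and on each such interval apply Doob's maximal inequality to the exponential supermartingale $\exp(\lambda M^{(u)}_t - \tfrac12\lambda^2\langle M^{(u)}\rangle_t)$, tuning $\lambda\sim (\log n)/\sqrt{V_{\max}}$ where $V_{\max}$ is the QV bound at the top of the interval, and then taking $a=(\log n)^2$ in the tail $\bP[\sup_t(\lambda M^{(u)}_t-\tfrac12\lambda^2\langle M^{(u)}\rangle_t)\geq a]\leq e^{-a}$. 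Since $1/(\kappa+\eta)\leq 1/\sqrt{(\kappa+\eta)\eta}$, the resulting bound is stronger than \eqref{StochasticBnd}. A union bound over the $|\cL|=O(n^{16})$ lattice points and the $O(\log n)$ dyadic levels keeps the failure probability at $Ce^{-(\log n)^2}$.

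The principal difficulty is that the target bounds depend on the random quantities $\kappa_{s\wedge\sigma}$ and $\eta_{s\wedge\sigma}$ through the stopping time $\sigma$, rather than on deterministic functions of $s$ alone; the dyadic-in-$(\kappa_t+\eta_t)$ decomposition sidesteps this by reducing the problem to $O(\log n)$ deterministic subproblems on which standard martingale concentration inequalities apply directly. A secondary technical point is handling near-edge particles $\lambda_i(\tau)\in (E_\tau, E_\tau+f(\tau)]$, but this is cleanly controlled by the a priori bound from \Cref{l:beforestop}.
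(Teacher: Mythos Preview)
Your overall architecture (reduce to $\Sigma_k$, use $\Im[\widetilde m_\tau]\le 2\Im[m_\tau]$ before~$\sigma$, rewrite $\Im[m_\tau]\,\rd\tau=-\rd\eta_\tau$, and control the martingale via a dyadic decomposition plus Gaussian/BDG concentration) is sound and matches the paper's strategy. The gap is in the pointwise estimate $\Sigma_k(\tau,u)\leq Cn\Im[m_\tau(z_\tau)]/(\eta_\tau(\kappa_\tau+\eta_\tau)^{k-2})$.

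From \Cref{l:beforestop} you only obtain $\Re[z_\tau]-\lambda_i(\tau)\geq \kappa_\tau-f(\tau)$, so the natural bound is $|z_\tau-\lambda_i|\gtrsim (\kappa_\tau-f(\tau))+\eta_\tau$, \emph{not} $\kappa_\tau+\eta_\tau$. Your patch for the regime $\kappa_\tau\leq 2f(\tau)$, namely that ``$\kappa_\tau+\eta_\tau$ is comparable to $\eta_\tau$ up to $O(1)$'', is false: take $\tau=s\wedge\sigma$ at the south--west corner of $\cD_\tau$, so $\kappa_\tau=f(\tau)\sim(\log n)^{15}n^{-2/3}$, while the domain constraint together with \eqref{e:Immz1} only forces $\eta_\tau\gtrsim(\log n)^{13/4}n^{-2/3}$; the ratio $\kappa_\tau/\eta_\tau$ is then a large power of $\log n$, and in particular $\kappa_\tau-f(\tau)=0$ while $\kappa_\tau+\eta_\tau\sim\kappa_\tau\gg\eta_\tau$. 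Since this corner point is precisely where the integrals are most singular, the pointwise bound as stated cannot hold, and with it the clean change of variables $\int\Im[m_\tau]\eta_\tau^{-1}(\kappa_\tau+\eta_\tau)^{-(k-2)}\rd\tau$ collapses.

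The missing ingredient is \Cref{c:domainic}: it gives the quantitative lower bound $\kappa_\tau-f(\tau)\gtrsim (s\wedge\sigma-\tau)\Im[m_{s\wedge\sigma}(z_{s\wedge\sigma})]\,\kappa_{s\wedge\sigma}/\eta_{s\wedge\sigma}$ (for $\tau\geq\fC\sqrt{\eta_*}$, with an analogous bound below), i.e.\ the gap $\kappa_\tau-f(\tau)$ opens \emph{linearly} as you run the characteristic backward. The paper keeps the denominator in the form $(\kappa_\tau-f(\tau))^2+\eta_\tau^2$ (resp.\ $\kappa_\tau-f(\tau)+\eta_\tau$), inserts this linear-in-$(s-\tau)$ lower bound, and computes the resulting integral explicitly (see \eqref{e:intcov}--\eqref{e:ffv} and \eqref{e:cubintegral}--\eqref{e:intcov0}); this is what produces the extra factor $(\kappa_{s\wedge\sigma}+\eta_{s\wedge\sigma})^{-1}$ beyond $\eta_{s\wedge\sigma}^{-1}$. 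Once you replace your pointwise bound by one in terms of $\kappa_\tau-f(\tau)$ and invoke \Cref{c:domainic}, the remainder of your plan (dyadic levels in $(\kappa_t+\eta_t)\eta_t$, exponential-supermartingale or BDG concentration, union bound over $\cL$) goes through and coincides with the paper's argument.
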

\begin{proof}[Proof of \Cref{p:thirdbound}]
For simplicity of notation, we write $\mathfrak{t} (u), z_{\tau}(u), \kappa_{\tau}(u), \eta_{\tau}(u)$ as $\mathfrak{t}, z_{\tau}, \kappa_{\tau}, \eta_{\tau}$, respectively.
By the definition of $\sigma$, for $\tau\leq \sigma$, \Cref{l:beforestop} gives that 
\begin{align}\label{e:la1bb}
\la_1(\tau)\leq E_\tau+f(\tau). 
\end{align}

To prove \eqref{StochasticBnd}, we notice by \Cref{c:Dtproperty} that $z_{\tau}(u)\in \cD_{\tau}$ for any $0\leq {\tau}\leq \ft$. We define a series of stopping times $0=t^{(0)}< t^{(1)}<t^{(2)}<\cdots<t^{(m)}=\ft$, 
as follows:
\begin{equation}\label{e:choosetik}
t^{(k)} = \ft \wedge \inf\{{\tau}> t^{(k-1)} : (\kappa_{{\tau}} +\eta_{{\tau}})\eta_{{\tau}} <( \kappa_{t^{(k-1)}}+ \eta_{t^{(k-1)}})\eta_{t^{(k-1)}}/2\},\quad k=1,2,3,\cdots,m,
\end{equation}

\noindent where $m = m(u)$ might depend on $u$. From \eqref{e:Immz1}, $\kappa_t$ and $\eta_t$ are finite and cannot be smaller than $1/n^2$; in particular, $\eta_t (\kappa_t + \eta_t) \ge 2n^{-4}$, and so any $u \in \mathcal{L}$ must satisfy $m = m(u) \leq10 \log n$.
We now apply the Burkholder--Davis--Gundy inequality to our stochastic integral. This allows us to bound the supremum of our stochastic integral over a time range by its quadratic variation with overwhelming probability. We refer to \cite[Theorem 3.2]{DFIM} (see also \cite[Theorem 11.2.1]{PTIIM}) for a detailed introduction to the Burkholder--Davis--Gundy inequality. The quadratic variation of the left side of \eqref{StochasticBnd} at $s = t^{(k)}$ is given by 
\begin{align}\begin{split}\label{e:quadraticv}
&\phantom{{}={}}\int_{0}^{t^{(k)} \wedge \sigma} \frac{2}{\beta n^3} \sum_{k=1}^{n} \frac{\rd {\tau}}{|z_{\tau} - \lambda_k({\tau})|^4}
 \leq 
  \frac{\OO(1)}{n^3} \int_{0}^{t^{(k)} \wedge \sigma} \sum_{i=1}^n\frac{1}{|z_\tau-E_\tau-f(\tau)|^2}\frac{\rd \tau}{|z_\tau-\lambda_i(\tau)|^2} \\
 &=\frac{\OO(1)}{n^2} \int_{0}^{t^{(k)} \wedge \sigma} \frac{\text{Im}[\widetilde m_{\tau}(z_{\tau})]}{\eta_{\tau} ((\kappa_{\tau} - f({\tau}))^2 + (\eta_{\tau})^2)} \rd {\tau} \leq 
\frac{\OO(1)}{n^2} \int_{0}^{t^{(k)} \wedge \sigma} \frac{\text{Im}[m_{\tau}(z_{\tau})]}{\eta_{\tau} ((\kappa_{\tau} - f({\tau}))^2 + (\eta_{\tau})^2)} \rd {\tau},
\end{split}\end{align}
where in the first inequality we used that for $\tau\leq \sigma$ we have $\lambda_1(\tau)\leq E_\tau+f(\tau) \le \Real z_{\tau}$; in the second inequality we used the definition of $\wt m_t$ from \eqref{e:defmt}; and in the last inequality, we used  \eqref{e:Im-Im} indicating that, for ${\tau}\leq \sigma$, we have $\Im[\widetilde m_{\tau}(z_{\tau})]\leq 2\Im[ m_{\tau}(z_{\tau})]$.

Let $s=t^{(k)}\wedge\sigma$ and $\widetilde s=\min\{s, \mathfrak{C}\sqrt{\eta_*}\}$. We can rewrite the integral in \eqref{e:quadraticv} as
\begin{align}\begin{split}\label{e:intcov}
\int_{0}^{s} \frac{\text{Im}[m_{\tau}(z_{\tau})]}{\eta_{\tau} ((\kappa_{\tau} - f({\tau}))^2 + (\eta_{\tau})^2)} \rd {\tau} & \leq \int_{0}^{\widetilde s} \frac{\text{Im}[m_{\tau}(z_{\tau})]}{\eta_{\tau} (((\widetilde s-{\tau})\Im[m_s(z_s)]\kappa_{\widetilde s}/\eta_{\widetilde s})^2 + (\eta_{\tau})^2)} \rd {\tau}\\
& \qquad +\int_{\widetilde s}^{s} \frac{\text{Im}[m_{\tau}(z_{\tau})]}{\eta_{\tau} (((s-{\tau})\Im[m_s(z_s)]\kappa_{ s}/2\eta_{ s})^2 + (\eta_{\tau})^2)} \rd {\tau},
\end{split}\end{align}
where we used  \eqref{e:ksfsdiff2} to get the first term on the righthand of \eqref{e:intcov}, and \eqref{e:ksfsdiff1} to get the second term. The two terms on the righthand side of \eqref{e:intcov} can be bounded in the same way, so we will only study the second one.
 Let $A:=\Im[m_{\tau}(z_{\tau})]=\Im[m_s(z_s)]$; then $\eta_{\tau}=\eta_s+(s - \tau)A$ (from \eqref{e:etas}). Applying a change of variables replacing $s-\tau$ by $\tau$, and then one replacing $A \tau$ by $\tau$, we find 
 \begin{align*}
\int_{\widetilde s}^{s} \frac{\text{Im}[m_{\tau}(z_{\tau})]\rd {\tau}}{\eta_{\tau} (((s-{\tau})\Im[m_s(z_s)]\kappa_{ s}/2\eta_{ s})^2 + (\eta_{\tau})^2)} \rd {\tau}
& =
\int_{0}^\infty \frac{A\rd {\tau}}{(\eta_s+{\tau}A) (({\tau}A\kappa_{ s}/2\eta_{ s})^2 + (\eta_s+{\tau}A)^2)} \\
&=\int_{0}^\infty \frac{\rd {\tau} }{(\eta_s+{\tau}) (({\tau}\kappa_{ s}/2\eta_{ s})^2 + (\eta_s+{\tau})^2)}.
 \end{align*}

\noindent If $\eta_s\geq \kappa_s$, then 
\begin{align}
\int_{0}^\infty \frac{\rd {\tau} }{(\eta_s+{\tau}) (({\tau}\kappa_{ s}/2\eta_{ s})^2 + (\eta_s+{\tau})^2)}
\leq \int_{0}^\infty \frac{\rd {\tau} }{(\eta_s+{\tau})^3 }\leq \frac{1}{2\eta_s^2}\leq \frac{1}{(\kappa_s+\eta_s)\eta_s}.
\end{align}
If $\kappa_s\geq \eta_s$, we need to decompose the integral into three parts:
\begin{align}\begin{split}\label{e:ffv}
&\phantom{{}={}}\int_{0}^\infty \frac{\rd {\tau} }{(\eta_s+{\tau}) (({\tau}\kappa_{ s}/2\eta_{ s})^2 + (\eta_s+{\tau})^2)}
=\int_0^{\eta^2_s/\kappa_s}(\cdots)
+\int_{\eta^2_s/\kappa_s}^{\eta_s}(\cdots)
+\int_{\eta_s}^{\infty}(\cdots)\\
&\leq 
\int_0^{\eta^2_s/\kappa_s}\frac{\rd {\tau} }{\eta_s^3}
+\int_{\eta^2_s/\kappa_s}^{\eta_s}\frac{\rd {\tau} }{\eta_s ({\tau}\kappa_{ s}/2\eta_{ s})^2 }
+\int_{\eta_s}^{\infty}\frac{\rd {\tau} }{{\tau}({\tau}\kappa_{ s}/2\eta_{ s})^2}
\leq \frac{1}{\kappa_s\eta_s}+\frac{4}{\kappa_s\eta_s}+\frac{2}{\kappa_s^2}\leq \frac{16}{(\kappa_s+\eta_s)\eta_s}.
\end{split}\end{align}
By the same argument, the first term on the righthand side of \eqref{e:intcov} can be bounded by $16/((\kappa_{\widetilde s}+\eta_{\widetilde s})\eta_{\widetilde s})\leq 16/((\kappa_{s}+\eta_{s})\eta_s)$ (where the inequality is from the facts that $\widetilde s\leq s$ and that $\kappa_\tau$ and $\eta_\tau$ are both decreasing function in the time $\tau$ from \eqref{e:etas} and \eqref{e:ksktrelation}). 

	 Thus, for  $s=t^{(k)}\wedge \sigma$ and any $p > 1$, the Burkholder--Davis--Gundy inequality \cite[Theorem 3.2]{DFIM} (see also \cite[Theorem 11.2.1]{PTIIM}), together with the upper bound of the quadratic variance \eqref{e:quadraticv}, \eqref{e:intcov} and \eqref{e:ffv}, yields 
	that there exists a constant $C_0>1$ such that
	\begin{flalign}
		\label{integral} 
		\bE
		\left[ \displaystyle\sup_{\tau \in [0, t^{(k)}]} 
		\left| \int_{0}^{{\tau}\wedge \sigma}\sqrt{\frac{2}{\beta n^3}} \sum_{i=1}^{n} \frac{\rd B_i({\tau})}{|z_{\tau} - \lambda_i({\tau})|^2} \right|^{2p} \right] 
		\le (72p)^{2p} \cdot \mathbb{E} \Bigg[ \left(\frac{C_0}{n^2(\kappa_{t^{(k)} \wedge \sigma}+\eta_{t^{(k)} \wedge \sigma}) \eta_{t^{(k)} \wedge \sigma}}\right)^{p} \Bigg].
	\end{flalign}

By taking $p=(\log n)^2$ in \eqref{integral}, there exists some $C_1>1$, such that with  probability $1-e^{-2(\log n)^2}$ we have 
\begin{equation}\label{localineq}
\sup_{0\leq {\tau} \leq t^{(k)} } \left|\int_{0}^{{\tau}\wedge \sigma}\sqrt{\frac{2}{\beta n^3}} \sum_{i=1}^{n} \frac{\rd B_i({\tau})}{|z_{\tau} - \lambda_i({\tau})|^2}  \right|\leq \frac{C_1(\log n)^2}{n \sqrt{(\kappa_{t^{(k)} \wedge \sigma}+\eta_{t^{(k)} \wedge \sigma}) \eta_{t^{(k)} \wedge \sigma}}}.
\end{equation}
We define $\Omega$ to be the event on which \eqref{localineq} holds for all $0\leq k\leq m$ and all $u\in\cL$. Since $m|\cL|\leq |\mathcal{L}| \cdot 10 \log n \le n^{20}$, it follows from the discussion above that  $\Omega$ holds with probability $1-n^{20}e^{-2(\log n)^2}\geq 1-e^{-(\log n)^2}$, provided $n$ is large enough. Therefore, for any $s\in[t^{(k-1)},t^{(k)}]$, the bounds \eqref{localineq} and our choice of $t^{(k)}$ \eqref{e:choosetik} (with the fact that $\eta_s$ and $\kappa_s$ are non-increasing in $s$) yield on $\Omega$ that, for any $0\leq s\leq \ft(u)$, we have
\begin{align*}\begin{split}
\left|\int_{0}^{s\wedge \sigma}\sqrt{\frac{2}{\beta n^3}} \sum_{i=1}^{n} \frac{\rd B_i(\tau)}{|z_\tau - \lambda_i(\tau)|^2} \right|
 \leq \frac{C_1(\log n)^2}{n \sqrt{(\kappa_{t^{(k)} \wedge \sigma}+\eta_{t^{(k)} \wedge \sigma}) \eta_{t^{(k)} \wedge \sigma}}}\leq \frac{2C_1(\log n)^2}{n \sqrt{(\kappa_{s \wedge \sigma}+\eta_{s \wedge \sigma}) \eta_{s \wedge \sigma}}}.
\end{split}
\end{align*}
This finishes the proof of  \eqref{StochasticBnd}.

The left side of \eqref{e:ssbound1} is, following \eqref{e:quadraticv}, bounded by 
 \begin{align}\begin{split}\label{e:cubintegral}
 &\phantom{{}={}}\int_{0}^{s \wedge \sigma} \frac{1}{n^2} \sum_{i=1}^{n}\frac{1}{|\lambda_{i}(\tau) - z_{\tau}|^3} \rd \tau
 \leq  \int_{0}^{s \wedge \sigma} \frac{1}{n^2} \sum_{i=1}^{n}\frac{1}{|\lambda_{i}(\tau) - z_{\tau}|^2|\kappa_\tau-f(\tau)+\ri \eta_\tau|} \rd \tau\\
 &=\frac{1}{n}\int_{0}^{s \wedge \sigma} \frac{ \text{Im}[\widetilde m_{\tau}(z_{\tau})]}{|\kappa_\tau -f({\tau})+\ri\eta_{\tau}| \eta_\tau} \rd {\tau} \\
 &\leq  \frac{\OO(1)}{n}\int_{0}^{s \wedge \sigma} \frac{\text{Im} [{m}_{\tau}(z_{\tau})]}{|\kappa_\tau -f({\tau})+\ri\eta_{\tau}|\eta_\tau} \rd {\tau}
 \leq  \frac{\OO(1)}{n} \int_{0}^{s \wedge \sigma} \frac{\Im[m_\tau(z_\tau)]}{(\kappa_{\tau} -f({\tau})+\eta_{\tau}) \eta_{{\tau}}}\rd {\tau}.
\end{split}\end{align}
where we used $\lambda_k(\tau)\leq \la_1(\tau)\leq E_\tau+f(\tau)$ from \eqref{e:la1bb}, thus $|\lambda_{k}(\tau) - z_{\tau}|\geq |\kappa_\tau+E_\tau-f(\tau)+\ri \eta_\tau|$ in the first inequality; the definition of $\wt m_t(z)$ for the second inequality; \eqref{e:Im-Im} in the third inequality; and $|\kappa_\tau-f(\tau)+\ri\eta_\tau|\geq |\kappa_\tau-f(\tau)+\eta_\tau|/2$ in the last inequality. Let $\widetilde s=\min\{s\wedge \sigma, \fC\sqrt{\eta_*}\}$; we can rewrite the integral in \eqref{e:cubintegral} as
\begin{align}\label{e:intcov0}
\int_{0}^{s\wedge \sigma} \frac{\Im[m_\tau(z_\tau)]}{(\kappa_{\tau} -f({\tau})+\eta_{\tau}) \eta_{{\tau}}}\rd {\tau}
=\int_{0}^{\widetilde s} \frac{\Im[m_\tau(z_\tau)]}{(\kappa_{\tau} -f({\tau})+\eta_{\tau}) \eta_{{\tau}}}\rd {\tau}
+\int_{\widetilde s}^{s\wedge \sigma} \frac{\Im[m_\tau(z_\tau)]}{(\kappa_{\tau} -f({\tau})+\eta_{\tau}) \eta_{{\tau}}}\rd {\tau}.
\end{align}
Using \Cref{c:domainic}, the two terms on the righthand side of \eqref{e:intcov0} can be bounded in the same way as for \eqref{e:intcov}; we will only study the second one.
Again let $A:=\Im[m_{\tau}(z_{\tau})]=\Im[m_{s\wedge\sigma}(z_{s\wedge\sigma})]$, so that $\eta_{\tau}=\eta_{s\wedge\sigma}+(s\wedge\sigma -\tau)A$. We can bound the second term on the righthand side of \eqref{e:intcov0} as
 \begin{align*}
&\phantom{{}={}}\int_{\widetilde s}^{{s\wedge\sigma}} \frac{\text{Im}[m_{\tau}(z_{\tau})]\rd {\tau}}{\eta_{\tau} ((s\wedge \sigma-{\tau})\Im[m_{s\wedge\sigma}(z_{s\wedge\sigma})]\kappa_{ {s\wedge\sigma}}/2\eta_{ {s\wedge\sigma}} + \eta_{\tau})} \rd {\tau}\\
&=
\int_{0}^\infty \frac{A\rd {\tau}}{(\eta_{s\wedge\sigma}+{\tau}A) (({\tau}A\kappa_{ {s\wedge\sigma}}/2\eta_{ {s\wedge\sigma}}) + (\eta_{s\wedge\sigma}+{\tau}A))} \\
&=\int_{0}^\infty \frac{\rd {\tau} }{(\eta_{s\wedge\sigma}+{\tau}) (({\tau}\kappa_{ {s\wedge\sigma}}/2\eta_{ {s\wedge\sigma}}) + (\eta_{s\wedge\sigma}+{\tau}))}.
 \end{align*}
where we used \eqref{e:ksfsdiff1} to bound the second term on the righthand side of \eqref{e:intcov0} by the first line; in the second line we used the definition of $A$ and applied a change of variables replacing $s \wedge \sigma -\tau$ by $\tau$; in the last line we replaced $A \tau$ with $\tau$.

 If $\eta_{s\wedge \sigma}\geq \kappa_{s\wedge \sigma}$, then 
\begin{align}
\int_{0}^\infty \frac{\rd {\tau} }{(\eta_{s\wedge\sigma}+{\tau}) (({\tau}\kappa_{ {s\wedge\sigma}}/2\eta_{ {s\wedge\sigma}}) + (\eta_{s\wedge\sigma}+{\tau}))}
\leq \int_{0}^\infty \frac{\rd {\tau} }{(\eta_{s\wedge \sigma}+{\tau})^2 }\leq \frac{1}{\eta_{s\wedge \sigma}}\leq \frac{2}{\kappa_{s\wedge \sigma}+\eta_{s\wedge \sigma}}.
\end{align}
If $\kappa_{s\wedge \sigma}\geq \eta_{s\wedge \sigma}$, we need to decompose the integral into two parts:
\begin{align*}
&\phantom{{}={}}\int_{0}^\infty \frac{\rd {\tau} }{(\eta_{s\wedge\sigma}+{\tau}) (({\tau}\kappa_{ {s\wedge\sigma}}/2\eta_{ {s\wedge\sigma}}) + (\eta_{s\wedge\sigma}+{\tau}))}
=
\int_{0}^{\eta_{s\wedge \sigma}}(\cdots)
+\int_{\eta_{s\wedge \sigma}}^{\infty}(\cdots)\\
&\leq 
\int_0^{\eta_{s\wedge\sigma}}\frac{\rd {\tau} }{\eta_{s\wedge \sigma}(\tau \kappa_{s\wedge\sigma}/2\eta_{s\wedge\sigma}+\eta_{s\wedge\sigma})}
+\int_{\eta_{s\wedge \sigma}}^{\infty}\frac{\rd {\tau} }{{\tau}({\tau}\kappa_{ s\wedge\sigma}/2\eta_{ s\wedge\sigma})}
\\
&\leq \frac{2 \log((\kappa_{s\wedge\sigma}+\eta_{s\wedge\sigma})/\eta_{s\wedge\sigma})}{\kappa_{s\wedge \sigma}}+\frac{2}{\kappa_{s\wedge \sigma}}\leq \frac{\OO(1)(\log n)}{\kappa_{s\wedge \sigma}+\eta_{s\wedge \sigma}},
\end{align*}
 where in the last inequality we used $(\kappa_{s\wedge\sigma}+\eta_{s\wedge\sigma})/\eta_{s\wedge\sigma}\leq n^3$ (from \eqref{e:Immz1}). The first term on the righthand side \eqref{e:intcov0} can similarly be bounded by $\OO(1)\log n/(\kappa_{\widetilde s}+\eta_{\widetilde s})\leq \OO(1)\log n/(\kappa_{s\wedge\sigma}+\eta_{s\wedge\sigma})$ (where the inequality is from the facts that $\widetilde s\leq s\wedge\sigma$, and that $\kappa_\tau$ and $\eta_\tau$ are both decreasing function in time $\tau$ from \eqref{e:etas} and \eqref{e:ksktrelation}). This finishes the proof of \eqref{e:ssbound1}.

\end{proof}

\begin{proof}[Proof of \Cref{p:stoptime}]
Let us return to equation \eqref{eq:mzt} for the difference between $\widetilde{m}_t(z)$ and $m_t(z)$. 
We plug \eqref{StochasticBnd} and \eqref{e:ssbound1} into \eqref{eq:mzt}, on the event $\Omega$ as defined in \Cref{p:thirdbound}, to obtain 
\begin{align}\begin{split}\label{e:globalest}
|\widetilde m_{t\wedge\sigma}(z_{t\wedge\sigma})- m_{t\wedge \sigma}(z_{t\wedge \sigma})|
&=\int_0^{t\wedge\sigma}\left|\widetilde{m}_{\tau}(z_{\tau})-m_{\tau}(z_{\tau})\right|\left|\partial_z  \widetilde m_{\tau}(z_{\tau})\right|\rd \tau\\
&+\OO\left(\frac{(\log n)^2}{n\sqrt{(\kappa_{t\wedge \sigma}+\eta_{t\wedge \sigma})\eta_{t\wedge \sigma}}}\right).
\end{split}\end{align}
For the first term on the righthand side of \eqref{e:globalest}, using  \Cref{l:STproperty} and \eqref{e:Im-Im}, we have
\begin{align}\label{e:aterm2}
\left|\partial_z \widetilde m_{\tau}(z_{\tau})\right|
\leq \frac{\text{Im}[ \widetilde{m}_{\tau}(z_{\tau})]}{\eta_{\tau}}
\leq \left(1+\frac{1}{\log n}\right)\frac{\text{Im}[ {m}_{\tau}(z_{\tau})]}{\eta_{\tau}}.
\end{align}

We denote the quantity,
\begin{align} \label{eqn:betabd}
\beta(s):= \left(1+\frac{1}{\log n}\right)\frac{\text{Im}[ {m}_s(z_s)]}{\eta_s}\leq \frac{2\text{Im}[ {m}_s(z_s)]}{\eta_s},
\end{align}
and rewrite \eqref{e:globalest} as
\begin{align*}\begin{split}
\left|\widetilde{m}_{ t\wedge\sigma}(z_{ t\wedge\sigma})-  m_{ t\wedge\sigma}(z_{ t\wedge\sigma})\right|
\leq& \int_0^{ t\wedge\sigma}\beta(s)\left|\widetilde{m}_s(z_s)- m_s(z_s)\right|\rd s+\frac{\OO(1)(\log n)^2}{n\sqrt{(\kappa_{ t \wedge \sigma}+\eta_{ t \wedge \sigma})\eta_{ t \wedge \sigma}}}.
\end{split}
\end{align*}
By the Gr\"{o}nwall inequality, this implies for any $ 0\leq t\leq \ft(u)$ that, on $\Omega$, 
\begin{align}\begin{split}\label{e:midgronwall}
\left|\widetilde{m}_{t\wedge\sigma}(z_{t\wedge\sigma})-m_{t\wedge\sigma}(z_{t\wedge\sigma})\right|
& \leq 
\frac{\OO(1)(\log n)^2}{n\sqrt{ (\kappa_{t \wedge \sigma}+\eta_{t\wedge \sigma})\eta_{t\wedge \sigma}}}\\
& \qquad + \int_0^{t\wedge\sigma}\beta(s)\left(\frac{\OO(1)(\log n)^2}{n\sqrt{(\kappa_s+\eta_{s})\eta_{s}}}+\right)e^{\int_s^{t\wedge\sigma} \beta(\tau)\rd \tau} \rd s.
\end{split}
\end{align}
For the function $\beta(\tau)$, using  \eqref{eqn:betabd}, we have the following estimates
\begin{align*}\begin{split}
\int_s^{t\wedge\sigma} \beta(\tau)\rd \tau
&\leq  \left(1+\frac{1}{\log n}\right)\int_s^{t\wedge\sigma} \frac{\Im[ m_\tau(z_\tau)]}{\eta_\tau}\rd \tau= \left(1+\frac{1}{\log n}\right)\log \left(\frac{\eta_s}{\eta_{t\wedge\sigma}}\right),
\end{split}\end{align*} 
where in the last equality we used $\del_\tau \eta_\tau=-\Im[m_\tau(z_\tau)]$, by taking the time derivative of \eqref{e:etas}. Thus,
\begin{align}\label{e:beta}
e^{\int_s^{t\wedge\sigma} \beta(\tau)d \tau}
\leq  e^{\left(1+\frac{1}{\log n}\right)\log \left(\frac{\eta_{s}}{\eta_{t\wedge\sigma}}\right)}
\leq \OO(1)  \frac{\eta_s}{\eta_{t\wedge\sigma}},
\end{align}
where in the last inequality, we used \eqref{e:Immz1}, which gives $\log(\eta_{s}/\eta_{t\wedge\sigma}) \leq 3\log n$, provided $n$ is large enough. 
Combining \eqref{e:beta} with \eqref{e:midgronwall} and \eqref{eqn:betabd}, we can bound the last term in \eqref{e:midgronwall} by
\begin{align}\begin{split}\label{e:term2}
&\phantom{{}={}} \mathcal{O} (1) \int_0^{t\wedge\sigma}\frac{\text{Im}[m_{s}(z_s)]}{\eta_{t\wedge \sigma}}\left(\frac{(\log n)^2}{n\sqrt{(\kappa_{s}+\eta_{s})\eta_{s}}}\right) \rd s.
\end{split}\end{align}
Thanks to \eqref{e:ksktrelation} in \Cref{c:kappa}, we have
\begin{align}\label{e:kslowb}
(\kappa_{s}+\eta_{s})\eta_{s}\geq \left(\frac{\kappa_{t\wedge \sigma}}{\eta_{t\wedge \sigma}}\eta_s+\eta_s\right)\eta_s=\eta_s^2\frac{\kappa_{t\wedge \sigma}+\eta_{t\wedge \sigma}}{\eta_{t\wedge \sigma}}.
\end{align}
By plugging \eqref{e:kslowb} into \eqref{e:term2}, we obtain 
\begin{align}\begin{split}\label{e:dier}
\int_0^{t\wedge\sigma}\frac{\text{Im}[m_{s}(z_s)]}{\eta_{t\wedge \sigma}}\left(\frac{(\log n)^2}{n\sqrt{(\kappa_{s}+\eta_{s})\eta_{s}}}\right) \rd s
\leq
\frac{(\log n)^2}{n\sqrt{\eta_{t\wedge\sigma}(\kappa_{t\wedge\sigma}+\eta_{t\wedge\sigma})}}\int_0^{t\wedge\sigma}\frac{\text{Im}[m_{s}(z_s)]\rd s}{\eta_s}\\
\leq 
\frac{(\log n)^2 \log(\eta_0/\eta_{t\wedge \sigma})}{n\sqrt{\eta_{t\wedge\sigma}(\kappa_{t\wedge\sigma}+\eta_{t\wedge\sigma})}}\leq \frac{3(\log n)^3 }{n\sqrt{\eta_{t\wedge\sigma}(\kappa_{t\wedge\sigma}+\eta_{t\wedge\sigma})}}.
\end{split}\end{align}

\noindent where in the first bound we used \eqref{e:kslowb}; in the second inequality we used $\del_\tau \eta_\tau=-\Im[m_\tau(z_\tau)]$ by taking the time derivative of \eqref{e:etas}; and in the last inequality, we used  \eqref{e:Immz1}, which implies $\log(\eta_{0}/\eta_{t\wedge\sigma}) \leq 3\log n$.

Combining the above estimates \eqref{e:midgronwall} and \eqref{e:dier}, we deduce on the event $\Omega$ that, for any $u\in \cL$ and $0\leq t\leq \mathfrak{t}(u)$, we have
\begin{align*}
 \left|\widetilde{m}_{t\wedge \sigma}(z_{t\wedge \sigma}) - m_{t\wedge \sigma}(z_{t\wedge \sigma}) \right|
 &\leq \frac{\OO(1)(\log n)^3}{n \sqrt{(\kappa_{t\wedge \sigma}+\eta_{t\wedge \sigma}) \eta_{t\wedge \sigma}}}.
\end{align*}
For $w\in \cD_{t\wedge\sigma}$, from \eqref{e:Immz1}, $\Im[w]\geq 1/n^2$. Thus on $\cD_{t\wedge\sigma}$, 
both $\widetilde{m}_{t\wedge \sigma}(w)$ and $m_{t\wedge \sigma}(w)$ are Lipschitz with Lipschitz constant bounded by $n^{4}$ (using \Cref{l:STproperty}).
From \Cref{c:Lapproximate}, for any $w\in \cD_{t\wedge\sigma}$ there exists some $u\in \cL$ with $|z_{t\wedge\sigma}(u)-w|\leq n^{-5}$. Thus we have 
$|\widetilde{m}_{t\wedge \sigma}(w) -\wt m_{t\wedge \sigma}(z_{t\wedge\sigma}(u))|\leq 1/n$ and  $|{m}_{t\wedge \sigma}(w) - m_{t\wedge \sigma}(z_{t\wedge\sigma}(u))|\leq 1/n$. Hence, for $n$ large enough,
\begin{align}\begin{split}\label{e:outside}
 \left|\widetilde{m}_{t\wedge \sigma}(w) - m_{t\wedge \sigma}(w) \right|& \leq \left|\widetilde{m}_{t\wedge \sigma}(z_{t\wedge \sigma}(u)) - m_{t\wedge \sigma}(z_{t\wedge \sigma}(u)) \right| +  \left|\widetilde{m}_{t\wedge \sigma}(w) - m_{t\wedge \sigma}(z_{t\wedge\sigma}(u)) \right| \\
 & \qquad  + \left|{m}_{t\wedge \sigma}(w) - m_{t\wedge \sigma}(z_{t\wedge\sigma}(u)) \right|\\
 &\leq \frac{\OO(1)(\log n)^3}{n \sqrt{(\kappa_{t\wedge \sigma}+\eta_{t\wedge \sigma}) \eta_{t\wedge \sigma}}}+\frac{2}{n}\leq \frac{1}{10}\frac{(\log n)^{7/2}}{n \sqrt{(\Re[w-E_{t\wedge\sigma}]+\Im[w]) \Im[w]}}.
\end{split}\end{align}
uniformly for any $w\in \cD_{t\wedge\sigma}$. Comparing \eqref{e:outside} with the definition of the stopping time $\sigma$ from \eqref{stoptime}, we conclude that on $\Omega$ we have $\sigma=\sfT $.  Since $\mathbb{P} [\Omega] \ge 1 - \mathcal{O} (e^{-(\log n)^2})$, this finishes the proof of \Cref{p:stoptime}.
\end{proof}

\subsection{Edge Universality}

Given what we have already done, our universality result \Cref{t:universality} will be a quick consequence of the following coupling from \cite[Theorem 3.1]{landon2017edge}. Before recalling that result, we need to introduce some notation.
Take $0<\omega_1<\omega_0/100$ and fix two time scales
\begin{align}
t_0=\frac{n^{\omega_0}}{n^{1/3}},\quad t_1=\frac{n^{\omega_1}}{n^{1/3}}.
\end{align}
We consider a pair of coupled solutions of the Dyson Brownian motion \eqref{e:DBM}  $\{\wt\lambda_i(s)\}_{1\leq i\leq n}$ and $\{\mu_i(s)\}_{1\leq i\leq n}$ using the same Brownian motions as in \Cref{l:coupling}.  

The Dyson Brownian motion  $\{\mu_i(s)\}_{1\leq i\leq n}$ starts from the eigenvalues of a Gaussian $\beta$ ensemble. It is known that the law of $\{\mu_i(s)\}_{1\leq i\leq n}$ is given by the $\beta$-ensemble  and the empirical eigenvalue $(1/n)\sum_{1\leq i\leq n}\delta_{\mu_i(s)}$ concentrates around the rescaled semicircle distribution (\cite[Theorem 2.4]{MR3253704} and also \cite{MR3192527,MR2905803, bourgade2022optimal}),
\begin{align}
\mu_{\rm sc}^{(1+s)}=\frac{\sqrt{4(1+s)-x^2}}{2(1+s)\pi}\rd x,
\end{align}
We denote its right edge by $E_\mu(s)=2\sqrt{1+s}$.

We also assume there exists a small constant $c>0$, so that the empirical measure of the initial data of $\{\wt\lambda_i(s)\}_{1\leq i\leq n}$ concentrates around a density $\wt \varrho_{0}(x)$ with $E_{\la}(0)=\max\supp  \wt \varrho_{0}(x)$, such that the following statements hold.
\begin{enumerate}
\item  $\wt \varrho_{0}(x)$ has square root behavior close to $E_{\la}(0)$:
\begin{align}\label{e:trho0bb}
c\sqrt{E_{\la}(0)-x} \leq\wt \varrho_{0}(x)\leq c^{-1}\sqrt{E_{\la}(0)-x}, \quad \text{for}\quad  E_\la(0)-c\leq x\leq E_\la(0).
\end{align}
and
\begin{align}\label{e:trho0}
\wt \varrho_{0}(x)=\frac{\sqrt{E_{\la}(0)-x}}{\pi}\left(1+\OO\left(\frac{|E_{\la}(0)-x|}{t_0^2}\right)\right),\quad \text{for}\quad E_\la(0)-ct_0^2\leq x\leq E_\la(0).
\end{align}
\item We denote $\widetilde \varrho_{s}$ the free convolution of $\widetilde \varrho_0$ with the rescaled semicircle distribution $\mu_{\semci}^{(s)}$, its right edge by $E_{\la}(s)=\max\supp \wt\varrho_s$, and its classical eigenvalue locations by $\wt \gamma_i(s)=\gamma_{i;n}^{\widetilde \varrho_{s}}$ as in \eqref{e:classical_loc}. Then with probability $1-c^{-1}e^{-(\log n)^2}$, we have for all $0\leq s\leq t_1$ that
\begin{align}\label{e:rigg}
|\wt\lambda_i(s)-\wt\gamma_i(s)|\leq \frac{(\log n)^{1/c}}{i^{1/3}n^{2/3}}, \quad \text{for}\quad  1\leq i\leq cn.
\end{align}
\end{enumerate}
The assumptions \eqref{e:trho0bb} and \eqref{e:trho0} verify the conditions \cite[Equation (3.19) and (3.21)]{landon2017edge}, and \cite[Theorem 3.1]{landon2017edge} states that with high probability, the difference $\wt\lambda_i(s)-\mu_i(s)$ is a constant up to a very small error of size $\OO(n^{-2/3-\delta})$.\footnote{We remark the result in \cite[Theorem 3.1]{landon2017edge} is stated for $\beta=1$, but the coupling directly generalizes for $\beta\geq 1$. A detailed discussion in the setting of Dyson Brownian motion with $\beta\geq 1$ (and general potential) can be found in  \cite[Section 6]{adhikari2020dyson}.}

\begin{prop}[{\cite[Theorem 3.1]{landon2017edge}}]\label{p:diffsmall}

There exists a small constant $\delta>0$ such that the following holds for any real number $D > 1$. We can couple the two Dyson Brownian motions $\{\wt\la_i(s)\}_{1\leq i\leq n}$ with $\{\mu_i(s)\}_{1\leq i\leq n}$, such that for $n$ large enough, with probability $1-n^{-D}$, we have 
\begin{align}
|(\wt\lambda_i(t_1)-E_\la(t_1))-(\mu_i(t_1)-E_\mu(t_1))|\leq n^{-2/3-\delta},
\end{align}
for any finite $i\geq 1$ (that is uniformly bounded in $n$). 
\end{prop}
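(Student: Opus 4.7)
The plan is to construct the coupling by driving both DBMs with the same Brownian motions $B_i(s)$, and then analyze the resulting deterministic evolution of the difference $v_i(s) = \wt\la_i(s) - \mu_i(s)$. This is the coupled-DBM/homogenization strategy originating in \cite{bourgade2017eigenvector, landon2017edge}. Subtracting the two copies of \eqref{e:DBM} cancels the stochastic term and yields the closed, linear, parabolic equation
\begin{align*}
\partial_s v_i(s) = -\frac{1}{n}\sum_{j \neq i} B_{ij}(s) \bigl(v_i(s) - v_j(s)\bigr), \qquad B_{ij}(s) = \frac{1}{(\wt\la_i(s) - \wt\la_j(s))(\mu_i(s) - \mu_j(s))} > 0.
\end{align*}
The target is then to show that, after time $t_1 = n^{\omega_1 - 1/3}$, the solution $v_i(t_1)$ depends only mildly on the index $i$ for $i = \OO(1)$, and in fact equals the edge shift $E_\la(t_1) - E_\mu(t_1)$ up to error $n^{-2/3-\delta}$.

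First I would introduce the centered difference $u_i(s) = v_i(s) - c(s)$ with $c(s) = E_\la(s) - E_\mu(s)$, so the target becomes $|u_i(t_1)| \leq n^{-2/3-\delta}$ for finite $i$. The extra forcing $-\partial_s c(s)$ that appears in the equation for $u_i$ is, via the boundary identity $\partial_s E_\la(s) = -m_{\wt\varrho_s}(E_\la(s))$ and its analogue for $\mu$, exactly the leading-order long-range contribution of the kernel $B_{ij}$ when both densities exhibit matching square-root edge behavior. Since assumption \eqref{e:trho0} is preserved on $[0, t_1]$ under the free convolution with the rescaled semicircle (by arguments analogous to \Cref{l:densityt}), this edge-subtraction removes the principal source term and leaves a genuine diffusion equation for $u_i$ with only small residual forcing.

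Next I would carry out a short-range versus long-range decomposition of the interaction sum at $|i - j| = \ell$ with $\ell = n^{\omega}$ for a parameter $\omega < \omega_1$. For the long range $|i-j| > \ell$, the rigidity estimate \eqref{e:rigg} for $\wt\la$ and the classical edge rigidity for the Gaussian $\beta$-ensemble (\cite{MR2813333, MR3253704}) allow replacement of the discrete sum by an integral against the deterministic densities $\wt\varrho_s$ and $\varrho_{\semci}^{(1+s)}$; the matching square-root behavior at the two edges forces these integrals to agree at order $n^{-2/3-\delta}$, consistent with the choice of $c(s)$. The short-range kernel is a band-limited discrete diffusion, for which a parabolic H\"older or De Giorgi--Nash type estimate (or an explicit heat-kernel decay bound for the linearized DBM operator) applied over time $t_1$ contracts the local oscillation of $u_\cdot(s)$ by a factor $n^{-\alpha \omega_1}$ for some universal $\alpha > 0$, yielding $\delta$ proportional to $\omega_1$.

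The main obstacle is the homogenization step: executing the long-range replacement of discrete sums by integrals at the sharp precision $n^{-2/3-\delta}$ uniformly in $s \in [0, t_1]$ demands optimal edge rigidity for both ensembles and preservation of the square-root edge behavior of $\wt\varrho_s$ throughout the entire interval $[0,t_1]$. Balancing the exponents $\omega, \omega_0, \omega_1, \delta$ so that both the short-range parabolic regularity gain and the long-range homogenization error remain below $n^{-2/3-\delta}$ is the technically delicate point, but produces a small constant $\delta > 0$ as claimed.
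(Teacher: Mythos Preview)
The paper does not prove \Cref{p:diffsmall}; it is quoted directly as \cite[Theorem~3.1]{landon2017edge} (with a footnote pointing to \cite[Section~6]{adhikari2020dyson} for the extension to general $\beta\ge 1$) and used as a black box in the proof of \Cref{t:universality}. So there is no ``paper's own proof'' to compare against.

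That said, your outline is an accurate high-level summary of the argument in \cite{landon2017edge}: the same-Brownian-motion coupling leading to the parabolic difference equation with kernel $B_{ij}$, the short-range/long-range splitting at scale $\ell=n^{\omega}$, replacement of the long-range sum by deterministic integrals via edge rigidity on both sides, and a discrete parabolic regularity (energy/H\"older) estimate for the short-range piece that contracts oscillations over time $t_1$. One refinement worth noting: in the actual proof the constant mode is not subtracted as $c(s)=E_\la(s)-E_\mu(s)$ a priori, but rather emerges as an (essentially arbitrary) null-space component of the parabolic operator, and is only identified with the edge shift a posteriori using the rigidity of both processes; your version front-loads this identification, which is fine but slightly obscures why the forcing term is controllable. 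The ``main obstacle'' you flag --- balancing $\omega,\omega_0,\omega_1,\delta$ so that both error sources stay below $n^{-2/3-\delta}$ --- is exactly where the work lies, and your sketch correctly isolates it without resolving it.
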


Next we prove \Cref{t:universality} using \Cref{p:diffsmall}. We recall the notation from the beginning of \Cref{s:Cf}.
We also recall that $t\geq \ft=\max\{2\fB\sqrt{\eta_*}, n^{-1/3+\fa}\}$ (where $\ft$ and $\fa$ are from \Cref{t:universality}). Fix a small real number $0<\omega<\fa/100$ and let $t_0=t-n^{-1/3+\omega}$. Then 
\begin{align}\label{e:t0low}
t_0\geq t/2\geq n^{-1/3+\fa}/2+\fB\sqrt{\eta_*}.
\end{align}
Then  \eqref{e:rhotdensity} in \Cref{l:densityt} gives for $x\in [E_{t_0}, E_{t_0}-\fc(t_0)]$ that
\begin{align}\label{e:rhot0a}
 \varrho_{t_0}(x)= (1+\cE(E_{t_0}-x))\frac{\sqrt{\cA({t_0})(E_{t_0}-x)}}{\pi},\qquad \text{where $\mathcal{E}$ satisfies $|\cE(x)|\leq  \frac{|x|}{\fc({t_0})}$},
\end{align}
and $\fc({t_0})$ is from \eqref{e:defAs0}, and \eqref{e:defAs} gives $\fc({t_0})\geq 2^{-20}(\sfb {t_0})^2$.
In the following we denote $\cA:=\cA(t_0)$.  
Our assumption in  \Cref{t:universality} gives that $\cA\in [\Theta^{-1}, \Theta]$, and for $t_0\leq t\leq \sfT$ that
\begin{align}\label{e:Et0}
 \Theta^{-1}\sqrt{E_{t}-x}\leq \varrho_{t}(x)\leq \Theta\sqrt{E_{t}-x},\quad \text{for}\quad \quad E_{t}-\Theta^{-1}\leq x\leq E_{t}.
\end{align}

The next lemma states the difference $|\cA-\cA(t)|$ is small (recall $\cA=\cA(t_0)$ and $t_0=t-n^{-1/3+\omega}$)
\begin{lem}\label{l:cAdiff}
Adopt the assumptions in \Cref{t:universality}, we have
\begin{align}
|\cA-\cA(t)|\leq n^{-\omega},
\end{align}
provided $n$ is large enough.
\end{lem}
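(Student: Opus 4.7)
The plan is to show that $s \mapsto \cA(s)$ is Lipschitz on $[t_0, t]$ with Lipschitz constant $C/s$, then integrate and use $t - t_0 = n^{-1/3+\omega} \ll t_0$. First, I would derive an explicit formula for $\cA'(s)$. Differentiating $m_0'(\xi(s)) = 1/s$ (which is the defining relation \eqref{e:defxit} of $\xi(s)$) gives $\xi'(s) = -1/(s^2 m_0^{(2)}(\xi(s))) = s\cA(s)/2$, where the second equality uses \eqref{e:defAs0}. Then taking the logarithmic derivative of $\cA(s) = -2/(s^3 m_0^{(2)}(\xi(s)))$ yields
\begin{align*}
\cA'(s) = -\frac{3\cA(s)}{s} + \frac{m_0^{(3)}(\xi(s))\, s^4 \cA(s)^3}{4}.
\end{align*}

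Next I would bound each term on the interval $[t_0, t]$. For $n$ large, $n^{-1/3+\omega} \le \ft/2$, so $t_0 \ge t - \ft/2 \ge \ft/2 \ge \fB\sqrt{\eta_*} \ge (30/\sfb)\sqrt{\eta_*}$, which places all of $[t_0, t]$ in the regime where \Cref{l:densityt} applies and the hypothesis $\cA(s) \in [\Theta^{-1}, \Theta]$ holds. The first term is then bounded by $3\Theta/s$. For the second term, \eqref{e:m03} combined with $-m_0^{(2)}(\xi(s)) = 2/(s^3\cA(s))$ gives $0 \le m_0^{(3)}(\xi(s)) \le 6/(s^3 \cA(s)\, \xi(s))$, and the lower bound $\xi(s) \ge (\sfb s/20)^2$ from \eqref{e:defAs} yields
\begin{align*}
\left|\frac{m_0^{(3)}(\xi(s))\, s^4 \cA(s)^3}{4}\right| \le \frac{3 s \cA(s)^2}{2\xi(s)} \le \frac{600\, \Theta^2}{\sfb^2\, s}.
\end{align*}
Altogether, $|\cA'(s)| \le C/s$ for a constant $C = C(\sfb, \Theta)$.

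Finally, I would integrate: $|\cA - \cA(t)| = |\cA(t_0) - \cA(t)| \le C\int_{t_0}^t \rd s/s = C\log(t/t_0)$. Using $t \ge \ft \ge n^{-1/3+\fa}$ and $t - t_0 = n^{-1/3+\omega}$ with $\omega < \fa/100$, we have $t_0 \ge n^{-1/3+\fa}/2$ for $n$ large, hence $\log(t/t_0) \le \log(1 + 2n^{\omega-\fa}) \le 2 n^{\omega - \fa} \le 2n^{-99\omega}$. For $n$ large enough this is at most $n^{-\omega}$, completing the proof.

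There is no real obstacle here; the argument is routine once one notices that the identity $\xi'(s) = s\cA(s)/2$ lets $\cA'(s)$ be expressed purely in terms of $\cA(s)$, $\xi(s)$, and the derivatives of $m_0$, each of which is already controlled by \Cref{l:densityt} and the standing hypotheses of \Cref{t:universality}. The only mild subtlety is checking that the entire segment $[t_0, t]$ lies in the good regime $[\ft/2, \sfT]$, which follows from the strict inequality $\omega < \fa/100$.
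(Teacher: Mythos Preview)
Your proof is correct and follows essentially the same strategy as the paper: differentiate the defining relation for $\xi(s)$ to get $\xi'(s)$, then bound the derivative of $\cA(s)$ (the paper works with $g(s)=1/\cA(s)$ instead, but this is cosmetic) using the estimates on $\xi(s)$ and $m_0^{(3)}(\xi(s))$ from \Cref{l:densityt}, and finally exploit $(t-t_0)/t_0 \le 2n^{\omega-\fa}$. The only minor differences are that you integrate $C/s$ to get $C\log(t/t_0)$ whereas the paper uses a cruder sup-bound times $|t-t_0|$, and you invoke \eqref{e:m03} for $m_0^{(3)}$ while the paper rederives a comparable estimate directly; both routes give the same conclusion.
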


\begin{proof}
Recall $\xi(s)$ from \eqref{e:defxit}, which satisfies the relation
\begin{align}\label{e:xitexp}
 m_0' (\xi(s))=\int_\bR \frac{\rd \mu_0(x)}{(x-\xi(s))^2}=\frac{1}{s}. 
\end{align}
Recall from the first statement in \eqref{e:defAs} that $\xi(s)\geq (\sfb s/20)^2$, and
\begin{align}\label{e:third}
|m_0^{(3)} (\xi(s))|=6\int_\bR \frac{\rd \mu_0(x)}{(x-\xi(s))^4} 
\leq \frac{6}{\xi(s)^2}\int_\bR \frac{\rd \mu_0(x)}{(x-\xi(s))^2} =\frac{6}{s\xi(s)^2}\leq \frac{2400}{\sfb^4 s^5},
\end{align}

\noindent where in the second statement we used the fact that $\xi(s) > \max (\supp \mu_0)$ (by \eqref{e:defxit}). By taking the derivative with respect to $s$ on both sides of \eqref{e:xitexp}, we obtain
\begin{align}\label{e:dxit}
\xi'(s)=\frac{1}{-s^2 m_0^{(2)} (\xi(s))}. 
\end{align}
The time derivative of $g(s):=-s^3 m_0^{(2)} (\xi(s))/2=1/\cA(s)$ is bounded by 
\begin{align}\begin{split}\label{e:dgt}
|\del_s g(s)|&=\frac{1}{2}|\del_s (s^3 m_0^{(2)} (\xi(s)))|\\
&=\frac{1}{2}|3s^2 m_0^{(2)} (\xi(s))+s^3 m_0^{(3)} (\xi(s))\xi'(s)|
\leq \frac{3|g(s)|}{s}+\frac{1200}{\sfb^4s |g(s)|},
\end{split}\end{align}
where in the last inequality we used \eqref{e:third} and \eqref{e:dxit}

By our assumption in \Cref{t:universality}, we have $g(s)\in [\Theta^{-1}, \Theta]$ for $s\in [t_0, t]\subset [\ft/2, \sfT]$. Then by \eqref{e:dgt}, for $s\in[t_0, t]$ we have
\begin{align}\label{e:dsg}
|\del_s g_s|\leq \frac{3|g(s)|}{2s}+\frac{1200}{\sfb^4 s |g(s)|}\leq \frac{3\Theta}{t_0}+\frac{4800 \Theta}{\sfb^4 t_0}, \quad s\in [t_0, t].
\end{align}
We conclude that 
\begin{align*}
|\cA(t)-\cA(t_0)|
&=g_{t}^{-1} g_{t_0}^{-1}
|g_{t}-g_{t_0}|\leq \Theta^2 |t-t_0|\left(\frac{3\Theta}{t_0}+\frac{4800  \Theta}{\sfb^4 t_0}\right)\\
&\leq \frac{10^4 \Theta^3}{\sfb^4}\frac{|t-t_0|}{t_0}\leq \frac{10^4 \Theta^3}{\sfb^4}\frac{n^{-1/3+\omega}}{n^{-1/3+\fa}/2}
\leq \frac{2\cdot 10^4 \Theta^3}{\sfb^4} n^{\omega-\fa}\leq n^{-\omega},
\end{align*}
provided $n$ is large enough. Here, in the first statement we used $\cA(s)=1/g(s)$; in the second statement we used $|g(s)|^{-1} \leq \Theta$ and integrated \eqref{e:dsg} from $t_0$ to $t$; the third statement is from basic algebra; the fourth statement uses that $t-t_0=n^{-1/3+\omega}$ and $t_0\geq t/2\geq n^{-1/3+\fa}/2$ from \eqref{e:t0low}; and the last two inequalities follow from $\omega\leq \fa/100$ and the fact that $n$ is large enough.
This finishes the lemma.
\end{proof}

\begin{proof}[Proof of \Cref{t:universality}]

For the Dyson Brownian motion $\{\lambda_i(s)\}_{1\leq i\leq n}$, we can rescale time by $\cA^{2/3}$ and space by $\cA^{1/3}$. Specifically, define: $\widetilde \lambda_i(s)=\cA^{1/3}\lambda_i(t_0+\cA^{-2/3}s)$. Then $\{\wt \lambda_i(s)\}_{1\leq i\leq n}$ also satisfies Dyson's Brownian motion:
\begin{align}\label{e:ttla}
	\rd\wt\lambda_i (s) = \frac{1}{n}\displaystyle\sum_{\substack{1 \le j \le n \\ j \ne i}}	\displaystyle\frac{\rd s}{\wt\lambda_i (s) - \wt\lambda_j (s)} + \Big( \displaystyle\frac{2}{\beta n} \Big)^{1/2} \rd B_i (s).
\end{align}
Next we check that rescaled Dyson Brownian motion $\{\wt \lambda_i(s)\}_{1\leq i\leq n}$ satisfies \eqref{e:trho0bb}, \eqref{e:trho0} and \eqref{e:rigg}.

We introduce the rescaled density $\wt \varrho_{0}(x)$, by setting (using \eqref{e:rhot0a})
\begin{align}\label{e:trho01}
\wt \varrho_{0}(x)= \frac{\varrho_{t_0}(\cA^{-1/3}x)}{\cA^{1/3}}=\frac{\sqrt{(E_\la(0)-x)}}{\pi}\left(1+\OO\left(\frac{|E_{\la}(0)-x|}{t_0^2}\right)\right),\quad E_\la(0):=\cA^{1/3}E_{t_0},
\end{align}
for $x\in [E_\la(0)-\cA^{1/3}\fc(t_0), E_\la(0)]$, where $\fc({t_0})\geq 2^{-20}(\sfb {t_0})^2$.
We denote by $\widetilde \varrho_{s}$ the free convolution of $\widetilde \varrho_0$ with the rescaled semicircle distribution $\mu_{\semci}^{(s)}$. Then, by \Cref{r:convpmeasure},
\begin{align}
\widetilde \varrho_{s}(x)=\frac{1}{\cA^{1/3}}\varrho_{t_0+\cA^{-2/3}s}(\cA^{-1/3}x),
\end{align}
and the right edge of $\widetilde \varrho_{s}(x)$ is given by $\wt E_s= \cA^{1/3}E_{t_0+\cA^{-2/3}s}$. 
Moreover, \eqref{e:Et0} implies that for $0\leq s\leq 1$ (from $t_0+\cA^{-2/3}\leq t_0+\Theta\leq \sfT$) we have 
\begin{align}\label{e:Et02}
 \Theta^{-1}\sqrt{x/\cA}\leq \wt\varrho_{s}(E_{\la}(s)-x)=\frac{\varrho_{t_0+\cA^{-2/3}s}(\cA^{-1/3}(E_{\la}(s)-x))}{\cA^{1/3}}\leq \Theta\sqrt{x/\cA}, 
 \end{align}
 for $E_{\la}(s)-\cA^{1/3}\Theta^{-1}\leq x\leq E_{\la}(s)$.
 The relations \eqref{e:trho01} and \eqref{e:Et02} verify the conditions \eqref{e:trho0bb} and \eqref{e:trho0} provided $c\leq \min\{2^{-20}\sfb^2\cA^{1/3},  \cA^{1/3}\Theta^{-1}\}$ is small enough.

We denote the classical locations (recall from \eqref{e:classical_loc}) of $\wt\varrho_s(x)$ by $\wt \gamma_i(s)=\gamma_{i;n}^{\wt\varrho_s}$.
The optimal edge rigidity estimate (from \Cref{t:main}) and bulk one (from \Cref{t:bulkrigidity}) give that there exists a constant $C=C(\sfb, \sfT)>1$ such that, with probability $1-Ce^{-(\log n)^2}$,
\begin{align}\label{e:rigiditycopy}
\wt \la_1(s)\leq E_\la(s)+\frac{(\log n)^{15}}{n^{2/3}},\quad 
\wt\gamma_{i+\lfloor(\log n)^5\rfloor}(s)-n^{-1}\leq \wt\lambda_i(s)\leq \wt\gamma_{i-\lfloor(\log n)^5\rfloor}(s)+n^{-1}.
\end{align}

\noindent for all $0\leq s\leq 1$ (as $\fB\sqrt{\eta_*}\leq t_0\leq t_0+\cA^{-2/3}\leq t_0+\Theta\leq \sfT$).

Since $\wt \varrho_s$ has square root behavior as in \eqref{e:Et02}, there exist constants $C_0=C_0(\Theta) > 1$ and $c > 0$ such that the classical locations satisfy
\begin{align}\label{e:inbound}
C_0^{-1}\frac{i^{2/3}}{n^{2/3}}\leq E_\la(s)-\wt\gamma_i(s)\leq C_0 \frac{i^{2/3}}{n^{2/3}},\quad 0\leq s\leq 1,\quad  1\leq i\leq cn.
\end{align}

\noindent Thus, we have for any $1\leq i\leq j\leq cn$ that
\begin{align*}
\frac{j-i}{n}
&=\int_{\wt \gamma_j(s)}^{\wt\gamma_i(s)}\wt\varrho_s(x)\rd x
\geq \frac{1}{\cA^{1/2}\Theta}\int_{\wt \gamma_j(s)}^{\wt\gamma_i(s)}\sqrt{E_\la(s)-x}\rd x
=\frac{2(E_\la(s)-\wt \gamma_j(s))^{3/2}-(E_\la(s)-\wt \gamma_i(s))^{3/2}}{3\cA^{1/2}\Theta}\\
&\geq \frac{2(\wt\gamma_i(s)-\wt \gamma_j(s))}{3\cA^{1/2}\Theta}\frac{(E_\la(s)-\wt \gamma_j(s))^{1/2}+(E_\la(s)-\wt \gamma_i(s))^{1/2}}{2}
\geq \frac{(\wt\gamma_i(s)-\wt \gamma_j(s))}{3C_0\cA^{1/2}\Theta }\frac{j^{1/3}}{n^{1/3}}
\end{align*}
where the first statement is from the definition of classical locations \eqref{e:classical_loc}; the second statement is from \eqref{e:Et02}; the third statement is from performing integration; the fourth statement is from the bound $a^{3/2}-b^{3/2}\geq (a-b)(\sqrt a+\sqrt b)/2$; in the last statement is from \eqref{e:inbound}. Thus we get 
\begin{align}\begin{split}\label{e:tgammaest}
\wt\gamma_i(t)-\wt\gamma_j(t)
&\leq \frac{3C_0 \cA^{1/2}\Theta (j-i)}{j^{1/3}n^{2/3}}, \quad 0\leq s\leq 1, \quad 1\leq i\leq j\leq cn.
\end{split}\end{align}

\noindent Then, the second statement in \eqref{e:rigiditycopy} and \eqref{e:tgammaest} together imply that with probability $1-Ce^{-(\log n)^2}$ we have for each $2(\log n)^5 \leq i \leq cn$ that 
\begin{align}\begin{split}\label{e:rigidityh1}
&\phantom{{}={}}|\wt\lambda_i(t)-\wt\gamma_{i}(t)|\leq |\wt\gamma_{i-\lfloor(\log n)^5\rfloor}(t)- \wt\gamma_{i+\lfloor(\log n)^5\rfloor}(t)|+2n^{-1}\\
&\leq \frac{3C_0 \cA^{1/2}\Theta \cdot 2(\log n)^5}{(i+(\log n)^5)^{1/3}n^{2/3}}+2n^{-1}\leq \frac{(\log n)^6}{i^{1/3}n^{2/3}},
\end{split}\end{align}
provided $n$ is large enough.

To address the remaining $1 \le i \le 2(\log n)^5$, observe from \eqref{e:rigiditycopy} that, with probability $1-Ce^{-(\log n)^2}$, we have
\begin{align}
\wt\gamma_{i+\lfloor(\log n)^5\rfloor}(s)-n^{-1}\leq \wt\lambda_i(s) \le \widetilde{\lambda}_1 (s) \le E_\la(s) + \frac{(\log n)^{15}}{n^{2/3}}, 
\end{align}
so
\begin{align}\label{e:llagammaha}
\wt\lambda_i(s), \wt\gamma_i(s)\in \left[\gamma_{3\lfloor(\log n)^5\rfloor}(s) - n^{-1}, E_\la(s)+\frac{(\log n)^{15}}{n^{2/3}}\right].
\end{align}
It follows that for each $1 \le i \le 2(\log n)^5$ we have
\begin{align}\begin{split}\label{e:rigidityh2}
&\phantom{{}={}}|\wt\lambda_i(s)-\wt\gamma_{i}(s)|\leq \frac{(\log n)^{15}}{n^{1/3}}+|E_\la(s)- \gamma_{3\lfloor(\log n)^5\rfloor}(s)| + n^{-1} \\
&\leq \frac{(\log n)^{15}}{n^{1/3}}+\frac{3C_0 \cA^{1/2}\Theta \cdot 3(\log n)^5}{((\log n)^5)^{2/3}n^{1/3}}+n^{-1}\leq \frac{(\log n)^{16}}{i^{1/3}n^{2/3}},
\end{split}\end{align}
provided $n$ is large enough.
Here the first inequality follows from \eqref{e:llagammaha}, and the second inequality follows from \eqref{e:tgammaest}.
The two estimates \eqref{e:rigidityh1} and \eqref{e:rigidityh2} together give \eqref{e:rigg}.
Thus \Cref{p:diffsmall} (with $(t_0, t_1)$ there taken to be $(t_0, \cA^{2/3}n^{-1/3+\omega})$ here) implies that we can couple the two Dyson Brownian motions $\{\wt\la_i(s)\}_{1\leq i\leq n}$ (from \eqref{e:ttla}) with $\{\mu_i(s)\}_{1\leq i\leq n}$, such that with probability $1-n^{-D}$ we have for any finite $i\geq 1$ that
\begin{align}\label{e:wtcoupling}
|(\wt\lambda_i(t_1)-E_{\la}(t_1))-(\mu_i(t_1)- 2 \sqrt{1+t_1})|\leq n^{-2/3+\delta}.
\end{align}
where $t_1=\cA^{2/3}n^{-1/3+\omega}=\cA^{2/3}(t-t_0)$, provided $n$ is large enough.
Recall the relation $\widetilde \lambda_i(s)=\cA^{1/3}\lambda_i(t_0+\cA^{-2/3}s)$,   and $\cA^{1/3}E_{\la}(s)=E_{t_0+\cA^{-2/3}s}$; \eqref{e:wtcoupling} then implies
\begin{align}\begin{split}\label{e:ccouple}
&\phantom{{}={}}|(\cA^{1/3}(\lambda_1(t)-E_{t}))-(\mu_1(t_1)-2 \sqrt{1+t_1})| \leq n^{-2/3+\delta}.
\end{split}\end{align}

It was proved in \cite{MR2813333} that as the number of particles $n$ goes to infinity, the rescaled vector,
\begin{align}\label{e:Airyb}
  n^{2/3}(2\sqrt{1+t}-\mu_1(t))\rightarrow TW_\beta, 
\end{align}
converges to the Tracy-Widom $\beta$ distribution, which is characterized by the stochastic Airy operator. A quantitative version of the convergence \eqref{e:Airyb} follows from \cite[Theorem 1.2]{landon2020edge}:  Letting $F : \bR \to \bR$ be a bounded test function with bounded derivatives, we have for $n$ is large enough that
\begin{align}\begin{split} \label{e:Airyb2}
\bE[ F (n^{2/3} (2\sqrt{1+t}-\mu_1(t))] = \bE_{TW_\beta}[ F (\Lambda) ]+\OO\left(n^{-\delta}\right),
\end{split}\end{align}
where $\Lambda$ is sampled under the Tracy-Widom $\beta$ distribution. \Cref{t:universality} then follows from applying the test function $F$ to \eqref{e:ccouple}, replacing $\cA$ by $\cA(t)$ (from \Cref{l:cAdiff}, their difference is bounded by $n^{-\omega}$),  taking expectations, and plugging in \eqref{e:Airyb2}.
\end{proof}

\section{Examples}\label{s:example}
In this section, we discuss some examples and applications of our optimal rigidity result \Cref{t:main}.

\subsection{Initial Data of Small Support}
In this section, we study the example that the initial data $\mu_0=\mu_0^{(n)}$ is supported on a small interval.
\begin{assumption}\label{a:deltadensity}

Let $c\in (0,1)$ be a real number, and assume that the initial data $\mu_0=\mu_0^{(n)}$ satisfies $\supp \mu_0 \subseteq [-c/2, c/2]$. 
\end{assumption}

We notice that $\mu_0([c/2-x,c/2])=1$ for any $x\geq c$. Then (after shifting by $c/2$) \Cref{a:densitylowbound} is satisfied with $\eta_*=c$, and $\sfb=1/\sfT^3$. For the remainder of this section, we denote the free convolution of $\mu_0$ with the rescaled semicircle distribution $\mu_{\semci}^{(t)} $ by $\mu_t=\mu_0\boxplus \mu_{\semci}^{(t)}$. We also let $E_t^- = \min (\supp \mu_t)$ and $E_t^+ = \max (\supp \mu_t)$, for each $t \ge 0$. 

We will show the following estimate on the $i$-th particle $\lambda_i (t)$ of $\bm{\lambda}(t)$,  using \Cref{t:main}.

\begin{prop}\label{c:oprigidity}
Adopt \Cref{a:deltadensity}, fix a real number $\sfT\geq 1$, and set $\fB=2^{60}\sfT^{18}$. Then, there exists a constant $C=C(c,\sfT) > 1$ such that, with probability $1-Ce^{-(\log n)^2}$, we have for  any time $\fB\sqrt c\leq t\leq \sfT$ and index $1 \le i \le n$ that
\begin{align}\label{e:rr}
|\lambda_i(t)-\gamma_i(t)|\leq \frac{(\log n)^{17}}{\min\{i,n-i+1\}^{1/3}n^{2/3}}.
\end{align}

\end{prop}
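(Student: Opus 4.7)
The plan is to apply \Cref{t:main} once to the process and once to its reflection, combine with the bulk rigidity from \Cref{t:bulkrigidity}, and convert the resulting edge and bulk estimates into the claim via the square-root density estimates from \Cref{l:densityt} and \Cref{p:densitybound}.

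After translating $\mu_0$ by $c/2$ so that it is supported in $[-c,0]$, the trivial lower bound $\mu_0([-x,0]) = 1 \geq \mathsf{T}^{-3} x^{3/2}$ for $x \in [c, \mathsf{T}^2]$ verifies \Cref{a:densitylowbound} with $\eta_* = c$ (we may and do assume $c < 1/4$, since otherwise the claimed time interval is empty) and $\mathsf{b} = \mathsf{T}^{-3}$, so $\mathfrak{B} = 2^{60}\mathsf{b}^{-6} = 2^{60}\mathsf{T}^{18}$ as stated. \Cref{t:main} then gives $\lambda_1(t) \leq E_t^+ + (\log n)^{15}/n^{2/3}$ on $[\mathfrak{B}\sqrt c, \mathsf{T}]$ with probability $1 - Ce^{-(\log n)^2}$. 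Because the reflected process $\widetilde\lambda_i(t) := -\lambda_{n+1-i}(t)$ is itself a Dyson Brownian motion whose initial empirical measure $\widetilde\mu_0(\cdot) = \mu_0(-\cdot)$ is again supported in $[-c/2, c/2]$, the same input applied to $\widetilde{\bm\lambda}$ yields the symmetric edge bound $\lambda_n(t) \geq E_t^- - (\log n)^{15}/n^{2/3}$. Meanwhile \Cref{t:bulkrigidity} supplies $\gamma_{i+k}(t) - n^{-10} \leq \lambda_i(t) \leq \gamma_{i-k}(t) + n^{-10}$ for all $1 \leq i \leq n$, where $k := \lfloor(\log n)^5\rfloor$.

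On the density side, \Cref{l:densityt} applied to $\mu_0$ and separately to $\widetilde\mu_0$ produces square-root behavior of $\mu_t$ at both edges, while \Cref{p:densitybound} gives $\mu_t([E_t^+ - x, E_t^+]) \geq \mathfrak{c} x^{3/2}$ and its left-edge analogue for all $x \in [0, \mathsf{T}^2]$ and $t \in [\mathfrak{B}\sqrt c, \mathsf{T}]$. These two-sided density estimates produce $E_t^+ - \gamma_i(t) \asymp (i/n)^{2/3}$ in the right-edge regime (and analogously on the left), together with the consecutive-gap estimate $\gamma_i(t) - \gamma_{i+1}(t) \leq C / (\min\{i, n-i+1\}^{1/3} n^{2/3})$. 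Summing $k$ such gaps yields $|\gamma_i(t) - \gamma_{i\pm k}(t)| \leq C(\log n)^5 / (\min\{i, n-i+1\}^{1/3} n^{2/3})$ whenever $\min\{i, n-i+1\} \geq k$, with the coarser bound $|\gamma_i(t) - \gamma_{i \pm k}(t)| \leq C(k/n)^{2/3}$ in general.

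The endgame is a case split on $i$. For $k \leq \min\{i, n-i+1\} \leq n-k$, bulk rigidity combined with the gap estimate directly produces a bound stronger than \eqref{e:rr}. For $i \leq k$, the lower bound $\lambda_i \geq \gamma_{i+k} - n^{-10}$ together with $\gamma_i - \gamma_{i+k} \leq C(k/n)^{2/3}$ handles one side, while $\lambda_i \leq \lambda_1 \leq E_t^+ + (\log n)^{15}/n^{2/3}$ together with $E_t^+ - \gamma_i \leq C(i/n)^{2/3} \leq C(\log n)^{10/3}/n^{2/3}$ handles the other; the resulting $|\lambda_i - \gamma_i| \leq C(\log n)^{15}/n^{2/3}$ is absorbed into the target $(\log n)^{17}/(i^{1/3}n^{2/3})$ because $i^{1/3} \leq (\log n)^{5/3}$ in this regime. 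The symmetric argument covers $i \geq n-k$. The main obstacle is the two-sided square-root control on $\mu_t$; this is where the small-support hypothesis on $\mu_0$ enters, via \Cref{l:densityt} applied to both $\mu_0$ and its reflection $\widetilde\mu_0$.
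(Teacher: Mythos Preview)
Your overall architecture (edge rigidity from \Cref{t:main} on both sides, bulk rigidity from \Cref{t:bulkrigidity}, then convert via classical-location gap estimates) matches the paper's. The gap is in the density step. \Cref{l:densityt} gives two-sided square-root control only on $[E_t-\mathfrak{c}(t),E_t]$, and the explicit formula $\mathfrak{c}(t)=-2^{-9}t\,m_0^{(2)}(\xi(t))\xi(t)^2$ together with the bound $-m_0^{(2)}(\xi(t))\leq 2/(t\xi(t))$ from \eqref{e:upm0} forces $\mathfrak{c}(t)\leq 2^{-8}\xi(t)\leq 2^{-8}\sqrt t$. Since the support of $\mu_t$ has width of order $4\sqrt t$, the two edge windows (right and left) together cover only a fixed small fraction of $\supp\mu_t$ and carry only a small fraction of the total mass. \Cref{p:densitybound}, on the other hand, supplies only a \emph{lower} bound on the \emph{integrated} density $\mu_t([E_t-x,E_t])\geq\mathfrak{c}x^{3/2}$, which does not imply any pointwise density lower bound. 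Consequently, for indices $i$ in the deep bulk (say $i\asymp n/2$) you have no lower bound on $\varrho_t(\gamma_i(t))$ and hence no control on $\gamma_{i-k}(t)-\gamma_{i+k}(t)$; the consecutive-gap estimate $\gamma_i(t)-\gamma_{i+1}(t)\leq C/(\min\{i,n-i+1\}^{1/3}n^{2/3})$ that you assert is unsupported there.

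The paper closes this with \Cref{l:density1}, proved by exploiting the small-support hypothesis directly: because $\supp\mu_0\subset[-c/2,c/2]$ one has $m_0(w)=-1/w+\OO(c/t)$ on $\partial\Lambda_t$, and solving $y=w-tm_0(w)$ explicitly yields the two-sided bulk estimate $\varrho_t(y)\asymp\sqrt{4t-y^2}/(2\pi t)$ (\Cref{i:bulkest}) in addition to the edge square-root behavior (\Cref{i:edgeest}). These combine into the global gap estimate $\gamma_t(y)-\gamma_t(y')\leq(24\pi)^{2/3}t^{1/2}\big(y'^{2/3}-y^{2/3}\big)$ of \Cref{i:gammaest}, which is precisely the missing input. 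Once such a lemma is in hand, the remainder of your outline is essentially the paper's proof.
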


	As a consequence of \Cref{c:oprigidity}, we next state a result bounding the gaps between the first particles under the rescaled Dyson Brownian motion,
	\begin{flalign}
		\label{e:reDBM}
		\rd x_i (t) = \displaystyle\sum_{\substack{1 \le j \le n \\ j \ne i}}	\displaystyle\frac{\rd t}{x_i (t) - x_j (t)} + \Big( \displaystyle\frac{2}{\beta } \Big)^{1/2} \rd B_i (t),\quad 1\leq i\leq n,
	\end{flalign}
	 whose initial data is ``sufficiently small.'' This result will be used in the forthcoming work \cite{U}. 
	 We remark that similar result has been proven in \cite[Lemma 4.7]{lee2015edge}. They do not require that the support of $\mu_0$ is on a short interval. However, they require that the initial data $\mu_0$ is close to a deterministic profile, with certain distance bounded by $n^{-\oo(1)}$.

	\begin{cor}		
		\label{initialsmall2}
		For any real number $B > 1$, there exists a constant $c =B^{-34}2^{-120}> 0$ such that the following holds. Let $\bm{x} = (x_1, x_2, \ldots , x_n) \in \overline{\mathbb{W}}_n$ be a sequence of real numbers such that $x_1 -x_n < cn^{2/3}$. Let $\bm{x}(t) = \big( x_1 (t),x_2 (t), \ldots , x_n (t) \big) \in \overline{\mathbb{W}}_n$ denote recaled Dyson Brownian motion (from \eqref{e:reDBM}) with initial data $\bmx$, run for time $t$. 		
Then, there exists a constant $C = C(B) > 1$ such that the following holds with probability at least $1 - C e^{-(\log n)^2}$. For all $t \in [B^{-1}, B]$ and $1 \le i \le j \le \lfloor n/2 \rfloor$, we have 
\begin{flalign*}
|x_i (tn^{1/3}) - x_j (tn^{1/3})| \le (24\pi)^{2/3} t^{1/2} (j^{2/3} - i^{2/3}) + (\log n)^{20} i^{-1/3}.
\end{flalign*}
		
	\end{cor}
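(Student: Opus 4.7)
The strategy is to reduce \Cref{initialsmall2} to \Cref{c:oprigidity} via a space-time rescaling, then convert the resulting rigidity into a gap bound by comparing the classical locations of $\tilde\mu_t$ with those of the rescaled semicircle distribution.

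First I will set $\tilde\lambda_i(s):=x_i(sn^{1/3})/n^{2/3}$ for $s\in[0,B]$. A direct It\^o computation---using that $B_i(sn^{1/3})$, viewed as a process in $s$, has the law of $n^{1/6}\tilde B_i(s)$---verifies that $\tilde{\bm\lambda}(s)$ satisfies the standard Dyson Brownian motion \eqref{e:DBM} on $n$ particles. The hypothesis $x_1(0)-x_n(0)<cn^{2/3}$ translates to $\tilde\lambda_1(0)-\tilde\lambda_n(0)<c$, so after an overall translation (which does not affect gaps) the initial empirical measure $\tilde\mu_0:=(1/n)\sum_i\delta_{\tilde\lambda_i(0)}$ is supported in $[-c/2,c/2]$, verifying \Cref{a:deltadensity}.

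Next I will apply \Cref{c:oprigidity} to $\tilde{\bm\lambda}$ with $\sfT=B$. Taking $c$ sufficiently small (so that $\fB\sqrt c=2^{60}B^{18}\sqrt c\leq B^{-1}$) yields, with probability at least $1-Ce^{-(\log n)^2}$,
\begin{equation*}
|\tilde\lambda_i(s)-\tilde\gamma_i(s)|\leq \frac{(\log n)^{17}}{\min\{i,n-i+1\}^{1/3}n^{2/3}}
\end{equation*}
uniformly for $s\in[B^{-1},B]$ and $1\leq i\leq n$, where $\tilde\gamma_i(s)$ are the classical locations of $\tilde\mu_s:=\tilde\mu_0\boxplus\mu_{\semci}^{(s)}$. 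Passing back through $x_i(tn^{1/3})=n^{2/3}\tilde\lambda_i(t)$ and applying the triangle inequality absorbs the random error into the $(\log n)^{20}i^{-1/3}$ term of the corollary, reducing matters to the deterministic estimate
\begin{equation*}
n^{2/3}|\tilde\gamma_i(t)-\tilde\gamma_j(t)|\leq (24\pi)^{2/3}t^{1/2}(j^{2/3}-i^{2/3})\qquad\text{for}\ 1\leq i\leq j\leq \lfloor n/2\rfloor,\ t\in[B^{-1},B].
\end{equation*}

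To establish this, I will compare $\tilde\mu_t$ with the centered rescaled semicircle $\mu_{\semci}^{(t)}$. Because $\tilde\mu_0$ is concentrated in an interval of length $\leq c$, Taylor-expanding $\tilde m_0(z)=-z^{-1}-\overline{y}z^{-2}-\overline{y^2}z^{-3}-\cdots$ in $1/z$ (with $\overline{y^k}=O(c^k)$) gives $\tilde m_0^{(\ell)}(z)=\bar m_0^{(\ell)}(z)(1+O(c/|z|))$ for $\ell\geq 0$, where $\bar m_0(z)=-1/z$; inserting this into \Cref{l:densityt} yields $\xi(t)=\sqrt t(1+O(c))$, $E_t=2\sqrt t+O(c)$, and the edge coefficient $\cA(t)=t^{-3/2}(1+O(c))$. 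Propagating this initial closeness along the characteristic flow of \Cref{s:Cf} (via a Gr\"onwall argument analogous to the one used to prove \Cref{p:stoptime}, starting from $|\tilde m_0(z)-m_{\semci}^{(0)}(z)|\lesssim c/|z|^2$) gives the pointwise estimate $\tilde\varrho_t=\varrho_{\semci}^{(t)}(1+O(c))$ throughout $\supp\tilde\mu_t$. Inverting the cumulative distribution then gives $\tilde\gamma_i(t)-\tilde\gamma_j(t)\leq(1+O(c))(\bar\gamma_i(t)-\bar\gamma_j(t))$, where $\bar\gamma_i(t)$ are semicircle quantiles; an elementary computation bounds the semicircle gap by $(12\pi)^{2/3}t^{1/2}(j^{2/3}-i^{2/3})/n^{2/3}$ throughout $1\leq i\leq j\leq \lfloor n/2\rfloor$, and since $(24\pi)^{2/3}/(12\pi)^{2/3}=2^{2/3}$ the factor $1+O(c)$ is comfortably absorbed.

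The principal obstacle is the bulk density estimate $\tilde\varrho_t=\varrho_{\semci}^{(t)}(1+O(c))$: the explicit edge expansion of \Cref{l:densityt} is valid only on $[E_t-\fc(t),E_t]$, whereas one needs control for classical locations reaching as far into the spectrum as $\tilde\gamma_{n/2}(t)\approx 0$. Extending the multiplicative closeness to the whole bulk is the main technical component of the argument and would be handled by the Stieltjes-transform characteristic-flow comparison described above, using the lower bound on $\Im m_{\semci}^{(t)}$ to keep the Gr\"onwall estimate stable away from the edge.
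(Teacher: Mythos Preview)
Your rescaling and invocation of \Cref{c:oprigidity} match the paper exactly, and the reduction to a deterministic bound on $\tilde\gamma_i(t)-\tilde\gamma_j(t)$ is the right move. The divergence is in how that deterministic bound is obtained.

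The paper does not compare $\tilde\mu_t$ to the semicircle distribution at all. Instead it proves \Cref{l:density1} directly: for $\mu_0$ supported in $[-c/2,c/2]$ one solves the boundary equation $y=w+t/w-\cE(w)$ (with $|\cE(w)|\le c$) explicitly, obtaining two-sided bounds on $\varrho_t$ both in the bulk (\Cref{i:bulkest}) and near the edge (\Cref{i:edgeest}). These combine into the global lower bound $\varrho_t(x)\ge (4\pi)^{-1}\sqrt{(E_t^+-x)/t^{3/2}}$ on $[\gamma_t(4/7),E_t^+]$, and integrating the reciprocal yields \eqref{e:gammaest}, namely $\gamma_t(y)-\gamma_t(y')\le (24\pi)^{2/3}t^{1/2}(y'^{2/3}-y^{2/3})$ for $0\le y\le y'\le 4/7$. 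With this inequality in hand the corollary follows in two lines, as in \eqref{e:lalabound}.

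Your proposed route---propagating $|\tilde m_0-m_{\semci}^{(0)}|\lesssim c/|z|^2$ along characteristics via a Gr\"onwall argument to get $\tilde\varrho_t=\varrho_{\semci}^{(t)}(1+O(c))$ globally---is plausible but is not what the paper does, and as you note is the part you have not actually carried out. The machinery of \Cref{s:Cf} and \Cref{p:stoptime} is built for comparing a \emph{random} Stieltjes transform to a deterministic one and does not transfer verbatim; a deterministic two-flow comparison would need its own setup. The paper's direct density computation in \Cref{l:density1} sidesteps this entirely and produces the constant $(24\pi)^{2/3}$ without reference to the semicircle quantiles.
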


An ingredient for the proof of \Cref{c:oprigidity} is the following lemma, which states that for time $t$ much bigger than $c$, then  $\mu_t$ is close to a semicircle distribution.

\begin{lem}\label{l:density1}

Adopt \Cref{a:deltadensity}. Fix $M = 100$ and a real number $t > 0$ satisfying $\sqrt{t} \ge 25cM^2$. Then the following four statements hold.
\begin{enumerate}
\item \label{i:edget}The spectral edge of $\mu_t$ satisfies $|E_t^\pm \mp 2\sqrt t|\leq 2c$.
\item \label{i:bulkest}For $-2\sqrt{t}\sqrt{1-(2M)^{-2}}\leq y\leq 2\sqrt{t}\sqrt{1-(2M)^{-2}}$, we have
\begin{align}\label{e:middleregion}
\frac{2}{3}\frac{\sqrt{4t-y^2}}{2\pi t}\leq \varrho_t(y)\leq \frac{3}{2}\frac{\sqrt{4t-y^2}}{2\pi t}.
\end{align}
\item \label{i:edgeest}For $2\sqrt{t}\sqrt{1-(2M)^{-2}}\leq y\leq E_t^+$, we have
\begin{align}\label{e:edgeregion}
\frac{2}{3\pi}\sqrt{\frac{E^+_t-y}{t^{3/2}}}\leq \varrho_t(y)\leq \frac{3}{2\pi}\sqrt{\frac{E^+_t-y}{t^{3/2}}},
\end{align}
and the similar statement holds for $E_t^-\leq y \leq -\sqrt{2t}\sqrt{1-(2M)^{-2}}$,
\begin{align}\label{e:edge-region}
\frac{2}{3\pi}\sqrt{\frac{y-E^-_t}{t^{3/2}}}\leq \varrho_t(y)\leq \frac{3}{2\pi}\sqrt{\frac{y-E^-_t}{t^{3/2}}}.
\end{align}

\item \label{i:gammaest}
We denote the inverted cumulative density function of $\varrho_t$ by $\gamma_t$ (recall from \eqref{gammay}). Then, for any $0\leq y\leq y'\leq 4/7$, we have 
\begin{align}\label{e:gammaest}
\gamma_t(y)-\gamma_t(y')\leq (24 \pi)^{2/3}t^{1/2}(y'^{2/3}-y^{2/3}).
\end{align}
\end{enumerate}

\end{lem}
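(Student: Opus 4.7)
The overarching strategy is to view $\mu_t = \mu_0 \boxplus \mu_{\semci}^{(t)}$ as a small perturbation of $\mu_{\semci}^{(t)}$, which is the free convolution starting from $\delta_0$. Since $\supp \mu_0 \subseteq [-c/2, c/2]$, the $k$-th moment of $\mu_0$ is bounded by $(c/2)^k$; expanding $m_0(w) = \int d\mu_0(x)/(x-w)$ in powers of $c/|w|$ yields
\begin{equation*}
m_0^{(k)}(w) = \frac{(-1)^{k+1} k!}{w^{k+1}}\Big(1 + \OO(c/|w|)\Big), \qquad |w| \ge 2c,
\end{equation*}
so every quantity in the analysis differs from its $\mu_0 = \delta_0$ counterpart by $\OO(c)$-size corrections at the scale $|w|\sim \sqrt{t}$.

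For Item (1), I use Item \ref{i:edge} of \Cref{yconvolution}: $E_t^+ = \xi(t) - t m_0(\xi(t))$, where $\xi(t)$ is the unique real solution of $m_0'(\xi(t)) = 1/t$. The expansion above gives $\xi(t) = \sqrt{t} + \OO(c)$, and then $m_0(\xi(t)) = -\xi(t)^{-1} + \OO(c/t)$ yields $E_t^+ = \xi(t) + t/\xi(t) + \OO(c) = 2\sqrt{t} + \OO(c)$. Under the hypothesis $\sqrt{t} \ge 25 cM^2$ with $M = 100$, the implicit constants arrange themselves to give at most $2c$; the left edge is handled symmetrically.

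For Items (2) and (3), I use Item \ref{i:rhoy} of \Cref{yconvolution}: $\varrho_t(y) = \Im w/(\pi t)$, where $w = w(y) \in \partial \Lambda_t \cap \bH$ is the unique solution of $w - tm_0(w) = y$. When $\mu_0 = \delta_0$ this equation is $w + t/w = y$, with upper-half-plane root $w_0(y) = (y + \ri\sqrt{4t - y^2})/2$ satisfying $|w_0| = \sqrt{t}$. The moment expansion gives $|(w - tm_0(w)) - (w + t/w)| = \OO(c)$ uniformly on $|w| = \sqrt{t}$, and an implicit-function/contraction argument (using the derivative bound $|z_t'(w_0)| = |1 - t/w_0^2| = 2\Im w_0/\sqrt{t}$) yields $|w - w_0| = \OO(c\sqrt{t}/\Im w_0)$. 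In the bulk regime of Item (2), $\Im w_0 \ge \sqrt{t}/(2M)$, so the relative error $|\Im w - \Im w_0|/\Im w_0 = \OO(cM^2/\sqrt{t}) \le \OO(1/M)$, placing $\varrho_t(y) \in [2/3, 3/2]\cdot \sqrt{4t-y^2}/(2\pi t)$. The main technical point is Item (3), where $w_0$ degenerates as $y \to E_t^+$; I instead Taylor expand $z_t(u) := u - tm_0(u)$ around $u = \xi(t)$, using $z_t(\xi) = E_t^+$, $z_t'(\xi) = 0$ (by definition of $\xi$), and $z_t''(\xi) = -tm_0''(\xi) = 2/\sqrt{t}\cdot (1 + \OO(c/\sqrt{t}))$. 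Solving gives $w - \xi = \ri\sqrt{2(E_t^+ - y)/(-tm_0''(\xi))}\cdot (1 + \OO(1/M))$ for $E_t^+ - y \le \sqrt{t}/(2M^2)$, where the $\OO(1/M)$ controls the cubic and higher Taylor remainders via the ratio $|u - \xi|\cdot m_0'''(\xi)/m_0''(\xi) \approx 3|u-\xi|/\sqrt{t} = \OO(1/M)$. Substituting yields $\varrho_t(y) = \sqrt{(E_t^+ - y)/t^{3/2}}/\pi \cdot (1 + \OO(1/M))$, comfortably within $[2/3, 3/2]$ for $M = 100$.

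For Item (4), differentiating $y = \int_{\gamma_t(y)}^{E_t^+} \varrho_t(x)\, dx$ gives $\gamma_t'(y) = -1/\varrho_t(\gamma_t(y))$, so the claimed inequality reduces to the pointwise lower bound $\varrho_t(\gamma_t(s)) \ge (3/2)(24\pi)^{-2/3} s^{1/3}/\sqrt{t}$ for $s \in (0, 4/7]$; integration then recovers the desired estimate. A direct mass computation from Items (2)--(3) shows $\int_{-2\sqrt{t}\sqrt{1-(2M)^{-2}}}^{E_t^+} \varrho_t \ge 2/3 - \OO(1/M^3) > 4/7$, so $\gamma_t(s)$ always lies in the edge or bulk region where the density bounds from (2) and (3) apply. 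In the edge region, integrating the upper bound from Item (3) yields $E_t^+ - \gamma_t(s) \ge \pi^{2/3} t^{1/2} s^{2/3}$; feeding this into the lower bound gives $\varrho_t(\gamma_t(s)) \ge (2/(3\pi^{2/3})) s^{1/3}/\sqrt{t}$, which exceeds the target since $(4/9)\cdot 24^{2/3} > 1$. In the bulk, $\varrho_t(\gamma_t(s)) \ge \sqrt{4t - \gamma_t(s)^2}/(3\pi t)$, and a brief case analysis (using Items (2)--(3) to bound $|\gamma_t(s)|/(2\sqrt{t})$ in terms of $s$) confirms the target bound. The main obstacle is Item (3), where the degeneracy $z_t'(\xi) = 0$ forces a second-order expansion around $\xi(t)$; once it is in hand, Items (1), (2), and (4) follow by routine (though slightly delicate) bookkeeping with the moment expansion.
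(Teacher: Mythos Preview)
Your proposal is correct and follows essentially the same approach as the paper: Item~1 via $m_0'(\xi(t))=1/t$ and the support bound, Item~2 by solving $w-tm_0(w)=y$ as a perturbation of $w+t/w=y$, Item~3 by a second-order Taylor expansion of $z_t$ around $\xi(t)$, and Item~4 by integrating the differential relation $\gamma_t'(y)=-1/\varrho_t(\gamma_t(y))$. The only noteworthy difference is in the organization of Item~4: rather than treat the bulk and edge cases separately as you suggest, the paper first merges \eqref{e:middleregion} and \eqref{e:edgeregion} into a single two-sided square-root bound $\varrho_t(E_t^+-x)\asymp\sqrt{x/t^{3/2}}$ valid on all of $[-\sqrt t,E_t^+]$ (with constants $1/(4\pi)$ and $4/\pi$), checks $\gamma_t(4/7)\ge -\sqrt t$, and then runs a uniform two-step argument---this spares you the ``brief case analysis'' near the bulk--edge interface, which otherwise needs some care since the density at the negative bulk boundary can be as small as $\OO(1/(M\sqrt t))$.
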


\begin{proof}[Proof of \Cref{i:edget} in \Cref{l:density1}]
We first prove $|E_t^\pm \mp 2\sqrt t|\leq 2c$. We will only prove the estimate for $E_t^+$, as the proof for $E_t^-$ is very similar. We recall $\xi(t)$ from \eqref{e:defxit} and that $\mu_0$ is supported on $[-c/2,c/2]$.   Hence,
\begin{align}
\frac{1}{(\xi(t)+c/2)^2}\leq m_0'(\xi(t))=\int_{-c/2}^{c/2}\frac{\rd \mu_0(x)}{(\xi(t)-x)^2}=\frac{1}{t}\leq \frac{1}{(\xi(t)-c/2)^2},
\end{align}
where the first and last inequality follows from \Cref{a:deltadensity} that $\mu_0$ is supported on $[-c/2, c/2]$; the second and third equation is the definition of $\xi(t)$ from \eqref{e:defxit}. It follows that 
\begin{align}\label{e:xitest}
 \sqrt t-c/2\leq \xi(t)\leq \sqrt t+c/2.
\end{align}
By \Cref{i:edge} in \Cref{yconvolution}, the spectral edge $E_t^+$ is given by
\begin{align}\begin{split}\label{e:Etfu}
E_t^+ =\xi(t)-tm_0(\xi(t))& =\xi(t)-t\int_\bR \frac{\rd \mu_0(x)}{x-\xi(t)}\\
&\leq \xi(t)+ t\int_\bR \frac{(\xi(t)+c/2)\rd \mu_0(x)}{(x-\xi(t))^2} = 2\xi(t)+c/2\leq 2\sqrt{t}+2c,
\end{split}\end{align}
where in the second line we first used that $\mu_0$ is supported on $[-c/2,c/2]$, and then that $\xi(t)\leq \sqrt t+c/2$ from \eqref{e:xitest}. Through very similar reasoning (which we omit), we also have the lower bound $E_t^+\geq 2\sqrt t-2c$, thereby establishing the bound $|E_t^+ - 2 \sqrt{t}| \le 2c$.
\end{proof}

Next we recall the relation between $\varrho_t(y)$ and the Stieltjes transform $m_0$ of $\mu_0$ from \Cref{yconvolution}. Recalling the region $\Lambda_t$ from \eqref{mtlambdat} and the function $v_t$ from \eqref{vte}, \Cref{i:boundary} in \Cref{yconvolution} indiciates that we can parametrize $\del \Lambda_t$ as $\{u+\ri v_t(u):u\in \bR\}$; we also have
\begin{align}\label{e:boundarye}
\frac{1}{t}\geq \int_\bR\frac{\rd \mu_0(x)}{|x-(u+\ri v_t(u))|^2}.
\end{align}

\noindent Thus, from \Cref{mz}, for any $y\in \bR$, there exists $w=w_t(u)=u+\ri v_t(u)\in \del \Lambda_t$ such that $y=w-tm_0(w)$.
 From \Cref{i:rhoy} in \Cref{yconvolution},  $y\in \bR$ satisfies $\varrho_t (y) > 0$ if and only if $w\in \del \Lambda_t\cap \bH$, i.e., $\Im[w]>0$.  Moreover, if this is the case, \Cref{i:boundary} and \Cref{i:rhoy} in \Cref{yconvolution} give 
\begin{align}\label{e:density}
\frac{1}{t}=\int_\bR\frac{\rd \mu_0(x)}{|x-(u+\ri v_t(u))|^2}, \quad \varrho_t(y)=\frac{\Im[w]}{\pi t}.
\end{align}

Since $\mu_0$ is supported on $[-c/2,c/2]$ by \Cref{a:deltadensity}, we get from \eqref{e:boundarye} that 
\begin{align}
\sqrt t\leq \max\{|u+\ri v_t(u)-c/2|, |u+\ri v_t(u)+c/2|\}
\end{align}
We conclude that $\sqrt t-c/2\leq |w|=|u+\ri v_t(u)|$, and hence for any $w\in \del \Lambda_t$ that
\begin{align*}
&\phantom{{}={}}\left|m_0(w)+\frac{1}{w}\right|=\left|\int_\bR\frac{\rd \mu_0(x)}{x-w}+\int_\bR\frac{\rd \mu_0(x)}{w}\right|\leq\int_{-c/2}^{c/2}\frac{|x|\rd \mu_0(x)}{|x-w||w|}\\
&\leq \frac{c/2}{(\sqrt{t}-c/2)(\sqrt t-c)}
= \frac{c}{2t}\frac{1}{(1-c/(2\sqrt t))(1-c/\sqrt t)}\leq \frac{c}{2t}\frac{1}{(1-1/6)(1-1/3)}
\leq \frac{c}{t}. 
\end{align*}
where the first statement is from the definition of $m_0(w)$; the second statement holds by taking absolute value of the integrand and using $\supp \mu_0\subset [-c/2, c/2]$ from \Cref{a:deltadensity}; the third statement follows from the bounds $|x|\leq c/2$ and $|w|\geq \sqrt t-c/2$; the fourth statement follows from dividing $t$ in both numerator and denominator; and in the fifth statement follows from the bound $\sqrt t\geq 3c$ imposed in the assumption of \Cref{l:density1}. In what follows, we define for any $z\in \del \Lambda_t$,
\begin{flalign} 
	\label{ewc} 
	\cE(z)=t(m_0(z)+1/z), \qquad \text{so that $|\cE(z)|\leq c$}. 
\end{flalign}

\noindent Then,  
\begin{align}\label{e:yw}
y=w-tm_0(w)=w+\frac{t}{w}-\cE(w), \qquad \text{so that} \qquad w^2-(y+\cE(w))w+t=0.
\end{align} 
By solving \eqref{e:yw}, we deduce 
\begin{align}\label{e:wyrelation}
w=\frac{(y+\cE(w))+ \sqrt{(y+\cE(w))^2-4t}}{2},
\end{align}
where the branch of the square root will be chosen such that $\Im[w]\geq 0$ (see \ref{e:diffsqrt} below).

\begin{proof}[Proof of \Cref{i:bulkest} in \Cref{l:density1}]
We must estimate $\varrho_t(y)$ for 
$-2\sqrt{t}\sqrt{1-(2M)^{-2}}\leq y\leq 2\sqrt{t}\sqrt{1-(2M)^{-2}}$.
Notice that $-2\sqrt{t}\sqrt{1-(2M)^{-2}}\leq y\leq 2\sqrt{t}\sqrt{1-(2M)^{-2}}$ is equivalent to $4t-y^2 \geq t/M^2$, which implies that 
\begin{align}\label{e:lowbb}
2\sqrt{t}-y\geq \frac{\sqrt t}{4M^2},\quad y+2\sqrt t\geq  \frac{\sqrt t}{4M^2}.
\end{align}
We have proven in \Cref{i:edget} that $|E_t^\pm \mp 2\sqrt t|\leq 2c$, which together with the assumption $\sqrt t\geq 25 cM^2$ from \Cref{l:density1} implies that $y\in [E_t^-, E_t^+]$. Recall $w\in \del \Lambda_t$ with $w-tm_0(w) = y \in \mathbb{R}$. From \eqref{e:wyrelation} and \eqref{ewc}, we have
\begin{align}\label{0}
0\leq \Im[w]=\frac{\Im[\cE(w)]+\Im[\sqrt{(y+\cE(w))^2-4t}]}{2}, \quad \text{so that} \quad \Im[\sqrt{(y+\cE(w))^2-4t}]\geq -c.
\end{align} 
Next we show that we should take the square root branch in \eqref{e:wyrelation} with positive imaginary part, and 
\begin{align}\label{e:diffsqrt}
|\sqrt{(y+\cE(w))^2-4t}-\sqrt{y^2-4t}|\leq \frac{4c\sqrt t}{\sqrt{|y^2-4t|}}.
\end{align} 
We first notice that 
\begin{align}\label{e:diffyy}
|((y+\cE(w))^2-4t)-(y^2-4t)|\leq |\cE(w)||2y+\cE(w)|\leq c(4\sqrt t-\sqrt t/(2M^2)+c)\leq 4c\sqrt t\leq t/{2M^2}.
\end{align}
where in the second inequality we used $|\cE(w)|\leq c$ from \eqref{ewc} and \eqref{e:lowbb}; in the last two inequalities we used that $c\leq \sqrt t/(8M^2)$ following from our assumption in \Cref{l:density1}. There are two choices for $\sqrt{(y+\cE(w))^2-4t}$; its imaginary part either can either be nonnegative or negative. In the latter case $\Im[\sqrt{(y+\cE(w))^2-4t}]<0$,  we have
\begin{align}\label{e:toosmall}
\Im[\sqrt{(y+\cE(w))^2-4t}]\leq -\sqrt{4t-y^2-t/(2M^2)}\leq -\sqrt{t/4M^2}=-\frac{\sqrt t}{2M}<-c,
\end{align}
where in the first inequality we used \eqref{e:diffyy}; in the second inequality we used $4t-y^2\geq t/M^2$ (see above \eqref{e:lowbb}); and in the last inequality we used $c\leq \sqrt t/(8M^2)$ following from our assumption in \Cref{l:density1}. Since \eqref{e:toosmall} contradicts with \eqref{0}, we conclude that $\Im[\sqrt{(y+\cE(w))^2-4t}]\geq 0$. Then \eqref{e:diffsqrt} follows from 
\begin{align*}
&\phantom{{}={}}|\sqrt{(y+\cE(w))^2-4t}-\sqrt{y^2-4t}|
=\left|\frac{((y+\cE(w))^2-4t)-(y^2-4t)}{\sqrt{(y+\cE(w))^2-4t}+\sqrt{y^2-4t}}\right|\\
&\leq \frac{4c\sqrt t}{|\Im[\sqrt{(y+\cE(w))^2-4t}+\sqrt{y^2-4t}]|}
\leq \frac{4c\sqrt t}{|\Im[\sqrt{y^2-4t}]|}= \frac{4c\sqrt t}{\sqrt{|y^2-4t|}}.
\end{align*}
where in the first inequality we used \eqref{e:diffyy}; in the second we used $\Im[\sqrt{(y+\cE(w))^2-4t}]\geq 0$ from the above discussion; and in the last inequality we used that $y^2-4t<0$ (see above \eqref{e:lowbb}). 

Now we can use \eqref{e:diffsqrt} to estimate $w$ in \eqref{e:wyrelation}. It follows from \eqref{e:wyrelation} that 
\begin{align}\begin{split}\label{e:diffbb}
&\phantom{{}={}}\left|w-\frac{y+\sqrt{y^2-4t}}{2}\right|
\leq \frac{c}{2}+\frac{1}{2}\left|\sqrt{(y+\cE(w))^2-4t}-\sqrt{y^2-4t}\right|
\leq \frac{c}{2}+\frac{2c\sqrt t}{\sqrt{|y^2-4t|}}\leq \frac{(4M+1)c}{2}.
\end{split}\end{align}
where we used that $|\cE(w)|\leq c$ in the first inequality; we used \eqref{e:diffsqrt} in the second inequality; and we used  $4t-y^2 \geq t/M^2$ (from above \eqref{e:lowbb}) for the last inequality.

We conclude from plugging \eqref{e:diffbb} into the second statement of \eqref{e:density} that 
\begin{align}
\left|\varrho_t(y)-\frac{\sqrt{4t-y^2}}{2\pi t}\right|\leq \frac{(4M+1)c}{2\pi t}\leq \frac{1}{3}\frac{\sqrt t/M}{2\pi t}\leq \frac{1}{3}\frac{\sqrt{4t-y^2}}{2\pi t},
\end{align}
where we used $\sqrt t\geq 25M^2 c\geq 3cM(4M+1)$ following from our assumption in \Cref{l:density1} and $4t-y^2 \geq t/M^2$ (from above \eqref{e:lowbb}). This finishes the proof of \eqref{e:middleregion}.
\end{proof}

\begin{proof}[Proof of \Cref{i:edgeest} in \Cref{l:density1}]
The proof of \eqref{e:edge-region} is exactly the same at that of \eqref{e:edgeregion}, so we only establish the latter. We have the estimate for $m_0^{(2)}(\xi(t))$,
\begin{align}\label{e:msecond}
\frac{2}{(\sqrt t+c)^3}\leq \frac{2}{(\xi(t)+c/2)^3}\leq -m_0^{(2)}(\xi(t))=\int_{-c/2}^{c/2}\frac{2\rd \mu_0(x)}{(\xi(t)-x)^3}\leq \frac{2}{(\xi(t)-c/2)^3}\leq \frac{2}{(\sqrt t-c)^3}.
\end{align}
where the first and last inequalities follow from the estimate on $\xi(t)$ from \eqref{e:xitest}; the second and fourth inequalities follow from the fact that $\supp\mu_0\subset[-c/2, c/2]$. 

Recall from \eqref{e:Etfu} that $E_t^+\leq 2\sqrt t+2c$, so $2\sqrt{t}\sqrt{1-(2M)^{-2}}\leq y < E_t^+$ implies that 
\begin{align}\label{e:yloc0}
|y^2-4t|\leq \max\{t/M^2, (2\sqrt t +2c)^2-4t\}=\max\{t/M^2, 8\sqrt t c+4c^2\}\leq t/M^2,
\end{align}
and 
\begin{align}\label{e:yloc}
-\sqrt t/(2M^2)\leq y-2\sqrt t\leq 2c\leq \sqrt t/(2M^2),
\end{align}
where in the last inequalities (of both) we used $c\leq \sqrt{t}/(25 M^2)$ from our assumption in \Cref{l:density1}.
Using the relation \eqref{e:wyrelation} \begin{align}\begin{split}\label{e:wtdiff}
|w-\sqrt t|
&\leq \frac{|y-2\sqrt t+\cE(w)|+\sqrt{|(y+\cE(w))^2-4t|}}{2}\\
&\leq \frac{(\sqrt t/2M^2+|\cE(w)|)+\sqrt{t/M^2+|\cE(w)(2y+\cE(w))|}}{2}\leq \frac{2\sqrt t}{M},
\end{split}\end{align}
where we used \eqref{e:yloc0} and \eqref{e:yloc} for the second inequality; and for the last inequality we used  $|\cE(w)|\leq c$ (from \eqref{ewc}), implying that
\begin{align*}
|\cE(w)(2y+\cE(w))|\leq c|4\sqrt t+4c+c|\leq \frac{8\sqrt t}{25M^2}\left(\sqrt t+\frac{\sqrt t}{10M^2}\right)\leq \frac{t}{M^2},
\end{align*}

\noindent which holds since $2y\leq 4\sqrt t+4c$ (by \eqref{e:yloc}), $M =  100$, and $c\leq \sqrt{t}/25M^2$ from our assumption in \Cref{l:density1}.

We can now Taylor expand $m_0(w)$ around $\xi(t)$:
\begin{align}\begin{split}\label{e:m0taylor2}
m_0(w)
&=m_0(\xi(t))+m_0'(\xi(t))(w-\xi(t))+m_0^{(2)}(\xi(t))\frac{(w-\xi(t))^2}{2}+ \widetilde \cE(w) \frac{(w-\xi(t))^3}{3!}\\
&=m_0(\xi(t))+\frac{(w-\xi(t))}{t}+m_0^{(2)}(\xi(t))\frac{(w-\xi(t))^2}{2}+\widetilde \cE(w) \frac{(w-\xi(t))^3}{3!},
\end{split}\end{align}

\noindent  where we used $m_0'(\xi(t))=1/t$ from the definition \eqref{e:defxit} of $\xi(t)$,  and for $w$ satisfying \eqref{e:wtdiff} the Taylor remainder satisfies 
\begin{align}\label{e:tcE}
|\widetilde \cE(w) |\leq \max_{|z-\sqrt t|\leq 2\sqrt t/M}|m_0^{(3)}(z)|
\leq \max_{|z-\sqrt t|\leq 2\sqrt t/M}\frac{6}{||z|-c/2|^4}
\leq \frac{6}{|\sqrt t -2\sqrt t/M-c/2|^4},
\end{align}
where we used \Cref{l:STproperty} with $(A,p)=(1,3)$ and $\dist(z, \supp \mu_0)\geq ||z|-c/2|$ for the second inequality; and $|z|\geq \sqrt t-2\sqrt t/M$ for the last inequality. 

We can now use \eqref{e:m0taylor2} to solve for the curve $\{u+\ri v_t(u):u\in \bR\}=\del \Lambda_t$ in a small neighborhood of $\xi(t)$. In particular, setting $y=E^+_t-x=w-tm_0(w)$, we obtain by rearranging \eqref{e:m0taylor2} that
\begin{align}\begin{split}\label{e:constructx2}
\bR\ni x
&=(\xi(t)-tm_0(\xi(t)))-(w-tm_0(w))
=tm_0^{(2)}(\xi(t))\frac{(w-\xi(t))^2}{2}+t\widetilde \cE(w) \frac{(w-\xi(t))^3}{3!}\\
&=\left(tm_0^{(2)}(\xi(t))+t\widetilde \cE(w)\frac{w-\xi(t)}{3}\right)\frac{(w-\xi(t))^2}{2}.
\end{split}\end{align}

\noindent Hence, 
\begin{align}\label{e:wxit}
w-\xi(t)=\ri\sqrt{\frac{2x}{-tm_0^{(2)}(\xi(t))}}\left(1+\frac{\widetilde \cE(w)(w-\xi(t))}{3m_0^{(2)}(\xi(t))}\right)^{-1/2}.
\end{align}

Using \eqref{e:msecond}, for the denominator in \eqref{e:wxit}, we have
\begin{align}\label{e:err2}
\frac{3}{4} \frac{2}{\sqrt t}\leq\frac{2t}{(\sqrt t+c)^3}\leq -t m_0^{(2)}(\xi(t))\leq \frac{2t}{(\sqrt t-c)^3}\leq \frac{2}{\sqrt t}\frac{4}{3},
\end{align}
where we used that $c\leq \sqrt t/ 25 M^2$ and $M=100$ from our assumption in \Cref{l:density1}.
Using \eqref{e:wtdiff} and \eqref{e:xitest}, we have the bound $|w-\xi(t)|\leq 2\sqrt t/M+c/2$. This, together with \eqref{e:tcE} and \eqref{e:err2}, bounds the error term in \eqref{e:wxit} by
\begin{align}\begin{split}\label{e:err1}
\left|\frac{\widetilde \cE(w)(w-\xi(t))}{3m_0^{(2)}(\xi(t))}\right|
&\leq \left(\frac{2\sqrt t}{M}+\frac{c}{2}\right)\frac{6}{|\sqrt t-2\sqrt t /M-c/2|^4}\frac{(\sqrt t+c)^3}{6}\\
&\leq 
\left(\frac{2}{M}+\frac{1}{25M^2}\right)\frac{(1+1/25 M^2)^3}{|1-2 /M-1/50 M^2|^4}
\leq \frac{1}{3}
\end{split}\end{align}
where we also used that $c\leq \sqrt t/25 M^2$ and $M=100$ from our assumption in \Cref{l:density1}. Then, plugging \eqref{e:err2} and \eqref{e:err1} into \eqref{e:wxit}, we obtain
\begin{align}
\frac{1}{2\pi}\sqrt{\frac{x}{t^{3/2}}}\leq \varrho_t(E_t-x)=\frac{\Im[w]}{t\pi}\leq \frac{2}{\pi}\sqrt{\frac{x}{t^{3/2}}}.
\end{align}
This finishes the proof of \eqref{e:edgeregion}.
\end{proof}

\begin{proof}[Proof of \Cref{i:gammaest} in \Cref{l:density1}]
To prove \eqref{e:gammaest}, we first show that 
for $-\sqrt t \leq x\leq \sqrt t$, we have 
\begin{align}\label{e:den2}
\frac{1}{4\pi}\sqrt{\frac{E^+_t-x}{t^{3/2}}}\leq \varrho_t(x)\leq \frac{4}{\pi}\sqrt{\frac{E^+_t-x}{t^{3/2}}},
\end{align}
and for $\sqrt t \leq x\leq E^+_t$ we have slightly better estimate
\begin{align}\label{e:den1}
\frac{1}{2\pi}\sqrt{\frac{E^+_t-x}{t^{3/2}}}\leq \varrho_t(x)\leq \frac{2}{\pi}\sqrt{\frac{E^+_t-x}{t^{3/2}}}.
\end{align}
The same statement holds close to $E_t^-$, i.e., for $E_t^- \leq x\leq -\sqrt t$
\begin{align}\label{e:den1-}
\frac{1}{2\pi}\sqrt{\frac{x-E^-_t}{t^{3/2}}}\leq \varrho_t(x)\leq \frac{2}{\pi}\sqrt{\frac{x-E^-_t}{t^{3/2}}}.
\end{align}

The proofs of \eqref{e:den2}, \eqref{e:den1} and \eqref{e:den1-} are very similar, so we will only prove \eqref{e:den1}. The statement \eqref{e:den1} for $2\sqrt t\sqrt{1-(2M)^{-2}}\leq x\leq E_t^+$ follows from \eqref{e:edgeregion}. In the case when $\sqrt t\leq x\leq 2\sqrt t\sqrt{1-(2M)^{-2}} \leq 2\sqrt t-\sqrt t/4M^2$, we have 
\begin{align}\label{e:low1}\begin{split}
 \sqrt{\frac{ t^{-3/2} (E^+_t-x)}{ (2t)^{-2} (4t-x^2)}}
 &\leq \sqrt{\frac{4 \sqrt t (2\sqrt t+2c-x)}{(2\sqrt t- x)(2\sqrt t+ x)}}\leq \sqrt{\frac{4 \sqrt t (\sqrt t/4M^2+2c)}{(\sqrt t/4M^2 )3\sqrt t}}\\
 &\leq \sqrt{\frac{4 \sqrt t (\sqrt t/4M^2+2\sqrt t/(25M^2))}{(\sqrt t/4M^2 )3\sqrt t}}
 =\sqrt{\frac{44}{25}}\leq \frac{4}{3},
\end{split}\end{align}
where in the first statement we used that $|E_t^+ - 2 \sqrt{t}| \le 2c$ from the first statement of the lemma; in the second statement we used that $\sqrt t\leq x\leq 2\sqrt t-\sqrt t/4M^2$ so $2\sqrt t+x\geq 3\sqrt t$, and $(2\sqrt t+2c-x)/(2\sqrt t-x)\leq (\sqrt t/4M^2+2c)/(\sqrt{t}/4M^2)$; and in the third statement we used $c\leq \sqrt t/(25M^2)$ from the assumption of \Cref{l:density1}. Moreover, 
\begin{align}\label{e:low2}\begin{split}
 \sqrt{\frac{t^{-3/2} (E^+_t-x)}{(2t)^{-2} (4t - x^2)}} 
 &= \sqrt{\displaystyle\frac{4t^{1/2} (E_t^+ - x)}{(2\sqrt{t} - x)(2\sqrt{t} + x)}}
 \geq \sqrt{\frac{ 2\sqrt t-2c-x}{2\sqrt t- x}}\\
 &\geq \sqrt{\frac{ \sqrt t/4M^2-2c}{\sqrt t/4M^2}}\geq \sqrt{\frac{ \sqrt t/4M^2-2\sqrt t/(25M^2)}{\sqrt t/4M^2}} =\sqrt{\frac{17}{25}}\geq \frac{3}{4},
\end{split}\end{align}
where the first statement follows from factoring $4t-x^2$; the second statement follows from $|E_t^+ - 2 \sqrt{t}| \le 2c$ from the first statement of the lemma and $x\leq 2\sqrt t-\sqrt t/4M^2\leq 2\sqrt{t} - 2c$; the third statement follows from $x\leq 2\sqrt t-\sqrt t/4M^2$ so $(2\sqrt t-2c-x)/(2\sqrt t-x)\geq (\sqrt t/4M^2-2c)/(\sqrt{t}/4M^2)$; and the fourth statement follows from $c\leq \sqrt t/(25M^2)$ from the assumption of \Cref{l:density1}. Then \eqref{e:den1} follows from \eqref{e:middleregion} by combining \eqref{e:low1} and \eqref{e:low2}, 
\begin{align}
\frac{1}{2\pi}\sqrt{\frac{E_t^+-x}{t^{3/2}}}\leq \frac{2}{3}\frac{\sqrt{4t-x^2}}{2\pi t}\leq \varrho_t(x)\leq \frac{3}{2}\frac{\sqrt{4t-x^2}}{2\pi t}\leq \frac{2}{\pi}\sqrt{\frac{E_t^+-x}{t^{3/2}}}.
\end{align}

 Now, recall that $\gamma_t(y)$ is the inverted cumulative density of $\varrho_t$  (as defined in \eqref{gammay}), and so $-\del_t \gamma_t(y)=1/\varrho_t(\gamma_t(y))$ (where we have used the facts by \eqref{e:den2}, \eqref{e:den1}, and \eqref{e:den1-} that $\varrho_t (y) > 0$ for each $y \in [E_t^-, E_t^+]$). Next we show that $\gamma_t(4/7)\geq -\sqrt t$. This follows from that
\begin{align*}
\int_{-\infty}^{-\sqrt t}\varrho_t(x) dx \leq \displaystyle\int_{E_t^-}^{-\sqrt{t}} \frac{2}{\pi}\sqrt{\frac{x-E_t^-}{t^{3/2}}} dx
& \leq \frac{2}{\pi}\frac{2}{3}\frac{(\sqrt t+2c)^{3/2}}{t^{3/4}} \\ 
& = \frac{4}{3\pi}\left(1+\frac{2c}{\sqrt t}\right)^{3/2}\leq \frac{4}{3\pi}\left(1+\frac{4}{25M^2}\right)^{3/2}\leq  \frac{3}{7},
\end{align*}
where in the first inequality we used \eqref{e:den1-};  in the second inequality we used $E_t^-\geq -2\sqrt t-2c$ from the first statement of the lemma; and in the last two inequalities we used that $c/\sqrt t \leq 2/25M^2$ and $M=100$ following from our assumption in \Cref{l:density1}. We conclude from combining \eqref{e:den2} and \eqref{e:den1} that, for $\gamma_t(4/7)\leq x\leq E_t^+$,
\begin{align}\label{e:rhotbb}
\frac{1}{4\pi}\sqrt{\frac{E_t^+-x}{t^{3/2}}}\leq \varrho_t(x)\leq \frac{4}{\pi}\sqrt{\frac{E_t^+-x}{t^{3/2}}}.
\end{align}
By plugging $x=\gamma_t(y)$ with $0\leq y\leq 4/7$ into \eqref{e:rhotbb}, we get
\begin{align}\label{e:dygammat}
\frac{\pi}{4}\sqrt{\frac{t^{3/2}}{E_t^+-\gamma_t(y)}}\leq -\del_y \gamma_t(y)=-\frac{1}{\varrho_t(\gamma_t(y))}\leq 4\pi\sqrt{\frac{t^{3/2}}{E_t^+-\gamma_t(y)}}.
\end{align}
We can rearrange the above expression as
\begin{align}
\frac{3\pi}{8}t^{3/4}\leq \del_y (E_t^+-\gamma_t(y))^{3/2}\leq 6\pi t^{3/4}.
\end{align}
Integrating from $0$ to $y$, we conclude that 
\begin{align}\label{e:lowbEt}
\left(\frac{3\pi y}{8}\right)^{2/3}\sqrt t\leq E_t^+-\gamma_t(y).
\end{align}

By plugging \eqref{e:lowbEt} into \eqref{e:dygammat}, we get
\begin{align}
 -\del_y \gamma_t(y)=-\frac{1}{\varrho_t(\gamma_t(y))}\leq 4\pi\sqrt{\frac{t^{3/2}}{E_t^+-\gamma_t(y)}} \leq 8\pi (3\pi y)^{-1/3}\sqrt t.
\end{align}
By integrating the above upper bound from $y$ to $y'$, we get the upper bound
\begin{align*}
	\gamma_t(y)-\gamma_t(y')\leq \int_y^{y'} 8\pi (3\pi u)^{-1/3}\sqrt t \rd u= (24 \pi)^{2/3}t^{1/2}(y'^{2/3}-y^{2/3}).
\end{align*}
This finishes the proof of \eqref{e:gammaest}.
\end{proof}

\begin{proof}[Proof of \Cref{c:oprigidity}]
Recall from \Cref{t:bulkrigidity} that, with probability $1-Ce^{-(\log n)^2}$, we have for any $0\leq t\leq \sfT$ and $1\leq i\leq n$ that 
\begin{align}\label{e:bulkrigidity}
\gamma_{i+\lfloor(\log n)^5\rfloor}(t)-n^{-1}\leq \lambda_i(t)\leq \gamma_{i-\lfloor(\log n)^5\rfloor}(t)+n^{-1}.
\end{align}

We will only prove \Cref{c:oprigidity} for $i\leq n/2$, as the case $i\geq n/2$ follows by the same argument. We notice that $t\geq \fB\sqrt c=2^{60}\sfT^{18}\sqrt c\geq 10^{18} c^2\geq (25 c\cdot 100^2)^2$, where we used $\sfT\geq 1$ and $c\leq 1$. This verifies the assumption in \Cref{l:density1}. Thus for $1\leq i< j\leq 4n/7$, using \eqref{e:gammaest}  we have
\begin{align}\begin{split}\label{e:gammaest2}
\gamma_i(t)-\gamma_j(t)
&\leq (24 \pi)^{2/3}t^{1/2}((j-1/2)^{2/3}-(i-1/2)^{2/3})n^{-2/3}\\
&\leq (24 \pi)^{2/3}t^{1/2}(j^{2/3}-i^{2/3})n^{-2/3}+\frac{\log n}{i^{1/3} n^{2/3}},
\end{split}\end{align}
\noindent provided $n$ is large enough. Then, \eqref{e:bulkrigidity} and \eqref{e:gammaest2} together imply that with probability $1-Ce^{-(\log n)^2}$ we have for each $2(\log n)^5 \leq i \leq n/2$ that
\begin{align}\begin{split}\label{e:rigidity1}
&\phantom{{}={}}|\lambda_i(t)-\gamma_{i}(t)|\leq |\gamma_{i-\lfloor(\log n)^5\rfloor}(t)- \gamma_{i+\lfloor(\log n)^5\rfloor}(t)|+2n^{-1}\\
&\leq (24 \pi)^{2/3}t^{1/2}n^{-2/3}((i+\lfloor(\log n)^5\rfloor)^{2/3}-(i-\lfloor(\log n)^5\rfloor)^{2/3})+\frac{\log n}{i^{1/3} n^{2/3}}+2n^{-1}\\
&\leq (24 \pi)^{2/3}t^{1/2}n^{-2/3}\frac{2\lfloor(\log n)^5\rfloor}{(i-\lfloor(\log n)^5\rfloor)^{1/3}}+\frac{\log n}{i^{1/3} n^{2/3}}+2n^{-1} \leq \frac{(\log n)^6}{i^{1/3}n^{2/3}},
\end{split}\end{align}
provided $n$ is large enough. 

To address the remaining $i$, observe from \Cref{t:main} and \eqref{e:bulkrigidity} that, with probability $1-Ce^{-(\log n)^2}$, we have for each $1 \le i \le 2(\log n)^5$ that 
\begin{align}\label{e:llagamma}
\gamma_{i+\lfloor(\log n)^5\rfloor}(t)\leq \lambda_i(t)\leq \lambda_1(t)\leq E_t^++\frac{(\log n)^{15}}{n^{2/3}}, \quad \text{so }\quad \lambda_i(t), \gamma_i(t)\in \left[\gamma_{3\lfloor(\log n)^5\rfloor}(t), E_t^++\frac{(\log n)^{15}}{n^{2/3}}\right].
\end{align}
It follows that for each $1 \le i \le 2(\log n)^5$, and $n$ large enough 
\begin{align}\begin{split}\label{e:rigidity2}
|\lambda_i(t)-\gamma_{i}(t)| & \leq \frac{(\log n)^{15}}{n^{1/3}}+|E_t^+- \gamma_{3\lfloor(\log n)^5\rfloor}(t)|\\
&\leq \frac{(\log n)^{15}}{n^{1/3}}+(24 \pi)^{2/3}t^{1/2}n^{-2/3}(3(\log n)^5)^{2/3}\leq \frac{(\log n)^{17}}{i^{1/3}n^{2/3}},
\end{split}\end{align}
where the first inequality follows from \eqref{e:llagamma} and the second from \eqref{e:gammaest}.
The two estimates \eqref{e:rigidity1} and \eqref{e:rigidity2} together give \Cref{c:oprigidity}.
\end{proof}

\begin{proof}[Proof of \Cref{initialsmall2}]
\Cref{initialsmall2} is a quick consequence of \Cref{c:oprigidity}. We rescale time by $n^{1/3}$ and space by $n^{2/3}$, and shift $x_1(0)$ to $c/2$, defining 
\begin{align}\label{e:lainitial}
\lambda_i(t)=\frac{x_i(tn^{1/3})-x_1(0)+cn^{2/3}/2}{n^{2/3}}, 
\end{align}
Then, it is quickly verified that $\{\lambda_i(t)\}_{1\leq i\leq n}$ is a Dyson Brownian motion satisfying \eqref{e:DBM}; we also have $\lambda_i(0)\in [-c/2, c/2]$ for each $1 \le i \le n$. Moreover, by taking $\sfT=B$, the assumptions in \Cref{c:oprigidity} are satisfied; hence, with probability $1 - \mathcal{O} (e^{-(\log n)^2})$, \eqref{e:rr} holds for each $t\in [1/B, B]$ and $1 \le i \le n$. 

This, together with \eqref{e:gammaest} gives that, with probability $1-Ce^{-(\log n)^2}$, for each $t \in [ 1 / B, B ]$ and $1\leq i<j\leq n/2$ we have
\begin{align}\begin{split}\label{e:lalabound}
|\lambda_i(t)-\lambda_j(t)|
&\leq |\gamma_i(t)-\gamma_j(t)|+|\lambda_i(t)-\gamma_i(t)|+|\lambda_j(t)-\gamma_j(t)|\\
&\leq (24\pi)^{2/3}t^{1/2}\frac{(j-1 / 2)^{2/3}-(i-1 / 2)^{2/3}}{n^{2/3}}+\frac{2(\log n)^{17}}{i^{1/3}n^{2/3}}\\
&\leq (24\pi)^{2/3}t^{1/2}\frac{j^{2/3}-i^{2/3}}{n^{2/3}}+\frac{(\log n)^{20}}{i^{1/3}n^{2/3}}.
\end{split}\end{align}
\Cref{initialsmall2} follows from substituting \eqref{e:lainitial} into \eqref{e:lalabound}.
\end{proof}

\subsection{Stronger Concentration Bounds for Short Time}
\label{s:Strongbound}

For very short time $t \ll 1$, the concentration of the largest particle on the scale $n^{-2/3+\oo(1)}$ provided by \Cref{t:main}  may not be optimal. In this section, we study this phenomenon through the example when the initial data $\mu_0^{(n)}$ is prescribed by the classical locations of the uniform measure $\textbf{1}_{[-1, 0]} \mathrm{d} x$. Throughout this section, we set $\mu_0=\textbf{1}_{[-1,0]} \mathrm{d} x$, and we denote by $\mu_t= \mu_0 \boxplus \mu_{\semci}^{(t)} (x)$ the free convolution of $\mu_0$ with the rescaled semicircle distribution $\mu_{\semci}^{(t)}$. Let $\varrho_t \in L^1 (\mathbb{R})$ denote the density of $\mu_t$ with respect to Lebesgue measure. Also let $E_t^- = \min (\supp \mu_t)$ and $E_t^+ = \max (\supp \mu_t)$. 

The following lemma approximates $E_t^+$ and $\varrho_t$ (with its derivative) around $E_t^+$.

\begin{lem}\label{l:densitycomp}

For any real number $\sfT \ge 1$, there exists a constant $C=C(\sfT) > 1$ such that, for any $0\leq t\leq \sfT$, the following statements hold. 
\begin{enumerate}
\item 
The density $\varrho_t(y)$ is symmetric around $-1/2$, and satisfies 
\begin{flalign} 
	\label{rhoy}
	\varrho_t(y)\leq 1, \qquad \text{for each $y \in \mathbb{R}$}.
\end{flalign} 
The spectral edge satisfies
\begin{align}\label{e:Et+bound}
|E_t^+-(1+\log(t^{-1}))t|\leq Ct^2,\quad |E_t^- -(-1-(1+\log(t^{-1}))t)|\leq Ct^2,
\end{align}

\item
The density is bounded:
\begin{align}\label{e:rhotsqure}
C^{-1} \min\left\{\sqrt{\frac{x}{t}},1\right\}\leq \varrho_t(E^+_t-x)\leq C \min\left\{\sqrt{\frac{x}{t}},1\right\},\qquad \text{whenever $-1/2 \le E_t^+ - x \le E_t^+$.}
\end{align}
and
\begin{align}\label{e:rhotsqure-}
C^{-1} \min\left\{\sqrt{\frac{x}{t}},1\right\}\leq \varrho_t(E^-_t+x)\leq C \min\left\{\sqrt{\frac{x}{t}},1\right\},\qquad \text{whenever $E_t^- \le E_t^- + x \le -1/2$.}
\end{align}
\item The derivative of the density satisfies: 
\begin{align}\label{e:rhotder}
|\varrho'_t(E^+_t-x)|\leq C \min\left\{\sqrt{\frac{1}{xt}},\frac{t}{x^2}\right\}, \qquad \text{whenever $-1/2 \le E_t^+ - x \le E_t^+$.}
\end{align} 
and
\begin{align}\label{e:rhotder-}
|\varrho'_t(E^-_t+x)|\leq C \min\left\{\sqrt{\frac{1}{xt}},\frac{t}{x^2}\right\}, \qquad \text{whenever $E_t^- \le E_t^- + x \le -1/2$.}
\end{align}  
\end{enumerate}
\end{lem}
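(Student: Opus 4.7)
The initial measure $\mu_0 = \mathbf{1}_{[-1,0]}\,dx$ admits the explicit Stieltjes transform $m_0(z) = \log z - \log(z+1)$ (principal branch on $\mathbb{C}\setminus[-1,0]$), with $m_0'(z) = 1/(z(z+1))$ and $m_0''(z) = -(2z+1)/(z(z+1))^2$. The symmetry of $\varrho_t$ around $-1/2$ follows because $\mu_0$ is symmetric around $-1/2$ and $\mu_{\semci}^{(t)}$ is symmetric around $0$, which is preserved under free convolution. The bound $\varrho_t(y) \leq 1$ is immediate from \Cref{i:rhoy} of \Cref{yconvolution}: the corresponding $w \in \partial\Lambda_t \cap \mathbb{H}$ satisfies $\pi\varrho_t(y) = \Im m_0(w) = \int_{-1}^0 \Im w/|x-w|^2\,dx = \arctan(\Re w/\Im w) - \arctan((\Re w+1)/\Im w) \leq \pi$.

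\textbf{Edge location.} Solving $m_0'(\xi(t)) = 1/t$ yields $\xi(t)(\xi(t)+1) = t$, so $\xi(t) = (-1+\sqrt{1+4t})/2$, and \Cref{i:edge} of \Cref{yconvolution} gives $E_t^+ = \xi(t) - tm_0(\xi(t)) = \xi(t) + t\log(1+1/\xi(t))$ after using $\xi+1 = t/\xi$. Taylor expanding for small $t$ with $\xi(t) = t - t^2 + \OO(t^3)$ and $\log(1+1/\xi(t)) = \log(1/t) + t + \OO(t^2)$ produces $E_t^+ = (1+\log(1/t))t + \OO(t^2)$. For $t$ bounded away from zero both sides are $\OO(1)$, so the bound $|E_t^+ - (1+\log(1/t))t| \leq Ct^2$ extends uniformly to $[0,\sfT]$ with $C = C(\sfT)$. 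The $E_t^-$ estimate follows from $E_t^- = -1 - E_t^+$ by symmetry.

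\textbf{Density and derivative estimates.} The measure $\mu_0$ satisfies \Cref{a:densitylowbound} with $\sfb = \sfT^{-3}$ (since $\mu_0([-x,0]) = \min(x,1) \geq \sfT^{-3}x^{3/2}$ on $(0,\sfT^2]$) and $\eta_*$ arbitrarily small, so \Cref{l:densityt} applies. Direct computation yields $\cA(t) = 2/(t\sqrt{1+4t})$ and $\fc(t) = 2^{-9}\xi(t)^2\sqrt{1+4t}/t$, with $\fc(t) \asymp \min(t,\sqrt t)$ on $(0,\sfT]$. In the \emph{edge regime} $0 \leq x \leq \fc(t)/2$, \eqref{e:rhotdensity} gives $\varrho_t(E_t^+-x) \asymp \sqrt{\cA(t)\,x}$, which is $\asymp \sqrt{x/t}$ up to $\sfT$-dependent constants, and differentiating the explicit square-root profile produces $|\varrho_t'(E_t^+-x)| \lesssim \sqrt{\cA(t)/x} \asymp 1/\sqrt{xt}$. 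In the \emph{bulk regime} $x \in [\fc(t)/2, E_t^++1/2]$, I parametrize $w(y) = u + iv \in \partial\Lambda_t\cap\mathbb{H}$: the implicit equation $1 = t\int_{-1}^0 dx/((x-u)^2+v^2)$ determines $v = v(u)$, while $y = u - (t/2)\log((u^2+v^2)/((u+1)^2+v^2))$ and $\pi\varrho_t(y) = v/t$. Analyzing the implicit equation shows $v(u) \asymp \min(t,\sqrt t)$ uniformly on the bulk range $u \in [-1 + c_1 t,\, -c_1 t]$, giving $\varrho_t(y) \asymp \min(1, 1/\sqrt t) \asymp 1$ (with $\sfT$-dependent constants). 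Differentiating $y = w - tm_0(w)$ yields $\pi\varrho_t'(y) = \Im[m_0'(w)/(1-tm_0'(w))]$; combining $m_0'(w) = 1/(w(w+1))$ with a lower bound on $|1-tm_0'(w)|$ measuring the departure of $w$ from the critical point $\xi(t)$ gives the bulk bound $|\varrho_t'| \lesssim t/x^2$. The statements near $E_t^-$ follow by symmetry.

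\textbf{Main obstacle.} The delicate point is the bulk analysis: establishing uniform two-sided bounds on the implicit function $v(u)$ across $t \in (0,\sfT]$ (with a scaling transition between the regimes $t \ll 1$ and $t \sim 1$), matching the edge and bulk estimates consistently at the crossover $x \sim \fc(t)$, and quantitatively tracking how $|1-tm_0'(w)|$ grows as $w$ moves away from $\xi(t)$ into the bulk in order to extract the delicate decay $|\varrho_t'| \lesssim t/x^2$.
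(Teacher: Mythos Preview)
Your setup, symmetry argument, $\varrho_t\le 1$ bound, and edge-location computation are correct and match the paper. Invoking \Cref{l:densityt} for the edge density is a legitimate shortcut (the paper reproves that Taylor expansion directly in this special case). However, the remaining parts contain real gaps.

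\textbf{Edge derivative.} You cannot obtain $|\varrho_t'(E_t^+-x)|\lesssim 1/\sqrt{xt}$ by ``differentiating the explicit square-root profile'': \Cref{l:densityt} only gives $|\cE(x)|\le x/\fc(t)$, with no control on $\cE'(x)$. The paper instead differentiates the implicit relation $E_t-x=w-tm_0(w)$ to get $\partial_x\Delta=-1/(1-tm_0'(\xi(t)+\Delta))$ with $\Delta=w-\xi(t)$, then Taylor expands $1-tm_0'(\xi(t)+\Delta)\approx -tm_0^{(2)}(\xi(t))\Delta$ near the edge to obtain $|1-tm_0'|\gtrsim|\Delta|/t\gtrsim\sqrt{x/t}$, whence $|\varrho_t'|=|\Im\partial_x\Delta|/(t\pi)\lesssim 1/\sqrt{xt}$.

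\textbf{Bulk density.} Rather than a direct analysis of the implicit equation for $v(u)$, the paper uses a monotonicity trick you do not mention: for $a>-1/2$ one has $\partial_a\int_{-1}^0\frac{dx}{(x-a)^2+b^2}<0$, so $v_t(u)$ is strictly decreasing on $[-1/2,\xi(t)]$, hence $\varrho_t(E_t^+-x)$ is increasing in $x$. The bulk lower bound then follows for free from the edge value at the crossover $x\sim t$, and the upper bound from $\varrho_t\le 1$.

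\textbf{Bulk derivative.} Your phrase ``a lower bound on $|1-tm_0'(w)|$ measuring the departure of $w$ from $\xi(t)$'' is not what actually works: in the bulk one needs $|1-tm_0'(w)|$ bounded below by a \emph{constant}, not just by the distance from $\xi(t)$. The paper extracts this from the exact identity
\[
\Re[1-tm_0'(w)]\;=\;2t\,(\Im w)^2\int_{-1}^0\frac{dx}{|w-x|^4},
\]
obtained by combining $1=t\int_{-1}^0 dx/|w-x|^2$ (the defining relation of $\partial\Lambda_t$) with the real part of $tm_0'(w)$. Since $\Im w\gtrsim_{\sfT} t$ in the bulk (by the density lower bound) and the integral is $\gtrsim_{\sfT} t^{-3}$, one gets $\Re[1-tm_0'(w)]\gtrsim_{\sfT}1$. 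The $t/x^2$ in the numerator comes from writing $\varrho_t'(E_t-x)=-\Im[m_0'(w)]/\big(\pi|1-tm_0'(w)|^2\big)$ and bounding $|\Im m_0'(w)|=|\Im[w(w+1)]|/|w(w+1)|^2\lesssim t/|w|^2$, together with $|w-\xi(t)|\ge x/2$ (proved via $x\le 2|\xi(t)-w|$ using Cauchy--Schwarz on the Stieltjes transforms).
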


\begin{proof}[Proof of \Cref{l:densitycomp}] 
 
We denote the Stieltjes transform of $\varrho_t$ by
\begin{align*}
m_t(z)=\int_\bR\frac{\varrho_t(x)d x}{x-z},\qquad \text{so that} \qquad m_0(z)=\log \frac{z}{z+1}.
\end{align*}

\noindent Recall from \eqref{mt} that $m_t$ satisfies 
\begin{align}
m_t(z_t)=m_0(z),\qquad \text{where} \quad z_t=z_t(z)=z-tm_0(z).
\end{align}

\begin{figure}
\centering
\includegraphics[width=1\textwidth, trim=2cm 1cm 2cm 1cm, clip]{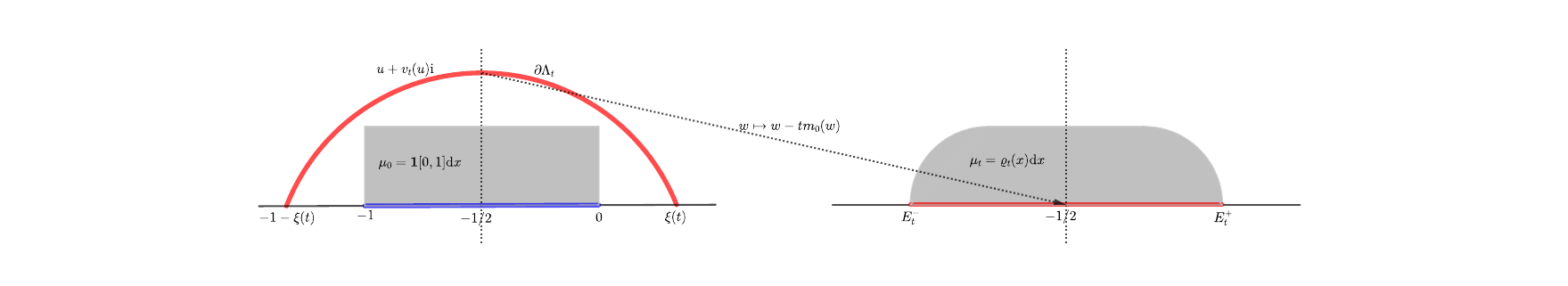}
\caption{Left panel shows $\del \Lambda_t\cap \bH$ from $-1-\xi(t)$ to $\xi(t)$, and the density $\mu_0$ (the gray region). 
Right panel is the density $\mu_t= \mu_0 \boxplus \mu_{\semci}^{(t)} (x)$ supported on $[E_t^-, E_t^+]$. Both of them are symmetric around $-1/2$. }
\label{f:density_map}
\end{figure}

Recalling the region $\Lambda_t$ from \eqref{mtlambdat} and the function $v_t$ from \eqref{vte}, \Cref{i:boundary} in \Cref{yconvolution} indicates that we can parametrize $\del \Lambda_t$ as $\{u+\ri v_t(u):u\in \bR\}$; we also have
\begin{align}\label{e:boundarye2}
\frac{1}{t}\geq \int_\bR\frac{\rd \mu_0(x)}{|x-(u+\ri v_t(u))|^2}=\int_{-1}^0 \frac{\rd x}{(x-u)^2+v_t(u)^2}.
\end{align}
From \Cref{i:edge} in \Cref{yconvolution}, there exists $\xi^+(t)$ and $\xi^-(t)$ such that $\xi^\pm(t)-tm_0(\xi^\pm(t))=E^\pm_t$ and $\xi^\pm(t)$ are characterized by 
\begin{align}\label{e:firstD}
m_0'(\xi^\pm(t))=\frac{1}{t}, \qquad \text{where we observe that} \qquad m_0'(z)=\int_{-1}^0\frac{\rd x}{(x-z)^2}=\frac{1}{z}-\frac{1}{z+1}.
\end{align}
By the symmetry for $\mu_0$ around $-1/2$, we have $\xi^-(t)=-1-\xi^+(t)$, $E_t^- = - E_t^+ - 1$, and $\varrho_t (E_t^+ - x) = \varrho_t (E_t^- + x)$; see \Cref{f:density_map}. Hence, it suffices to show all of the bounds stated in the lemma for $E_t^+$ (or for $x$ close to $E_t^+$). So, throughout this proof, we abbreviate $E_t=E_t^+$, and $\xi(t)=\xi^+(t)$.

From \Cref{mz} that, for any $y\in \bR$, there exists $w=w_t(u)=u+\ri v_t(u)\in \del \Lambda_t$ such that $y=w-tm_0(w)$. Next we show that $v_t(u)>0$ for $-1-\xi(t)<u<\xi(t)$ as in \Cref{f:density_map}. Otherwise, $v_t(u)=0$; we may assume by symmetry that $u\geq -1/2$. If $u\leq 0$, then we have from \eqref{e:boundarye2} that
\begin{align}
\frac{1}{t}\geq \int_{-1}^0 \frac{\rd x}{(x-u)^2}=\infty,
\end{align}
which is a contradiction. If $0<u<\xi(t)$, then \eqref{e:boundarye2} gives
\begin{align}
\frac{1}{t}\geq \int_{-1}^0 \frac{\rd x}{(x-u)^2}>\int_{-1}^0 \frac{\rd x}{(x-\xi(t))^2}=\frac{1}{t},
\end{align}
where in the middle inequality we used that for $x\in [-1,0]$ we have $\xi(t)-x>u-x>0$. This is again a contradiction. Thus $v_t(u)>0$ for $-1-\xi(t)<u<\xi(t)$. 
From \Cref{i:rhoy} in \Cref{yconvolution},  $y\in \bR$ satisfies $\varrho_t (y) > 0$ if and only if $w\in \del \Lambda_t\cap \bH$. Therefore, the discussion above, together with \Cref{i:boundary} and \Cref{i:rhoy} in \Cref{yconvolution} gives 
\begin{align}\label{e:defwu}
\frac{1}{t}=\int_\bR\frac{\rd \mu_0(x)}{|x-(u+\ri v_t(u))|^2}, \quad \varrho_t(y)=\frac{\Im[w]}{\pi t}=\frac{v_t(u)}{\pi t}, \quad \text{for} \quad y\in [-1-E_t, E_t].
\end{align}

Using the relation \eqref{e:firstD}, we can solve $\xi(t)$ explicitly, for each $0\leq t\leq \sfT$. In particular, we have
\begin{align}\label{e:xit}
\xi(t)^2+\xi(t)=t,\quad \text{so} \quad \frac{t}{2\sfT}\leq \xi(t)=\frac{2t }{1+\sqrt{1+4t}}\leq t,\quad \text{which implies} \quad |\xi(t)-t|\leq \big| \xi(t) \big|^2 \le t^2.
\end{align}

\noindent By \eqref{e:defwu}, we have
\begin{align}\label{e:densityupper}
\frac{1}{t}=\int_\bR\frac{\rd \mu_0(x)}{|x-w|^2}\leq \int_{-\infty}^\infty \frac{\rd x}{(x-u)^2+v_t(u)^2}=\frac{\pi}{v_t(u)},
\end{align}

\noindent and hence $\varrho_t(y)= v_t(u)/\pi t\leq 1$, verifying \eqref{rhoy}. Moreover, using the estimate of $\xi(t)$ from \eqref{e:xit}, with the relation $\xi(t)-tm_0(\xi(t))=E_t$ from \Cref{i:edge} in \Cref{yconvolution}, we have
\begin{align*}
E_t=\xi(t)-tm_0(\xi(t))=\xi(t)-t\log \frac{\xi(t)}{\xi(t)+1}
=(1+\log(t^{-1})+\OO(t ))t.
\end{align*}

\noindent This finishes the proof of \eqref{e:Et+bound}.

Next we show \eqref{e:rhotsqure}, from which \eqref{e:rhotsqure-} would follow by symmetry. From \Cref{i:rhoy} in \Cref{yconvolution},  $\varrho_t (E_t - x) > 0$ if and only if there exists some $w \in \partial \Lambda_t \cap \mathbb{H}$ such that $E_t - x = w - tm_0 (w)$; in this case, $\varrho_t (E_t - x) = (t \pi)^{-1} \Imaginary w$. Equivalently, let $\Delta =  w - \xi(t)$. We have $\varrho_t (E_t - x) > 0$ if and only if there exists some $\Delta = \Delta(x)$ satisfying $\Delta + \xi(t) \in \mathbb{H}$ such that
\begin{align}\label{e:Delta0}
\xi(t)+\Delta -t m_0(\xi(t)+\Delta )=E_t-x, \quad \text{in which case} \quad \varrho_t(E_t-x)=\Im[\Delta]/t\pi.
\end{align}

In the following we fix the constant $M=(4\sfT)^3$ and let $w=\xi(t)+\Delta$ be some complex number. If $|\Delta|\leq t/M$ then by a Taylor expansion around $\xi(t)$, using \eqref{e:firstD}, we get 
\begin{align}\label{e:taylore}
m_0(w)- m_0 ( \xi(t) ) = m'_0(\xi(t))\Delta+\frac{m_0^{(2)}(\xi(t))\Delta^2}{2}+\frac{\cE(w)\Delta^3}{6}
=\frac{\Delta}{t}+\frac{m_0^{(2)}(\xi(t))\Delta^2}{2}+\frac{\cE(w)\Delta^3}{6}.
\end{align}
where 
\begin{align}\label{e:cEbb}
|\cE(w)|\leq 2\max_{|z-\xi(t)|\leq t/M}\left|\frac{1}{z^3}- \frac{1}{(z+1)^3}\right|\leq \frac{(4\sfT)^3}{t^3},
\end{align}

\noindent where we used the facts that $m_0 (z) = \log z - \log (z + 1)$ (so that $m_0^{(3)} (z) = 2 z^{-3} - 2(z+1)^{-3}$) and \eqref{e:xit}, which implies that for $|z-\xi(t)|\leq t/M$ we have $|z|, |z+1|\geq t/(2\sfT)-t/M\geq t/(16^{1/3}\sfT)$. 

By plugging \eqref{e:taylore} into \eqref{e:Delta0} (and using the fact that $E_t = \xi (t) - t m_0 ( \xi(t) )$), we get that $\varrho_t (E_t - x) > 0$ if and only if there exists some $\Delta = \Delta(x) \in \mathbb{H}$ such that
\begin{align}\label{e:Delta}
\frac{x}{t}=\frac{m_0^{(2)}(\xi(t))\Delta^2}{2}+\frac{\cE(w)\Delta^3}{6}=\frac{m_0^{(2)}(\xi(t))\Delta^2}{2}\left(1+\frac{\cE(w)\Delta}{3m_0^{(2)}(\xi(t))}\right).
\end{align}

By explicit computation using the formula of $m_0(z) = \log z - \log (z+1)$ (so that $m_0^{(2)} (z) = (z+1)^{-2} - z^{-2}$ and $m_0^{(3)} (z) = 2z^{-3} - 2(z+1)^{-3}$) and $\xi(t)\leq t\leq \sfT$ from \eqref{e:xit}, we have 
\begin{align}\begin{split}\label{e:mder}
&\frac{1}{t^2 }\leq -m_0^{(2)}(\xi(t))=\frac{2\xi(t)+1}{\xi(t)^2(\xi(t)+1)^2}=\frac{1+2\xi(t)}{t^2}\leq \frac{3\sfT}{t^2 },\\
&|m_0^{(3)}(\xi(t))|= 2\left|\frac{3\xi(t)^2+3\xi(t)+1}{\xi(t)^3(\xi(t)+1)^3}\right|
=2\left|\frac{3t+1}{t^3}\right|
\leq \frac{8\sfT}{t^3}.
\end{split}\end{align}
 
It follows that the error term on righthand side of \eqref{e:Delta}, when $|\Delta|\leq t/M$, is bounded by
\begin{align}\label{e:Deltaexp2}
\left| \frac{\cE(w)\Delta}{3m_0^{(2)}\big(\xi(t) \big)}\right|\leq \frac{(4\sfT)^3}{t^3} \frac{t}{M}\frac{t^2}{3}\leq \frac{1}{3},
\end{align}

\noindent where we used \eqref{e:cEbb} and \eqref{e:mder} (and the fact that $M = (4 \mathsf{T})^3$). 

Next we show the righthand side of \eqref{e:Delta} as a function of $\Delta$  is a surjection from $|\Delta|\leq t/M$ onto  $\{u\in \bC: |u|\leq 1/4M^2\}$. Fix any $|u| \leq 1/4M^2$; we will use Rouch\'{e}'s theorem to compare the two holomorphic functions
\begin{align}\label{e:twof}
\frac{m_0^{(2)}(\xi(t))\Delta^2}{2}\left(1+\frac{\cE(w)\Delta}{3m_0^{(2)}(\xi(t))}\right)-u,\quad \text{and}\quad \frac{m_0^{(2)}(\xi(t))\Delta^2}{2}-u
\end{align} 
 on the domain $\{\Delta\in \bC: |\Delta|\leq t/M\}$. Using the first statement in \eqref{e:mder}, we notice that  $m_0^{(2)}(\xi(t))\Delta^2/2-u$ has a root with $|\Delta|\leq t/M$. On the boundary, for any $\Delta\in \bC$ such that $|\Delta|= t/M$, we can upper bound the difference
\begin{align}\label{e:boundarydiff2}\begin{split}
&\phantom{{}={}}\left|\left(\frac{m_0^{(2)}(\xi(t))\Delta^2}{2}\left(1+\frac{\cE(w)\Delta}{3m_0^{(2)}(\xi(t))}\right)-u\right)-\left( \frac{m_0^{(2)}(\xi(t))\Delta^2}{2}-u\right)\right|\\
&=\left|\frac{m_0^{(2)}(\xi(t))\Delta^2}{2}\frac{\cE(w)\Delta}{3m_0^{(2)}(\xi(t))}\right|\leq \frac{1}{3}\left|\frac{m_0^{(2)}(\xi(t))\Delta^2}{2}\right|\leq \left|\frac{m_0^{(2)}(\xi(t))\Delta^2}{2}-u\right|
\end{split}\end{align} 
where in the second statement we used \eqref{e:Deltaexp2}; in the last inequality we used that, when $|\Delta|=t/M$, 
\begin{align}
-\frac{2}{3} \frac{ m_0^{(2)}(\xi(t))|\Delta|^2}{2}
= -\frac{2}{3} \frac{ m_0^{(2)}(\xi(t))}{2}\frac{t^2}{M^2}
> \frac{1}{4M^2}\geq |u|,
\end{align}
where the second statement follows from $-m_0^{(2)}(\xi(t))\geq 1/t^2$ from the first statement in \eqref{e:mder}.
The relation \eqref{e:boundarydiff2} verifies the conditions of Rouch\'{e}'s theorem, and we conclude that the first function in \eqref{e:twof} (as a function of $\Delta$)  also has a root with $|\Delta|\leq t/M$. Thus we can use \eqref{e:mder} to solve for $\Delta=\Delta(x)$ for any $0\leq x\leq t/4M^2$. We collect it here for the future use:
\begin{align}\label{e:xtoDelta}
|\Delta(x)|\leq t/M, \quad \text{when},\quad 0\leq x\leq t/4M^2
\end{align}

By \eqref{e:Delta}, we can solve for $\Delta$, given by
\begin{align}\label{e:Deltaexp1}
\Delta =\ri \sqrt{\frac{2x}{-tm_0^{(2)}(\xi(t))}}\left( 1+\frac{\cE(w)\Delta}{3m_0^{(2)}(\xi (t))}\right)^{-1/2}.
\end{align}
 By plugging \eqref{e:mder} and \eqref{e:Deltaexp2} into \eqref{e:Deltaexp1}, we conclude for $0\leq x\leq t/4M^2$ that
\begin{align}\label{e:Dtbound}
\sqrt{\displaystyle\frac{tx}{2\mathsf{T}}} \leq \sqrt{\frac{2x}{(3\sfT/t)}} \left(1+\frac{1}{3}\right)^{-1/2}\leq |\Delta |, \Im[\Delta]\leq  \sqrt{\frac{2x}{(1/t)}} \left(1-\frac{1}{3}\right)^{-1/2}\leq \sqrt{3 tx}.
\end{align}
As a consequence, using \eqref{e:defwu} and \eqref{e:Dtbound}, we get 
\begin{align}\label{e:bb1}
\frac{1}{\pi}\sqrt{\displaystyle\frac{x}{2t\mathsf{T}}}\leq \varrho(E_t-x)=\frac{\Im[\Delta]}{t\pi}\leq \frac{1}{\pi}\sqrt{\frac{3x}{t}}
\end{align}
for $0\leq x\leq t/4M^2$. For $x\geq t/4M^2$, next we show
$\varrho_t(E_t-x)$ is an increasing function of $x$ for $0\leq x\leq E_t+1/2$. Then this, together with \eqref{rhoy} and the second statement in \eqref{e:Dtbound}, would imply that
\begin{align}\label{e:bb2}
1\geq \varrho_t(E_t-x)\geq \varrho_t(E_t-t/4M^2)\geq \frac{1}{2\pi M \sqrt{2\sfT}},
\quad
\text{for} 
\quad t/4M^2\leq  x\leq E_t+1/2,
\end{align}

\noindent so \eqref{e:bb1} and \eqref{e:bb2} together give \eqref{e:rhotsqure}.

Next we use \eqref{e:defwu} to show that 
$\varrho_t(E_t-x)$ is an increasing function of $x$ for $0\leq x\leq E_t+1/2$. First we notice that  on $[-1/2,\xi(t)]$, $v_t(u)$ is a decreasing function. Otherwise, there are some $-1/2\leq u<u'\leq \xi(t)$ such that $ v_t(u)\leq v_t(u')$. Then \eqref{e:defwu} leads to the contradiction,
\begin{align}
\frac{1}{t}=\int_{-1}^0\frac{\rd x}{(x-u)^2+ v_t(u)^2}\geq \int_{-1}^0\frac{\rd x}{(x-u)^2+ v_t(u')^2}>\int_{-1}^0\frac{\rd x}{(x-u')^2+ v_t(u')^2}=\frac{1}{t},
\end{align}
where the third statement follows from the following bound (by taking $b=v_t(u')$ and integrating $a$ from $u$ to $u'$): for any $a> -1/2$ and $b\geq 0$,
\begin{align}\label{e:calcu}
\del_a \int_{-1}^0\frac{\rd x}{(x-a)^2+b^2} = -\displaystyle\int_{-1}^0 \partial_x \bigg( \displaystyle\frac{1}{(x-a)^2 + b^2} \bigg) \mathrm{d} x = \displaystyle\frac{1}{(a+1)^2 + b^2} - \displaystyle\frac{1}{a^2 + b^2} <0.
\end{align}

From the discussion above \eqref{e:defwu}, and \Cref{i:bijection} and \Cref{i:rhoy} from \Cref{yconvolution}, the map $M$ (from \eqref{mtlambdat}) maps $\{u+v_t(u)\ri: -1-\xi(t)\leq u\leq \xi(t)\}$ increasingly (as a function of $u$) to $\supp \mu_t=[-1-E_t, E_t]$ (see \Cref{f:density_map}).
By symmetry around $-1/2$, 
\begin{align}\label{e:map}
M \text{ maps }\{u+v_t(u)\ri: -1/2\leq u\leq \xi(t)\} \text{ increasingly (as a function of $u$) to } [-1/2, E_t]. 
\end{align}
This together with  $\varrho_t(M(u+v_t(u)\ri)=v_t(u)/t\pi$ from  \eqref{e:defwu}, and $v_t(u)$ is decreasing on $[-1/2, \xi(t)]$, leads to that $\varrho_t(y)$ is decreasing on $[-1/2, E_t]$. Thus $\varrho_t(E_t-x)$ is an increasing function of $x$ for $0\leq x\leq E_t+1/2$, and this together with \eqref{e:bb2} finishes the claim.

We can further estimate the derivative of the density. As mentioned previously, we can view the quantity $\Delta$ in \eqref{e:Delta0} as a function of $x$. Taking its derivative with respect to $x$ yields
\begin{align}\label{e:drhot}
\del_x\Delta=-\frac{1}{1-tm_0'(\xi(t)+\Delta)}, \quad \varrho(E_t-x)=\Im[\Delta]/t\pi. 
\end{align}

Recall $M=(4\sfT)^3$. We distinguish between whether $x\leq t/(4M^2)$ or $x\geq t/(4M^2)$.
If $x\leq t/(4M^2)$, recall from \eqref{e:xtoDelta}, we have $|\Delta|\leq t/M$. In this case, the Taylor expansion of $m_0'(\xi(t)+\Delta)$ around $\xi(t)$ is 
\begin{align}
m_0'(\xi(t)+\Delta)=m_0'(\xi(t))+m_0^{(2)}(\xi(t))\Delta +\frac{\widetilde \cE(\Delta)\Delta^2}{2},\quad |\widetilde \cE(\Delta)|\leq \frac{(4\sfT)^3}{t^3},
\end{align}
where the bound of the remainder term follows from \eqref{e:cEbb}. This together with $m'_0(\xi(t))=1/t$ from \eqref{e:firstD} gives 
\begin{align}\label{e:exp2}
\left|1-tm_0'(\xi(t)+\Delta)+tm_0^{(2)}(\xi(t))\Delta\right|=\frac{|t\widetilde\cE(w)\Delta^2|}{2}
\leq \frac{2^5\sfT^3\Delta^2 }{t^2}.
\end{align}
We can reformulate \eqref{e:exp2} as
\begin{align*}
\left|1-tm_0'(\xi(t)+\Delta)\right|\geq \frac{|\Delta|}{t}-\frac{2^5\sfT^3\Delta^2 }{t^2}\geq \frac{|\Delta |}{2t}\geq \frac{1}{2\sqrt{2\sfT}}\sqrt{\frac{x}{t}},
\end{align*}
where the first equality follows from \eqref{e:exp2} and the first statement of \eqref{e:mder};
the second inequality is from $|\Delta| \le M^{-1} t$ and $M = (4 \mathsf{T})^3 $; and the last inequality follows from the first statement in \eqref{e:Dtbound} and our assumption $x\leq t/(4M^2)$. 
Plugging the above estimate into \eqref{e:drhot}, we conclude that
\begin{align}\label{e:rhotbb1}
|\varrho_t'(E_t-x)|\leq \frac{|\del_x \Delta |}{t\pi}=\frac{1}{t\pi}\frac{1}{|1-tm_0'(\xi(t)+\Delta)|}\leq \frac{\sqrt{8\sfT}}{\pi\sqrt{xt}}.
\end{align}

Now assume that $x\geq t/4M^2$. Recalling that $w = \xi(t) + \Delta=u+v_t\ri$, we must upper bound 
\begin{align}\label{2rhoderivative}
\varrho_t'(E_t-x)=\frac{\del_x\Im[\Delta]}{t\pi}=-\frac{\Im[m_0'(\xi(t)+\Delta)]}{\pi |1-tm_0'(\xi(t)+\Delta)|^2},
\end{align}
\noindent where the second statement follows from taking imaginary part of the first statement in \eqref{e:drhot}. By \eqref{e:map} and \eqref{e:densityupper}, we only need to upper bound \eqref{2rhoderivative} for 
\begin{align}\label{e:domain}
-1/2\leq \Re[w]=u\leq \xi(t),\quad \Im[w]\leq \pi t.
\end{align}
For the denominator in \eqref{2rhoderivative} observe that from the definition \eqref{mz0} that 
\begin{align}\label{e:fenmu}
	\begin{aligned} 
&\phantom{{}={}}\Re[1-tm_0'(w)]
=1-t\Re\left[\int_{-1}^0\frac{\rd x}{(x-w)^2}\right]
=1-t\int_{-1}^0\frac{\Re[(x-\overline{w})^2]\rd x}{|x-w|^4}\\
&=1-t\int_{-1}^0\frac{(|w-x|^2-2 \Im[w]^2)\rd x}{|w-x|^4}
=1-t\int_{-1}^0\frac{\rd x}{|w-x|^2}+2t\Im[w]^2\int_{-1}^0\frac{\rd x}{|w-x|^4}\\
&=2t\Im[w]^2\int_{-1}^0\frac{\rd x}{|w-x|^4}
\geq 2t \left(\frac{t}{2 M \sqrt{\mathsf{2T}}} \right)^2\int_{-1}^0 \frac{\rd x}{|u+\ri\pi t-x|^4}
=\frac{1}{4M^2\sfT} t^3\int_{-1}^0\frac{\rd x}{((u-x)^2+(\pi t)^2)^2}
\end{aligned} 
\end{align}
where for the first equality in the third line we used \eqref{e:defwu}; for the middle inequality we used that $\Im[w]=\Im[\Delta(x)]=t\pi\varrho_t(E_t-x)$ from \eqref{e:Delta0}, and $\varrho_t(E_t-x)\geq1/(2\pi M \sqrt{2\sfT})$ for $x\geq t/4M^2$ from \eqref{e:bb2},
thus
\begin{align}\label{e:Imw}
\Im[w]\geq \frac{t}{2M\sqrt{2\sfT}},\quad
\text{for} 
\quad t/4M^2\leq  x\leq E_t+1/2,
\end{align}
 and the fact that $\Im[w]\leq \pi t$ from \eqref{e:domain}. 
For the last integral in \eqref{e:fenmu} we have
\begin{align}\begin{split}\label{e:boundint}
t^3\int_{-1}^0 \frac{\rd x}{((u-x)^2+(\pi t)^2)^2}
&=\int_{-1/t}^0 \frac{\rd x}{((u/t-x)^2+\pi^2)^2}
\geq \int_{\min\{0,u/t\}-1/(2\sfT)}^{\min\{0,u/t\}} \frac{\rd x}{((u/t-x)^2+\pi^2)^2}\\
&\geq \int_{\min\{0,u/t\}-1/(2\sfT)}^{\min\{0,u/t\}} \frac{\rd x}{((1+1/2\sfT)^2+\pi^2)^2}
\geq \frac{1}{2\sfT} \frac{1}{(2^2+\pi^2)^2}
\end{split}\end{align}
where in the first statement we changed variables $x\rightarrow tx$; in the second statement we restricted the integral and used $\min\{0,u/t\}-1/(2\sfT)\geq -1/(2t)-1/(2\sfT)\geq -1/t$ (where $u\geq -1/2$ is from \eqref{e:domain}); in the third statement we used  
$u\leq \xi(t)\leq t$ (from \eqref{e:domain}), so $|u/t-x|\leq 1+1/(2\sfT)$;
and in the last inequality we used $\sfT\geq 1$. 

To bound the numerator in \eqref{2rhoderivative},
we first notice that 
\begin{align}\begin{split}\label{e:bigbang}
x&=|(\xi(t)-tm_0(\xi(t)))-(w-tm_0(w))| \le |\xi(t)-w|\left(1+t\int_{-1}^0 \frac{\rd x}{|w-x||\xi(t)-x|}\right)\\
&\leq |\xi(t)-w|\left(1+t\sqrt{\int_{-1}^0 \frac{\rd x}{|w-x|^2}\int_{-1}^0 \frac{\rd x }{|\xi(t)-x|^2}}\right)
=2 |\xi(t)-w|,
\end{split}\end{align}
where in the last equality we used \eqref{e:defwu}. Recall from\eqref{e:Imw} that $\Im[w]\geq t/(2M\sqrt{2\sfT})$ 
and from \eqref{e:xit} that $|\xi(t)|\leq t$. Thus, 
\begin{align}\label{e:ulob}
\left(1+4M \sqrt{\mathsf{T}}\right)|w|\geq  |w|+|\xi(t)|\geq  |w-\xi(t)|\geq \frac{x}{2},
\end{align}
where the last inequality is from \eqref{e:bigbang}.
By using the formula for $m_0'(z)$ from \eqref{e:firstD}, with \eqref{e:ulob}, we get
\begin{align}\label{e:fenzi}
\big| \Im[m_0'(w)] \big| = \Bigg| \Im\left[ \frac{1}{w}-\frac{1}{(w+1)} \right] \Bigg|
=\frac{\big| \Im[w(w+1)] \big|}{|w(w+1)|^2} \leq 4\frac{\big| 2\Re[w]+1 \big| \Im[w]}{|w|^2}\leq \frac{32 \pi(\sfT + 1)}{(1+4M \sqrt{\mathsf{T}})^2} \frac{t}{x^2},
\end{align}
where we used that $|w+1|\geq 1/2$ (following from \eqref{e:domain}) in the third statement and $|\Re[w]|\leq \sfT$ (following from \eqref{e:domain} and $\xi(t)\leq t\leq \sfT$ from \eqref{e:xit}) and $\Im[w]\leq \pi t$ (from \eqref{e:domain}) in the fourth. By plugging \eqref{e:fenmu}, \eqref{e:boundint} and \eqref{e:fenzi} into \eqref{2rhoderivative}, we conclude that there exists a constant $C_0 = C_0 (\mathsf{T}) > 1$ such that, for $|\Delta |\geq t/M$, we have 
\begin{align}\label{e:rhotbb2}
|\varrho_t'(E_t-x)|\leq C_0 \frac{  t}{x^2}.
\end{align}

The estimates \eqref{e:rhotbb1} and \eqref{e:rhotbb2} together give
\begin{align*}
\varrho'_t(E_t-x)\asymp \min\left\{\sqrt{\frac{1}{xt}},\frac{t}{x^2}\right\},\qquad \text{whenever $0\leq x\leq E_t+1/2$.}
\end{align*} 
This finishes the proof of \eqref{2rhoderivative}.
\end{proof}

The following proposition bounds the fluctuations for the largest (and smallest) particle under Dyson Brownian motion run under initial data prescribed by (the classical locations of) $\mu_0 = \textbf{1}_{[-1, 0]}$; it is a quick consequence of \Cref{p:detercoupling} and \Cref{l:densitycomp}. As alluded to in the beginning of this section, observe that it provides a concentration bound with error much smaller than $n^{-2/3}$, if $t \ll n^{-2/3}$. 

\begin{prop}\label{p:uniform}

For any real number $\sfT \ge 1$, there exists a constant $C = C(\mathsf{T}) > 1$ such that the following holds. Let $\lambda_1 (t) > \lambda_2 (t) > \cdots > \lambda_n (t)$ denote Dyson Brownian motion \eqref{e:DBM}, with initial data given by setting $\lambda_i (0) = (1-2i) / (2n)$ for each $1 \le i \le n$. Denote the classical locations of $\mu_t$ (recall \Cref{gammarho}) by $\gamma_j (t) = \gamma_{j;n}^{\mu_t}$. Then for $n$ large enough, we have for any $0\leq t\leq \sfT$  that
\begin{align}\label{e:la1bound}
&\bP\left(\sup_{0\leq s\leq t}|\lambda_1(s)-\gamma_1(s)|\leq C\left(t+\frac{(\log n)\sqrt t}{\sqrt n}\right)\right)\geq 1-Ce^{(-\log n)^2},\\
&\bP\left(\sup_{0\leq s\leq t,\atop 1\leq i\leq n}\lambda_i(s)-\lambda_1(0)+\frac{i-1}{n}\leq C\left(t+t|\log(t)|+\frac{(\log n)\sqrt t}{\sqrt n}\right)\right)\geq 1-Ce^{(-\log n)^2}.
\end{align}
\end{prop}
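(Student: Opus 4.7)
The plan is to apply \Cref{p:detercoupling} directly, and then combine the output with a separate estimate for the displacement of the classical locations. I first verify the two hypotheses of \Cref{p:detercoupling} for $\mu_t = \mu_0 \boxplus \mu_{\semci}^{(t)}$ using \Cref{l:densitycomp}. The density bound $\varrho_t \leq 1$ from \eqref{rhoy} gives the first hypothesis with $c = 1/2$. For the derivative hypothesis, since $\varrho_t \leq 1$ and the support of $\mu_t$ is symmetric around $-1/2$, the extremal classical locations satisfy $E_t^+ - \gamma_1(t), \gamma_n(t) - E_t^- \geq 1/(2n)$, so any $x$ within $c/(2n)$ of some $\gamma_i(t)$ lies at distance at least $c_0/n$ from both edges. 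The derivative estimates \eqref{e:rhotder}--\eqref{e:rhotder-} give $|\varrho_t'(y)| \leq C\min\{(dt)^{-1/2}, t/d^2\}$ where $d = \min(E_t^+ - y, y - E_t^-)$, and this minimum is always bounded by $C/d$ (the two terms are equal at $t=d$ with common value $1/d$), hence by $Cn/c_0$ uniformly in $t \in [0, \sfT]$. Picking $c$ sufficiently small verifies the hypothesis. Since $\lambda_i(0) = (1-2i)/(2n) = \gamma_i(0)$ exactly, \Cref{p:detercoupling} yields
\begin{equation*}
\sup_{0 \leq s \leq t,\; 1 \leq i \leq n} |\lambda_i(s) - \gamma_i(s)| \leq C\bigl(t + (\log n)\sqrt{t/n}\bigr)
\end{equation*}
with probability at least $1 - Ce^{-(\log n)^2}$, which proves the first statement upon specializing to $i = 1$.

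For the second statement, note the identity $\lambda_i(s) - \lambda_1(0) + (i-1)/n = \lambda_i(s) - \lambda_i(0) = \lambda_i(s) - \gamma_i(0)$. Thus by the triangle inequality and the above bound, it suffices to show $|\gamma_i(s) - \gamma_i(0)| \leq C(s + s|\log s|)$ uniformly in $i$. As derived in \eqref{e:apDBMa}, $\partial_\tau \gamma_i(\tau) = -\pi H\varrho_\tau(\gamma_i(\tau))$, so the task reduces to the uniform bound $|H\varrho_\tau(y)| \leq C(1 + |\log \tau|)$ for $y \in [E_\tau^-, E_\tau^+]$ and $\tau \in (0, \sfT]$. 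Using \Cref{i:rhoy} of \Cref{yconvolution}, $H\varrho_\tau(y) = \pi^{-1} \Re m_0(w) = \pi^{-1} \log|w/(w+1)|$ where $w = u + iv_\tau(u) \in \del \Lambda_\tau$ is the point corresponding to $y$. Symmetry of $\mu_0$ around $-1/2$ reduces matters to $u \in [-1/2, \xi(\tau)]$, where $|w+1| \in [1/2, C]$, so the key point is a lower bound $|w| \geq c\tau$. On $u \in [-1/2, 0]$ the monotonicity of $v_\tau$ established in the proof of \Cref{l:densitycomp} gives $v_\tau(u) \geq v_\tau(0) \asymp \tau$ (from $v_\tau(0) \arctan(1/v_\tau(0)) = \tau$ by \eqref{e:defwu}); on $u \in [0, \xi(\tau)]$ with $\xi(\tau) \asymp \sqrt\tau$, the equation \eqref{e:defwu} may be recast as $\arctan(v/(v^2 + u^2 + uv)) = v/\tau$, and a case split on whether $v/\tau$ is small produces $|w|^2 = u^2 + v^2 \geq c\tau^2$. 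Integrating $|\partial_\tau \gamma_i(\tau)| \leq C(1+|\log\tau|)$ from $0$ to $s$ then yields $|\gamma_i(s) - \gamma_i(0)| \leq C(s + s|\log s|)$, as required.

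The main obstacle is the uniform lower bound $|w| \geq c\tau$ above. The minimum of $|w|$ on the curve $\del \Lambda_\tau$ is attained near $u = 0$, where $v_\tau(0) \asymp \tau$, rather than at the spectral edge $u = \xi(\tau)$ where $|w| \asymp \sqrt\tau$; this linear-in-$\tau$ scaling at the interior point $u=0$ is precisely the source of the $t|\log t|$ term in the final bound. The remainder of the argument reduces to routine applications of \Cref{l:densitycomp} and \Cref{p:detercoupling}.
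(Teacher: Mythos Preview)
Your verification of the hypotheses of \Cref{p:detercoupling} and the deduction of the first inequality are essentially identical to the paper's proof.

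For the second inequality the approaches diverge. The paper argues more directly: after using the first part to get $\lambda_i(s) - \lambda_1(0) \leq (\gamma_i(s) - \gamma_1(0)) + C\bigl(t + (\log n)\sqrt{t/n}\bigr)$, it writes
\[
\gamma_i(s) - \gamma_1(0) = E_s^+ + \bigl(\gamma_i(s) - \gamma_1(s)\bigr) - \bigl(E_s^+ - \gamma_1(s)\bigr) - \gamma_1(0),
\]
uses the density bound $\varrho_s \leq 1$ from \eqref{rhoy} to obtain $\gamma_i(s) - \gamma_1(s) \leq -(i-1)/n$ and $E_s^+ - \gamma_1(s) \geq 1/(2n)$, and finishes with the edge estimate $E_s^+ \leq C(s + s|\log s|)$ from \eqref{e:Et+bound}. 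No analysis of the Hilbert transform is needed.

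Your route via the uniform bound $|H\varrho_\tau(y)| \leq C(1+|\log\tau|)$ is also correct and in fact yields the two-sided estimate $|\gamma_i(s) - \gamma_i(0)| \leq C(s+s|\log s|)$, which is more than the one-sided statement requires. Two minor slips to correct: by \eqref{e:xit} one has $\xi(\tau) \asymp \tau$, not $\sqrt\tau$; and the arctan identity from \eqref{e:defwu} reads $v/\tau = \arctan\bigl(v/(v^2+u^2+u)\bigr)$, with $u$ rather than $uv$ in the denominator (this comes from $u(1+u)$). With these fixes your case split works: setting $A = v/(|w|^2+u)$, if $A \geq 1$ then $v/\tau = \arctan A \geq \pi/4$ gives $|w| \geq v \geq (\pi/4)\tau$, while if $A < 1$ then $\arctan A \asymp A$ forces $|w|^2 + u \asymp \tau$, so either $u \gtrsim \tau$ or $|w|^2 \gtrsim \tau \geq \tau^2/\sfT$. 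The paper's decomposition is shorter because only an upper bound is asked for, but your argument delivers extra information at modest additional cost.
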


\begin{proof}[Proof of \Cref{p:uniform}]
	
	Let us first verify the assumptions in \Cref{p:detercoupling} with the $c$ there given by $1/4\fC$ here, where $\fC > 1$ is the constant $C(\mathsf{T})$ from \Cref{l:densitycomp}. The first assumption in that proposition is verified since $\sup_{x \in \mathbb{R}} \varrho_t (x) \le 1 \le (2c)^{-1}$, where the first bound follows from \eqref{rhoy} (and the second from the bound $c < 1$, as $\mathfrak{C} > 1$). To verify the second assumption, observe by \Cref{l:densitycomp}, that for each $0\leq t\leq \sfT$ and $x \in [E_t^-, E_t^+]$ we have
\begin{align}\begin{split}\label{e:derbb}
 |\varrho'_t(x)|&\leq \fC \min\left\{\sqrt{\frac{1}{ t \min\{|E_t^+-x|, |E_t^--x|\}}},\frac{t}{\min\{|E_t^+-x|, |E_t^--x|\}^2}\right\} \le  \frac{\fC}{\min\{|E_t^+-x|, |E_t^--x|\}},
\end{split} \end{align}

\noindent where the second inequality follows from the fact that $\min \{ \sqrt{(ab)^{-1}}, ab^{-2} \big\} \le b^{-1}$ for any real numbers $a, b > 0$ (applied with $(a, b) = \big( t, \min \{ |E_t^+ - x|, |E_t^- - x | \} \big)$). Furthermore, letting $\Gamma_t : [0, 1] \rightarrow \mathbb{R}$ denote the inverted cummulative density function associated with $\mu_t$ (recall \eqref{gammay}), we also have $\Gamma_t (y) - \Gamma_t(y') \ge y' - y$ for any $0 \le y \le y' \le 1$, since $\sup_{x \in \mathbb{R}} \varrho_t (x) \le 1$. Hence, $E_t^+ = \Gamma_t (0) \ge \Gamma_t ( 1 / 2n ) + 1 / 2n = \gamma_1 (t) + 1 / 2n$ and similarly $E_t^- \le \gamma_n (t) - 1 / 2n$. Thus, whenever $\dist(x, \{\gamma_1(t),\gamma_2(t),\cdots, \gamma_n(t)\})\leq c/2n$, we must have that $\dist(x, \{E_t^+, E_t^-\})\geq 1/2n-c/2n\geq 1/4n$. Together with \eqref{e:derbb}, this yields $|\varrho'_t(x)|\leq 4\fC n = c^{-1} n$, thereby verifying the second assumption in \Cref{p:detercoupling}.

Hence, that proposition applies, and so there exists a constant $C = C(\mathsf{T}) > 1$ such that for $n$ large enough and $0\leq t\leq \sfT$ the following holds. 
For each
\begin{align}\label{e:la1bound2}
\bP\left(\sup_{0\leq s\leq t}|\lambda_1(s)-\gamma_1(s)|\leq C\left(t+\frac{(\log n)\sqrt t}{\sqrt n}\right)\right)\geq 1-\OO\left(e^{-(\log n)^2}\right).
\end{align}
This gives the first statement in \eqref{e:la1bound}. Moreover, there exists a constant $\wt C = \wt C(\mathsf{T}) > 1$ such that, with probability $1 - \mathcal{O} (e^{-(\log n)^2})$ we have for each $0 \le t \le \mathsf{T}$ that
\begin{align*}
&\phantom{{}={}}\lambda_i(s)-\lambda_1(0)
\leq \gamma_i(s)-\gamma_1(0)+C\left(t+\frac{(\log n)\sqrt t}{\sqrt n}\right)\\
&= E_s^++\gamma_i(s)-\gamma_1(s)-(E_s^+-\gamma_1(s))-\gamma_1(0)+C\left(t+\frac{(\log n)\sqrt t}{\sqrt n}\right)\\
&\leq E_s^+-\frac{i-1}{n}-\frac{1}{2n}+\frac{1}{2n}+C\left(t+\frac{(\log n)\sqrt t}{\sqrt n}\right)
= E_s^++C\left(t+\frac{(\log n)\sqrt t}{\sqrt n}\right)-\frac{i-1}{n}\\
&\leq \widetilde C\left(t+t|\log t|+\frac{(\log n)\sqrt t}{\sqrt n}\right)-\frac{i-1}{n}
\end{align*}

\noindent where in the first statement we used the facts that $\gamma_1 (0) = \lambda_1 (0)$ and \Cref{p:detercoupling}; in the third statement we used the facts that $\gamma_1(0)=-1/(2n)$, that (since by \Cref{l:densitycomp} we have $\varrho_s(y)\leq 1$) $\gamma_i(s)-\gamma_1(s)\leq -(i-1)/n$, and that $E_s^+-\gamma_1(s)\geq 1/(2n)$; in the fifth statement we used the upper bound of $E_s^+\leq C'(s+s|\log s|)\leq C'(t+t|\log t|)$ from \Cref{l:densitycomp}. This establishes the second statement in \eqref{e:la1bound2}. 
\end{proof}

As a consequence of \Cref{p:uniform}, we next deduce the following result estimating the trajectories of the first (and last) particles under the rescaled Dyson Brownian motion (recall \eqref{e:reDBM}), whose initial data is given by a uniform distribution (of density governed by the parameter $B$ below) added to a delta mass (of weight governed by the parameter $M$ below). It will be used in the forthcoming work \cite{U}.

	\begin{cor}\label{p:extreme}
			
			For any real numbers $B, D > 1$, there exist constants $C_1 = C_1 (B) > 1$ and $C_2 = C_2 (B, D) > 1$ such that the following holds. Let $n \ge k \ge 2$ be integers, and let $L \in [1, k^D]$ be a real number such that $n = L^{3/2} k$. Let $\bm{x}(t) = \big(x_1 (t), x_2 (t), \ldots , x_n (s) \big) \in \overline{\mathbb{W}}_n$ denote rescaled Dyson Brownian motion (from \eqref{e:reDBM}) with initial data $\bm{x}(0)$, run for time $t$. Suppose that, for some real number $M \ge 1$, we have
			\begin{align}\label{e:xi-xj2}
				x_i(0)-x_j(0)\geq \left(\frac{j-i}{BL^{3/4}k}-M \right)k^{2/3}, \qquad \text{for each $1 \le i \le j \le n$}.
			\end{align} 
			
			\noindent Then for any real number $0\leq t\leq 1$, we have
			\begin{align}\label{e:eigbound}
				\begin{aligned}
				\mathbb{P} \Bigg[ \Big\{ x_1 (tk^{1/3})-x_1(0) \leq C_1 k^{2/3} \big( tL^{3/4} (1+|\log t|^2) + ( & Mt)^{1/2} L^{3/8}+ (tk^{-1})^{1/2} \log n \big) \Big\} \Bigg] \\
				& \qquad \qquad \qquad \ge 1 - C_2 e^{-(\log n)^2}. 
				\end{aligned} 
			\end{align}
		\noindent Similarly, for any real number $0 \le t \le 1$, we have    
	\begin{align}
		\label{xntk}
				\begin{aligned}
				\mathbb{P} \Bigg[ \Big\{ x_n (tk^{1/3})-x_n(0) \geq -C_1 k^{2/3} \big( tL^{3/4}(1+|\log t|^2) + ( & Mt)^{1/2} L^{3/8}+ (tk^{-1})^{1/2} \log n \big) \Big\} \Bigg] \\
				& \qquad \qquad \qquad \ge 1 - C_2 e^{-(\log n)^2}. 
				\end{aligned} 
			\end{align}			 
		\end{cor}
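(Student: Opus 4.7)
The plan is to prove \eqref{e:eigbound}; the bound \eqref{xntk} follows by the symmetry $\check x_i(s) := -x_{n+1-i}(s)$, which is again rescaled DBM (driven by $-B_{n+1-i}(s)$), preserves the hypothesis \eqref{e:xi-xj2} after reindexing, and satisfies $\check x_1 = -x_n$. So only \eqref{e:eigbound} needs a fresh argument.

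To establish \eqref{e:eigbound}, I would first dominate $\bm{x}(0)$ pointwise by a lifted arithmetic progression. Setting $\tilde x_i(0) := x_1(0) + M k^{2/3} - (i-1) k^{2/3}/(B L^{3/4} k)$, the hypothesis \eqref{e:xi-xj2} (specialised to $i=1$) yields $\tilde x_i(0) \geq x_i(0)$ for every $i$. Coupling the rescaled DBM $\tilde{\bm x}$ to $\bm x$ through the same Brownian motions, \Cref{l:coupling}(1) gives $\tilde x_1(s) \geq x_1(s)$ for all $s \geq 0$. Hence it suffices to bound $\tilde x_1(tk^{1/3}) - \tilde x_1(0)$ and add $M k^{2/3}$.

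The process $\tilde{\bm x}$ is rescaled DBM with uniform initial data of spread $R := (n-1) k^{2/3}/(B L^{3/4} k) \simeq L^{3/4} k^{2/3}/B$. Two successive scalings, $\lambda := \tilde x/\sqrt n$ (converting \eqref{e:reDBM} into \eqref{e:DBM}) followed by $\mu_i(u) := \sqrt n\, \lambda_i(uR^2/n)/R$ (scale invariance of \eqref{e:DBM}), transport $\tilde{\bm x}$ to a standard DBM $\mu$ on $n$ particles with uniform initial data on $[-1,0]$, matching the setup of \Cref{p:uniform}. The time $tk^{1/3}$ in $x$-coordinates corresponds to $u = n t k^{1/3}/R^2 \simeq B^2 t$ in $\mu$-coordinates, so \Cref{p:uniform} (with $\mathsf{T}$ taken large enough depending on $B$) gives, with probability $1 - Ce^{-(\log n)^2}$,
\[
\mu_1(u) - \mu_1(0) \leq C\bigl(u + u|\log u| + (\log n)\sqrt{u/n}\bigr).
\]
Unscaling by the factor $R$ produces
\[
\tilde x_1(tk^{1/3}) - \tilde x_1(0) \leq C\bigl(B L^{3/4} k^{2/3} t (1 + |\log t|) + (\log n) k^{2/3} (t/k)^{1/2}\bigr),
\]
recovering the first and third terms of \eqref{e:eigbound} (actually with the stronger factor $1+|\log t|$ in place of $1+|\log t|^2$).

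The remaining, and main, obstacle is the additive $Mk^{2/3}$ contribution from the lift in the second step, which is only linear in $M$ and so suffices for the claimed middle term $(Mt)^{1/2}L^{3/8}k^{2/3}$ only when $M \lesssim tL^{3/4}$. In the complementary regime $M \gtrsim tL^{3/4}$, I would use a delta-mass comparison: the lift $Mk^{2/3}$ of the initial profile, at the natural particle density $\rho \simeq BL^{3/4}k^{1/3}$ of $\tilde{\bm x}$, corresponds to concentrating $m \simeq MBL^{3/4}k$ excess particles near the edge. Constructing an auxiliary rescaled DBM in which these $m$ particles sit as a delta mass of relative weight $m/n \simeq MBL^{-3/4}$, comparing gap-wise to $\tilde{\bm x}$ via \Cref{l:coupling}(2), and invoking \Cref{p:detercoupling2} shows that their spread at time $tk^{1/3}$ is at most $\sqrt{(m/n)\cdot tk^{1/3}}\cdot \sqrt n \simeq (Mt)^{1/2}L^{3/8}k^{2/3}$. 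Taking the minimum of the naive $Mk^{2/3}$ bound and this delta-mass bound then recovers \eqref{e:eigbound}. The delicate point, and the place I expect the real technical work to lie, is designing the initial data of the auxiliary DBM so that the gap hypothesis required by \Cref{l:coupling}(2) is actually satisfied despite the slack inherent in \eqref{e:xi-xj2}, and ensuring that the shift-by-$Mk^{2/3}$ argument and the delta-mass argument can be combined coherently across the full range of parameters $L \in [1, k^D]$ while keeping the constants $C_1$ and $C_2$ in the claimed form.
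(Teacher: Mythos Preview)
Your symmetry reduction, rescaling, and comparison to a lifted uniform profile are all correct and essentially match the paper's treatment of the case $M\lesssim tL^{3/4}$ (equivalently $\delta:=BM/L^{3/4}\lesssim t$). The paper's rescaling is the single step $\lambda_i(s)=\frac{BL^{1/4}}{n^{2/3}}x_i(B^{-2}k^{1/3}s)$, which reduces the claim to $\lambda_1(s)-\lambda_1(0)\le C'(s+s|\log s|^2+\sqrt{\delta s}+(\log n)\sqrt{s/n})$ for $s\le B^2$; your two-step rescaling lands in the same place.

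The genuine gap is in the hard regime $\delta\gtrsim t$. Your proposed tool, the gap comparison \Cref{l:coupling}(2), controls only the \emph{spread} of the $m$ clumped particles, not their common \emph{drift}. Even if those particles stay within $O(\sqrt{\delta t})$ of one another, the uniform tail below them exerts a one-sided repulsion that pushes the whole clump upward, and it is precisely this drift that you must bound to control $\lambda_1(t)-\lambda_1(0)$. In fact the hypothesis of \Cref{l:coupling}(2) fails in the direction you want: the delta-mass configuration has zero gaps among its first $m$ particles, strictly smaller than the uniform gaps, so the lemma gives no useful inequality.

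The paper handles this regime differently, using only the position comparison \Cref{l:coupling}(1). After dominating the initial data by $m+1$ particles at a single point plus a uniform tail, it \emph{raises} the delta mass to a carefully chosen height $r\asymp\sqrt{(m+1)t/n}$ and then proves the clump stays strictly above the tail throughout $[0,t]$. This is done by two further one-sided couplings: (i) lower-bound the clump by an isolated $(m{+}1)$-particle DBM started at $r$ (via \Cref{p:detercoupling2}), and (ii) upper-bound the tail by a full uniform DBM (via \Cref{p:uniform}, after sending the top $m{+}1$ particles to $+\infty$ and reindexing). The resulting separation $\hat\lambda_i(s)-\hat\lambda_j(s)\gtrsim t|\log t|+(j-m-1)/n$ turns the tail's repulsion on any clump particle into a sum $\frac{1}{n}\sum_{j>m+1}\frac{1}{\hat\lambda_i-\hat\lambda_j}\lesssim\int_{t|\log t|}^{1}\frac{dx}{x}\lesssim|\log t|$. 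Thus the clump evolves as an isolated $(m{+}1)$-DBM with an explicit additive drift $O(|\log t|)$ per unit time; one more application of \Cref{p:detercoupling2} then gives $\hat\lambda_1(t)\le r+C\big(\sqrt{(m{+}1)t/n}+t|\log t|+(\log n)\sqrt{t/n}\big)$, which is the desired bound. This drift estimate, not a gap comparison, is the missing idea in your sketch.
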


\begin{proof}[Proof of \Cref{p:extreme}]
	
	Observe that \eqref{xntk} follows from \eqref{e:eigbound} by symmetry (that is, by negating each of the $x_j$, which leaves the assumption \eqref{e:xi-xj2} satisfied). Thus, it suffices to verify \eqref{e:eigbound}. 
	
	To that end, we first rescale the $x_j$ to make them satisfy the standard Dyson Brownian motion equations \eqref{e:DBM}. More specifically, we rescale time by $k^{1/3}/B^2$ and space by $n^{2/3}/(BL^{1/4})$, defining
\begin{align}
\la_i(t)=\frac{BL^{1/4}}{n^{2/3}}x_i( B^{-2} k^{1/3} t).  
\end{align}

\noindent Since $n^{-1}  = (B L^{1/4} n^{-2/3})^2 \cdot B^{-2} k^{1/3}$ (as $n = L^{3/2} k$), it is quickly verified that $\{\la_i(t)\}_{1\leq i\leq n}$  satisfies \eqref{e:DBM}. Moreover, (since $BL^{1/4} n^{-2/3} \cdot (BL^{3/4} k^{1/3})^{-1} = n^{-1}$ and $BL^{1/4} n^{-2/3} k^{2/3} = BL^{-3/4}$, both as $n = L^{3/2} k$) the initial data satisfies
	\begin{align}
		\label{lambdaij} 
				\la_i(0)-\la_j(0)\geq \frac{j-i}{n}-\frac{BM}{L^{3/4}} , \qquad \text{for each $1 \le i \le j \le n$}.
			\end{align} 

Now set $\delta=BM L^{-3/4}$. Since $BL^{1/4} n^{-2/3} \cdot k^{-2/3} = B L^{-3/4}$, the claim \eqref{e:eigbound} is  equivalent to show that for any fixed $t\leq B^2$, we have with probability $1 - \OO( e^{-(\log n)^2})$ that 
\begin{align}\label{e:extremegao}
\la_1(t)\leq \la_1(0)+C_1' (t+t|\log t|^2+\sqrt{\delta t } + (\log n)\sqrt{t/n}),
\end{align}

\noindent for some constant $C_1' = C_1' (B) > 1$. By translating the $\lambda_i$ if necessary, we may assume that $\lambda_1 (0) = -(2n)^{-1}$. Moreover, by the comparison result given by the first statement in \Cref{l:coupling}, we may assume that the remaining $\lambda_i$ are as large as possible so that \eqref{lambdaij} is satisfied. More specifically, denoting $m=\lceil \delta n\rceil$, we may suppose 
\begin{flalign}
	\label{lambdaj0} 
	\begin{aligned} 
	& \la_1(0)=\la_2(0)=\cdots=\la_m(0)=-(2n)^{-1}=\la_{m+1}(0) = -(2n)^{-1}, \\ 
	& \la_{m+i}(0)-\la_{m+i+1}(0)= n^{-1}, \quad \text{for each $1 \le i \le n-m+1$},
	\end{aligned} 
\end{flalign} 

\noindent where the last constraint can be omitted if $m \ge n$. 

For the special case $m=0$, \eqref{e:extremegao} follows from \Cref{p:uniform}, which in fact implies the slightly stronger estimate: there exists a constant $C_0 = C_0 (\mathsf{T}) > 1$ such that, with probability $1-\OO(e^{-(\log n)^2})$, we have for each $0 \le t \le \mathsf{T}$ that 
\begin{align}
\la_1(t)\leq \la_1(0)+C_0 (t+t|\log t| + (\log n)\sqrt{t/n}).
\end{align}
  
\noindent Next assume that $m \ge 1$. We consider two cases, depending on if $\sqrt{m t/n }\leq  t |\log t|$ or $\sqrt{m t/n }\geq  t |\log t|$. 

First assume that $\sqrt{mt / n} \le t |\log t|$, so that $m/n\leq t|\log t|^2$.
We consider another Dyson Brownian motion with initial data given by
$\widetilde \la_i(0)=m/n-(i-1/2)/n$ for $1\leq i\leq n$. 
The new Dyson Brownian motion satisfies the assumptions in \Cref{p:uniform} (up to translation by $m/n$). Thus that proposition yields, with probability $1- \OO( e^{-(\log n)^2})$, that 
 \begin{align}\begin{split}\label{e:tlla}
\widetilde \la_1(t)+1/(2n) & \leq (m/n)+C_0 (t+t|\log t|+\log n\sqrt{t/n})\\
&\leq C_0 (t+t|\log t|+t|\log t|^2+\log n\sqrt{t/n})\leq 2C_0 (t+t|\log t|^2+\log n\sqrt{t/n})
\end{split}\end{align}
Moreover, by construction (and \eqref{lambdaj0}), the new Dyson Brownian motion satisfies $\widetilde \la_i(0)\geq \la_i(0)$ for $1\leq i\leq n$. By the first statement in \Cref{l:coupling}, we can couple the $\lambda_j$ with the $\widetilde{\lambda}_j$ such that $\widetilde \la_i(t)\geq \la_i(t)$ for $1\leq i\leq n$. The claim \eqref{e:extremegao} then follows \eqref{e:tlla} (with the fact that $\lambda_1 (0) = -(2n)^{-1}$).  

Now assume instead that $\sqrt{m t/n }\geq t |\log t|$, so that $m/n\geq t|\log t|^2$. We consider another Dyson Brownian motion with initial data given by 
\begin{flalign*} 
	\widehat \la_1(0)=\widehat \la_2(0)=\cdots=\widehat \la_{m+1}(0)=r, \quad \text{and} \quad \widehat{\lambda}_i (0) = \lambda_i (0), \quad \text{for $m+2 \le i \le n$},
\end{flalign*} 

\noindent  where 
\begin{flalign}
	\label{r} 
	r=3C_2 (t+\sqrt{(m+1) t/n }+\log n\sqrt{t/n}) -(1/2n).
\end{flalign} 

\noindent and $C_2$ is the maximum of the two constants $C(\mathsf{T})$ from \Cref{p:detercoupling2} and \Cref{p:uniform}.  Then the first statement in \Cref{l:coupling} yields a coupling between the $\lambda_j$ and $\widehat{\lambda}_j$ such that $\lambda_1 (t) \le \widehat{\lambda}_1 (t)$. Hence, to verify \eqref{e:extremegao} (with the fact that $\lambda_1 (0) = (-2n)^{-1}$), it suffices to show with probability at least $1 - \OO(e^{-(\log n)^2})$ that 
 \begin{align}
 	\label{lambda10} 
 \widehat{\lambda}_1 (t) + (2n)^{-1} \le C_1' (t+t|\log t|^2+\sqrt{\delta t } + (\log n)\sqrt{t/n}).
 \end{align}

To verify \eqref{lambda10}, we will first show that there is likely a gap between $\widehat{\lambda}_{m+1} (s)$ and $\widehat{\lambda}_{m+2} (s)$ for $s \in [0, t]$, which will require  both an upper on the $\widehat{\lambda}_i (s)$ for $m+2 \le i \le n$ and a lower bound on the $\widehat{\lambda}_i (s)$ for $1 \le i \le m+1$. We begin with the upper bound, which will proceed by comparing $\widehat{\lambda}_i (s)$ to Dyson Brownian motion $\widehat{y}_i (s)$ with initial data $\widehat{y}_i (0) = -(2i+1) (2n)^{-1}$ for each $1 \le i \le n$; we claim that it is possible to couple $\widehat{\lambda}_i$ with $\widehat{y}_i$ such that $\widehat{\lambda}_i (s) \le \widehat{y}_{i-m-1} (s)$ for each $s \in [0, t]$ and $m+2 \le i \le n$. 

To show this, we first compare $\{\widehat \la_i(s)\}_{1\leq i\leq n}$ with Dyson Brownian motion with initial data $ y_1(0)=y_2(0)=\cdots= y_{m+1}(0)=\infty$, and $ y_i(0)=\la_i(0) = (2m-2i+1) (2n)^{-1}$ for $m+2 \le i \le n$. Then, $y_i (0) \ge \widehat{\lambda}_i (0)$ for each $1 \le i \le n$, and so the first part of \Cref{l:coupling} yields a coupling between the $\widehat{\lambda}_i$ and $y_i$ such that, with probability $1-\OO(e^{-(\log n)^2})$, we have for each $s \in [0, t]$ and $1 \le i \le n$ that $\widehat{\lambda}_i (s) \le y_i (s)$. Let $\breve{y}_i (s)$ denote Dyson Brownian motion with initial data $\breve{y}_i (0) = y_{i+m+1} (0) = -(2i+1)(2n)^{-1}$ for $1 \le i \le n - m - 1$ and $\breve{y}_i (0) = -\infty$ for $n-m \le i \le n$. Then, the above coupling between $\widehat{\lambda}$ and $\widehat{y}$ yields $\widehat{\lambda}_i (s) \le \breve{y}_{i-m-1} (s)$ for each $m+2 \le i \le n$. Since $\breve{y}_i (0) \le \widehat{y}_i (0)$ for each $1 \le i \le n$, the first part of \Cref{l:coupling} yields a coupling between $\breve{y}_i (s)$ and $\widehat{y}_i (s)$ such that $\breve{y}_i (s) \le \widehat{y}_i (s)$ for each $s \in [0, t]$ and $1 \le i \le n$. Combining these two couplings confirms the existence of one between $\widehat{\lambda}_i (s)$ and $\widehat{y}_i (s)$ such that $\widehat{\lambda}_i (s) \le \widehat{y}_{i-m-1} (s)$ for each $s \in [0, t]$ and $m+2 \le i \le n$. Hence, with probability $1 - \OO(e^{-(\log n)^2})$, we have for each $s \in [0, t]$ and $m+2 \le i \le n$ that
\begin{align}\label{e:tlafirst}
	\begin{aligned} 
\widehat \la_i (s)\leq \widehat{y}_{i-m-1} (s)& \leq -1/(2n)+C_2 (t+t|\log t|+\log n\sqrt{t/n})-(i-m-1)/n,
\end{aligned}
\end{align}

\noindent where the last inequality is from \Cref{p:uniform} (applied with the $\lambda_i$ there given by $\widehat{y}_i$ here, translated by $mn^{-1}$) and the fact that $\widehat{y}_1 (0) = -3 (2n)^{-1}$. 

To lower bound the $\widehat{\lambda}_i (s)$, we next compare $\{\widehat \la_i(s)\}_{1\leq i\leq n}$ with Dyson Brownian motion starting from $ z_1(0)=z_2(0)=\cdots= z_{m+1}(0)=r$, and $ z_i(0)=-\infty$ for $i\geq m+2$. Since $z_i (0) \le \widehat{\lambda}_i (0)$ for each $1 \le i \le n$, the first part of \Cref{l:coupling} yields a coupling between the $z_i$ and $\widehat{\lambda}_i$ such that, with probability $1-\OO(e^{-(\log n)^2})$, we have for each $s \in [0, t]$ and $1 \le i \le m + 1$ that
\begin{align}\begin{split}\label{e:tlasecond}
\widehat \la_i(s)
\geq z_i(s) & \geq r-C_2(\sqrt{(m+1) t/n}+\log n\sqrt{t/n})-1/(2n)\\
&\geq 2C_2(t+\sqrt{(m+1)t/n}+\log n\sqrt{t/n})-1/(2n)\geq
 2C_2(t+t|\log t|+\log n\sqrt{t/n})-(1/2n),
\end{split}\end{align}

\noindent where the second inequality is from \Cref{p:detercoupling2} (translated by $r$); the third inequality uses that $r=3C_2(t+\sqrt{(m+1) t/n }+\log n\sqrt{t/n}) $; and the last inequality uses that $m/n\geq t|\log t|^2$. 

Comparing \eqref{e:tlafirst} and \eqref{e:tlasecond}, we deduce that with probability at least $1 - \OO(e^{-(\log n)^2})$ that, for each $s \in [0, t]$ and $1 \le i \le m + 1 < m+2 \le j \le n$,
\begin{align}\begin{split}\label{e:gapbb2}
\widehat{\lambda}_i(s)-\widehat{\lambda}_j(s) &\geq C_2(t+t|\log t|+\log n\sqrt{t/n})+\frac{j-m-1}{n}.
\end{split}\end{align}

\noindent In what follows, we restrict to the event $\mathscr{E}$ on which the bound \eqref{e:gapbb2} holds, observing that $\mathbb{P} [\mathscr{E}] \ge 1 - \OO(e^{-(\log n)^2})$. 

On the event $\mathscr{E}$, the stochastic differential equation \eqref{e:DBM} implies, for any $s \in [0, t]$ and $1 \le i \le m +1$, that
\begin{align}\begin{split}\label{e:tlai}
\rd \widehat \la_i(s)
&\leq \left(\frac{2}{\beta n}\right)^{1/2}\rd B_i(s)+\frac{1}{n}\sum_{j: j\neq i,\atop j\leq m+1}\frac{1}{\widehat \la_i(s)-\widehat \la_j(s)} + \displaystyle\sum_{m+2}^n \frac{1}{C_2(t+t|\log t|)n+j-m-1}\\
&\leq \left(\frac{2}{\beta n}\right)^{1/2}\rd B_i(s)+\frac{1}{n}\sum_{j: j\neq i,\atop j\leq m+1}\frac{1}{\widehat \la_i(s)-\widehat \la_j(s)}+\int_{C_2(t+t|\log t|)}^{C_2(t+t|\log t|)+1}\frac{\rd x}{x}\\
&\leq \left(\frac{2}{\beta n}\right)^{1/2}\rd B_i(s)+\frac{1}{n}\sum_{j: j\neq i,\atop j\leq m+1}\frac{1}{\widehat \la_i(s)-\widehat \la_j(s)}+\log C_2'+ C_2'|\log t|,
\end{split}\end{align} 

\noindent for some constant $C_2' = C_2' (\mathsf{T}) > 1$. Here, the first line follows from \eqref{e:DBM} and \eqref{e:gapbb2}; the second from the fact that $x^{-1}$ is decreasing; and the third from performing the integration and using the facts that $\log \big( C_2 (t + t|\log t|) + 1 \big) \le \log C_2 + \log \big( t + t |\log t| + 1 \big) \le \log C_2 + \mathsf{T} + \mathsf{T} |\log t|$ and $\big| \log \big( C_2 (t + t |\log t|) \big) \big| \le \log C_2 + |\log t| + \log \big(1 + |\log t| \big) \le \log C_2 + 2 |\log t|$. 

\eqref{e:tlai} is the same stochastic differential equation as the $(m+1)$-particle Dyson Brownian motion starting from a delta mass at $r$ with an extra drift $\log C_2+2C_2|\log t|$. Specifically, letting $w_i (s)$ denote Dyson Brownian motion \eqref{e:DBM} with initial data $w_i (0) = r$ for $1 \le i \le m + 1$, we find from \eqref{e:tlai} that on $\mathscr{E}$ we have $\widehat{\lambda}_i (s) \le w_i (s) + \big( \log C_2' + C_2' |\log t| \big) s$, for each $s \in [0, t]$ and $1 \le i \le m + 1$. Therefore, since $\mathbb{P}[\mathscr{E}] \ge 1 - \OO(e^{-(\log n)^2})$, \Cref{p:detercoupling2} and a union bound yields that, with high probability $1-\OO( e^{-(\log n)^2})$, we have for some constant $C_3 = C_3 (\mathsf{T}) > 1$ that
\begin{align*}
 \widehat \la_1(t) \le w_1 (t) + \big( \log C_2' + C_2' |\log t| \big) t &\leq r+C_2\left(\sqrt{(m+1)t/n}+\frac{(\log n)\sqrt t}{\sqrt n}\right) +(\log C_2'+2C_2|\log t|)t\\
 &\leq C_3 (t+\sqrt{(m+1) t/n }+t|\log t|+\log n\sqrt{t/n})-(1/2n) \\
 &\leq  2 C_3 (t+t|\log t|^2+\sqrt{\delta t+2t/n } + (\log n)\sqrt{t/n}) - (1/2n) \\
&\leq  6 C_3 (t+t|\log t|^2+\sqrt{\delta t } + (\log n)\sqrt{t/n}) - (1/2n).
\end{align*}

\noindent where the third bound follows from \eqref{r}; the fourth from the facts that $\delta n \ge \lceil \delta n \rceil -1 \ge m-1$ and that $t + t |\log t|^2 \ge |\log t|$; and the fifth from the facts that $\sqrt{\delta t + 2tn^{-1}} \le \sqrt{\delta t} + \sqrt{2} \cdot \sqrt{tn^{-1}}$ and that $2^{1/2} \le 3 \log 2 \le 3 \log n$. This implies \eqref{lambda10} and thus the corollary. 
\end{proof}

\bibliography{References.bib}
\bibliographystyle{abbrv}

\end{document}